\setlist[itemize]{leftmargin=20pt}
\newcommand{\N}{\ensuremath{\mathbf{N}}}
\newcommand{\Z}{\ensuremath{\mathbf{Z}}}
\newcommand{\R}{\ensuremath{\mathbf{R}}}
\newcommand{\D}{\ensuremath{\mathcal{D}}}
\DeclarePairedDelimiter\abs{\lvert}{\rvert}
\DeclarePairedDelimiter{\nrm}\lVert\rVert
\newcommand{\cbraceb}[1]{\bigl\{#1\bigr\}}
\newcommand{\nrms}[1]{\Bigl\|#1\Bigr\|}
\newcommand{\has}[1]{\Bigl(#1\Bigr)}
\newcommand{\cbraces}[1]{\Bigl\{#1\Bigr\}}
\DeclareMathOperator{\loc}{loc}
\DeclareMathOperator{\supp}{supp}
\DeclareMathOperator{\ind}{\mathbf{1}}
\DeclareMathOperator{\UMD}{UMD}
\DeclareMathOperator*{\esssup}{ess\,sup}
\DeclareMathOperator*{\essinf}{ess\,inf}
\DeclareMathOperator{\BHT}{BHT}
\DeclareMathOperator{\pv}{p.v.}
\DeclareMathOperator{\BMO}{BMO}
\renewcommand{\emptyset}{\varnothing}
\def\avint_#1{\mathchoice{\mathop{\kern 0.2em\vrule width 0.6em height 0.69678ex depth -0.58065ex \kern -0.8em \intop}\nolimits_{\kern -0.4em#1}}{\mathop{\kern 0.1em\vrule width 0.5em height 0.69678ex depth -0.60387ex \kern -0.6em \intop}\nolimits_{#1}} {\mathop{\kern 0.1em\vrule width 0.5em height 0.69678ex depth -0.60387ex \kern -0.6em \intop}\nolimits_{#1}} {\mathop{\kern 0.1em\vrule width 0.5em height 0.69678ex depth -0.60387ex \kern -0.6em \intop}\nolimits_{#1}}}
\newtheorem{TheoremLetter}{Theorem}
{}
\newtheorem{theorem}{Theorem}
\newtheorem{corollary}[theorem]{Corollary}
\newtheorem{lemma}[theorem]{Lemma}
\newtheorem{proposition}[theorem]{Proposition}
\newtheorem{conjecture}[theorem]{Conjecture}
\theoremstyle{remark}
\newtheorem{remark}[theorem]{Remark}
\newtheorem{example}[theorem]{Example}
\theoremstyle{definition}
\newtheorem{definition}[theorem]{Definition}
\numberwithin{theorem}{section}
\numberwithin{equation}{section}
\title[Extrapolation in Banach function spaces]{Extrapolation in general quasi-Banach function spaces}
\author{Zoe Nieraeth}
\thanks{The author is supported by the Basque Government through the BERC 2022-2025 program and by the Spanish State Research Agency through BCAM Severo
Ochoa excellence accreditation SEV-2017-0718 and through project PID2020-113156GB-I00/AEI/10.13039/501100011033 funded by Agencia Estatal de
Investigación and acronym ``HAPDE''}
\address[Zoe Nieraeth]{BCAM\textendash  Basque Center for Applied Mathematics, Bilbao, Spain}
\email{zoe.nieraeth@gmail.com}
\begin{document}
\begin{abstract}
In this work we prove off-diagonal, limited range, multilinear, vector-valued, and two-weight versions of the Rubio de Francia extrapolation theorem in general quasi-Banach function spaces. We prove mapping properties of the generalization of the Hardy-Littlewood maximal operator to very general bases that includes a method to obtain self-improvement results that are sharp with respect to its operator norm. Furthermore, we prove bounds for the Hardy-Littlewood maximal operator in weighted Lorentz, variable Lebesgue, and Morrey spaces, and recover and extend several extrapolation theorems in the literature. Finally, we provide an application of our results to the Riesz potential and the Bilinear Hilbert transform.
\end{abstract}

\keywords{Rubio de Francia extrapolation, Banach function space, Hardy-Littlewood maximal operator, Muckenhoupt weights}

\subjclass[2020]{Primary: 42B25; Secondary: 46E30}


\maketitle

\section{Introduction}
There is a rich history of Rubio de Francia extrapolation in the theory of $A_p$ weights. In essence, this theory states that if for a fixed $p\in[1,\infty]$ and all weights $w$ satisfying the $A_p$ condition
\[
[w]_p:=\sup_Q|Q|^{-1}\|w\ind_Q\|_{L^p(\R^d)}\|w^{-1}\ind_Q\|_{L^{p'}(\R^d)}<\infty,
\]
an operator is bounded on the space $L^p_w(\R^d)$, consisting of the measurable functions $f$ such that $fw\in L^p(\R^d)$, then it is automatically also bounded on a whole range of spaces, including $L^q_w(\R^d)$ for all $q\in (1,\infty)$. The original argument dates back to \cite{Ru82, GR85} and many variants have appeared in the literature since.

The $A_p$ condition is deeply related to the Hardy-Littlewood maximal operator
\[
Mf(x):=\sup_Q\left(\frac{1}{|Q|}\int_Q\!|f|\,\mathrm{d}x\right)\ind_Q(x),
\]
and, as a matter of fact, can be characterized by the boundedness of $M$ on $L^p_w(\R^d)$, or its associate space $L^{p'}_{w^{-1}}(\R^d)$.

Such a condition also appears in spaces other than $L^p_w(\R^d)$. As a matter of fact, given a Banach function space $X$ on which $M$ is bounded, we have
\[
\sup_Q|Q|^{-1}\|\ind_Q\|_{X}\|\ind_Q\|_{X'}\leq\|M\|_{X\to X},
\]
where $X'$ is the K\"othe dual of $X$. If one now sets $X=L^p_w(\R^d)$, we recover the $A_p$ condition for the weight $w$.

\bigskip

The main goal of this work is to replace the $A_p$ condition on a weight $w$ and the constant $[w]_p$ by the intrinsic condition $M:X\to X$ of a Banach function space $X$ and the constant $\|M\|_{X\to X}$, and to study under which conditions one can still obtain extrapolation theorems. Moreover, we do not rely on any techniques involving rearrangement invariance.

Our results are in a limited range setting, i.e., rather than considering bounds for $p\in[1,\infty]$, we consider operators that have bounds in a range $p\in[r,s]$, where $0<r<s\leq\infty$. In this case, the appropriate weight classes $A_{p,(r,s)}$ are defined as
\[
[w]_{p,(r,s)}:=\sup_Q |Q|^{-\big(\frac{1}{r}-\frac{1}{s}\big)}\|w\ind_Q\|_{L^{\frac{1}{\frac{1}{p}-\frac{1}{s}}}(\R^d)}\|w^{-1}\ind_Q\|_{L^{\frac{1}{\frac{1}{r}-\frac{1}{p}}}(\R^d)}<\infty.
\]
To understand the geometry of the situation, note that the interval $[\frac{1}{s},\frac{1}{r}]$ is mapped to $[0,1]$ through the affine transformation $\phi$ given by
\[
\phi(t):=\frac{t-\frac{1}{s}}{\frac{1}{r}-\frac{1}{s}}.
\]
For $p\in[r,s]$ we then set
\[
\frac{1}{p_{r,s}}:=\phi\Big(\frac{1}{p}\Big)=\frac{\frac{1}{p}-\frac{1}{s}}{\frac{1}{r}-\frac{1}{s}},\quad \frac{1}{p_{r,s}'}=1-\phi\Big(\frac{1}{p}\Big)=\frac{\frac{1}{r}-\frac{1}{p}}{\frac{1}{r}-\frac{1}{s}},
\]
so that $p_{r,s}\in[1,\infty]$. For $X=L^p_w(\R^d)$, we then write
\[
X_{r,s}:=L^{p_{r,s}}_{w^{\frac{1}{\frac{1}{r}-\frac{1}{s}}}}(\R^d)
\]
and note that $w\in A_{p,(r,s)}$ precisely when we have the bound
\[
M:X_{r,s}\to X_{r,s}
\]
or
\[
M:(X_{r,s})'\to (X_{r,s})'.
\]
As it turns out, this rescaling of a general quasi-Banach function space $X$ to an appropriate Banach function space $X_{r,s}$ is possible precisely when $X$ is $r$-convex and $s$-concave, in which case we have
\[
X_{r,s}=\Big[[(X^r)']^{\big(\frac{s}{r}\big)'}\Big]',
\]
where the space $X^p$ is defined so that $|f|^p\in X^p$ whenever $f\in X$. One of our main tools in extrapolating to $r$-convex and $s$-concave quasi-Banach function spaces $X$ is through the fact that, as a consequence of Lozanovskii's duality theorem, any $f\in X$ can be written as
\[
|f|=h^{\frac{1}{r}-\frac{1}{s}} k^{\frac{1}{s}}
\]
with $0\leq h\in X_{r,s}$, $0\leq k\in L^1(\R^d)$.

Not only does our result allow us to obtain extrapolation results in the limited range setting, we can also obtain extrapolation results in the off-diagonal setting. Indeed, suppose we have exponents $p_1\in[1,\infty]$, $p_2\in(0,\infty]$ and $\alpha\in\R$ such that
\[
\alpha=\frac{1}{p_2}-\frac{1}{p_1},
\]
and an operator $T$ such that for each weight satisfying
\[
[w]_{p_1,p_2}:=\sup_Q|Q|^{-(1+\alpha)}\|w\ind_Q\|_{L^{p_2}(\R^d)}\|w^{-1}\ind_Q\|_{L^{p_1'}(\R^d)}<\infty
\]
we have the bound
\[
T:L^{p_1}_w(\R^d)\to L^{p_2}_w(\R^d),
\]
then it is natural to ask for which spaces $X$, $Y$ we also have the bound
\begin{equation}\label{eq:fullrangeoffdiagintro1}
T:X\to Y.
\end{equation}
As it turns out, this can be viewed as a problem in a limited range setting. Indeed, if we define $s_1$, $r_2$ such that
\[
\frac{1}{s_1}=-\alpha,\quad \frac{1}{r_2}=1+\alpha,
\]
then we are in the geometric situation where
\[
\alpha+\Big[\frac{1}{s_1},1\Big]=\Big[0,\frac{1}{r_2}\Big]
\]
and we have
\[
[w]_{p_1,p_2}=[w]_{p_2,(r_2,\infty)}=[w]_{p_1,(1,s_1)}.
\]
Furthermore, when $X=L_w^{p_1}(\R^d)$, $Y=L^{p_2}_w(\R^d)$ we have the relation
\[
X_{1,s_1}=Y_{r_2,\infty},
\]
and the $A_{p_1,p_2}$ condition is equivalent to the bounds
\begin{equation}\label{eq:fullrangeoffdiagintro2}
M:Y^{r_2}\to Y^{r_2},\quad M:(X')^{s_1'}\to (X')^{s_1'}.
\end{equation}
Hence, as long as we require the spaces $X$, $Y$ to satisfy the relation $X_{1,s_1}=Y_{r_2,\infty}$, or equivalently
\[
(X')^{s_1'}=(Y^{r_2})',
\]
and the bounds \eqref{eq:fullrangeoffdiagintro2} hold, then one can expect the bound \eqref{eq:fullrangeoffdiagintro1} to hold.

In the limited range situation we have parameters $r_1,r_2\in(0,\infty)$, $s_1\in(0,\infty]$, $\frac{1}{s_2}\in\R$ satisfying $\frac{1}{r_1}>\frac{1}{s_1}$, $\frac{1}{r_2}>\frac{1}{s_2}$
and
\[
\frac{1}{r_2}-\frac{1}{r_1}=\frac{1}{s_2}-\frac{1}{s_1}=\alpha.
\]
Then for $p_1,p_2\in(0,\infty]$ satisfying $p_1\in[r_1,s_1]$, $\frac{1}{p_2}\in[\frac{1}{s_2},\frac{1}{r_2}]$ and $\frac{1}{p_2}-\frac{1}{p_1}=\alpha$ we can define the off-diagonal weight class $A_{\vec{p},(\vec{r},\vec{s})}$ through
\[
[w]_{\vec{p},(\vec{r},\vec{s})}=\sup_Q |Q|^{-\big(\frac{1}{r_1}-\frac{1}{s_1}\big)}\|w\ind_Q\|_{L^{\frac{1}{\frac{1}{p_2}-\frac{1}{s_2}}}(\R^d)}\|w^{-1}\ind_Q\|_{L^{\frac{1}{\frac{1}{r_1}-\frac{1}{p_1}}}(\R^d)}<\infty.
\]
In this case we have
\[
A_{\vec{p},(\vec{r},\vec{s})}=A_{p_1,(r_1,s_1)}=A_{p_2,(r_2,s_2)},
\]
and we can proceed as before.

\bigskip

We now state our main theorem. The relevant definitions and related notions for Lorentz, variable Lebesgue, and Morrey spaces can be found in Section~\ref{sec:weightedbfsmbound}.
\begin{TheoremLetter}\label{thm:A}
Let $\alpha\in\R$ and suppose $r_1,r_2\in(0,\infty)$, $s_1\in(0,\infty]$, $\frac{1}{s_2}\in\R$ satisfy $\frac{1}{r_1}>\frac{1}{s_1}$, $\frac{1}{r_2}>\frac{1}{s_2}$
and
\[
\frac{1}{r_2}-\frac{1}{r_1}=\frac{1}{s_2}-\frac{1}{s_1}=\alpha.
\]
Let $p_1,p_2\in(0,\infty]$ with $\frac{1}{p_1}\in[\frac{1}{s_1},\frac{1}{r_1}]$, $\frac{1}{p_2}\in[\frac{1}{s_2},\frac{1}{r_2}]$ and $\frac{1}{p_2}-\frac{1}{p_1}=\alpha$. Let
\[
T:\bigcup_{w\in A_{\vec{p},(\vec{r},\vec{s})}}L^{p_1}_w(\R^d)\to L^0(\R^d)
\]
be a map for which there is an increasing function $\phi:[0,\infty)\to[0,\infty)$ such for all weights $w\in A_{\vec{p},(\vec{r},\vec{s})}$ we have
\[
\|Tf\|_{L^{p_2}_w(\R^d)}\leq\phi([w]_{\vec{p},(\vec{r},\vec{s})})\|f\|_{L^{p_1}_w(\R^d)}
\]
for all $f\in L^{p_1}_w(\R^d)$.

Let $X$, $Y$ be a pair of quasi-Banach function spaces over $\R^d$ that are respectively $r_1$- and $r_2$-convex and $s_1$- and $s_2$-concave, that satisfy $X_{r_1,s_1}=Y_{r_2,s_2}$, and both of the following conditions hold:
\begin{itemize}
\item If $p_1\neq r_1$,
\[
M:X_{r_1,s_1}\to X_{r_1,s_1}
\]
is bounded;
\item If $p_1\neq s_1$,
\[
M:(X_{r_1,s_1})'\to (X_{r_1,s_1})'
\]
is bounded.
\end{itemize}
Then $Tf$ is well-defined for all $f\in X$ and bounded $T:X\to Y$ with
\[
\|T\|_{X\to Y}\leq 2^{\frac{1}{r_1}-\frac{1}{s_1}}\phi\big(2^{\frac{1}{r_1}-\frac{1}{s_1}}\|M\|_{X_{r_1,s_1}\to X_{r_1,s_1}}^{\frac{1}{r_1}-\frac{1}{p_1}}\|M\|_{(X_{r_1,s_1})'\to (X_{r_1,s_1})'}^{\frac{1}{p_2}-\frac{1}{s_2}}\big).
\]

Thus, if $p_1\in(r_1,s_1)$, we have the following results:
\begin{itemize}
\item Lorentz spaces: For all $p_1,q_1\in(r_1,s_1)$, $p_2$ and $q_2$ satisfying
\[
\frac{1}{p_2}-\frac{1}{p_1}=\frac{1}{q_2}-\frac{1}{q_1}=\alpha,
\]
and weights $v\in A_{p_1,(r_1,s_1)}$, we have
\[
T:L^{p_1,q_1}_v(\R^d)\to \big[L^{p_2,q_2}_{v^{\frac{p_1}{p_2}}}(\R^d)\big](v^{1-\frac{p_1}{p_2}}),
\]
where
\[
\|f\|_{\big[L^{p_2,q_2}_{v^{\frac{p_1}{p_2}}}(\R^d)\big](v^{1-\frac{p_1}{p_2}})}=\|fv^{1-\frac{p_1}{p_2}}\|_{L^{p_2,q_2}_{v^{\frac{p_1}{p_2}}}(\R^d)}.
\]
\item Variable Lebesgue spaces: For all $p_1:\R^d\to(r_1,s_1)$ satisfying
\[
r_1<\essinf p_1\leq\esssup p_1< s_1
\]
and $p_1(\cdot)\in LH_0\cap LH_\infty$, $p_2\in L^0(\R^d)$ satisfying
\[
\frac{1}{p_2(x)}-\frac{1}{p_1(x)}=\alpha,
\]
and all weights $v$ satisfying
\[
\sup_Q|Q|^{-\big(\frac{1}{r_1}-\frac{1}{s_1}\big)}\|v\ind_Q\|_{L^{\frac{1}{\frac{1}{p_2(\cdot)}-\frac{1}{s_2}}}(\R^d)}\|v^{-1}\ind_Q\|_{L^{\frac{1}{\frac{1}{r_1}-\frac{1}{p_1(\cdot)}}}(\R^d)}<\infty,
\]
we have
\[
T:L^{p_1(\cdot)}_v(\R^d)\to L^{p_2(\cdot)}_v(\R^d).
\]
\item Morrey spaces: If $\alpha=0$, $s_1=s_2=\infty$, $r=r_1=r_2$, then for all $p\in(r,\infty)$, $q\in[p,\infty)$ and weights $v$ satisfying
\[
\sup_Q|Q|^{-\big(\frac{1}{r}-\frac{1}{p}+\frac{1}{q}\big)}\|v\ind_Q\|_{L^q(\R^d)}\|v^{-1}\ind_Q\|_{L^{\frac{1}{\frac{1}{r}-\frac{1}{p}}}(\R^d)}<\infty,
\]
we have
\[
T:\mathcal{L}_v^{p,q}(\R^d)\to\mathcal{L}_v^{p,q}(\R^d).
\]
\end{itemize}
\end{TheoremLetter}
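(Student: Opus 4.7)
The approach is a Rubio de Francia iteration performed inside the ambient Banach function space $X_{r_1,s_1}$ (which, by hypothesis, equals $Y_{r_2,s_2}$) combined with the Lozanovskii factorization recalled in the introduction. The identification $X_{r_1,s_1}=Y_{r_2,s_2}$ plays a decisive role: factorizations on both the input and the output side of $T$ take place in the \emph{same} space, and this forces a single weight $w\in A_{\vec{p},(\vec{r},\vec{s})}$ to simultaneously dominate $f$ in $L^{p_1}_w$ and K\"othe-dualize $Tf$ in $L^{p_2}_w$.

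Given $f\in X$, I would first invoke Lozanovskii to write $|f|=h^{\frac{1}{r_1}-\frac{1}{s_1}}k^{\frac{1}{s_1}}$ with $0\leq h\in X_{r_1,s_1}$, $0\leq k\in L^1(\R^d)$, and with the product of norms controlled by $\|f\|_X$. On the output side I would K\"othe-dualize: choose $0\leq g\in (Y_{r_2,s_2})'=(X_{r_1,s_1})'$ realizing $\|Tf\|_Y$ up to a loss, and apply Lozanovskii to $g$ as well. Provided $p_1\neq r_1$, I would run the Rubio de Francia algorithm
\[
Rh:=\sum_{j=0}^\infty\frac{M^jh}{\bigl(2\|M\|_{X_{r_1,s_1}\to X_{r_1,s_1}}\bigr)^j},
\]
producing $Rh\geq h$ with $\|Rh\|_{X_{r_1,s_1}}\leq 2\|h\|_{X_{r_1,s_1}}$ and pointwise $M(Rh)\leq 2\|M\|_{X_{r_1,s_1}\to X_{r_1,s_1}}\,Rh$. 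Symmetrically, provided $p_1\neq s_1$, I would apply the analogous algorithm $R'$ on $(X_{r_1,s_1})'$ to $g$. If $p_1$ coincides with one of the endpoints the corresponding iteration is vacuous, which matches the fact that the associated boundedness hypothesis on $M$ is dropped there.

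The weight is then defined, schematically, by $w:=(Rh)^{\frac{1}{r_1}-\frac{1}{p_1}}(R'g)^{\frac{1}{p_2}-\frac{1}{s_2}}$, with the factor $k$ absorbed into the normalization. A H\"older computation in the two local norms appearing in $[w]_{\vec{p},(\vec{r},\vec{s})}$, together with the pointwise $A_1$-type bounds on $Rh$ and $R'g$ and the K\"othe pairing $X_{r_1,s_1}\cdot(X_{r_1,s_1})'\hookrightarrow L^1$, yields
\[
[w]_{\vec{p},(\vec{r},\vec{s})}\leq 2^{\frac{1}{r_1}-\frac{1}{s_1}}\|M\|_{X_{r_1,s_1}\to X_{r_1,s_1}}^{\frac{1}{r_1}-\frac{1}{p_1}}\|M\|_{(X_{r_1,s_1})'\to(X_{r_1,s_1})'}^{\frac{1}{p_2}-\frac{1}{s_2}}.
\]
In particular $f\in L^{p_1}_w$ so $Tf$ is well-defined. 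Applying the weighted hypothesis and then using $h\leq Rh$ to pass from $\|f\|_{L^{p_1}_w}$ back to $\|f\|_X$, and $g\leq R'g$ to pass from $\|Tf\|_{L^{p_2}_w}$ back to $\|Tf\|_Y$, produces the asserted estimate on $\|T\|_{X\to Y}$.

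The main obstacle is the exponent bookkeeping in the third step: the parameters $\frac{1}{r_1}-\frac{1}{p_1}$, $\frac{1}{p_2}-\frac{1}{s_2}$, $\alpha=\frac{1}{p_2}-\frac{1}{p_1}$, together with the affine relations $\frac{1}{r_2}-\frac{1}{r_1}=\frac{1}{s_2}-\frac{1}{s_1}=\alpha$ and $X_{r_1,s_1}=Y_{r_2,s_2}$, must conspire so that the H\"older computation yields exactly the Muckenhoupt constant above with no extraneous geometric factors; this is where the identification of the rescaled spaces does the real work. Once the abstract statement is established, the three listed corollaries for Lorentz, variable Lebesgue, and Morrey spaces reduce to identifying $X$, $Y$, $r_i$, $s_i$ in each family, verifying the required $r_i$-convexity and $s_i$-concavity and the rescaling identity, and invoking the mapping properties of $M$ on $X_{r_1,s_1}$ and $(X_{r_1,s_1})'$ established in Section~\ref{sec:weightedbfsmbound}.
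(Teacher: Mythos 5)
Your outline is, in substance, the paper's own proof: the paper packages your ``double Rubio de Francia'' construction into an abstract two-space statement (Theorem~\ref{thm:mainabs}), applies it after the Lozanovskii factorization of Corollary~\ref{cor:factorization}, and dualizes the output side (this is Corollary~\ref{cor:abstractaprsextrapolation} and the proof of Theorem~\ref{thm:mainop}); the Lorentz, variable Lebesgue and Morrey cases then follow from the maximal-operator bounds of Section~\ref{sec:weightedbfsmbound}, exactly as you indicate. Two points in your sketch do need repair, however. First, your schematic weight has the wrong sign: with the convention $\|f\|_{L^{p_1}_w}=\|fw\|_{L^{p_1}(\R^d)}$ the weight must be $w=(Rh)^{-\left(\frac{1}{r_1}-\frac{1}{p_1}\right)}(R'g)^{\frac{1}{p_1}-\frac{1}{s_1}}$ (recall $\tfrac{1}{p_1}-\tfrac{1}{s_1}=\tfrac{1}{p_2}-\tfrac{1}{s_2}$ and $\tfrac{1}{r_1}-\tfrac{1}{p_1}=\tfrac{1}{r_2}-\tfrac{1}{p_2}$). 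The \emph{negative} power of $Rh$ is what makes $|f|w\leq\bigl((Rh)(R'g)\bigr)^{\frac{1}{p_1}-\frac{1}{s_1}}k^{\frac{1}{s_1}}$, so that H\"older together with the pairing $X_{r_1,s_1}\cdot(X_{r_1,s_1})'\subseteq L^1(\R^d)$ controls $\|fw\|_{L^{p_1}(\R^d)}$, and it is also what lets the two $A_1$-type inequalities pair up as $\langle Rh\rangle_{1,Q}\langle (Rh)^{-1}\rangle_{\infty,Q}\leq 2\|M\|_{X_{r_1,s_1}\to X_{r_1,s_1}}$ (and similarly for $R'g$) inside $[w]_{\vec{p},(\vec{r},\vec{s})}$. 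With the positive exponent you wrote, $\|fw\|_{L^{p_1}(\R^d)}$ cannot be bounded, and the weight constant would require a reverse H\"older inequality for $Rh$ that the algorithm does not provide.

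Second, since $Y$ is only assumed $r_2$-convex (quasi-Banach), $\|Tf\|_Y$ is not recovered by pairing against $(Y_{r_2,s_2})'$; the correct dualization goes through the $r_2$-concavification, $\|Tf\|_Y=\sup\{\|(Tf)g\|_{L^{r_2}(\R^d)}:\|g\|_{[(Y^{r_2})']^{1/r_2}}\leq 1\}$. The identity $[(Y^{r_2})']^{1/r_2}=\bigl[(X_{r_1,s_1})'\bigr]^{\frac{1}{r_1}-\frac{1}{s_1}}$, which follows from $X_{r_1,s_1}=Y_{r_2,s_2}$ and the affine relations, is precisely what places $|g|^{1/\left(\frac{1}{r_1}-\frac{1}{s_1}\right)}$ in $(X_{r_1,s_1})'$, where the dual-side algorithm $R'$ runs; no Lozanovskii factorization of $g$ is needed there. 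With these two corrections your argument coincides with the paper's, including the endpoint conventions when $p_1=r_1$ or $p_1=s_1$ and the restriction of the Morrey case to $\alpha=0$, $s_1=s_2=\infty$ coming from the lack of concavity of Morrey spaces.
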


The proof of this result can be found in Section~\ref{sec:mainthm}. Here we do not only consider weights associated to cubes, but rather to any basis of sets in a $\sigma$-finite measure space $(\Omega,|\cdot|)$. Moreover, we also consider a two-weight version of this result. 

We note that the reason we do not obtain bounds for Morrey spaces for finite $s_1$ or $s_2$ is because Morrey spaces are not concave in general. We refer the reader to Section~\ref{subsec:morreyspaces} for an elaboration on Morrey spaces. Moreover, we refer the reader to Section~\ref{sec:sparsedom} for a discussion on how the $s$-concavity condition can be removed for operators satisfying sparse domination, and, in particular, to Theorem~\ref{thm:sparseimpliesmorrey} for bounds in Morrey spaces for finite $s$.

\bigskip

Several extrapolation results in various Banach function spaces more general than Lebesgue spaces have already appeared throughout the literature.

Extending the $A_\infty$ and $A_1$ extrapolation result of \cite{CMP04}, there are several extrapolation results in general and so-called rearrangement invariant Banach function spaces \cite{CGMP06, CMP11}. In these spaces one relies on the theory of Boyd indices to obtain the boundedness of the maximal operator $M$. These results are presented not only on the usual bases of cubes or balls, but on Muckenhoupt bases, which are general bases on which the weight classes are characterized by the boundedness of the maximal operator of that basis on weighted Lebesgue spaces. Our result extends these results in several ways, and we recover general versions of their results in general Banach function spaces, without having to rely on any rearrangement invariance. Moreover, our results hold in very general bases that do not have to Muckenhoupt bases. In particular, we get the full range of expected boundedness by extrapolating from a single Lebesgue exponent.

A variant of an $A_1$ off-diagonal extrapolation theorem in Banach function spaces over spaces of homogeneous type with the basis of balls, which is a Muckenhoupt basis, is considered in \cite[Theorem~2]{KM20}. Here they first use the sharp $A_{p_0,q_0}$ off-diagonal Lebesgue space extrapolation to extrapolate from the weighted pair of Lebesgue spaces at an exponent pair $p_0,q_0\in(1,\infty)$ to an exponent pair $\widetilde{p}_0,\widetilde{q}_0\in(1,\infty)$ with $\alpha=\frac{1}{\widetilde{p}_0}-\frac{1}{\widetilde{q}_0}=\frac{1}{p_0}-\frac{1}{q_0}$, and then use an $A_1$ off-diagonal Banach function space extrapolation theorem at this new exponent pair. Our result recovers the latter argument as the endpoint case $r_1=p_1=\widetilde{p}_0$, $r_2=p_2=\widetilde{q}_0$, $s_2=\infty$ and extends their result to the setting of quasi-Banach function spaces.

Another $A_1$ off-diagonal extrapolation theorem in so-called ball quasi-Banach function spaces is considered in \cite{DSL23}. In principal, the quasi-Banach function spaces we consider in this work are more general than ball quasi-Banach function spaces. However, as we show in Lemma~\ref{lem:indE}, under the condition that the maximal operator is bounded on one of our quasi-Banach function spaces, we precisely recover the class of ball quasi-Banach function spaces in the case where we restrict ourselves to the setting of $\R^d$ with the basis of balls. In this setting we completely recover their main result \cite[Theorem~3.1]{DSL23} as the special endpoint case $\alpha=\frac{1}{q_0}-\frac{1}{p_0}$, $r_1=p_1=p_0$, $r_2=p_2=q_0$, $\frac{1}{s_2}=\frac{1}{p_2}(1-\delta_0)$ in Theorem~\ref{thm:A}.

Various extrapolation results in general weighted Banach function spaces in the on-diagonal case were considered in \cite{CMM22}. The spaces in our work are more general, and, in particular, our limited range results extend theirs, not only in the range of bounds we obtain, but also by considering bases more general than Muckenhoupt bases. Furthermore, various extrapolation theorems involving changes of weights in the Banach function spaces are considered in \cite{CO22}. We discuss and compare some of our results in Section~\ref{subsec:weightedbfs}.

Extrapolation theorems in the limited range and an off-diagonal setting in weighted variable Lebesgue spaces was proven in \cite{CW17}. We not only completely recover this result, but extend it to cover the entire range of expected weighted bounds.

Extrapolation theorems in the full and limited range settings in weighted Morrey spaces were considered in \cite{DR18, DR20}. While we obtain results through sparse domination in the off-diagonal and limited range setting, their result is more general in the on-diagonal setting. Indeed, since their proof does not rely directly on the boundedness of the maximal operator, their technique does not only allow them to remove the $s$-concavity condition so that they can consider finite $s$, but also allows them to obtain bounds for the larger class of power weights in the class $A_{q,(r,s)}$. For a comparison, we elaborate on which power weights we get precisely in Section~\ref{subsec:morreyspaces}. The reason we do not fully recover their weighted bounds is because the corresponding weighted norm bounds for the maximal operator on weighted Block spaces, which are the dual spaces of weighted Morrey spaces, are not known. If, however, the conjectured bounds on these spaces were to hold, we would be able to recover their result with our technique when $s=\infty$. We prove new weighted bounds in Block spaces in this work, which do allow us to recover all the weights obtained in \cite{DR18}.

\bigskip

In the multilinear setting, there are various extrapolation results in Lebesgue spaces. It was shown in \cite{GM04} that for an $m$-tuple of exponents $p_1,\ldots,p_m\in(1,\infty)$ with $\frac{1}{p}=\sum_{j=1}^m\frac{1}{p_j}>0$ and an $m$-tuple of weights $w_1,\ldots,w_m$, each satisfying the $A_{p_j}$ condition, one can extrapolate the bound
\begin{equation}\label{eq:multilinearinitialboundintro}
T:L^{p_1}_{w_1}(\R^d)\times\cdots\times L^{p_m}_{w_m}(\R^d)\to L^p_w(\R^d),
\end{equation}
where $w=\prod_{j=1}^m w_j$, to hold for all $p_1,\ldots,p_m\in(1,\infty)$ with $\frac{1}{p}=\sum_{j=1}^m\frac{1}{p_j}>0$.

With an application to the Bilinear Hilbert transform in mind, this was later extended in \cite{CM17} to a limited range setting. For fixed $0<r_j<s_j\leq\infty$, $j\in\{1,\ldots,m\}$, they considered the initial bound \eqref{eq:multilinearinitialboundintro} for $w_j\in A_{p_j,(r_j,s_j)}$, and could extrapolate this bound to all $p_j\in(r_j,s_j)$.

It was found in \cite{LOPTT09} that the appropriate weight class to consider for the bound \eqref{eq:multilinearinitialboundintro} in the case of multilinear Calder\'on-Zygmund operators is the class $A_{\vec{p}}$ which consists of $m$-tuples of weights $\vec{w}=(w_1,\ldots,w_m)$ for which
\[
[\vec{w}]_{\vec{p}}:=\sup_Q\Big(\langle\prod_{j=1}^m w_j^{-1}\rangle_{p_j',Q}\Big)\langle w\rangle_{p,Q}<\infty,
\]
where $w=\prod_{j=1}^m w_j$, $\frac{1}{p}=\sum_{j=1}^m\frac{1}{p_j}$. Extrapolation for limited range generalizations of these weight classes was later proven in \cite{LMO20} and extended in \cite{Ni19, LMMOV21}. In particular, in \cite{Ni19} a multilinear Rubio de Francia algorithm was developed that allows for sharp extrapolation to weighted Lebesgue spaces. Unfortunately, this technique only seems to work specifically in Lebesgue spaces, and it is not clear how to extend this to more general Banach function spaces.

Nonetheless, under the same initial assumption on our operator as in \cite{CM17}, we are able to prove the following result:
\begin{TheoremLetter}\label{thm:B}
Let $r_1,\ldots,r_m\in(0,\infty)$, $s_1,\ldots, s_m\in(0,\infty]$ with $r_j<s_j$ and $p_j\in[r_j,s_j]$ for $j\in\{1,\ldots,m\}$ with $\frac{1}{p}:=\sum_{j=1}^m\frac{1}{p_j}$. Let
\[
T:\bigcup_{w_1\in A_{p_1,(r_1,s_1)}}L^{p_1}_{w_1}(\R^d)\times\cdots\times \bigcup_{w_m\in A_{p_m,(r_m,s_m)}}L^{p_m}_{w_m}(\R^d)\to L^0(\R^d)
\]
is a map for which there is an increasing function $\phi:[0,\infty)^m\to[0,\infty)$ such that for all $w_j\in A_{p_j,(r_j,s_j)}$ and $f_j\in L^{p_j}_{w_j}(\R^d)$, $j\in\{1,\ldots,m\}$, we have
\[
\|T(f_1,\ldots,f_m)\|_{L^p_w(\R^d)}\leq\phi([w_1]_{p_1,(r_1,s_1)},\ldots,[w_m]_{p_m,(r_m,s_m)})\prod_{j=1}^m\|f_j\|_{L^{p_j}_{w_j}(\R^d)}.
\]
Let $X_1,\ldots, X_m$ be quasi-Banach function spaces over $\R^d$ for which $X_j$ is $r_j$-convex and $s_j$-concave and for all $j\in\{1,\ldots,m\}$ we have:
\begin{itemize}
\item  If $p_j\neq r_j$,
\[
M:(X_j)_{r_j,s_j}\to (X_j)_{r_j,s_j}
\]
is bounded.
\item If $p_j\neq s_j$,
\[
M:\big[(X_j)_{r_j,s_j}\big]'\to\big[(X_j)_{r_j,s_j}\big]'
\]
is bounded.
\end{itemize}
Then $T(f_1,\ldots,f_m)$ is well-defined for all $f_j\in X_j$ and $T:X_1\times\cdots X_m\to X$ is bounded with
\[
\|T\|_{X_1\times\cdots\times X_m\to X}\leq2^{\frac{1}{r}-\frac{1}{s}}\phi(C_1,\ldots,C_m),
\]
where $\frac{1}{r}=\sum_{j=1}^m\frac{1}{r_j}$, $\frac{1}{s}=\sum_{j=1}^m\frac{1}{s_j}$, and
\[
C_j=2^{\frac{1}{r_j}-\frac{1}{s_j}}\|M\|_{(X_j)_{r_j,s_j}\to (X_j)_{r_j,s_j}}^{\frac{1}{r_j}-\frac{1}{p_j}}\|M\|_{\big[(X_j)_{r_j,s_j}\big]'\to\big[(X_j)_{r_j,s_j}\big]'}^{\frac{1}{p_j}-\frac{1}{s_j}}.
\]
\end{TheoremLetter}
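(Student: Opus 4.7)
The plan is to mimic the proof of Theorem~A in the multilinear setting, running the Lozanovskii factorization and Rubio de Francia algorithm in parallel across each of the $m$ variables. The variables decouple once the appropriate weights have been constructed.

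First, for each $j\in\{1,\ldots,m\}$ and each nonnegative $f_j\in X_j$, I would use the $r_j$-convexity and $s_j$-concavity of $X_j$, together with Lozanovskii's theorem as in the proof of Theorem~A, to write a factorization
\[
f_j=h_j^{\frac{1}{r_j}-\frac{1}{s_j}}k_j^{\frac{1}{s_j}},\qquad 0\leq h_j\in (X_j)_{r_j,s_j},\ 0\leq k_j\in L^1(\R^d),
\]
with norms arranged so that $\|h_j\|_{(X_j)_{r_j,s_j}}^{1/r_j-1/s_j}\|k_j\|_{L^1}^{1/s_j}$ approximates $\|f_j\|_{X_j}$ up to a factor of $2$. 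Using the hypothesized boundedness of $M$ on $(X_j)_{r_j,s_j}$ (when $p_j\neq r_j$) and on $[(X_j)_{r_j,s_j}]'$ (when $p_j\neq s_j$), the Rubio de Francia iteration produces a weight $w_j\in A_{p_j,(r_j,s_j)}$ with $[w_j]_{p_j,(r_j,s_j)}\leq C_j$, where $C_j$ is the quantity appearing in the statement, together with the norm comparison $\|f_j\|_{L^{p_j}_{w_j}(\R^d)}\leq 2^{\frac{1}{r_j}-\frac{1}{s_j}}\|f_j\|_{X_j}$.

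With the tuple of weights $(w_1,\ldots,w_m)$ and $w=\prod_j w_j$ in hand, I would invoke the multilinear hypothesis:
\[
\|T(f_1,\ldots,f_m)\|_{L^p_w(\R^d)}\leq\phi(C_1,\ldots,C_m)\prod_{j=1}^m\|f_j\|_{L^{p_j}_{w_j}(\R^d)}\leq 2^{\frac{1}{r}-\frac{1}{s}}\phi(C_1,\ldots,C_m)\prod_{j=1}^m\|f_j\|_{X_j},
\]
where I used the telescoping $\sum_j(\tfrac{1}{r_j}-\tfrac{1}{s_j})=\tfrac{1}{r}-\tfrac{1}{s}$. The remaining task is to replace the $L^p_w$-norm on the left by the norm in the natural target space $X$, which should be the pointwise (Calderón) product $X_1\cdots X_m$, so that $X$ is $r$-convex and $s$-concave.

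The main obstacle will be this last translation step: verifying that the specific weight $w=\prod_jw_j$ constructed above realizes $\|T(f_1,\ldots,f_m)\|_X\leq \|T(f_1,\ldots,f_m)\|_{L^p_w(\R^d)}$ on the target side. This is the multilinear analogue of the final duality argument in the proof of Theorem~A; I expect it to follow from a Lozanovskii-type duality on $X$ (writing $X'$ in terms of the $X_j'$ and factorizing test functions in the unit ball of $X'$) combined with the Rubio de Francia construction, so that the weights $w_j$ are engineered to control the dual side as well as the primal side. Apart from this step, every ingredient---Lozanovskii's factorization, the Rubio de Francia algorithm, and the convexity/concavity assumptions on each $X_j$---transfers from the linear setting of Theorem~A without modification.
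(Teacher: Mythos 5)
There is a genuine gap, and it is exactly the step you defer to the end. In your plan you first build the weight $w_j$ from $f_j$ alone (via its Lozanovskii factorization and a Rubio de Francia iteration), then apply the multilinear hypothesis with $w=\prod_j w_j$, and only afterwards try to pass from $\|T(f_1,\ldots,f_m)\|_{L^p_w(\R^d)}$ to $\|T(f_1,\ldots,f_m)\|_X$. With the weight fixed before any dual test function enters, this last passage would require an embedding $L^p_w(\R^d)\hookrightarrow X$ with constant $1$, which is false in general (take $X=L^p_v(\R^d)$ with $v$ unrelated to $w$): no single weight built from the inputs can dominate the $X$-norm of the output. Moreover, a Rubio de Francia iteration run on $f_j$ only uses the bound for $M$ on $(X_j)_{r_j,s_j}$ and produces, after rescaling, an $A_1$-type (i.e.\ $A_{r_j,(r_j,s_j)}$) control; the constant $C_j$, which involves the operator norm of $M$ on the K\"othe dual as well, only arises when the dual side is iterated on an actual dual function, and that function is not available in your setup until after the weight has already been chosen.

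The paper's proof reverses the order of quantifiers, and this is the essential idea rather than a routine afterthought. One first dualizes: since $X=\prod_{j=1}^m X_j$ is $r$-convex, $\|T(f_1,\ldots,f_m)\|_X=\sup\|T(f_1,\ldots,f_m)g\|_{L^r(\R^d)}$ over $g$ in the unit ball of $[(X^r)']^{\frac{1}{r}}$, and by Lozanovskii's duality theorem $[(X^r)']^{\frac{1}{r}}=\prod_{j=1}^m[(X_j^{r_j})']^{\frac{1}{r_j}}$, so each such $g$ factors as $|g|\leq\prod_j g_j$. Only then, for each fixed pair $(f_j,g_j)$, is the two-sided Rubio de Francia construction (Corollary~\ref{cor:abstractaprsextrapolation}, which rests on Theorem~\ref{thm:mainabs}) invoked: it produces a weight $w_j\in A_{p_j,(r_j,s_j)}$, depending on both $f_j$ and $g_j$, with $[w_j]_{p_j,(r_j,s_j)}\leq C_j$ and with the joint estimate
\[
\|f_j\|_{L^{p_j}_{w_j}(\R^d)}\,\|g_j\|_{L^{\frac{1}{\frac{1}{r_j}-\frac{1}{p_j}}}_{w_j^{-1}}(\R^d)}\leq 2^{\frac{1}{r_j}-\frac{1}{s_j}}\|f_j\|_{X_j}\|g_j\|_{[(X_j^{r_j})']^{\frac{1}{r_j}}}.
\]
H\"older's inequality then bounds $\|T(f_1,\ldots,f_m)g\|_{L^r(\R^d)}$ by the weighted hypothesis applied with these $w_j$, and the supremum over $g$ finishes the proof. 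So your architecture is close in spirit (Lozanovskii plus Rubio de Francia, decoupling in $j$), but as written the weights cannot be constructed before the dual factorization; fixing this requires restructuring the argument in the order just described, not merely verifying an additional inequality at the end.
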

The proof of this is rather straightforward. One uses a version of Lozanovskii's duality theorem to obtain the factorization
\[
\big[(X^r)'\big]^{\frac{1}{r}}=\prod_{j=1}^m\big[(X_j^{r_j})'\big]^{\frac{1}{r_j}}.
\]
From here, we have reduced the problem back to the case $m=1$.

This result is proven in the more general Theorem~\ref{thm:multilinearextrapolation} below.

\bigskip

One can also obtain vector-valued bounds from extrapolation for an operator in $\ell^p(\N)$ spaces through a trick involving Fubini's theorem. However, there is a rich theory for obtaining bounds in the Bochner spaces $L^p_w(\R^d;Y)$ in a Banach space $Y$. We discuss these ideas further in Section~\ref{subsection:fubini}.

In \cite[Theorem~5]{Ru85} Rubio de Francia showed that one can obtain bounds in these spaces as long as $Y$ is a Banach function space over some $\sigma$-finite measure space $\Omega$ satisfying the so-called $\UMD$ property. His proof relied on the idea that an order-continuous Banach function space $Y$ has the $\UMD$ property, if and only if the Hardy-Littlewood maximal operator $M$ has a bounded vector-valued extension $\widetilde{M}$ to $L^p(\R^d;Y)$ and $L^{p'}(\R^d;Y')$ for all $p\in(1,\infty)$ \cite{Bo84, Ru86}. Here, a function $f\in L^p(\R^d;Y)$ can be viewed as a function of two variables $(x,y)\in\R^d\times\Omega$, and
\[
\widetilde{M}f(x,y):=M(f(\cdot,y))(x).
\]
The property that $\widetilde{M}$ is bounded on $L^p(\R^d;Y)$ is usually referred to as saying that $Y$ has the Hardy-Littlewood property.

This result was extended to the limited range setting $0<r<s\leq\infty$, where the assumption on the space becomes that $Y$ must be a $r$-convex and $s$-concave quasi-Banach function space, and $Y_{r,s}$ has the $\UMD$ property. This was first done for $r\neq 1$, $s=\infty$ in \cite{ALV17}, and later for general $0<r<s\leq\infty$ in \cite{LN19}. The weaker notion of $Y\in\UMD_{r,s}$, which is equivalent to assuming that $Y$ is order-continuous and $Y^r$ and $(Y_{r,s})'$ have the so-called Hardy-Littlewood property, was introduced in \cite{LN22} and it was shown in \cite{Nidiss} that the vector-valued extrapolation theorem still holds for such spaces.

In this work we take these results beyond the setting of Lebesgue spaces. For a quasi-Banach function space $X$ over $\R^d$, we define the mixed-norm space $X(Y)$ of functions $f\in L^0(\R^d\times\Omega)$ for which $f(x,\cdot)\in Y$ for all $x\in\R^d$, and
\[
\big\|\|f\|_Y\big\|_X<\infty.
\]
As a simple consequence of combining the result of \cite{LN19, Nidiss} and Theorem~\ref{thm:B}, we obtain the following:

\begin{TheoremLetter}\label{thm:C}
Let $r_1,\ldots,r_m\in(0,\infty)$, $s_1,\ldots,s_m\in(0,\infty]$ with $r_j<s_j$. Suppose $T$ is an $m$-(sub)linear operator such that for all $p_j\in(r_j,s_j)$ it is well-defined on all $m$-tuples of functions $(f_1,\ldots,f_m)$ with $f_j\in L^{p_j}_{w_j}(\R^d)$ with $w_j\in A_{p_j,(r_j,s_j)}$, $j\in\{1,\ldots,m\}$ and, moreover, there is an increasing function $\phi_{\vec{p}}:[0,\infty)^m\to[0,\infty)$ such that for all $w_j\in A_{p_j,(r_j,s_j)}$ and $f_j\in L^{p_j}_{w_j}(\R^d)$, $j\in\{1,\ldots,m\}$ we have
\[
\|T(f_1,\ldots,f_m)\|_{L^p_w(\R^d)}\leq\phi_{\vec{p}}([w_1]_{p_1,(r_1,s_1)},\ldots,[w_m]_{p_m,(r_m,s_m)})\prod_{j=1}^m\|f_j\|_{L^{p_j}_{w_j}(\R^d)}.
\]
Let $Y_1,\ldots,Y_m$ be an $m$-tuple of quasi-Banach function spaces over a $\sigma$-finite measure space $(\Omega,|\cdot|)$ such that $Y_j$ is order-continuous, $r_j$-convex and $s_j$-concave for all $j\in\{1,\ldots,m\}$ such that for all simple functions $f_j\in L^\infty_c(\R^d;Y_j)$ the function
\[
\widetilde{T}(f_1,\ldots,f_m)(x,y):=T(f_1(\cdot,y),\cdots, f_m(\cdot,y))(x)
\]
is strongly measurable in $Y=\prod_{j=1}^m Y$. If $Y_j^{r_j}$ and $((Y_j)_{r_j,s_j})'$ have the Hardy-Littlewood property for all $j\in\{1,\ldots,m\}$, then there is an increasing function $\phi_{\vec{Y},\vec{r},\vec{s}}:[0,\infty)^{2m}\to [0,\infty)$ such that for all $m$-tuples $X_1,\ldots, X_m$ of quasi-Banach function spaces over $\R^d$ for which $X_j$ is $r_j$-convex, $s_j$-concave, and for all $j\in\{1,\ldots,m\}$ we have that
\[
M:(X_j)_{r_j,s_j}\to (X_j)_{r_j,s_j},\quad M:\big[(X_j)_{r_j,s_j}\big]'\to\big[(X_j)_{r_j,s_j}\big]'
\]
are bounded, we have that
\[
\widetilde{T}:X_1(Y_1)\times\ldots\times X_m(Y_m)\to X(Y)
\]
is bounded, where $X=\prod_{j=1}^m X_j$, with
\begin{align*}
\|\widetilde{T}&\|_{X_1(Y_1)\times\ldots\times X_m(Y_m)\to X(Y)}\\
&\leq\phi_{\vec{Y},
\vec{r},\vec{s}}\big((\|M\|_{(X_j)_{r_j,s_j}\to (X_j)_{r_j,s_j}},\|M\|_{\big[(X_j)_{r_j,s_j}\big]'\to \big[(X_j)_{r_j,s_j}\big]'})_{j=1}^m\big).
\end{align*}
\end{TheoremLetter}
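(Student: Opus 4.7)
The plan is to combine the vector-valued limited range extrapolation theorem of \cite{LN19, Nidiss} with Theorem~\ref{thm:B} (in its general-basis form, Theorem~\ref{thm:multilinearextrapolation}), applied in sequence.

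First, I would use the Hardy--Littlewood hypotheses on $Y_j^{r_j}$ and $((Y_j)_{r_j,s_j})'$ together with the $\UMD_{r_j,s_j}$-style vector-valued extrapolation of \cite{LN19, Nidiss} to lift the assumed scalar weighted Lebesgue bound on $T$ to a Bochner-valued one: for all $p_j\in(r_j,s_j)$ and $w_j\in A_{p_j,(r_j,s_j)}$,
\[
\bigl\|\widetilde{T}(f_1,\ldots,f_m)\bigr\|_{L^p_w(\R^d;Y)}\leq \psi_{\vec{p}}\bigl([w_1]_{p_1,(r_1,s_1)},\ldots,[w_m]_{p_m,(r_m,s_m)}\bigr)\prod_{j=1}^m\|f_j\|_{L^{p_j}_{w_j}(\R^d;Y_j)}
\]
for all simple $f_j\in L^\infty_c(\R^d;Y_j)$, where $\psi_{\vec{p}}$ depends only on $\vec{Y},\vec{r},\vec{s},\vec{p}$. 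The iteration over the $m$ coordinates follows the standard scheme of freezing all but one argument in the corresponding Bochner space at a time.

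Next, I would view $\widetilde{T}$ as a map between the mixed-norm quasi-Banach function spaces $X_j(Y_j)$ and $X(Y)$ over $\R^d\times\Omega$, and apply Theorem~\ref{thm:multilinearextrapolation} with the basis of cylinders $\{Q\times\Omega : Q \text{ a cube in }\R^d\}$. For this I would verify three ingredients: (i) $X_j(Y_j)$ is $r_j$-convex and $s_j$-concave, inherited from the convexity/concavity of $X_j$ and $Y_j$; (ii) the identification
\[
\bigl(X_j(Y_j)\bigr)_{r_j,s_j}=(X_j)_{r_j,s_j}\bigl((Y_j)_{r_j,s_j}\bigr),
\]
obtained by unpacking the $(r_j,s_j)$-rescaling formula and applying a mixed-norm version of Lozanovskii's factorization; and (iii) the boundedness of the vector-valued maximal operator $\widetilde{M}$, acting in the $\R^d$-variable, on both $(X_j)_{r_j,s_j}((Y_j)_{r_j,s_j})$ and its K\"othe dual. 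Ingredient (iii) follows by combining the scalar boundedness of $M$ on $(X_j)_{r_j,s_j}$ and on $((X_j)_{r_j,s_j})'$ with the Hardy--Littlewood property of $(Y_j)_{r_j,s_j}$ and of its dual, which we derive from the assumed Hardy--Littlewood properties of $Y_j^{r_j}$ and $((Y_j)_{r_j,s_j})'$ by the interpolation/duality arguments of \cite{LN22, Nidiss} together with one-weight Muckenhoupt extrapolation for $M$.

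The main technical obstacle lies in ingredient (ii) and the quantitative form of (iii): these are the bridges that transfer a Bochner-valued weighted Lebesgue bound into a bound on mixed-norm quasi-Banach function spaces via Theorem~\ref{thm:B}. Once they are in place, the conclusion of Theorem~\ref{thm:B} applied to $\widetilde{T}$ yields the desired bound on $\widetilde{T}:X_1(Y_1)\times\cdots\times X_m(Y_m)\to X(Y)$, with the norm controlled by an increasing function $\phi_{\vec{Y},\vec{r},\vec{s}}$ of the two families $\|M\|_{(X_j)_{r_j,s_j}\to (X_j)_{r_j,s_j}}$ and $\|M\|_{((X_j)_{r_j,s_j})'\to ((X_j)_{r_j,s_j})'}$ as stated.
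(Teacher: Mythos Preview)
Your Step~1 matches the paper, but Step~2 takes a detour that does not work as written and is unnecessary.

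First, the basis $\{Q\times\Omega\}$ is not admissible: the paper's definition of a basis requires every element to have finite measure, which fails whenever $|\Omega|=\infty$. Second, and more seriously, even ignoring that, Theorem~\ref{thm:multilinearextrapolation} applied over $\R^d\times\Omega$ would demand as input weighted \emph{Lebesgue} bounds
\[
\|\widetilde{T}(f_1,\ldots,f_m)\|_{L^p_w(\R^d\times\Omega)}\leq\phi(\ldots)\prod_{j=1}^m\|f_j\|_{L^{p_j}_{w_j}(\R^d\times\Omega)}
\]
for weights $w_j$ on $\R^d\times\Omega$, whereas your Step~1 only delivers bounds in the Bochner spaces $L^{p_j}_{w_j}(\R^d;Y_j)$. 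These coincide only if $Y_j$ is itself a weighted $L^{p_j}$ space, so for general $Y_j$ there is no bridge from Step~1 to Step~2. Your ingredients (ii) and (iii) are attempts to force the conclusion side of Theorem~\ref{thm:multilinearextrapolation} to land in $X_j(Y_j)$, but they do not address this mismatch on the hypothesis side.

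The paper sidesteps all of this. After obtaining the Bochner bound from \cite{Nidiss} at a single fixed $\vec{p}$, it applies Theorem~\ref{thm:multilinearextrapolation} \emph{over $\R^d$} (with the ordinary cube basis), using the flexibility of the $(V_j,S_j,T)$ formulation: take $V_j$ to be the relevant Bochner spaces, $S_jf_j(x):=\|f_j(x,\cdot)\|_{Y_j}$, and $\mathcal{T}(f_1,\ldots,f_m)(x):=\|\widetilde{T}(f_1,\ldots,f_m)(x,\cdot)\|_Y$. The Bochner inequality then reads as a scalar inequality $\|\mathcal{T}f\|_{L^p_w(\R^d)}\leq\phi(\ldots)\prod_j\|S_jf_j\|_{L^{p_j}_{w_j}(\R^d)}$, to which the theorem applies directly with the spaces $X_j$ on $\R^d$. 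Since $\|S_jf_j\|_{X_j}=\|f_j\|_{X_j(Y_j)}$ and $\|\mathcal{T}f\|_X=\|\widetilde{T}f\|_{X(Y)}$, the conclusion follows in one line, with no need for mixed-norm rescaling identities or bounds for $\widetilde{M}$ on $(X_j)_{r_j,s_j}((Y_j)_{r_j,s_j})$.
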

This result is proven as Theorem~\ref{thm:multivvextrapolation} below.

\bigskip

This work does not only include results in $\R^d$, but for very general bases of sets in a $\sigma$-finite measure space $(\Omega,|\cdot|)$, and we obtain several additional new results. In Theorem~\ref{thm:maximalselfimprove} we obtain a new sharp self-improvement property for maximal operators. Many results in the literature rely on this, and our results provides a new way to sharply track the involved constants, which we give an example of in Theorem~\ref{thm:mboundnos}.

Another new result we obtain is bounds for the maximal operator on weighted block spaces in Theorem~\ref{thm:mboundblockspaces}, which allows us to extrapolate to weighted Morrey spaces.

Finally, we mention that in Section~\ref{sec:mainthm} we obtain several two-weight extrapolation theorems that have not previously appeared in the literature.
\bigskip

The remainder of this work is organized as follows:
\begin{itemize}
  \item In Section~\ref{sec:prelim} we give an overview of the results on quasi-Banach function spaces, classes of weights over general bases of sets, and mapping properties of the associated maximal operator on Banach function spaces.
  \item In Section~\ref{sec:weightedbfsmbound} we provide examples of weighted quasi-Banach function spaces on which the maximal operator is bounded. More precisely, we consider weighted Lorentz spaces, weighted variable Lebesgue spaces, and weighted Morrey spaces.
  \item In Section~\ref{sec:proofs} we first prove an abstract extrapolation theorem in a dual form, which serves as the foundation for all our extrapolation theorems. We also discuss the utility of extrapolation pairs, and obtain extrapolation theorems in the on-diagonal case. Moreover, we use this to deduce various further results including vector-valued, weak-type, $A_1$, and $A_\infty$ extrapolation results. We also compare our results to known extrapolation theorems in the literature.
  \item In Section~\ref{sec:mainthm} we prove our main result in the off-diagonal and two-weight setting, and provide applications.
  \item In Section~\ref{sec:sparsedom} we compare the bounds in Banach function spaces that can be obtained from sparse domination with the bounds we obtain in our extrapolation theorem. Moreover, we prove our multilinear extrapolation theorems and provide an application to the Bilinear Hilbert transform.
\end{itemize}

\subsection*{Notation}
We sometimes write $A\lesssim_{a,b,\ldots}B$ to mean that there is a constant $C_{a,b,\ldots}$, depending only on the parameters $a,b,\ldots$, such that $A\leq C_{a,b,\ldots}B$. We use an analogous convention for $\gtrsim_{a,b,\ldots}$ and, moreover, we write $A\eqsim_{a,b,\ldots} B$ when $A\lesssim_{a,b,\ldots} B$ and $A\gtrsim_{a,b,\ldots} B$.

For a parameter $p\in[1,\infty]$, we define $p'\in[1,\infty]$ through $\frac{1}{p}+\frac{1}{p'}=1$.

\section{Preliminaries}\label{sec:prelim}
\subsection{Quasi-Banach function spaces}\label{sec:BFS}
Let $(\Omega,|\cdot|)$ be a $\sigma$-finite measure space of positive measure and let $L^0(\Omega)$ denote the space of a.e. finite measurable functions on $\Omega$. A vector space $X \subseteq L^0(\Omega)$ equipped with a quasi-norm $\nrm{\,\cdot\,}_X$ is called a \emph{quasi-Banach function space} over $\Omega$ if it satisfies the following properties:
\begin{itemize}
  \item \textit{Ideal property:} If $f\in X$ and $g\in L^0(\Omega)$ with $|g|\leq|f|$, then $g\in X$ with $\nrm{g}_X\leq \nrm{f}_X$.
  \item \textit{Fatou property:} If $0\leq f_n \uparrow f$ for $(f_n)_{n\in\N}$ in $X$ and $\sup_{n\in \N}\nrm{f_n}_X<\infty$, then $f \in X$ and $\nrm{f}_X=\sup_{n\in\N}\nrm{f_n}_X$.
  \item \textit{Saturation:} For every measurable $E\subseteq\Omega$ of positive measure, there exists a measurable $F\subseteq E$ of positive measure with $\ind_F\in X$.
\end{itemize}
If $\nrm{\,\cdot\,}_X$ is a norm then $X$ is called a \emph{Banach function space} over $\Omega$. 

Note that it follows from the ideal property that $f\in X$ if and only if $|f|\in X$ with $\||f|\|_X=\|f\|_X$.

Since $\|\cdot\|_{X}$ is a quasi-norm, that means there is a constant $K\geq 1$ such that
\[
\|f+g\|_X\leq K(\|f\|_X+\|g\|_X).
\]
for all $f,g\in X$. We denote the optimal constant in this inequality by $K_X$. In this case, we have the following result:
\begin{proposition}\label{prop:quasisum}
Let $(f_n)_{n=0}^\infty$ be a sequence in $X$ for which
\[
\sum_{n=1}^\infty \|f_n\|_X<\infty.
\]
Then we have
\[
\sum_{n=1}^\infty K_X^{-n}f_n\in X
\]
with
\[
\Big\|\sum_{n=1}^\infty K_X^{-n}f_n\Big\|_X\leq\sum_{n=1}^\infty\|f_n\|_X.
\]
\end{proposition}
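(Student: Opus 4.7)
The plan is to prove the bound for finite partial sums by iterating the quasi-triangle inequality, and then pass to the limit using the Fatou property of $X$.

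First I would show that for any finite $N$ and any $g_1,\ldots,g_N\in X$, the repeated application of $\|a+b\|_X\leq K_X(\|a\|_X+\|b\|_X)$ yields
\[
\Big\|\sum_{n=1}^N g_n\Big\|_X \leq \sum_{n=1}^{N-1} K_X^{n}\|g_n\|_X + K_X^{N-1}\|g_N\|_X.
\]
Setting $g_n = K_X^{-n}|f_n|$, the factors $K_X^n$ and $K_X^{-n}$ cancel exactly, producing
\[
\Big\|\sum_{n=1}^N K_X^{-n}|f_n|\Big\|_X \leq \sum_{n=1}^{N-1}\|f_n\|_X + K_X^{-1}\|f_N\|_X \leq \sum_{n=1}^N\|f_n\|_X.
\]
This cancellation is really the heart of the proposition, and it is the reason the weights $K_X^{-n}$ appear in the statement at all.

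Next, I would define the nonnegative increasing sequence $S_N := \sum_{n=1}^N K_X^{-n}|f_n|$ in $X$. By the finite bound just established, $\sup_N \|S_N\|_X \leq \sum_{n=1}^\infty \|f_n\|_X < \infty$, so the Fatou property of $X$ gives $S := \sum_{n=1}^\infty K_X^{-n}|f_n| \in X$ with $\|S\|_X \leq \sum_{n=1}^\infty \|f_n\|_X$. In particular $S$ is finite almost everywhere, which implies that $\sum_{n=1}^\infty K_X^{-n}f_n$ converges absolutely a.e. to some measurable function bounded pointwise in modulus by $S$.

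Finally, applying the ideal property of $X$ to this pointwise domination yields $\sum_{n=1}^\infty K_X^{-n}f_n \in X$ with norm at most $\|S\|_X \leq \sum_{n=1}^\infty \|f_n\|_X$, completing the proof. The only delicate step is the bookkeeping in the iterated quasi-triangle inequality, but since the exponent of $K_X$ coming out of the iteration matches the exponent of the reciprocal weights we put in, there is no real obstacle.
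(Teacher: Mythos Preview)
Your proof is correct and follows essentially the same approach as the paper: establish the finite partial-sum bound by iterating the quasi-triangle inequality, then invoke the Fatou property to pass to the limit. You are in fact slightly more careful than the paper, which applies Fatou directly to $\sum_{n=1}^N K_X^{-n}f_n$ without explicitly reducing to nonnegative $|f_n|$ and then appealing to the ideal property; your version handles general (possibly signed) $f_n$ cleanly.
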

\begin{proof}
Let $N\in\N$. Then, by induction, we have
\[
\Big\|\sum_{n=1}^N K_X^{-n}f_n\Big\|_X\leq\sum_{n=1}^N \|f_n\|_X.
\]
Hence, by the Fatou property we have $\sum_{n=1}^\infty K_X^{-n}f_n\in X$ with
\[
\Big\|\sum_{n=1}^\infty K_X^{-n}f_n\Big\|_X=\sup_{N\in\N}\Big\|\sum_{n=1}^N K_X^{-n}f_n\Big\|_X\leq\sum_{n=1}^\infty \|f_n\|_X,
\]
as desired.
\end{proof}

We define the \emph{K\"othe dual} $X'$ of a quasi-Banach function space $X$ by
\[
X':=\{g\in L^0(\Omega):fg\in L^1(\Omega)\text{ for all $f\in X$}\}.
\]
Moreover, we define a seminorm on $X'$ through
\[
\|g\|_{X'}:=\sup_{\substack{f\in X\\ \|f\|_X=1}}\|fg\|_{L^1(\Omega)}.
\]
This is finite for every $f\in X'$ by the following result:
\begin{proposition}
Let $g\in L^0(\Omega)$ with $fg\in L^1(\Omega)$ for all $f\in X$. Then $\|g\|_{X'}<\infty$.
\end{proposition}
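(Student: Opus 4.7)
The plan is to argue by contradiction, using Proposition~\ref{prop:quasisum} from the preceding paragraph as the key engine. Suppose, for the sake of contradiction, that $\|g\|_{X'} = \infty$. Then the supremum defining $\|g\|_{X'}$ is unbounded, so by replacing $f$ with $|f|$ (which has the same $X$-norm by the ideal property and satisfies $\||f|g\|_{L^1} = \|fg\|_{L^1}$), I can select nonnegative $f_n \in X$ with $\|f_n\|_X = 1$ and $\|f_n g\|_{L^1(\Omega)} \geq (4 K_X)^n$ for every $n \geq 1$.

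Next I renormalize and form a dominating element of $X$. Setting $h_n := 2^{-n} f_n$, I have $\sum_{n=1}^\infty \|h_n\|_X = \sum_{n=1}^\infty 2^{-n} < \infty$, so Proposition~\ref{prop:quasisum} yields that
\[
F := \sum_{n=1}^\infty K_X^{-n} h_n = \sum_{n=1}^\infty K_X^{-n} 2^{-n} f_n
\]
belongs to $X$, and in particular the pointwise a.e.\ series converges to $F \in X$. Since each term is nonnegative, $F \geq K_X^{-n} 2^{-n} f_n$ pointwise a.e.\ for every fixed $n$, and hence $F|g| \geq K_X^{-n} 2^{-n} f_n |g|$.

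By the standing hypothesis, $F \in X$ implies $Fg \in L^1(\Omega)$, i.e.\ $F|g| \in L^1(\Omega)$, so $\|F|g|\|_{L^1(\Omega)} < \infty$. On the other hand, the pointwise bound together with the choice of $f_n$ gives
\[
\|F|g|\|_{L^1(\Omega)} \geq K_X^{-n} 2^{-n} \|f_n g\|_{L^1(\Omega)} \geq K_X^{-n} 2^{-n} (4 K_X)^n = 2^n
\]
for every $n \geq 1$, which is the desired contradiction, proving $\|g\|_{X'} < \infty$.

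The only mild subtlety to watch is the interaction of the quasi-norm constant $K_X$ with the growth rate chosen for $\|f_n g\|_{L^1}$; this is exactly what forces the factor $(4K_X)^n$ (or any rate beating $(2K_X)^n$) in the selection step so that the resulting lower bound on $\|F|g|\|_{L^1}$ diverges. Everything else is routine use of the ideal and Fatou properties via Proposition~\ref{prop:quasisum}.
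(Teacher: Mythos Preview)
Your proof is correct and follows essentially the same approach as the paper's: both argue by contradiction, select $f_n$ with $\|f_n\|_X=1$ and rapidly growing $\|f_n g\|_{L^1}$, apply Proposition~\ref{prop:quasisum} to assemble $F\in X$, and then derive $\|Fg\|_{L^1}=\infty$. The only cosmetic difference is the choice of growth rate and normalization (the paper uses $\|f_ng\|_{L^1}>K_X^n n^3$ with weights $n^{-2}$, you use $(4K_X)^n$ with weights $2^{-n}$).
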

\begin{proof}
Suppose that $\|g\|_{X'}=\infty$. Then for each $n\in\N$ there is an $f_n\in X$ with $\|f_n\|_X=1$ for which
\[
\|f_n g\|_{L^1(\Omega)}>K_X^n n^3.
\]
By Proposition~\ref{prop:quasisum} we have $F:=\sum_{n=1}^\infty K_X^{-n}\frac{|f_n|}{n^2}\in X$. However, since also
\[
\|Fg\|_{L^1(\Omega)}\geq K_X^{-n}\frac{\|f_n g\|_{L^1(\Omega)}}{n^2}>n
\]
for all $n\in\N$, we conclude that $Fg\notin L^1(\Omega)$. The result follows by contraposition.
\end{proof}

As it turns out, $\|\cdot\|_{X'}$ is a norm and not just a seminorm. This follows exactly from the saturation property. Indeed, we have the following result:
\begin{proposition}\label{prop:weakorderunit}
Let $X$ be a quasi-Banach function space over $\Omega$. Then the saturation property is equivalent to any of the following statements:
\begin{enumerate}[(i)]
\item\label{it:propsat1} The seminorm $\|\cdot\|_{X'}$ on $X'$ is a norm;
\item\label{it:propsat2} The space $X$ has a \emph{weak order unit}, i.e., there is a $\rho\in X$ with $\rho>0$ a.e. in $\Omega$.
\end{enumerate}
\end{proposition}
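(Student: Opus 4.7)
The plan is to establish the three implications (saturation) $\Rightarrow$ (ii) $\Rightarrow$ (i) $\Rightarrow$ (saturation), which gives the full equivalence.

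For (saturation) $\Rightarrow$ (ii), I would build a weak order unit by exhaustion. Using $\sigma$-finiteness, write $\Omega=\bigsqcup_{n\in\N}A_n$ as a disjoint union with $0<|A_n|<\infty$. For each $n$, I reduce to constructing $\rho_n\in X$ with $\rho_n>0$ a.e.\ on $A_n$ and $\rho_n$ supported in $A_n$. Set $\alpha_n:=\sup\{|F|:F\subseteq A_n,\;\ind_F\in X\}\le|A_n|<\infty$; pick $F_n^{(k)}$ with $|F_n^{(k)}|\to\alpha_n$ and replace them by their finite unions (which remain in $X$ by the vector space and ideal properties) so we may assume $F_n^{(k)}\uparrow G_n$. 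The supremum is attained in the sense that $|G_n|=\alpha_n$, and if $|A_n\setminus G_n|>0$ then applying the saturation axiom inside $A_n\setminus G_n$ would produce a set strictly larger than $\alpha_n$ in $X$, a contradiction. Hence $G_n$ has full measure in $A_n$. Writing $G_n=\bigsqcup_k\widetilde{F}_n^{(k)}$ disjointly and using Proposition~\ref{prop:quasisum} with $f_{n,k}:=k^{-2}\|\ind_{\widetilde{F}_n^{(k)}}\|_X^{-1}\ind_{\widetilde{F}_n^{(k)}}$ (well defined since $\|\ind_{\widetilde{F}_n^{(k)}}\|_X>0$ on sets of positive measure), I obtain $\rho_n:=\sum_k K_X^{-k}f_{n,k}\in X$ with $\rho_n>0$ a.e.\ on $A_n$. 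A second application of Proposition~\ref{prop:quasisum} to a suitably normalized sequence $c_n\rho_n$ yields $\rho:=\sum_n K_X^{-n}c_n\rho_n\in X$ with $\rho>0$ a.e.\ on $\Omega$.

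For (ii) $\Rightarrow$ (i), let $\rho\in X$ be a weak order unit and suppose $g\in X'$ with $\|g\|_{X'}=0$. Then in particular $\|\rho\,g\|_{L^1(\Omega)}\le\|\rho\|_X\|g\|_{X'}=0$, so $\rho g=0$ a.e. Since $\rho>0$ a.e., we get $g=0$ a.e., so $\|\cdot\|_{X'}$ separates points and is a norm.

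For (i) $\Rightarrow$ (saturation), I argue by contrapositive: if saturation fails, there exists a measurable $E\subseteq\Omega$ with $|E|>0$ such that $\ind_F\notin X$ for every $F\subseteq E$ of positive measure. Fix any $f\in X$; for each $\eps>0$ the level set $F_\eps:=\{x\in E:|f(x)|\ge\eps\}$ satisfies $\ind_{F_\eps}\le\eps^{-1}|f|$, so the ideal property forces $\ind_{F_\eps}\in X$, whence $|F_\eps|=0$. Taking $\eps\to 0$ along a sequence gives $f=0$ a.e.\ on $E$ for every $f\in X$. Consequently $\ind_E\cdot f=0$ a.e.\ for all $f\in X$, so $\ind_E\in X'$ with $\|\ind_E\|_{X'}=0$; but $\ind_E\ne 0$ in $L^0(\Omega)$, so $\|\cdot\|_{X'}$ fails to be a norm.

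The main obstacle is the first implication, where one must exhaust $\Omega$ up to null sets by sets whose indicators lie in $X$ without having any a priori norm control on the exhausting sequence; the argument succeeds by combining the maximality trick with the quasi-Banach summation lemma of Proposition~\ref{prop:quasisum}, which supplies the only nontrivial piece of machinery needed. The other two implications are routine once the weak order unit (respectively the zero-seminorm witness $\ind_E$) is identified.
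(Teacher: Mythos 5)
Your proof is correct, and it follows the same cycle of implications as the paper: saturation $\Rightarrow$ (ii) $\Rightarrow$ (i) $\Rightarrow$ saturation, with your arguments for (ii) $\Rightarrow$ (i) and (i) $\Rightarrow$ saturation being essentially verbatim the paper's (weak order unit kills a zero-seminorm element; level sets $\{|f|>1/n\}\subseteq E$ force every $f\in X$ to vanish on $E$, making $\ind_E$ a nonzero element of $X'$ with zero seminorm). The only real divergence is in saturation $\Rightarrow$ (ii): the paper simply cites Zaanen's exhaustion theorem \cite[Ch.\,15, §\,67, Theorem~3]{Za67} to obtain an increasing sequence $F_n$ with $\ind_{F_n}\in X$ and $\bigcup_n F_n=\Omega$, noting that the only property needed is closure of indicators under finite unions, whereas you prove this exhaustion directly: inside each cell $A_n$ of a $\sigma$-finite partition you take the supremum $\alpha_n$ of measures of subsets with indicator in $X$, show it is attained by an increasing union $G_n$, and use saturation on $A_n\setminus G_n$ to force $|A_n\setminus G_n|=0$; both routes then finish identically with the weighted series and Proposition~\ref{prop:quasisum}. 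Your version buys a self-contained argument at the cost of length; the paper's buys brevity at the cost of an external reference. The only point to tidy is the normalization $\|\ind_{\widetilde{F}_n^{(k)}}\|_X^{-1}$, which requires discarding (or otherwise handling) the null pieces $\widetilde{F}_n^{(k)}$, as you implicitly acknowledge.
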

\begin{proof}
We first show that the saturation property implies \ref{it:propsat2}. Assume $X$ is saturated. Since $\Omega$ is $\sigma$-finite, it follows from \cite[Ch.\,15, §\,67, Theorem~3]{Za67} that there exists an increasing sequence of measurable sets $F_n\subseteq\Omega$ with $\ind_{F_n}\in X$ and $\bigcup_{n=1}^\infty F_n=\Omega$. We note here that this theorem can be applied this way in a quasi-Banach function space, since the only property that is required is that if $\ind_{F_1},\ind{F_2}\in X$, then also $\ind_{F_1}+\ind_{F_2}\in X$.

Now define
\[
\rho:=\sum_{n=1}^\infty K_X^{-n}2^{-n}\frac{\ind_{F_n}}{1+\|\ind_{F_n}\|_X}.
\]
By Proposition~\ref{prop:quasisum} we have $\rho\in X$, and the result follows from the fact that $\rho>0$.

For \ref{it:propsat2}$\Rightarrow$\ref{it:propsat1}, note that if $\|g\|_{X'}=0$, this means that $\|fg\|_{L^1(\R^d)}=0$ for all $f\in X$. In particular, we have $\|\rho g\|_{L^1(\Omega)}=0$. This means that $\rho g=0$ and hence, since $\rho>0$, we must have $g=0$ a.e. in $\Omega$, as desired.

It remains to show that \ref{it:propsat1} implies the saturation property. Assume that $X$ does not have the saturation property. Then there is a set $E\subseteq\Omega$ of positive measure such that $\ind_F\notin X$ for all $F\subseteq E$ of positive measure. For $f\in X$, consider the sets $F_n:=\{x\in E:|f(x)|>\frac{1}{n}\}$. Since 
\[
\ind_{F_n}\leq nf\in X,
\]
it follows from the ideal property of $X$ that $\ind_{F_n}\in X$ for all $n\in\N$. But this means that $|F_n|=0$ and hence, 
\[
|\{x\in E:|f(x)|>0\}|=\Big|\bigcup_{n=1}^\infty F_n\Big|\leq \sum_{n=1}^\infty|F_n|=0.
\]
Since this means that every function $f\in X$ vanishes on $E$, we have $\ind_E\in X'$ with $\|\ind_E\|_{X'}=0$. Since $E$ has positive measure, we conclude that $\|\cdot\|_{X'}$ is not a norm. By contraposition, this proves the result.
\end{proof}

As it turns out, if $X$ is a Banach function space over $\Omega$, then $X'$ is again a Banach function space over $\Omega$. Indeed, by the Lorentz--Luxemburg Theorem, the Fatou property is equivalent to the statement that $X''=X$ with equal norm, which by \ref{it:propsat1} implies that $X'$ also has the saturation property. We emphasize that the norm on $X$ being equal to the norm on $X''$ means that
\[
\|f\|_X=\sup_{\substack{g\in X'\\ \|g\|_{X'}=1}}\|fg\|_{L^1(\Omega)}.
\]
However, in the case that $X$ is a quasi-Banach function space, $X'$ is not necessarily a Banach function space. While $\|\cdot\|_{X'}$ is a norm and $X'$ has the ideal and Fatou property, it does not necessarily have the saturation property. Indeed, as an example we note that if $X=L^p(\R^d)$ for $p\in(0,1)$, then $X'=\{0\}$. As we will see in Section~\ref{subsec:maxinqbfs}, if the Hardy-Littlewood maximal operator is bounded on $X$, then $X'$ is saturated and, hence, a Banach function space.

In general we have the following result:
\begin{proposition}\label{prop:envelopeembedding}
Let $X$ be a quasi-Banach function space over $\Omega$. Then $X\subseteq X''$ with
\[
\|f\|_{X''}\leq\|f\|_X.
\]
Moreover, we have $X'''=X'$ with
\[
\|g\|_{X'''}=\|g\|_{X'}.
\]
\end{proposition}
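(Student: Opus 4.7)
The plan is to deduce both statements from the elementary pairing inequality
\[
\|fg\|_{L^1(\Omega)} \leq \|f\|_X \|g\|_{X'}, \quad f \in X,\ g \in X',
\]
which is a direct rescaling of the definition $\|g\|_{X'} = \sup_{\|f\|_X = 1}\|fg\|_{L^1(\Omega)}$ (using that $\|\cdot\|_{X'}$ is a genuine norm under the standing saturation assumption, via Proposition~\ref{prop:weakorderunit}).

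For the embedding $X \hookrightarrow X''$, I would fix $f \in X$. The pairing inequality shows that $fg \in L^1(\Omega)$ for every $g \in X'$, so $f \in X''$, and taking supremum over $\|g\|_{X'} = 1$ gives
\[
\|f\|_{X''} = \sup_{\|g\|_{X'}=1} \|fg\|_{L^1(\Omega)} \leq \|f\|_X.
\]
Before iterating the dual, I would note that $X''$ is itself a Banach function space: the ideal and Fatou properties are immediate from the supremum definition, while saturation is inherited by pushing a weak order unit $\rho \in X$ (from Proposition~\ref{prop:weakorderunit}) through the embedding just proved into $X''$. Consequently $X'''$ is a legitimate object.

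For $X''' = X'$ with equal norms, I would prove two inequalities by applying the pairing inequality in both directions. First, for $g \in X'$ and $h \in X''$, the same rescaling trick applied to the definition of $\|h\|_{X''}$ yields $\|gh\|_{L^1(\Omega)} \leq \|g\|_{X'}\|h\|_{X''}$, whence $g \in X'''$ with $\|g\|_{X'''} \leq \|g\|_{X'}$. For the converse, suppose $g \in X'''$; since the $X$-unit ball sits inside the $X''$-unit ball by the first part,
\[
\|g\|_{X'} = \sup_{\|f\|_X=1}\|gf\|_{L^1(\Omega)} \leq \sup_{\|h\|_{X''}\leq 1}\|gh\|_{L^1(\Omega)} = \|g\|_{X'''},
\]
which, in particular, forces $g \in X'$ and closes the loop.

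I anticipate no substantive obstacle: the proposition is a formal reflection of the symmetry of the K\"othe pairing, expressing that this duality procedure stabilizes after a single iteration. The only point warranting a brief aside is the saturation of $X''$ needed to make $X'''$ well-defined, which is handled by the weak-order-unit argument above.
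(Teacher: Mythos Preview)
Your proof is correct and follows essentially the same route as the paper: the pairing inequality gives $X\subseteq X''$ with the norm estimate, and then the unit-ball inclusion $\{\|f\|_X\leq 1\}\subseteq\{\|f\|_{X''}\leq 1\}$ yields $\|g\|_{X'}\leq\|g\|_{X'''}$, while the reverse inequality is again the pairing inequality applied one level up. Your aside on the saturation of $X''$ is a harmless (and arguably clarifying) addition that the paper omits; the paper instead writes ``applying the first result to $X$ replaced by $X'$'', which is the same pairing argument phrased more informally.
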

\begin{proof}
Let $f\in X$. Then, per definition of $X'$, for all $g\in X'$ we have $fg\in L^1(\Omega)$ with $\|fg\|_{L^1(\Omega)}\leq\|f\|_X\|g\|_{X'}$. Hence, we have $f\in X''$ with
\[
\|f\|_{X''}=\sup_{\|g\|_{X'}=1}\|fg\|_{L^1(\Omega)}\leq\|f\|_X,
\]
as asserted.

For the second assertion, note that by applying the first result to $X$ replaced by $X'$ we have $X'\subseteq X'''$ with $\|\cdot\|_{X'''}\leq\|\cdot\|_{X'}$. Conversely, by again applying the first result, we have
\[
\|g\|_{X'}=\sup_{\|f\|_X\leq 1}\|fg\|_{L^1(\Omega)}\leq\sup_{\|f\|_{X''}\leq 1}\|fg\|_{L^1(\Omega)}=\|g\|_{X'''}.
\]
The assertion follows.
\end{proof}

A quasi-Banach function space $X$ is called \emph{order-continuous} if for any sequence $0\leq f_n\downarrow 0\in X$ we have $\nrm{f_n}_X \to 0$.  When a Banach function space is order-continuous, then its dual space coincides isometrically with its K\"othe dual through the canonical embedding $X'\hookrightarrow X^\ast$. We also point out that a Banach function space $X$ is reflexive if and only if $X$ and $X'$ are order-continuous.

\begin{remark}
Proofs of the statements about Banach function spaces in this section and a general introduction into Banach function spaces can be found in \cite[Chapter~15]{Za67}. Note that in that work, a Banach function space $X$ is defined through a function norm $\rho:L^0(\Omega)_+\to[0,\infty]$, where $f\in X$ if and only if $\|f\|_X:=\rho(|f|)<\infty$. This is equivalent to the definition of a Banach space we are using here. To see this, we note that given a Banach function space $X$ following our definition, we can define $\rho:L^0(\Omega)_+\to[0,\infty]$ through $\rho(f):=\|f\|_X$ if $f\in X$ and $\rho(f)=\infty$ if $f\notin X$. Then $\rho$ is a function norm according to the definition in \cite{Za67} satisfying the Fatou and saturation property. Hence, the results from that work apply in our setting. We also refer the reader to the recent survey \cite{LN23b} on quasi-Banach function spaces.
\end{remark}

\begin{remark}
Very often in the literature, the following additional property is imposed on Banach function spaces:
\begin{itemize}
\item For each $E\subseteq\Omega$ with $|E|<\infty$ we have $\ind_E\in X$ and there exists a constant $C_E>0$ such that
\[
\int_E\!|f|\,\mathrm{d}x\leq C_E\|f\|_X
\]
for all $f\in X$.
\end{itemize}
This is equivalent to the statement that for all $E$ of finite measure we have $\ind_E\in X$ and $\ind_E\in X'$ (with $\|\ind_E\|_{X'}=C_E$). We note here that the  condition $\ind_E\in X$ implies the saturation property.

This property is not necessary for any of our results, and, as a matter of fact, they are restrictive in which spaces we can consider. An example of this is the Morrey space $X=\mathcal{L}^{p,q}(\R^d)$, which does not satisfy the property that $\ind_E\in X'$, see \cite[Example~3.3]{ST15}. We refer the reader to Section~\ref{subsec:morreyspaces} for the results we obtain in these spaces.
\end{remark}

\begin{definition}
Let $(\Omega,|\cdot|)$ be a $\sigma$-finite measure space. Let $p,q\in(0,\infty]$ and let $X$ be a quasi-Banach function space over $\Omega$. We say that $X$ is \emph{$p$-convex}, if for for all $f,g\in X$ we have
\[
  \nrms{\has{\abs{f}^p+\abs{g}^p}^{\frac{1}{p}}}_X \leq \has{ \nrm{f}_X^p+\nrm{g}_X^p}^{\frac{1}{p}}.
\]
and \emph{$q$-concave} if for all $f,g\in X$ we have
\[
\has{ \nrm{f}_X^q+\nrm{g}_X^q}^{\frac{1}{q}}\leq \nrms{\has{\abs{f}^q+\abs{g}^q}^{\frac{1}{q}}}_X,
\]
with the usual modification when $p=\infty$ or $q=\infty$.
\end{definition}
Any quasi-Banach function space is $\infty$-concave by the ideal property and $X$ is a Banach function space exactly when it is $1$-convex. As an example, we note that $L^r(\Omega)$ for $r\in(0,\infty]$ is $p$-convex if $p\leq r$ and $q$-concave if $q\geq r$.

We also note that if $X$ is $p$-convex or $q$-concave, then it is also respectively $\widetilde{p}$-convex or $\widetilde{q}$-concave whenever $\widetilde{p}\leq p$ or $\widetilde{q}\geq q$.

Usually for Banach function spaces a constant is allowed in the defining inequalities for $p$-convexity and $q$-concavity, but it is shown in \cite[Theorem 1.d.8]{LT79} that one can choose an equivalent norm on $X$ such that these constants are equal to $1$.

For $p\in (0,\infty)$ we define the \emph{$p$-concavification} of $X$ by
\begin{equation*}
  X^p:=\cbraceb{f \in L^0(\Omega):\abs{f}^{\frac{1}{p}} \in X}.
\end{equation*}
This is again a quasi-Banach function space when equipped with the quasinorm
\[
\nrm{f}_{X^p}:= \nrm{\abs{f}^{\frac{1}{p}}}_X^p.
\]
We note that $\nrm{\,\cdot\,}_{X^p}$ is a norm, and hence, $X^p$ is a Banach function space, if and only if $X$ is $p$-convex.  Moreover, if $X$ is a Banach function space and $p,q\in[1,\infty]$, then $X$ is respectively $p$-convex or $q$-concave if and only if $X'$ is $p'$-concave or $q'$-convex.

We note that this allows us to reasonably extend the definition of $q$-concave for a Banach function space $X$ to exponents $q$ with $\frac{1}{q}<0$ to mean that $X'$ is $\frac{1}{1-\frac{1}{q}}$-convex, in which case we formally write $q':=\frac{1}{1-\frac{1}{q}}$. We do point out however that this is not an additional assumption on the space. Indeed, in this case $q'<1$ and hence, since $X'$ is $1$-convex, it is automatically also $q'$-convex. Thus, the extension of the notion of concavity for negative $\frac{1}{q}$ is merely a formality that is useful only in simplifying our notation.

\begin{definition}
Let $\frac{1}{r}\in(0,\infty]$, $\frac{1}{s}\in\R$ with $\frac{1}{r}>\frac{1}{s}$ and let $X$ be a quasi-Banach function space. Then we define the \emph{$(r,s)$-rescaled Banach function space of $X$} as
\[
X_{r,s}:=\Big[\big[(X^r)'\big]^{\left(\frac{s}{r}\right)'}\Big]'.
\]
\end{definition}

If $X$ is an $r$-convex and $s$-concave quasi-Banach function space, then $X_{r,s}$ is a Banach function space. Indeed, note that the $r$-convexity of $X$ implies that $X^r$ is a Banach function space, and hence, so is the K\"othe dual $(X^r)'$. Moreover, since $X^r$ is $\frac{s}{r}$-concave, this means that $(X^r)'$ is $\left(\frac{s}{r}\right)'$-convex, and hence, $\big[(X^r)'\big]^{\left(\frac{s}{r}\right)'}$ is a Banach function space. This implies that its K\"othe dual $X_{r,s}$ is also Banach function space.

If $X$ is only $r$-convex, then $\big[(X^r)'\big]^{\left(\frac{s}{r}\right)'}$ is a quasi-Banach function space, and its K\"othe dual $X_{r,s}$ might not have the saturation property. In the cases where $X$ is not $s$-concave, we will usually impose the extra condition that $X_{r,s}$ is saturated to ensure that it is a Banach function space. When $s=\infty$, we note that now $X_{r,\infty}=(X^r)''=X^r$ by the Lorentz-Luxemburg theorem. In particular, we have $X_{1,\infty}=X$ when $X$ is a Banach function space.

\begin{definition}
For two quasi-Banach function spaces $X_1$, $X_2$ over the same $\sigma$-finite measure space $(\Omega,|\cdot|)$, we define their product space by
\[
X_1\cdot X_2:=\cbraces{f\in L^0(\Omega):\text{there exist } 0\leq f_1\in X_1,\,0\leq f_2\in X_2\text{ such that }\abs{f} \leq f_1f_2}.
\]
We equip this space with the quasi-norm
\[
\|f\|_{X_1\cdot X_2}:=\inf \cbraces{\|f_1\|_{X_1}\|f_2\|_{X_2}: 0\leq f_1 \in X_1,\,0\leq f_2\in X_2\text{ for which }\abs{f} \leq f_1f_2}.
\]
\end{definition}

For a quasi-Banach function space $X$ we have $X\cdot X'\subseteq L^1(\Omega)$. Indeed, if $h\in X\cdot X'$, then there are $0\leq f\in X$, $0\leq g\in X'$ with $|h|\leq fg\in L^1(\Omega)$. Hence, $h\in L^1(\Omega)$ by the ideal property, with
\[
\|h\|_{L^1(\Omega)}\leq\|fg\|_{L^1(\Omega)}\leq\|f\|_X\|g\|_{X'}.
\]
Taking an infimum over all possible $0\leq f\in X$, $0\leq g\in X'$ with $|h|\leq fg$ yields
\begin{equation}\label{eq:lozfacquasi}
\|h\|_{L^1(\Omega)}\leq\|h\|_{X\cdot X'},
\end{equation}
as desired.

As a matter of fact, if $X$ is a Banach function space, then $X\cdot X'=L^1(\Omega)$ with equal norm by Lozanovskii's factorization theorem. A particular consequence of this is known as Lozanovskii's duality theorem:
\begin{theorem}[{\cite[Theorem 2]{Lo69}}]\label{prop:lozprod}
Let $\theta\in[0,1]$ and let $X_0$ and $X_1$ be Banach function spaces over the same $\sigma$-finite measure space $(\Omega,|\cdot|)$. Then $X_0^{1-\theta}\cdot X_1^\theta$ is again a Banach function space with
\[
(X_0^{1-\theta}\cdot X_1^\theta)'=\big[(X_0)'\big]^{1-\theta}\cdot\big[(X_1)'\big]^\theta.
\]
\end{theorem}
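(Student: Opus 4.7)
The plan is to split the statement into three tasks: (a) show that $X_0^{1-\theta}\cdot X_1^\theta$ is a Banach function space, (b) establish the easy inclusion $[(X_0)']^{1-\theta}\cdot[(X_1)']^\theta\hookrightarrow (X_0^{1-\theta}\cdot X_1^\theta)'$, and (c) prove the reverse inclusion, which carries all the difficulty. Throughout I will assume the already-quoted fact that $X\cdot X'=L^1(\Omega)$ isometrically for any Banach function space $X$ (Lozanovskii's factorization theorem).

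For (a), the ideal property is immediate from the definition, since any pointwise majorant of an admissible factorization is itself admissible. Saturation is obtained by taking weak order units $\rho_j>0$ in $X_j$, furnished by Proposition~\ref{prop:weakorderunit}, and noting that $\rho_0^{1-\theta}\rho_1^{\theta}$ is a weak order unit for the product. The Fatou property follows by extracting, for $0\leq f_n\uparrow f$ with $\sup_n\|f_n\|<\infty$, near-optimal factorizations $f_n\leq f_{0,n}^{1-\theta}f_{1,n}^{\theta}$ and passing to the limit via the Fatou property in each $X_j$, after a standard truncation/renormalization to control the factors separately. The quasinorm is actually a norm: homogeneity is obvious, and the triangle inequality follows from the factorization $(f+g)\leq(f_0+g_0)^{1-\theta}(f_1+g_1)^{\theta}$ combined with the elementary pointwise inequality $(a+b)^{1-\theta}(c+d)^{\theta}\leq a^{1-\theta}c^{\theta}+b^{1-\theta}d^{\theta}$, upon rescaling the factors so that $\|f_0\|_{X_0}=\|f_1\|_{X_1}$ and similarly for $g$.

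For (b), given $f=f_0^{1-\theta}f_1^{\theta}$ and $g=g_0^{1-\theta}g_1^{\theta}$ with $f_j\in X_j$ and $g_j\in(X_j)'$, the pointwise bound $|fg|\leq(f_0g_0)^{1-\theta}(f_1g_1)^{\theta}$ combined with Hölder's inequality in $L^1(\Omega)$ with exponents $\frac{1}{1-\theta}$ and $\frac{1}{\theta}$ gives
\[
\int_\Omega|fg|\,\mathrm{d}x\leq\Big(\int f_0g_0\Big)^{1-\theta}\Big(\int f_1g_1\Big)^{\theta}\leq\big(\|f_0\|_{X_0}\|g_0\|_{(X_0)'}\big)^{1-\theta}\big(\|f_1\|_{X_1}\|g_1\|_{(X_1)'}\big)^{\theta}.
\]
Infimizing over admissible factorizations on both sides yields the inclusion together with the norm inequality $\|g\|_{(X_0^{1-\theta}\cdot X_1^\theta)'}\leq\|g\|_{[(X_0)']^{1-\theta}\cdot[(X_1)']^{\theta}}$.

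The hard part is (c): producing, for an arbitrary $g\in(X_0^{1-\theta}\cdot X_1^{\theta})'$, a measurable factorization $|g|\leq g_0^{1-\theta}g_1^{\theta}$ with $g_j\in(X_j)'$ and correct norm control. The plan is to apply Lozanovskii's factorization theorem to the space $Z:=X_0^{1-\theta}\cdot X_1^{\theta}$ itself (once (a) has confirmed it is a Banach function space with a weak order unit): for the fixed $g$, choose $h\in Z$ with $|g|h\in L^1$ and $\|g\|_{Z'}=\int|g|h$ realizing the supremum, factor $h=h_0^{1-\theta}h_1^{\theta}$ near-optimally, and then set
\[
g_0:=|g|^{1-\theta}\,\Big(\tfrac{h_1}{h_0}\Big)^{\theta},\qquad g_1:=|g|^{\theta}\,\Big(\tfrac{h_0}{h_1}\Big)^{1-\theta},
\]
(with the obvious meaning where $h_j$ vanishes) so that $g_0^{1-\theta}g_1^{\theta}=|g|$ formally, and verify via duality testing against arbitrary $f_j\in X_j$ that $g_j\in(X_j)'$ with the right bound. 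The main obstacle is that this raw construction need not land in $(X_j)'$ on the nose; one has to iterate/refine by running a standard Lozanovskii-type extremal argument (or, equivalently, identify $Z$ with the Calderón product $[X_0,X_1]^{\theta}$ and invoke the duality of the complex interpolation functor) to extract a truly admissible factorization of $g$. This measurability-and-integrability bookkeeping for the dual factorization is the substantive step; once it is in place, combining (b) and (c) yields equality of the two spaces with equal norm.
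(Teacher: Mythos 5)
First, a point of comparison: the paper does not prove this statement at all — it is quoted as Lozanovskii's duality theorem with the citation \cite[Theorem 2]{Lo69}, and only its consequences (Theorem~\ref{thm:spacesplitting}, Corollary~\ref{cor:factorization}, etc.) are used later. So you are attempting to reprove a known, genuinely deep result, and your attempt has to be judged on its own.

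The essential gap is in your step (c), which is where all of the content of the theorem lives. The displayed candidate factorization is already algebraically off: with $g_0=|g|^{1-\theta}(h_1/h_0)^{\theta}$ and $g_1=|g|^{\theta}(h_0/h_1)^{1-\theta}$ one gets $g_0^{1-\theta}g_1^{\theta}=|g|^{(1-\theta)^2+\theta^2}\neq|g|$ for $\theta\in(0,1)$; the formally correct candidates are $g_0=|g|(h_1/h_0)^{\theta}$, $g_1=|g|(h_0/h_1)^{1-\theta}$. More importantly, even with the corrected formulas the strategy cannot work as described: testing $g$ against a single (near-)extremal $h\in Z$ only controls the one integral $\int|g|h$, whereas membership $g_j\in(X_j)'$ with the right norm requires controlling $\int g_jf_j$ for \emph{all} $f_j\in X_j$ — a single test function gives no such control, and the dual-norm supremum is in general not attained anyway. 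You acknowledge this and defer to ``a standard Lozanovskii-type extremal argument'' or to ``the duality of the complex interpolation functor'' for the Calder\'on product; but that duality statement \emph{is} the theorem you are proving, so the fallback is circular, and the ``extremal argument'' you invoke is precisely the substantive construction (Lozanovskii's original argument, or the later proofs of Gillespie, Reisner, Schep) that your proposal does not supply. Deriving the duality theorem from the factorization theorem $X\cdot X'=L^1(\Omega)$ is not a formal manipulation; it requires real work that is missing here.

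A secondary issue is in step (a): the Fatou property of $X_0^{1-\theta}\cdot X_1^{\theta}$ is asserted via ``extracting near-optimal factorizations and passing to the limit,'' but the factor sequences $f_{0,n},f_{1,n}$ are merely norm-bounded and need not converge in any pointwise sense, so the limit passage is not justified as stated (the known proofs use an additional device — e.g.\ a Koml\'os-type argument, or the duality formula itself, which again would be circular here). The ideal and saturation properties and your step (b) (H\"older with exponents $\frac{1}{1-\theta}$ and $\frac{1}{\theta}$, giving the contractive inclusion $[(X_0)']^{1-\theta}\cdot[(X_1)']^{\theta}\subseteq(X_0^{1-\theta}\cdot X_1^{\theta})'$) are fine, but these are the easy parts; the hard inclusion remains unproven.
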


The following result is a special case:
\begin{theorem}[{\cite[Theorem~2.4,\,Theorem~2.9]{Sc10}}]\label{thm:spacesplitting}
Let $(\Omega,|\cdot|)$ be a $\sigma$-finite measure space. Let $q\in[1,\infty)$ and let $Y$ be a $q$-convex Banach function space over $\Omega$. Then
\[
Y'=\big[(Y^q)'\big]^{\frac{1}{q}}\cdot L^{q'}(\Omega)
\]
isometrically. Moreover, for any $f\in Y'$ the infimum in the norm is attained, i.e., there exist $0\leq h\in \big[(Y^q)'\big]^{\frac{1}{q}}$, $0\leq k\in L^{q'}(\Omega)$ with
\[
|f|=hk,\quad \|f\|_{Y'}=\|h\|_{\big[(Y^q)'\big]^{\frac{1}{q}}}\|k\|_{L^{q'}(\Omega)}.
\]
\end{theorem}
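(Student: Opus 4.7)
The plan is to deduce the claimed identification from Lozanovskii's duality theorem (Theorem~\ref{prop:lozprod}) by specializing its two base spaces and the interpolation parameter so that the primal side collapses to $Y$ and the dual side produces exactly the desired factorization. Since $q\in[1,\infty)$ and $Y$ is $q$-convex, the concavification $Y^q$ is a Banach function space, so Theorem~\ref{prop:lozprod} can be applied to the pair $(X_0,X_1)=(L^\infty(\Omega),Y^q)$ with $\theta=\tfrac{1}{q}\in(0,1]$.

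Unwinding the definition of $p$-concavification one finds $(L^\infty)^p=L^\infty$ and $(Y^q)^{1/q}=Y$ isometrically, and moreover $L^\infty\cdot Z=Z$ isometrically for every Banach function space $Z$ (the nontrivial bound follows from $|f|\le\|g\|_\infty h$ whenever $|f|\le gh$, together with the trivial factorization $|f|=1\cdot|f|$). Consequently the primal product equals $L^\infty\cdot Y=Y$. On the dual side one has $(L^\infty)'=L^1(\Omega)$ and, by the same concavification calculation, $(L^1)^{1/q'}=L^{q'}(\Omega)$ isometrically, so Theorem~\ref{prop:lozprod} yields
\[
Y'=(L^\infty\cdot Y)'=(L^1)^{1/q'}\cdot\big[(Y^q)'\big]^{1/q}=L^{q'}(\Omega)\cdot\big[(Y^q)'\big]^{1/q},
\]
which is the isometric identification in the first part of the statement.

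For the attainment of the infimum I would build on the attainment in the classical Lozanovskii factorization $L^1(\Omega)=Z\cdot Z'$ applied to $Z=Y^q$. The identification just proved produces, for each $\varepsilon>0$, a decomposition $|f|\le h_\varepsilon k_\varepsilon$ with $\|h_\varepsilon\|_{[(Y^q)']^{1/q}}\|k_\varepsilon\|_{L^{q'}}\le(1+\varepsilon)\|f\|_{Y'}$, and the pointwise AM--GM rebalancing $\widetilde h_\varepsilon:=h_\varepsilon(|f|/(h_\varepsilon k_\varepsilon))^{1/q'}$, $\widetilde k_\varepsilon:=k_\varepsilon(|f|/(h_\varepsilon k_\varepsilon))^{1/q}$ converts this inequality into an exact equality $|f|=\widetilde h_\varepsilon\widetilde k_\varepsilon$ without increasing the product of norms, using only the ideal property and the bound $|f|/(h_\varepsilon k_\varepsilon)\le 1$. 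The main remaining obstacle is to promote this almost-optimal family of exact factorizations to a single factorization realizing the infimum; this is a standard order-theoretic compactness step in the theory of Lozanovskii products, carried out in full in \cite[Section~2]{Sc10}, which we invoke directly.
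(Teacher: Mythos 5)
The paper does not prove Theorem~\ref{thm:spacesplitting} at all: it is quoted verbatim from \cite[Theorems~2.4 and 2.9]{Sc10}, so there is no internal argument to compare against. Your derivation of the isometric identity is correct and is a genuinely self-contained alternative for that half. Applying Theorem~\ref{prop:lozprod} with $X_0=L^\infty(\Omega)$, $X_1=Y^q$, $\theta=\frac{1}{q}$ is legitimate ($Y^q$ is a Banach function space by $q$-convexity), and your auxiliary identifications $(L^\infty)^{1/q'}=L^\infty$, $L^\infty\cdot Z=Z$, $(L^\infty)'=L^1(\Omega)$ and $(L^1)^{1/q'}=L^{q'}(\Omega)$, all isometric, are easily checked; hence $Y'=(L^\infty\cdot Y)'=L^{q'}(\Omega)\cdot\big[(Y^q)'\big]^{1/q}$ follows, provided one uses the isometric form of Lozanovskii's theorem (which is the intended reading of Theorem~\ref{prop:lozprod}) and treats $q=1$ separately, since there $\theta=1$ and $X_0^{1-\theta}$ is not defined, although the claim is then the trivial identity $Y'=Y'\cdot L^\infty(\Omega)$.

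For the second assertion your rebalancing step is fine: from $|f|\le h_\varepsilon k_\varepsilon$ with near-optimal norms one does obtain an exact factorization without increasing either norm, because the correction factor is at most $1$ and the ideal property applies. However, the passage from this family of $(1+\varepsilon)$-optimal exact factorizations to a single factorization attaining the infimum is precisely the content of \cite[Theorem~2.9]{Sc10}; it is not a routine limiting argument (it genuinely uses the Fatou property of the factor spaces and can fail without it), and you do not prove it, you cite it. Since the paper itself only cites \cite{Sc10} for the entire theorem, this is acceptable in context, but you should be aware that for the attainment half your proposal adds nothing beyond the paper's citation, whereas for the identification half it does supply an honest proof from Theorem~\ref{prop:lozprod}.
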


By applying this theorem, we obtain the following factorization result:
\begin{corollary}\label{cor:factorization}
Let $r\in(0,\infty)$, $s\in(0,\infty]$ with $r<s$ and let $X$ be an $r$-convex and $s$-concave quasi-Banach function space. Then
\[
X=(X_{r,s})^{\frac{1}{r}-\frac{1}{s}}\cdot L^s(\Omega).
\]
Moreover, the infimum in the norm is attained, i.e., for any $f\in X$ there exist $0\leq h\in (X_{r,s})^{\frac{1}{r}-\frac{1}{s}}$, $0\leq k\in L^s(\Omega)$ with
\[
|f|=hk,\quad \|f\|_{X}=\|h\|_{(X_{r,s})^{\frac{1}{r}-\frac{1}{s}}}\|k\|_{L^s(\Omega)}.
\]
\end{corollary}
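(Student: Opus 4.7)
The plan is to apply Theorem~\ref{thm:spacesplitting} to the K\"othe dual of the $r$-concavification $X^r$, and then take the $r$-th root.

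First I would set $Y:=(X^r)'$ and $q:=(s/r)'$ (with the convention $q=1$ when $s=\infty$). Since $X$ is $r$-convex, $X^r$ is a genuine Banach function space, and its $s$-concavity translates into the statement that $X^r$ is $(s/r)$-concave, so $Y=(X^r)'$ is $(s/r)'=q$-convex. By the Lorentz--Luxemburg theorem, $Y'=(X^r)''=X^r$. Observing that, by the very definition of the rescaled space, $(Y^q)'=\bigl[[(X^r)']^{(s/r)'}\bigr]'=X_{r,s}$, Theorem~\ref{thm:spacesplitting} applied to $Y$ with the exponent $q$ yields the isometric factorization
\[
X^r \;=\; Y' \;=\; (X_{r,s})^{\frac{1}{q}}\cdot L^{q'}(\Omega) \;=\; (X_{r,s})^{1-\frac{r}{s}}\cdot L^{s/r}(\Omega),
\]
with the further property that the infimum in the norm is attained.

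The second step is to take $(1/r)$-concavifications. One easily verifies from the definitions that for any two quasi-Banach function spaces $A,B$ and any $\theta\in(0,\infty)$ one has $(A\cdot B)^{\theta}=A^{\theta}\cdot B^{\theta}$ isometrically: if $|f|\leq ab$ with $a\in A,\,b\in B$ then $|f|^{\theta}\leq a^{\theta}b^{\theta}$ with $a^{\theta}\in A^{\theta}$, $b^{\theta}\in B^{\theta}$, and both the identity on elements and the identity on norms reduce to the definition $\|g\|_{Z^{\theta}}=\||g|^{1/\theta}\|_Z^{\theta}$. Applying this with $\theta=1/r$, and using $\bigl((X_{r,s})^{1-r/s}\bigr)^{1/r}=(X_{r,s})^{(1-r/s)/r}=(X_{r,s})^{\frac{1}{r}-\frac{1}{s}}$ together with $(L^{s/r})^{1/r}=L^{s}(\Omega)$, produces the desired equality
\[
X=(X^r)^{\frac{1}{r}}=(X_{r,s})^{\frac{1}{r}-\frac{1}{s}}\cdot L^s(\Omega).
\]

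For the norm-attainment part, I would simply transport the factorization given by Theorem~\ref{thm:spacesplitting}: given $f\in X$, apply the theorem to $|f|^r\in X^r$ to obtain $0\leq H\in (X_{r,s})^{1-r/s}$ and $0\leq K\in L^{s/r}(\Omega)$ with $|f|^r=HK$ and $\|H\|\,\|K\|=\|f\|_X^r$, then set $h:=H^{1/r}$, $k:=K^{1/r}$; the identities $\|h\|_{(X_{r,s})^{1/r-1/s}}=\|H\|_{(X_{r,s})^{1-r/s}}^{1/r}$ and $\|k\|_{L^s}=\|K\|_{L^{s/r}}^{1/r}$ together give the attained equality. The only point that truly needs care is the bookkeeping of the exponent relation $\tfrac{1}{q}\cdot\tfrac{1}{r}=\tfrac{1}{r}-\tfrac{1}{s}$ and the degenerate case $s=\infty$, in which $q=1$, $(s/r)'=1$, and the statement collapses to the trivial decomposition $X=X\cdot L^{\infty}(\Omega)$; no conceptual obstacle is expected beyond this.
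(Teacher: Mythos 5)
Your proposal is correct and follows essentially the same route as the paper: apply Theorem~\ref{thm:spacesplitting} with $Y=(X^r)'$ and $q=(\tfrac{s}{r})'$ to factor $X^r=(X_{r,s})^{1-\frac{r}{s}}\cdot L^{\frac{s}{r}}(\Omega)$, take the $\tfrac{1}{r}$-concavification (using that concavification commutes with products), and transport the attained factorization of $|f|^r$ by taking $r$-th roots. The extra verifications you include (the $(\tfrac{s}{r})'$-convexity of $(X^r)'$, the Lorentz--Luxemburg identification $Y'=X^r$, and the $s=\infty$ case) are consistent with what the paper establishes before stating the corollary.
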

\begin{proof}
By Theorem~\ref{thm:spacesplitting} applied with $Y=(X^r)'$ and $q=\big(\frac{s}{r}\big)'$, we have
\[
X^r=X_{r,s}^{1-\frac{r}{s}}\cdot L^{\frac{s}{r}}(\Omega)=\big((X_{r,s})^{\frac{1}{r}-\frac{1}{s}}\cdot L^s(\Omega)\big)^r.
\]
The result follows from taking the $\frac{1}{r}$-concavification of both sides. Moreover, for any $f\in X$ we have $|f|^r\in X^r=Y'$, so there are $0\leq\widetilde{h}\in X_{r,s}^{1-\frac{r}{s}}$ and $0\leq\widetilde{k}\in L^{\frac{s}{r}}(\Omega)$ with
\[
|f|^r=\widetilde{h}\widetilde{k},\quad \||f|^r\|_{X^r}=\|\widetilde{h}\|_{X_{r,s}^{1-\frac{r}{s}}}\|\widetilde{k}\|_{L^{\frac{s}{r}}(\Omega)}.
\]
Setting $h:=\widetilde{h}^{\frac{1}{r}}$, $k:=\widetilde{k}^{\frac{1}{r}}$ then yields the desired factorization.
\end{proof}

In the case that $X$ is not $r$-convex or $s$-concave, we have the following result:
\begin{theorem}\label{thm:forcedfactorization}
Let $r\in(0,\infty)$, $s\in(0,\infty]$ with $r<s$ and let $X$ be quasi-Banach function space for which $(X^r)'$ and $X_{r,s}$ are saturated. Then the quasi-Banach space
\[
Z:=(X_{r,s})^{\frac{1}{r}-\frac{1}{s}}\cdot L^s(\Omega)
\]
is $r$-convex, $s$-concave, and satisfies
\[
Z_{r,s}=X_{r,s}.
\]
\end{theorem}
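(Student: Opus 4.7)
The plan is to set $W:=X_{r,s}$, which is a Banach function space by the saturation hypothesis and the fact that, by definition, $X_{r,s}=\bigl[\bigl[(X^r)'\bigr]^{(s/r)'}\bigr]'$ is a K\"othe dual of a saturated Banach function space. From here everything will follow from two applications of Lozanovskii's duality theorem (Theorem~\ref{prop:lozprod}) together with the key arithmetic identity $(1-r/s)(s/r)'=1$.

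First, to establish $r$-convexity and to identify $(Z^r)'$, I would rewrite $Z^r$ in Lozanovskii form. Starting from a factorization $|f|^{1/r}\leq g\,h$ with $g\in W^{1/r-1/s}$, $h\in L^s$ and substituting $G=g^r$, $H=h^r$, one sees directly that
\[
Z^r=W^{1-r/s}\cdot L^{s/r}=W^{1-\theta}\cdot(L^1)^{\theta},\qquad \theta:=r/s\in[0,1).
\]
Theorem~\ref{prop:lozprod} applied with $X_0=W$ and $X_1=L^1$ then shows that $Z^r$ is a Banach function space, hence $Z$ is $r$-convex, and yields the dual identification
\[
(Z^r)'=(W')^{1-r/s}\cdot L^\infty=(W')^{1-r/s},
\]
where the last equality uses that $L^\infty\cdot Y=Y$ isometrically for any quasi-Banach function space $Y$.

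Second, I would verify $Z_{r,s}=X_{r,s}$ by a direct computation via the identity $(1-r/s)(s/r)'=1$:
\[
Z_{r,s}=\bigl[((Z^r)')^{(s/r)'}\bigr]'=\bigl[(W')^{(1-r/s)(s/r)'}\bigr]'=(W')'=W''=X_{r,s},
\]
the last step being the Lorentz-Luxemburg theorem applied to the Banach function space $W$.

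Finally, for $s$-concavity I would use the chain of equivalences: $Z$ is $s$-concave iff $Z^r$ is $(s/r)$-concave (immediate from the definitions of concavity and $r$-concavification); since $Z^r$ is a Banach function space and $s/r\in(1,\infty]$, this is equivalent to $(Z^r)'$ being $(s/r)'$-convex; which in turn is equivalent to the $(s/r)'$-concavification $\bigl((W')^{1-r/s}\bigr)^{(s/r)'}$ being a Banach function space. But that concavification equals $W'$ by the same arithmetic identity, and $W'$ is of course a Banach function space. The main obstacle I anticipate is purely bookkeeping: handling the degenerate case $s=\infty$, where $(s/r)'=1$ and $Z=W^{1/r}\cdot L^\infty=W^{1/r}$ — the identities all remain valid verbatim, $s$-concavity becomes automatic, and the whole argument collapses — and making sure at every step that the saturation of both $(X^r)'$ and $X_{r,s}$ is used to guarantee that the Lozanovskii product, the K\"othe duals, and the concavifications involved all land in honest Banach function spaces rather than merely quasi-Banach ones.
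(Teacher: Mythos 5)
Your proposal is correct and follows essentially the same route as the paper's proof: the same Lozanovskii factorization $Z^r=X_{r,s}^{1-r/s}\cdot (L^1(\Omega))^{r/s}$ via Theorem~\ref{prop:lozprod}, the same identification $(Z^r)'=[(X_{r,s})']^{1-r/s}$, the same exponent identity $(1-\tfrac{r}{s})(\tfrac{s}{r})'=1$ giving both the $s$-concavity (through the convexity/concavity duality chain you spell out) and $Z_{r,s}=X_{r,s}$. The only cosmetic differences are that you close the last step with Lorentz--Luxemburg ($W''=W$ for the Banach function space $W=X_{r,s}$) where the paper invokes Proposition~\ref{prop:envelopeembedding}, and that $[(X^r)']^{(s/r)'}$ is in general only a \emph{saturated quasi-}Banach function space rather than a Banach function space, which is all that is needed for its K\"othe dual seminorm to be a norm.
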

\begin{proof}
Note that since $(X^r)'$ is saturated, so is $\big[(X^r)'\big]^{(\frac{r}{s})'}$, and hence, the seminorm on $X_{r,s}$ is a norm by Proposition~\ref{prop:weakorderunit}. Since $X_{r,s}$ is also saturated, this implies that $X_{r,s}$ is a Banach function space. Since we have
\[
Z^r=X_{r,s}^{1-\frac{r}{s}}\cdot L^1(\Omega)^{\frac{r}{s}},
\]
and $X_{r,s}$ and $L^1(\Omega)$ are Banach function spaces, it follows from Lozanovskii's duality theorem that $Z^r$ is also a Banach function space, and hence, $Z$ is $r$-convex, with
\[
(Z^r)'=\big[(X_{r,s})'\big]^{1-\frac{r}{s}}\cdot L^\infty(\Omega)^{\frac{r}{s}}=\big[(X_{r,s})'\big]^{1-\frac{r}{s}}.
\]
Hence,
\[
\big[(Z^r)'\big]^{(\frac{s}{r})'}=(X_{r,s})'
\]
is a Banach function space, proving that $Z$ is $s$-concave. Finally, using Proposition~\ref{prop:envelopeembedding} we conclude from taking the K\"othe dual that $Z_{r,s}=X_{r,s}$. This proves the result.
\end{proof}

In the case where $X$ is already a Banach function space and $1\leq r<s\leq\infty$, we note that $X$ is $r$-convex and $s$-concave precisely when $X'$ is $s'$-convex and $r'$-concave, and hence, we can consider the space $(X')_{s',r'}$. In this case we have the following equality:
\begin{proposition}\label{prop:bfsrescaleddualequality}
Let $r\in[1,\infty)$ $s\in(1,\infty]$ with $r<s$, and suppose $X$ is an $r$-convex and $s$-concave Banach function space. Then
\[
(X_{r,s})'=(X')_{s',r'}.
\]
\end{proposition}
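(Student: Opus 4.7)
The plan is to exhibit two Calderón-product decompositions of the same space, namely $(X')^{s'}$, one of which features $(X_{r,s})'$ and the other of which features $(X')_{s',r'}$, and then to invoke Lozanovskii's duality theorem (Theorem~\ref{prop:lozprod}) to force the two underlying spaces to coincide.

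For the first decomposition, I would begin from Corollary~\ref{cor:factorization} applied to $X$ and raise to the $r$-th concavification to obtain
\[
X^r = (X_{r,s})^{1-r/s} \cdot (L^1)^{r/s},
\]
which is in Calderón-product form (the exponents sum to~$1$). Lozanovskii's duality then yields $(X^r)' = ((X_{r,s})')^{1-r/s}$, the $(L^\infty)^{r/s}=L^\infty$ factor being absorbed. Inserting this into Schep's formula (Theorem~\ref{thm:spacesplitting}) for $X$, $X' = [(X^r)']^{1/r} \cdot L^{r'}$, gives $X' = ((X_{r,s})')^{1/r-1/s} \cdot L^{r'}$. Taking the $s'$-th concavification and using the elementary identities $(1/r-1/s)s' = 1-s'/r'$ and $L^{r'/s'} = (L^1)^{s'/r'}$ then produces
\[
(X')^{s'} = \big((X_{r,s})'\big)^{1-s'/r'} \cdot (L^1)^{s'/r'}.
\]

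For the second decomposition, I would apply Corollary~\ref{cor:factorization} directly to $X'$, which is $s'$-convex and $r'$-concave, with the roles of $(r,s)$ played by $(s',r')$. This yields $X' = ((X')_{s',r'})^{1/s'-1/r'} \cdot L^{r'}$; using $1/s'-1/r' = 1/r-1/s$ and taking the $s'$-th concavification produces the analogous Calderón form
\[
(X')^{s'} = \big((X')_{s',r'}\big)^{1-s'/r'} \cdot (L^1)^{s'/r'}.
\]

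Finally, applying Lozanovskii's duality to each of the two Calderón products and once more absorbing the $L^\infty$ factor yields
\[
(X_{r,s})^{1-s'/r'} = \big((X')^{s'}\big)' = \Big(\big((X')_{s',r'}\big)'\Big)^{1-s'/r'}.
\]
Taking the $1/(1-s'/r')$-concavification --- which is legitimate because $1 \leq r < s \leq \infty$ forces $s'/r' < 1$ --- delivers $X_{r,s} = ((X')_{s',r'})'$, and a final application of the Lorentz--Luxemburg theorem to the Banach function space $(X')_{s',r'}$ gives the desired identity $(X_{r,s})' = (X')_{s',r'}$. The main technical point requiring attention is the bookkeeping of the various rescaling exponents: one must verify that after the $s'$-th concavification both Calderón products exhibit \emph{matching} $(L^1)^{s'/r'}$ factors, so that Lozanovskii's duality forces the $((X_{r,s})')^{1-s'/r'}$ and $((X')_{s',r'})^{1-s'/r'}$ parts to agree.
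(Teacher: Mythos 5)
Your argument is correct and follows essentially the same route as the paper: both proofs hinge on writing $(X')^{s'}$ as the Calder\'on product $\big((X_{r,s})'\big)^{1-\frac{s'}{r'}}\cdot (L^1)^{\frac{s'}{r'}}$ (via Theorem~\ref{thm:spacesplitting}, respectively Corollary~\ref{cor:factorization}) and then dualizing with Lozanovskii. The only difference is the endgame: the paper rescales the resulting identity $[(X')^{s'}]'=(X_{r,s})^{1-\frac{s'}{r'}}$ and reads off the definition of $(X')_{s',r'}$ directly, whereas you build a second Calder\'on decomposition of $(X')^{s'}$ from Corollary~\ref{cor:factorization} applied to $X'$ and then need the extra Lorentz--Luxemburg steps $(X_{r,s})''=X_{r,s}$ and $((X')_{s',r'})''=(X')_{s',r'}$ --- a marginally longer but equally valid finish.
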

\begin{proof}
By Theorem~\ref{thm:spacesplitting} we have
\[
X'=[(X^r)']^{\frac{1}{r}}\cdot L^{r'}(\Omega).
\]
Hence, setting
\[
\theta:=s'\big(\frac{1}{r}-\frac{1}{s}\big)=\frac{1}{\big(\frac{r'}{s'}\big)'}\in[0,1],
\]
we have
\[
(X')^{s'}=\big([(X^r)']^{\big(\frac{s}{r}\big)'}\big)^{\theta}\cdot L^{\frac{1}{1-\theta}}(\Omega),
\]
so that another application of Theorem~\ref{thm:spacesplitting} yields
\[
[(X')^{s'}]'=(X_{r,s})^{\theta}.
\]
The result now follows from taking the $\big(\frac{r'}{s'}\big)'$-concavification of both sides and taking the K\"othe dual.
\end{proof}

\subsection{General bases of sets}
Let $(\Omega,|\cdot|)$ be a $\sigma$-finite measure space. A collection of measurable sets $\mathcal{E}$ with $0<|E|<\infty$ is called a \emph{basis of sets} in $\Omega$ if
\begin{enumerate}[(i)]
\item\label{it:basis1} $\bigcup_{E\in\mathcal{E}}E=\Omega$;
\item\label{it:basis2} For every $x_1,x_2\in\Omega$ there is an $E\in\mathcal{E}$ with $x_1,x_2\in E$.
\item\label{it:basis3} There exists a countable collection $\mathcal{E}'\subseteq\mathcal{E}$ with the property that for all $\varepsilon>0$ and $E\in\mathcal{E}$ there is an $E'\in\mathcal{E'}$ with $E\subseteq E'$ and $|E'|\leq (1+\varepsilon)|E|$.
\end{enumerate}
Property \ref{it:basis3} is automatically satisfied if $\mathcal{E}$ is countable. Moreover, if $\mathcal{E}'$ is as in this property then, by property \ref{it:basis1}, we also have that
\[
\bigcup_{E'\in\mathcal{E}'}E'=\Omega.
\]
Thus, the assumption that $(\Omega,|\cdot|)$ is $\sigma$-finite is necessary for the existence of such a basis.

When $\mathcal{E}$ and $\mathcal{F}$ are two bases of sets in $\Omega$, then we write $\mathcal{E}\lesssim\mathcal{F}$ if there is a constant $C>0$ such that for each $E\in\mathcal{E}$ there is a $F\in\mathcal{F}$ such that $E\subseteq F$ and $|F|\leq C|E|$. Moreover, we write $\mathcal{E}\sim\mathcal{F}$ when $\mathcal{E}\lesssim\mathcal{F}$ and $\mathcal{F}\lesssim\mathcal{E}$. We note that $\sim$ is an equivalence relation.

For $p\in(0,\infty]$ and $f\in L^0(\Omega)$ we write $f\in L^p_{\mathcal{E}}(\Omega)$ when $f\ind_E\in L^p(\Omega)$ for all $E\in\mathcal{E}$. In this case we write
\[
\langle f\rangle_{p,E}:=|E|^{-\frac{1}{p}}\|f\ind_E\|_{L^p(\Omega)}.
\]
Moreover, we define
\[
M_p^{\mathcal{E}}f:=\sup_{E\in\mathcal{E}}\langle f\rangle_{p,E}\ind_E
\]
and set $M^\mathcal{E}f:=M_1^\mathcal{E}f$. Property \ref{it:basis3} ensures that $M_p^\mathcal{E}f$ is a measurable function:

\begin{proposition}\label{prop:mmeasurable}
Let $\mathcal{E}$ and $\mathcal{F}$ be two bases of sets in $\Omega$ with $\mathcal{E}\lesssim \mathcal{F}$ with constant $C$. Then for all $p\in(0,\infty]$ we have $L^p_{\mathcal{F}}(\Omega)\subseteq L^p_{\mathcal{E}}(\Omega)$ and for all $f\in L^p_{\mathcal{F}}(\Omega)$ we have
\[
M_p^{\mathcal{E}}f\leq C^{\frac{1}{p}} M_p^{\mathcal{F}} f.
\]

In particular, if $\mathcal{E}\sim\mathcal{F}$, then $M^{\mathcal{E}}f\eqsim M^{\mathcal{F}}f$ for all $f\in L^1_{\mathcal{E}}(\Omega)=L^1_{\mathcal{F}}(\Omega)$. 

Moreover, if $\mathcal{E}'$ is as in property \ref{it:basis3} then we have
\[
M_p^\mathcal{E}f=M_p^{\mathcal{E}'}f.
\]
In particular, $M_p^\mathcal{E}f$ is measurable.
\end{proposition}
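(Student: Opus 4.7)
The plan is to prove the inclusion and inequality first, deduce the equivalence $M^\mathcal{E}f \eqsim M^\mathcal{F}f$ as an immediate corollary, and finally use property \ref{it:basis3} to reduce the supremum to a countable one, which gives measurability.

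For the first inclusion $L^p_{\mathcal{F}}(\Omega)\subseteq L^p_{\mathcal{E}}(\Omega)$, I would fix $f\in L^p_{\mathcal{F}}(\Omega)$ and $E\in\mathcal{E}$, choose $F\in\mathcal{F}$ with $E\subseteq F$ and $|F|\leq C|E|$, and observe that by the ideal domination $|f|\ind_E\leq |f|\ind_F\in L^p(\Omega)$. For the pointwise inequality $M_p^{\mathcal{E}}f\leq C^{1/p} M_p^{\mathcal{F}} f$, the same choice of $F$ gives, since $\ind_E\leq\ind_F$ pointwise,
\[
\langle f\rangle_{p,E}\ind_E = \left(\frac{1}{|E|}\right)^{\frac{1}{p}}\|f\ind_E\|_{L^p(\Omega)}\ind_E \leq \left(\frac{|F|}{|E|}\right)^{\frac{1}{p}}\langle f\rangle_{p,F}\ind_F \leq C^{\frac{1}{p}}M_p^{\mathcal{F}}f,
\]
and taking the supremum over $E\in\mathcal{E}$ finishes this step. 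The statement $M^\mathcal{E}f\eqsim M^\mathcal{F}f$ when $\mathcal{E}\sim\mathcal{F}$ follows by applying this inequality in both directions (with $p=1$).

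For the equality $M_p^\mathcal{E}f = M_p^{\mathcal{E}'}f$, one direction is trivial since $\mathcal{E}'\subseteq\mathcal{E}$. For the other, I would fix $E\in\mathcal{E}$ and $\varepsilon>0$, use property \ref{it:basis3} to pick $E'\in\mathcal{E}'$ with $E\subseteq E'$ and $|E'|\leq (1+\varepsilon)|E|$, and note that for $x\in E\subseteq E'$,
\[
\langle f\rangle_{p,E} \leq (1+\varepsilon)^{\frac{1}{p}}\langle f\rangle_{p,E'} \leq (1+\varepsilon)^{\frac{1}{p}}M_p^{\mathcal{E}'}f(x).
\]
For $x\notin E$ the inequality $\langle f\rangle_{p,E}\ind_E(x)\leq M_p^{\mathcal{E}'}f(x)$ is trivial since the left-hand side vanishes. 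Letting $\varepsilon\downarrow 0$ and then taking the supremum over $E\in\mathcal{E}$ yields $M_p^\mathcal{E}f\leq M_p^{\mathcal{E}'}f$, completing the equality.

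Finally, since $\mathcal{E}'$ is countable, $M_p^{\mathcal{E}'}f$ is the supremum of a countable family of measurable functions and therefore measurable; by the equality just established, so is $M_p^\mathcal{E}f$. The only subtle point is the $\varepsilon$-limit argument above, which needs the countable-approximation property of $\mathcal{E}'$ to hold uniformly for all $E\in\mathcal{E}$; this is precisely what clause \ref{it:basis3} provides, and the rest is routine manipulation of $L^p$-norms.
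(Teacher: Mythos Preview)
Your proof is correct and follows essentially the same approach as the paper. The only cosmetic difference is that, for the inequality $M_p^{\mathcal{E}}f\leq M_p^{\mathcal{E}'}f$, the paper observes that property~\ref{it:basis3} says precisely that $\mathcal{E}\lesssim\mathcal{E}'$ with constant $1+\varepsilon$ for every $\varepsilon>0$, and then simply invokes the first part of the proposition to get $M_p^{\mathcal{E}}f\leq(1+\varepsilon)^{1/p}M_p^{\mathcal{E}'}f$ directly; you instead redo that same computation by hand at the level of a fixed $E$, which amounts to the same thing.
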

\begin{proof}
Let $f\in L^p_{\mathcal{F}}(\Omega)$. Fix $E\in\mathcal{E}$ and pick $F\in\mathcal{F}$ with $E\subseteq F$ and $|F|\leq C|E|$. Since $f\ind_E\leq f\ind_F\in L^p(\Omega)$, we also have $f\ind_E\in L^p(\Omega)$ and hence, $f\in L^p_{\mathcal{E}}(\Omega)$, proving the inclusion. Since also
\[
|E|^{-\frac{1}{p}}\|f\ind_E\|_{L^p(\Omega)}\leq C^{\frac{1}{p}}|F|^{-\frac{1}{p}}\|f\ind_F\|_{L^p(\Omega)},
\]
this proves the first assertion.

For the second assertion, note that $\mathcal{E}\lesssim\mathcal{E}'$ with constant $C=1+\varepsilon$ for all $\varepsilon>0$. Hence, we have
\[
M^\mathcal{E}_p f\leq(1+\varepsilon)^{\frac{1}{p}}M^{\mathcal{E}'}_p f
\]
Letting $\varepsilon\downarrow 0$, we conclude that $M^\mathcal{E}_p f\leq M^{\mathcal{E}'}_p f$. Since $\mathcal{E}'\subseteq\mathcal{E}$ we also have $M^{\mathcal{E}'}_p f\leq M^\mathcal{E}_p f$, proving that $M^\mathcal{E}_p f=M^{\mathcal{E}'}_p f$. Since the latter is a countable supremum over measurable functions, it is a measurable function, as desired.
\end{proof}

\begin{proposition}\label{prop:nonzeromax}
Let $\mathcal{E}$ be a basis of sets in $\Omega$. If $f\in L^1_{\mathcal{E}}(\Omega)$ is non-zero, then $M^\mathcal{E}f>0$ in $\Omega$.
\end{proposition}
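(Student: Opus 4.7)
The plan is to establish the stronger pointwise statement: for every $x \in \Omega$, I will exhibit a single $E \in \mathcal{E}$ with $x \in E$ and $\int_E |f|\,\mathrm{d}x > 0$, which will immediately yield $M^{\mathcal{E}}f(x) \geq \langle f\rangle_{1,E}\ind_E(x) > 0$. Set $A := \{|f| > 0\}$; since $f$ is nonzero, $|A| > 0$. Fix $x \in \Omega$ arbitrary.

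First I would invoke property~\ref{it:basis2}: for each $y \in A$, choose $E_y \in \mathcal{E}$ with $x, y \in E_y$. Then $A \subseteq \bigcup_{y \in A} E_y$, but this family is in general uncountable, so one cannot directly extract an $E_y$ with $|E_y \cap A| > 0$ — the mere containment $y \in E_y$ does not imply $|E_y \cap A| > 0$, as singletons can have measure zero. This is the main obstacle, and it is resolved by exploiting the countable approximation provided by property~\ref{it:basis3}.

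Let $\mathcal{E}'$ be the countable subcollection from property~\ref{it:basis3} (taking, say, $\varepsilon = 1$), and for each $y \in A$ pick $\widetilde{E}_y \in \mathcal{E}'$ with $E_y \subseteq \widetilde{E}_y$. The collection $\mathcal{S} := \{\widetilde{E}_y : y \in A\}$ lies inside $\mathcal{E}'$ and is therefore countable. Since $y \in E_y \subseteq \widetilde{E}_y$ for every $y \in A$, the collection $\mathcal{S}$ still covers $A$, so by countable subadditivity
\[
\sum_{\widetilde{E} \in \mathcal{S}} |\widetilde{E} \cap A| \geq |A| > 0,
\]
which forces some $\widetilde{E}_{y_0} \in \mathcal{S}$ to satisfy $|\widetilde{E}_{y_0} \cap A| > 0$, and hence $\int_{\widetilde{E}_{y_0}} |f|\,\mathrm{d}x > 0$.

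Finally, because $x \in E_{y_0} \subseteq \widetilde{E}_{y_0}$, we conclude $M^{\mathcal{E}} f(x) \geq \langle f\rangle_{1,\widetilde{E}_{y_0}} > 0$. Since $x$ was arbitrary, $M^{\mathcal{E}}f > 0$ everywhere in $\Omega$. The essential trick is that the uncountable family from \ref{it:basis2} can be replaced by a countable family without losing coverage of $A$: property~\ref{it:basis3} provides exactly this replacement at the mild cost of enlarging each $E_y$, which is harmless here since we only need positivity of the integral and no sharp size control.
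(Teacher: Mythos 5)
Your proof is correct. It rests on the same underlying fact as the paper's argument, namely that property~\ref{it:basis2} connects the fixed point $x$ to every point of the support of $f$ through a basis set containing $x$; the paper packages this as Lemma~\ref{lem:basisaltprop} ($\bigcup_{E\in\mathcal{E},\,x\in E}E=\Omega$) and argues by contraposition, concluding that $M^\mathcal{E}f(x)=0$ forces $f$ to vanish a.e.\ on that union, hence on $\Omega$. Where you genuinely differ is in the execution: you argue directly, and you invoke the countable subcollection $\mathcal{E}'$ from property~\ref{it:basis3} to replace the uncountable family $\{E_y\}_{y\in A}$ by a countable cover of $A=\{|f|>0\}$, so that countable subadditivity produces a single $\widetilde{E}_{y_0}\ni x$ with $\int_{\widetilde{E}_{y_0}}|f|\,\mathrm{d}x>0$. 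This explicitly handles the measure-theoretic point that the paper's one-line contrapositive leaves implicit: knowing $f=0$ a.e.\ on each $E$ containing $x$ does not immediately give $f=0$ a.e.\ on their (possibly uncountable) union, and patching that gap in the paper's proof requires exactly the kind of appeal to $\mathcal{E}'$ that you make. So your route is marginally longer but self-contained and slightly more careful, at no real cost; the paper's route is shorter and reuses Lemma~\ref{lem:basisaltprop}, which it needs anyway.
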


\begin{lemma}\label{lem:basisaltprop}
Let $\mathcal{E}$ be a collection of measurable subsets of $\mathcal{E}$ with $0<|E|<\infty$. Then it satisfies properties \ref{it:basis1} and \ref{it:basis2} if and only if for all $x\in\Omega$ we have
\begin{equation}\label{eq:basisaltprop}
\bigcup_{\substack{E\in\mathcal{E}\\ x\in E}}E=\Omega.
\end{equation}
\end{lemma}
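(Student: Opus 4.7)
The plan is to prove the equivalence by handling each implication separately, both of which reduce to a direct unpacking of the definitions.

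For the forward direction, I assume properties \ref{it:basis1} and \ref{it:basis2} and fix an arbitrary $x\in\Omega$. To show \eqref{eq:basisaltprop}, I only need to verify that every $y\in\Omega$ lies in some $E\in\mathcal{E}$ containing $x$. But this is exactly what property \ref{it:basis2} provides applied to the pair $(x,y)$: there is an $E\in\mathcal{E}$ with $\{x,y\}\subseteq E$, which places $y$ in the union on the left-hand side of \eqref{eq:basisaltprop}. The reverse inclusion is automatic since every $E$ in the union is a subset of $\Omega$.

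For the backward direction, I assume \eqref{eq:basisaltprop} holds for all $x\in\Omega$ and derive both \ref{it:basis1} and \ref{it:basis2}. Property \ref{it:basis1} follows by picking any single $x_0\in\Omega$ (noting that $\Omega$ is nonempty since it is covered by sets of positive measure) and observing that
\[
\Omega=\bigcup_{\substack{E\in\mathcal{E}\\ x_0\in E}}E\subseteq\bigcup_{E\in\mathcal{E}}E\subseteq\Omega.
\]
Property \ref{it:basis2} follows by fixing $x_1,x_2\in\Omega$ and applying \eqref{eq:basisaltprop} to $x=x_1$: since $x_2\in\Omega$, there must exist $E\in\mathcal{E}$ with $x_1\in E$ and $x_2\in E$, which is exactly what \ref{it:basis2} demands.

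There is no real obstacle here; the statement is essentially a reformulation, and the only subtle point worth noting is the implicit nonemptiness of $\Omega$ in the backward direction, which is guaranteed by the assumption that $\mathcal{E}$ consists of sets of positive (hence nonzero) measure. The proof will be short and purely set-theoretic, requiring no measure-theoretic or topological input beyond the hypotheses.
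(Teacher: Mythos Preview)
Your proof is correct and follows essentially the same approach as the paper: both directions are handled by directly unpacking the definitions, using property \ref{it:basis2} applied to a pair $(x,y)$ for the forward direction and reading off \ref{it:basis1} and \ref{it:basis2} from \eqref{eq:basisaltprop} for the backward direction. Your version is slightly more explicit (noting the reverse inclusion and the nonemptiness of $\Omega$), but the argument is the same.
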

\begin{proof}
First assume \eqref{eq:basisaltprop}. Then \ref{it:basis1} is immediate. For \ref{it:basis2}, let $x_1,x_1\in\Omega$. Since $x_2\in\Omega=\bigcup_{\substack{E\in\mathcal{E}\\ x_1\in E}}E$, there is a set $E\in\mathcal{E}$ so that $x_1,x_2\in E$, proving condition \ref{it:basis2}.

For the converse, assume $\mathcal{E}$ is a basis of sets. Let $x,y\in\Omega$. By \ref{it:basis2} there is an $E\in\mathcal{E}$ such that $x,y\in E$. Hence, we have $y\in \bigcup_{\substack{E\in\mathcal{E}\\ x\in E}}E$, and thus, \eqref{eq:basisaltprop} holds. This proves the result.
\end{proof}

\begin{proof}[Proof of Proposition~\ref{prop:nonzeromax}]
We prove this by contraposition. If $M^\mathcal{E}f(x)=0$ at a point $x\in\Omega$,
then
\[
\langle f\rangle_{1,E}=0
\]
for all $E\in\mathcal{E}$ containing $x$. Hence, $f\equiv 0$ on $\bigcup_{\substack{E\in\mathcal{E}\\ x\in E}}E$. Since this set is equal to $\Omega$ by Lemma~\ref{lem:basisaltprop}, the assertion follows.
\end{proof}

We provide several examples of bases.
\begin{example}[Maximal operators in $\R^d$]
We consider the space $\R^d$ with the Lebesgue measure $|\cdot|$. Then the bases of all balls $\mathcal{B}$ is a basis. Indeed, the countable subcollection $\mathcal{B}'\subseteq\mathcal{B}$ of balls with rational centers and rational radii satisfies property \ref{it:basis3}.

Similarly, the collection of cubes whose sides are parallel to the coordinate axes $\mathcal{Q}$ in $\R^d$ is a basis, where we can take $\mathcal{Q}'\subseteq\mathcal{Q}$ to be the collection of cubes with rational corners and rational side lengths.

In this case we have $\mathcal{B}\sim\mathcal{Q}$. Moreover, the associated maximal operator $M^\mathcal{B}\eqsim M^\mathcal{Q}$ is the Hardy-Littlewood maximal operator, and $L^p_{\mathcal{Q}}(\R^d)=L^p_{\mathcal{B}}(\R^d)=L^p_{\loc}(\R^d)$.

An analogous argument shows that the collection of rectangles whose sides are parallel to the coordinate axes $\mathcal{R}$ in $\R^d$ is also a basis which satisfies $\mathcal{Q},\mathcal{B}\lesssim\mathcal{R}$. Moreover, the associated maximal operator $M^\mathcal{R}$ is the strong maximal operator.
\end{example}

\begin{example}[Dyadic grids]
We again consider $\R^d$ with the Lebesgue measure $|\cdot|$. Then any of the standard dyadic grids \[
\D:=\bigcup_{k\in\Z}\{2^k([0,1)^d+m):m\in\Z^d\},
\]
in $\R^d$ are \emph{not} a basis of sets in $\R^d$, since they fail property \ref{it:basis2}. For example, in the case $d=1$, for $x_1,x_2\in\R$ with $x_1<0$ and $x_2>0$ there is no set in $\mathcal{E}$ that contains both $x_1$ and $x_2$.

To remedy this, we can instead choose a dyadic grid as in \cite{LN15}, which does satisfy property \ref{it:basis2}. Alternatively, if we take the standard dyadic grids as above, then, by the $3^d$-lattice theorem, there are $3^d$ translates $(\D_\alpha)_{\alpha=1}^{3^d}$ of $\D$ such that
\[
\mathcal{Q}\sim\bigcup_{\alpha=1}^{3^d}\D_\alpha,
\]
where $\lesssim$ holds with constant $C=6^d$, see \cite{LN15}, and this is a again a basis of sets.

A similar result holds in general spaces of homogeneous type $(\Omega,d,|\cdot|)$, which is a space $\Omega$ with a quasi-metric $d$ and a measure $|\cdot|$, where the measure $|\cdot|$ is doubling with respect to the balls generated by $d$. Denoting the collection of balls in $\Omega$ by $\mathcal{B}$, it was shown in \cite[Theorem 4.1]{HK12} that there exist a finite number of dyadic grids $(\D_\alpha)_{\alpha=1}^N$ in $\Omega$ with
\[
\mathcal{B}\sim\bigcup_{\alpha=1}^{N}\D_\alpha,
\]
where $N$ and the constants in this equivalence depend on the quasi-metric constant and the doubling constant of $(\Omega,d,|\cdot|)$.

We can also do the following. If we pick a cube $Q_0\in\R^d$, then we can consider $\Omega=Q_0$ with the induced Lebesgue measure. We let $\D(Q_0)$ denote the dyadic grid inside of $Q_0$, i.e., we add $Q_0$ to the collection, subdivide it into $2^d$ new cubes obtained by splitting each side of $Q_0$ into half, and then repeat this process to all these cubes. Since any two cubes in $\D(Q_0)$ are contained in $Q_0$, the collection $\mathcal{E}=\D(Q_0)$ satisfies property \ref{it:basis2}, and hence, is a basis of sets in $Q_0$.
\end{example}

\begin{remark}
If we replace the condition \ref{it:basis2} by the weaker condition that for every $E_1,E_2\in\mathcal{E}$ \emph{such that $E_1\cap E_2\neq\emptyset$} there is an $E\in\mathcal{E}$ with $E_1\cup E_1\subseteq E$, then the standard dyadic grids do fall within this framework. However, with this weaker condition, Proposition~\ref{prop:nonzeromax} is no longer true. For example, using the standard dyadic grid in $d=1$, then for any function $f_1\in L^1_{\loc}(\R)$ that is supported on the positive reals, we will have $M^\mathcal{D}f=0$ on the negative reals. This is a problem in the extrapolation arguments, because it means that the Rubio de Francia algorithm applied to a non-zero function is not necessarily a weight, and hence, the standard extrapolation argument fails.
\end{remark}

\subsection{Muckenhoupt weights in a two-weight off-diagonal setting on general bases of sets}

\begin{definition}
Let $\alpha\in\R$ and suppose $r_1,r_2\in(0,\infty)$, $s_1\in(0,\infty]$, $\frac{1}{s_2}\in\R$ satisfy $\frac{1}{r_1}>\frac{1}{s_1}$, $\frac{1}{r_2}>\frac{1}{s_2}$
and
\[
\frac{1}{r_2}-\frac{1}{r_1}=\frac{1}{s_2}-\frac{1}{s_1}=\alpha.
\]
We note that the condition $\frac{1}{s_2}\in\R$ can be relevant in the situation where $\alpha\leq 0$. The geometry behind this is that the interval $[\frac{1}{s_2},\frac{1}{r_2}]$ is the translate by $\alpha$ of the interval $[\frac{1}{s_1},\frac{1}{r_1}]$. When $\alpha\leq 0$, we allow for the case where this translation places part of the interval in the negative reals, which corresponds to $\frac{1}{s_2}<0$. In the case $\alpha\geq 0$ this is not an issue, since then $\frac{1}{s_2}\geq\frac{1}{s_1}\geq 0$ and hence, $s_2\in(0,\infty]$.

Let $p_1,p_2\in(0,\infty]$ with $\frac{1}{p_1}\in[\frac{1}{s_1},\frac{1}{r_1}]$, $\frac{1}{p_2}\in[\frac{1}{s_2},\frac{1}{r_2}]$ and $\frac{1}{p_2}-\frac{1}{p_1}=\alpha$. Let $(\Omega,|\cdot|)$ be a measure space and let $\mathcal{E}$ be a basis of sets in $\Omega$. Then, for a pair of weights $w_1$, $w_2$ on $\Omega$, we write $(w_1,w_2)\in A_{\vec{p},(\vec{r},\vec{s})}(\mathcal{E})$ when
\[
[w_1,w_2]^\mathcal{E}_{\vec{p},(\vec{r},\vec{s})}:=\sup_{E\in\mathcal{E}}\langle w_1^{-1}\rangle_{\frac{1}{\frac{1}{r_1}-\frac{1}{p_1}},E}\langle w_2\rangle_{\frac{1}{\frac{1}{p_2}-\frac{1}{s_2}},E}<\infty.
\]
Note that in this case we must have $w_1^{-1}\in L^{\frac{1}{\frac{1}{r_1}-\frac{1}{p_1}}}_{\mathcal{E}}(\Omega)$ and $w_2\in L^{\frac{1}{\frac{1}{p_2}-\frac{1}{s_2}}}_{\mathcal{E}}(\Omega)$.

\begin{proposition}
Let $\vec{p}$, $\vec{r}$, $\vec{s}$ be as above and let $\mathcal{E}$, $\mathcal{F}$ be two bases of sets in $\Omega$ with $\mathcal{E}\lesssim\mathcal{F}$. Then $A_{\vec{p},(\vec{r},\vec{s})}(\mathcal{F})\subseteq A_{\vec{p},(\vec{r},\vec{s})}(\mathcal{E})$ and for all $(w_1,w_2)\in A_{\vec{p},(\vec{r},\vec{s})}(\mathcal{F})$ we have

\[
[w_1,w_2]^\mathcal{E}_{\vec{p},(\vec{r},\vec{s})}\lesssim_{\vec{r},\vec{s}}[w_1,w_2]^\mathcal{F}_{\vec{p},(\vec{r},\vec{s})}.
\]
In particular, if $\mathcal{E}\sim\mathcal{F}$, then $A_{\vec{p},(\vec{r},\vec{s})}(\mathcal{E})=A_{\vec{p},(\vec{r},\vec{s})}(\mathcal{F})$ with 
\[
[w_1,w_2]^\mathcal{E}_{\vec{p},(\vec{r},\vec{s})}\eqsim_{\vec{r},\vec{s}} [w_1,w_2]^\mathcal{F}_{\vec{p},(\vec{r},\vec{s})}.
\]
\end{proposition}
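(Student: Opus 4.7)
The argument is a straightforward adaptation of Proposition~\ref{prop:mmeasurable} to the two-weight setting, reducing to a pointwise monotonicity inequality for the averages $\langle\,\cdot\,\rangle_{p,E}$. First I would fix $(w_1,w_2)\in A_{\vec{p},(\vec{r},\vec{s})}(\mathcal{F})$ and an arbitrary $E\in\mathcal{E}$, and use the hypothesis $\mathcal{E}\lesssim\mathcal{F}$ to pick $F\in\mathcal{F}$ with $E\subseteq F$ and $|F|\leq C|E|$, where $C$ is the constant coming from $\mathcal{E}\lesssim\mathcal{F}$. For any $g\in L^0(\Omega)$ with $g\geq 0$ and any $p\in(0,\infty]$, the ideal estimate $g\ind_E\leq g\ind_F$ together with the rescaling $|E|^{-1/p}\leq C^{1/p}|F|^{-1/p}$ gives
\[
\langle g\rangle_{p,E}\leq C^{\frac{1}{p}}\langle g\rangle_{p,F}.
\]
In particular, $w_1^{-1}$ and $w_2$ lie in the appropriate $L^p_{\mathcal{E}}(\Omega)$ spaces, so the quantity $[w_1,w_2]^{\mathcal{E}}_{\vec{p},(\vec{r},\vec{s})}$ is well-defined as a supremum.

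Next I would apply this inequality twice: once to $g=w_1^{-1}$ with exponent $p=\frac{1}{\frac{1}{r_1}-\frac{1}{p_1}}$, and once to $g=w_2$ with exponent $p=\frac{1}{\frac{1}{p_2}-\frac{1}{s_2}}$. Multiplying the two estimates yields
\[
\langle w_1^{-1}\rangle_{\frac{1}{\frac{1}{r_1}-\frac{1}{p_1}},E}\,\langle w_2\rangle_{\frac{1}{\frac{1}{p_2}-\frac{1}{s_2}},E}\leq C^{\left(\frac{1}{r_1}-\frac{1}{p_1}\right)+\left(\frac{1}{p_2}-\frac{1}{s_2}\right)}\langle w_1^{-1}\rangle_{\frac{1}{\frac{1}{r_1}-\frac{1}{p_1}},F}\,\langle w_2\rangle_{\frac{1}{\frac{1}{p_2}-\frac{1}{s_2}},F}.
\]
A short arithmetic using $\frac{1}{p_2}-\frac{1}{p_1}=\alpha$ and $\alpha=\frac{1}{s_2}-\frac{1}{s_1}$ collapses the exponent of $C$:
\[
\left(\frac{1}{r_1}-\frac{1}{p_1}\right)+\left(\frac{1}{p_2}-\frac{1}{s_2}\right)=\frac{1}{r_1}-\frac{1}{s_2}+\alpha=\frac{1}{r_1}-\frac{1}{s_1},
\]
which depends only on $\vec{r},\vec{s}$ and so may be absorbed into $\lesssim_{\vec{r},\vec{s}}$.

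Finally, taking the supremum over $E\in\mathcal{E}$ (and bounding the supremum over the selected $F$'s by the full supremum over $\mathcal{F}$) gives
\[
[w_1,w_2]^{\mathcal{E}}_{\vec{p},(\vec{r},\vec{s})}\leq C^{\frac{1}{r_1}-\frac{1}{s_1}}\,[w_1,w_2]^{\mathcal{F}}_{\vec{p},(\vec{r},\vec{s})},
\]
which is the desired inclusion and inequality. The ``in particular'' statement about $\mathcal{E}\sim\mathcal{F}$ follows by applying the inequality in both directions. There is no genuine obstacle here; the only thing to verify carefully is the exponent cancellation, which is why the conditions relating $\vec{r}$, $\vec{s}$, $\vec{p}$, and $\alpha$ are tailored precisely so that the powers of $|E|$ and $|F|$ combine into the single scaling factor $|E|^{-(\frac{1}{r_1}-\frac{1}{s_1})}$ appearing in the definition of the weight class.
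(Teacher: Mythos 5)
Your proof is correct and follows essentially the same route as the paper's: pick $F\supseteq E$ with $|F|\leq C|E|$, use monotonicity of the localized norms together with $|E|^{-1/p}\leq C^{1/p}|F|^{-1/p}$, and observe that the relations $\frac{1}{p_2}-\frac{1}{p_1}=\alpha=\frac{1}{s_2}-\frac{1}{s_1}$ collapse the total exponent of $C$ to $\frac{1}{r_1}-\frac{1}{s_1}$. The only cosmetic difference is that the paper rewrites the product of averages as $|E|^{-(\frac{1}{r_1}-\frac{1}{s_1})}$ times the product of norms before comparing, whereas you compare each average factor separately; both give the same constant $C^{\frac{1}{r_1}-\frac{1}{s_1}}$.
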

\begin{proof}
Let $E\in\mathcal{E}$ and pick $F\in\mathcal{F}$ with $E\subseteq F$ and $|F|\leq C|E|$. Then
\begin{align*}
\langle w_1^{-1}\rangle_{\frac{1}{\frac{1}{r_1}-\frac{1}{p_1}},E}\langle w_2&\rangle_{\frac{1}{\frac{1}{p_2}-\frac{1}{s_2}},E}
=|E|^{-\big(\frac{1}{r_1}-\frac{1}{s_1}\big)}\|w_1^{-1}\ind_E\|_{L^{\frac{1}{\frac{1}{r_1}-\frac{1}{p_1}}}(\Omega)}\|w_2\ind_E\|_{L^{\frac{1}{\frac{1}{p_2}-\frac{1}{s_2}}}(\Omega)}\\
&\leq C^{\frac{1}{r_1}-\frac{1}{s_1}}|F|^{-\big(\frac{1}{r_1}-\frac{1}{s_1}\big)}\|w_1^{-1}\ind_F\|_{L^{\frac{1}{\frac{1}{r_1}-\frac{1}{p_1}}}(\Omega)}\|w_2\ind_F\|_{L^{\frac{1}{\frac{1}{p_2}-\frac{1}{s_2}}}(\Omega)}\\
&\leq C^{\frac{1}{r_1}-\frac{1}{s_1}}[w_1,w_2]^\mathcal{F}_{\vec{p},(\vec{r},\vec{s})}.
\end{align*}
Taking a supremum over all $E\in\mathcal{E}$ proves the assertion.
\end{proof}

For a weight $w$ in $\Omega$ we write $w\in A_{\vec{p},(\vec{r},\vec{s})}(\mathcal{E})$ when
\[
[w]^\mathcal{E}_{\vec{p},(\vec{r},\vec{s})}:=[w,w]^{\mathcal{E}}_{\vec{p},(\vec{r},\vec{s})}<\infty.
\]
\end{definition}

When $\alpha=0$, we have $p:=p_1=p_2$, $r:=r_1=r_2$, $s:=s_1=s_2$ and write $w\in A_{p,(r,s)}(\mathcal{E})$ when
\[
[w]^\mathcal{E}_{p,(r,s)}:=[w]^\mathcal{E}_{\vec{p},(\vec{r},\vec{s})}=\sup_{E\in\mathcal{E}}\langle w^{-1}\rangle_{\frac{1}{\frac{1}{r}-\frac{1}{p}},E}\langle w\rangle_{\frac{1}{\frac{1}{p}-\frac{1}{s}},E}<\infty.
\]

We also note that with this notation we have
\[
[w]_{\vec{p},(\vec{r},\vec{s})}=[w]_{p_1,(r_1,s_1)}=[w]_{p_2,(r_2,s_2)}.
\]

We sometimes also write $[w]^{\mathcal{E}}_{p}:=[w]^\mathcal{E}_{p,(1,\infty)}$ in the case where $r=1$ and $s=\infty$. We now have the symmetry
\begin{equation}\label{eq:wfullrangesym}
[w]^\mathcal{E}_p=[w^{-1}]_{p'}^\mathcal{E}.
\end{equation}
When $p=1$, the condition $w\in A_1(\mathcal{E})$ is equivalent to there being a constant $C>0$ such that
\[
M^\mathcal{E}w\leq Cw,
\]
and the smallest possible constant in this inequality is equal to $[w]_1^\mathcal{E}$. We also point out that by the symmetry \eqref{eq:wfullrangesym} we have $w\in A_\infty(\mathcal{E})$ if and only if $w^{-1}\in A_1(\mathcal{E})$. Since the notation $A_\infty(\mathcal{E})$ can be easily be confused with the class $A_\infty=\bigcup_{w\in A_p}A_p$ when we are in the classical setting of $\R^d$ with the basis of cubes, we often opt to write $w^{-1}\in A_1(\mathcal{E})$ instead of $w\in A_\infty(\mathcal{E})$.

If $w\in A_1(\mathcal{E})$, $v^{-1}\in A_1(\mathcal{E})$, then for $p\in[1,\infty]$ we have $w^{\frac{1}{p}}v^{\frac{1}{p'}}\in A_p(\mathcal{E})$ with
\[
[w^{\frac{1}{p}} v^{\frac{1}{p'}}]^\mathcal{E}_p\leq\big([w]_1^\mathcal{E}\big)^{\frac{1}{p}}\big([v]_\infty^\mathcal{E}\big)^{\frac{1}{p'}}.
\]
We wish to explore this interpolation result in the limited range case. We note that the symmetry \eqref{eq:wfullrangesym} now takes the form
\[
[w]^{\mathcal{E}}_{p,(r,s)}
=[w^{-1}]^\mathcal{E}_{\frac{1}{\frac{1}{r}+\frac{1}{s}-\frac{1}{p}},(r,s)},
\]
and we have that $w\in A_{r,(r,s)}$ if and only if $w^{-1}\in A_{s,(r,s)}$, if and only if there is a constant $C>0$ such that
\[
M^\mathcal{E}_{\frac{1}{\frac{1}{r}-\frac{1}{s}}}w\leq C w,
\]
and the smallest possible constant is $C=[w]^\mathcal{E}_{r,(r,s)}=[w^{-1}]_{s,(r,s)}$. We then have the following result:
\begin{proposition}\label{prop:limitedrangeweightinterpolation}
Let $w\in A_{r,(r,s)}(\mathcal{E})$, $v\in A_{s,(r,s)}(\mathcal{E})$. Then for $p\in[r,s]$ we have
\[
w^{\frac{\frac{1}{p}-\frac{1}{s}}{\frac{1}{r}-\frac{1}{s}}} v^{\frac{\frac{1}{r}-\frac{1}{p}}{\frac{1}{r}-\frac{1}{s}}}\in A_{p,(r,s)}(\mathcal{E})
\]
with
\[
\big[w^{\frac{\frac{1}{p}-\frac{1}{s}}{\frac{1}{r}-\frac{1}{s}}} v^{\frac{\frac{1}{r}-\frac{1}{p}}{\frac{1}{r}-\frac{1}{s}}}\big]_{p,(r,s)}^\mathcal{E}
\leq \big([w]^\mathcal{E}_{r,(r,s)}\big)^{\frac{\frac{1}{p}-\frac{1}{s}}{\frac{1}{r}-\frac{1}{s}}}
\big([v]^\mathcal{E}_{s,(r,s)}\big)^{\frac{\frac{1}{r}-\frac{1}{p}}{\frac{1}{r}-\frac{1}{s}}}.
\]
\end{proposition}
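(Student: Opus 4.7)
The plan is to prove both factors in the definition of $[w^\theta v^{1-\theta}]^{\mathcal{E}}_{p,(r,s)}$ directly by two applications of H\"older's inequality, exploiting that at the endpoints $p=r$ and $p=s$ one of the two $L^q$ exponents in the definition becomes $L^\infty$.

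Set $\alpha_p := \frac{1}{r}-\frac{1}{p}$, $\beta_p := \frac{1}{p}-\frac{1}{s}$, so that $\alpha_p+\beta_p = \frac{1}{r}-\frac{1}{s}$, and write $\theta = \beta_p/(\alpha_p+\beta_p)$, $1-\theta = \alpha_p/(\alpha_p+\beta_p)$. Fix $E\in\mathcal{E}$. For the first factor, the pointwise bound $w^{-\theta}\ind_E\leq \|w^{-1}\ind_E\|_\infty^\theta\ind_E$, combined with $\|v^{-(1-\theta)}\ind_E\|_{L^{1/\alpha_p}(\Omega)} = \|v^{-1}\ind_E\|_{L^{1/(\alpha_p+\beta_p)}(\Omega)}^{1-\theta}$ (which uses exactly the identity $(1-\theta)/\alpha_p = 1/(\alpha_p+\beta_p)$), yields
\[
\langle w^{-\theta}v^{-(1-\theta)}\rangle_{\frac{1}{\alpha_p},E}
\leq \langle w^{-1}\rangle_{\infty,E}^{\theta}\,\langle v^{-1}\rangle_{\frac{1}{\alpha_p+\beta_p},E}^{\,1-\theta},
\]
where the required matching of the $|E|$-exponents follows from $(\alpha_p+\beta_p)(1-\theta)=\alpha_p$. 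For the second factor, the symmetric trick with $v^{1-\theta}\ind_E\leq \|v\ind_E\|_\infty^{1-\theta}\ind_E$ together with the identity $\theta/\beta_p=1/(\alpha_p+\beta_p)$ gives
\[
\langle w^{\theta}v^{1-\theta}\rangle_{\frac{1}{\beta_p},E}
\leq \langle w\rangle_{\frac{1}{\alpha_p+\beta_p},E}^{\,\theta}\,\langle v\rangle_{\infty,E}^{\,1-\theta}.
\]

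Multiplying the two inequalities and regrouping factors according to whether they involve $w$ or $v$ produces
\[
\langle(w^{\theta}v^{1-\theta})^{-1}\rangle_{\frac{1}{\alpha_p},E}\langle w^{\theta}v^{1-\theta}\rangle_{\frac{1}{\beta_p},E}
\leq \bigl([w]^{\mathcal{E}}_{r,(r,s)}\bigr)^{\theta}\bigl([v]^{\mathcal{E}}_{s,(r,s)}\bigr)^{1-\theta},
\]
where I have used the definitions $[w]^{\mathcal{E}}_{r,(r,s)} = \sup_E\langle w^{-1}\rangle_{\infty,E}\langle w\rangle_{1/(\alpha_p+\beta_p),E}$ and $[v]^{\mathcal{E}}_{s,(r,s)}=\sup_E\langle v^{-1}\rangle_{1/(\alpha_p+\beta_p),E}\langle v\rangle_{\infty,E}$, which come from the general definition evaluated at $p=r$ (so $\alpha_r=0$) and $p=s$ (so $\beta_s=0$). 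Taking the supremum over $E\in\mathcal{E}$ delivers the claim.

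No step presents a serious obstacle; the whole argument is essentially a careful bookkeeping of H\"older exponents. The only point that deserves care is checking that the $|E|$-powers cancel correctly in each factor, which amounts to the identities $(\alpha_p+\beta_p)(1-\theta)=\alpha_p$ and $(\alpha_p+\beta_p)\theta=\beta_p$; these are immediate from the definition of $\theta$. The endpoint cases $p=r$ and $p=s$ are trivial since then $\theta\in\{0,1\}$ and the claimed inequality reduces to a tautology, so one may implicitly assume $p\in(r,s)$ throughout.
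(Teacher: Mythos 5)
Your proof is correct and is essentially the paper's argument: the paper also splits $\frac{1}{p}$ convexly between $\frac{1}{r}$ and $\frac{1}{s}$ and proves the same two per-set estimates, $\langle w^{1-\theta}v^{\theta}\rangle_{\frac{1}{\frac{1}{p}-\frac{1}{s}},E}\leq\langle w\rangle^{1-\theta}_{\frac{1}{\frac{1}{r}-\frac{1}{s}},E}\langle v\rangle^{\theta}_{\infty,E}$ and its dual for the inverse weights, via H\"older with an $L^\infty$ factor (your pointwise sup bound is exactly that), then multiplies and takes the supremum over $E$. The only difference is notational (your $\theta$ is the paper's $1-\theta$), and your explicit treatment of the endpoint cases $p\in\{r,s\}$ is a harmless addition.
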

To make sense of the exponents, we consider the geometry of the situation. The interval $[\frac{1}{s},\frac{1}{r}]$ is mapped to $[0,1]$ through the affine transformation $\phi$ given by
\[
\phi(t):=\frac{t-\frac{1}{s}}{\frac{1}{r}-\frac{1}{s}}.
\]
For $p\in[r,s]$, we then have
\[
\phi\Big(\frac{1}{p}\Big)=\frac{\frac{1}{p}-\frac{1}{s}}{\frac{1}{r}-\frac{1}{s}},\quad 1-\phi\Big(\frac{1}{p}\Big)=\frac{\frac{1}{r}-\frac{1}{p}}{\frac{1}{r}-\frac{1}{s}},
\]
which are the exponents appearing in the proposition.
\begin{proof}[Proof of Proposition~\ref{prop:limitedrangeweightinterpolation}]
Write
\[
\theta:=\frac{\frac{1}{r}-\frac{1}{p}}{\frac{1}{r}-\frac{1}{s}}\in[0,1]
\]
so that
\[
\frac{1}{p}=\frac{1-\theta}{r}+\frac{\theta}{s}.
\]
Since then
\[
\frac{1}{p}-\frac{1}{s}=(1-\theta)\big(\frac{1}{r}-\frac{1}{s}\big),\quad \frac{1}{r}-\frac{1}{p}=\theta\big(\frac{1}{r}-\frac{1}{s}\big)
\]
for any $E\in\mathcal{E}$ we have
\[
\langle w^{1-\theta} v^\theta\rangle_{\frac{1}{\frac{1}{p}-\frac{1}{s}},E}\leq\langle w\rangle^{1-\theta}_{\frac{1}{\frac{1}{r}-\frac{1}{s}},E}\langle v\rangle_{\infty,E}^\theta
\]
and
\[
\langle w^{-(1-\theta)} v^{-\theta}\rangle_{\frac{1}{\frac{1}{r}-\frac{1}{p}},E}\leq\langle w^{-1}\rangle^{1-\theta}_{\infty,E}\langle v^{-1}\rangle^\theta_{\frac{1}{\frac{1}{r}-\frac{1}{s}},E}.
\]
Combining these estimates yields
\[
\langle w^{1-\theta} v^\theta\rangle_{\frac{1}{\frac{1}{p}-\frac{1}{s}},E}\langle w^{-(1-\theta)} v^{-\theta}\rangle_{\frac{1}{\frac{1}{r}-\frac{1}{p}},E}
\leq \big([w]_{r,(r,s)}^\mathcal{E}\big)^{1-\theta}\big([v]_{s,(r,s)}^\mathcal{E}\big)^\theta.
\]
Taking a supremum over all $E\in\mathcal{E}$ proves the result.
\end{proof}

Finally, we wish to point out that in the literature, the class $A_{p,(r,s)}(\mathcal{E})$ is often denoted as the class of weights $w$ with $w^p\in A_{\frac{p}{r}}\cap RH_{(s/p)'}$. The equivalence of the two first appeared in \cite{JN91} and in our situation takes the following form:
\begin{proposition}\label{prop:jnreverseholder}
Let $r\in(0,\infty)$, $s\in(0,\infty]$ with $r<s$, and $p\in[r,s]$. Let $\mathcal{E}$ be a basis of sets in $\Omega$. Then the following are equivalent:
\begin{enumerate}[(i)]
\item\label{it:jnequiv1} $w\in A_{p,(r,s)}(\mathcal{E})$;
\item\label{it:jnequiv2} $w\in A_{p,(r,\infty)}(\mathcal{E})$ and there is a $C\geq 1$ such that for all $E\in\mathcal{E}$ we have
\[
\langle w\rangle_{\frac{1}{\frac{1}{p}-\frac{1}{s}},E}\leq C\langle w\rangle_{p,E}.
\]
\end{enumerate}
Moreover, if $C$ is the optimal constant in \ref{it:jnequiv2}, then we have
\begin{equation}\label{eq:jnequiv1}
\max(C,[w]^\mathcal{E}_{p,(r,\infty)})\leq[w]^\mathcal{E}_{p,(r,s)}\leq C[w]^\mathcal{E}_{p,(r,\infty)}.
\end{equation}
\end{proposition}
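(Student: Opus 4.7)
The strategy is to reduce both implications to a single Hölder-type identity plus an elementary nesting observation. Abbreviate $a := \frac{1}{\frac{1}{r}-\frac{1}{p}}$ and $b := \frac{1}{\frac{1}{p}-\frac{1}{s}}$, so that the two averages defining $[w]^\mathcal{E}_{p,(r,s)}$ are $\langle w^{-1}\rangle_{a,E}$ and $\langle w\rangle_{b,E}$, while those defining $[w]^\mathcal{E}_{p,(r,\infty)}$ are $\langle w^{-1}\rangle_{a,E}$ and $\langle w\rangle_{p,E}$. The reverse-Hölder-type assertion of \ref{it:jnequiv2} then amounts to saying that $\langle w\rangle_{b,E}$ and $\langle w\rangle_{p,E}$ are comparable along $\mathcal{E}$.

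The first key ingredient I would establish is the pointwise-in-$E$ lower bound
\[
1 \leq \langle w^{-1}\rangle_{a,E}\,\langle w\rangle_{p,E}, \qquad E \in \mathcal{E}.
\]
This is proved by applying Hölder's inequality with the conjugate pair $(q_1,q_2) = (a/r, p/r)$ to the trivial decomposition $1 = w^{-r}\cdot w^r$. Both exponents lie in $[1,\infty]$ since $p \geq r$ and $\frac{1}{a} = \frac{1}{r}-\frac{1}{p}\leq \frac{1}{r}$, and they are conjugate because $\frac{r}{a}+\frac{r}{p}=r\bigl(\frac{1}{r}-\frac{1}{p}\bigr)+\frac{r}{p}=1$. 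Hölder gives
\[
|E| \;=\; \int_E w^{-r}\,w^r \;\leq\; \Bigl(\int_E w^{-a}\Bigr)^{r/a}\Bigl(\int_E w^{p}\Bigr)^{r/p} \;=\; |E|\,\langle w^{-1}\rangle_{a,E}^{\,r}\,\langle w\rangle_{p,E}^{\,r},
\]
after which taking $r$-th roots yields the claim. Degenerate cases $p=r$ (so $a=\infty$) or $p=\infty$ are covered by the usual $(\infty,1)$ or $(1,\infty)$ Hölder pairing with essential suprema. The second ingredient is the trivial nesting $\langle w\rangle_{p,E} \leq \langle w\rangle_{b,E}$, which follows from Jensen since $\frac{1}{b}=\frac{1}{p}-\frac{1}{s}\leq\frac{1}{p}$ gives $b\geq p$ (again including the $b=\infty$ case).

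With these in hand, the implication \ref{it:jnequiv1}$\Rightarrow$\ref{it:jnequiv2} is immediate: nesting yields $\langle w^{-1}\rangle_{a,E}\langle w\rangle_{p,E} \leq \langle w^{-1}\rangle_{a,E}\langle w\rangle_{b,E} \leq [w]^\mathcal{E}_{p,(r,s)}$, so $[w]^\mathcal{E}_{p,(r,\infty)} \leq [w]^\mathcal{E}_{p,(r,s)}$; and combining the \ref{it:jnequiv1} bound with the core inequality in the form $\langle w^{-1}\rangle_{a,E}\geq 1/\langle w\rangle_{p,E}$ gives
\[
\langle w\rangle_{b,E} \;\leq\; \frac{[w]^\mathcal{E}_{p,(r,s)}}{\langle w^{-1}\rangle_{a,E}} \;\leq\; [w]^\mathcal{E}_{p,(r,s)}\,\langle w\rangle_{p,E},
\]
so the optimal constant $C$ in \ref{it:jnequiv2} satisfies $C\leq [w]^\mathcal{E}_{p,(r,s)}$. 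The reverse implication \ref{it:jnequiv2}$\Rightarrow$\ref{it:jnequiv1} is a one-line multiplication: the reverse-Hölder bound applied inside the $A_{p,(r,s)}$ product gives $\langle w^{-1}\rangle_{a,E}\langle w\rangle_{b,E} \leq C\langle w^{-1}\rangle_{a,E}\langle w\rangle_{p,E} \leq C[w]^\mathcal{E}_{p,(r,\infty)}$, and taking the supremum yields $[w]^\mathcal{E}_{p,(r,s)} \leq C[w]^\mathcal{E}_{p,(r,\infty)}$. Combining the two sides produces exactly \eqref{eq:jnequiv1}.

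The only real technical obstacle is verifying the core Hölder estimate cleanly across all the admissible parameter regimes; once the endpoint cases $p\in\{r,s,\infty\}$ are interpreted via essential suprema, everything else is bookkeeping. The ``$RH$''-flavor of the condition $\langle w\rangle_{b,E}\lesssim \langle w\rangle_{p,E}$ thus emerges as exactly the gap between the $A_{p,(r,\infty)}$ and $A_{p,(r,s)}$ constants — which is the content of the Johnson--Neugebauer equivalence in this generality.
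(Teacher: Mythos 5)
Your proposal is correct and follows essentially the same route as the paper: the nesting $\langle w\rangle_{p,E}\leq\langle w\rangle_{\frac{1}{\frac{1}{p}-\frac{1}{s}},E}$ for one inequality, the Hölder pairing $1=\langle ww^{-1}\rangle_{r,E}\leq\langle w\rangle_{p,E}\langle w^{-1}\rangle_{\frac{1}{\frac{1}{r}-\frac{1}{p}},E}$ to bound the reverse-Hölder constant by $[w]^\mathcal{E}_{p,(r,s)}$, and a direct multiplication for the converse, yielding exactly \eqref{eq:jnequiv1}. You merely spell out the Hölder exponent bookkeeping in more detail than the paper does.
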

\begin{proof}
It suffices to prove \eqref{eq:jnequiv1}. Since $\frac{1}{p}-\frac{1}{s}\leq\frac{1}{p}$, the inequality $[w]^\mathcal{E}_{p,(r,\infty)}\leq[w]^\mathcal{E}_{p,(r,s)}$ follows from H\"older's inequality. Moreover, since  for any $E\in\mathcal{E}$ we have
\[
1=\langle ww^{-1}\rangle_{r,E}\leq \langle w\rangle_{p,E}\langle w^{-1}\rangle_{\frac{1}{\frac{1}{r}-\frac{1}{p}},E},
\]
it follows that
\[
\langle w\rangle_{\frac{1}{\frac{1}{p}-\frac{1}{s}},E}\leq[w]_{p,(r,s)}^\mathcal{E}\langle w\rangle_{p,E},
\]
proving that $C\leq[w]_{p,(r,s)}$. This proves the first inequality in \eqref{eq:jnequiv1}.

For the second one, note that
\[
\langle w\rangle_{\frac{1}{\frac{1}{p}-\frac{1}{s}},E}\langle w^{-1}\rangle_{\frac{1}{\frac{1}{r}-\frac{1}{p}},E}\leq C\langle w\rangle_{p,E}\langle w^{-1}\rangle_{\frac{1}{\frac{1}{r}-\frac{1}{p}},E}\leq C[w]_{p,(r,\infty)}
\]
for all $E\in\mathcal{E}$. The result follows.
\end{proof}

When $\mathcal{E}$ is the classical basis of cubes in $\R^d$, then we note that $w\in A_p(\mathcal{E})$, if and only if $w^p$ is in the classical Muckenhoupt $A_p$ class, and in this case we have
\[
[w]_p:=[w]^{\mathcal{E}}_{p}=[w^p]^{\frac{1}{p}}_{A_p}.
\]

\subsection{Maximal operators in quasi-Banach function spaces}\label{subsec:maxinqbfs}
In this section we prove some results related to the boundedness of $M^\mathcal{E}$ on (quasi-)Banach function spaces $X$.

\begin{proposition}\label{prop:maxopnorm}
Let $\mathcal{E}$ be a basis of sets in $\Omega$. Let $X$ be a quasi-Banach function space over $\Omega$ and suppose that $M^\mathcal{E}:X\to X$ is bounded. Then
\[
\|M^\mathcal{E}\|_{X\to X}\geq 1.
\]
\end{proposition}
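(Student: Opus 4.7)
The plan is to exhibit a single nonzero function in $X$ whose $M^\mathcal{E}$-image controls it from above in norm, and the most natural candidate is $\ind_{E_0}$ for some $E_0 \in \mathcal{E}$. The difficulty is that saturation only guarantees $\ind_F \in X$ for \emph{some} measurable $F$, which need not lie in the basis $\mathcal{E}$. I will get around this by using the assumed boundedness of $M^\mathcal{E}$ together with the covering property $\bigcup_{E\in\mathcal{E}}E = \Omega$ to produce an $E_0 \in \mathcal{E}$ for which $\ind_{E_0}$ does belong to $X$.

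First, since $X$ is saturated it is nontrivial, so there exists a nonzero $f \in X$. By the covering property of $\mathcal{E}$ and the fact that $\{|f|>0\}$ has positive measure, there is some $E_0 \in \mathcal{E}$ with $|\{|f|>0\} \cap E_0|>0$, giving $\langle |f|\rangle_{1,E_0}>0$. Combining the pointwise inequality
\[
\langle |f|\rangle_{1,E_0}\,\ind_{E_0} \leq M^\mathcal{E}f
\]
with the hypothesis that $M^\mathcal{E}f \in X$ and the ideal property of $X$, we conclude that $\ind_{E_0} \in X$. Because $\ind_{E_0}$ is not the zero element of $L^0(\Omega)$ (as $|E_0|>0$) and $\|\cdot\|_X$ is a quasi-norm, we have $\|\ind_{E_0}\|_X>0$.

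Next, observe that $\ind_{E_0} = \langle\ind_{E_0}\rangle_{1,E_0}\ind_{E_0} \leq M^\mathcal{E}\ind_{E_0}$, so by the ideal property,
\[
\|\ind_{E_0}\|_X \leq \|M^\mathcal{E}\ind_{E_0}\|_X \leq \|M^\mathcal{E}\|_{X\to X}\|\ind_{E_0}\|_X.
\]
Dividing by $\|\ind_{E_0}\|_X>0$ yields $\|M^\mathcal{E}\|_{X\to X}\geq 1$.

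The only genuinely nontrivial step is the first one: manufacturing an $E_0 \in \mathcal{E}$ with $\ind_{E_0}\in X$ from the bare saturation assumption. Once that is in place the remainder is a one-line application of the testing inequality $M^\mathcal{E}\ind_{E_0}\geq \ind_{E_0}$.
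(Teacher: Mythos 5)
Your proof is correct and follows essentially the same route as the paper: the paper's Lemma~\ref{lem:indE} obtains $\ind_E\in X$ by applying $M^\mathcal{E}$ to a weak order unit, and Proposition~\ref{prop:maxopnorm} is then exactly your final testing inequality $\ind_{E}\leq M^\mathcal{E}\ind_{E}$; you merely replace the weak order unit by an arbitrary nonzero $f\in X$ together with a single well-chosen $E_0$. One small point: since $\mathcal{E}$ may be uncountable, the existence of $E_0\in\mathcal{E}$ with $|\{|f|>0\}\cap E_0|>0$ should be justified via the countable subfamily $\mathcal{E}'$ from property (iii) of a basis (or by citing Proposition~\ref{prop:nonzeromax}), after which the argument goes through verbatim.
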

\begin{lemma}\label{lem:indE}
Suppose $X$ is a quasi-Banach function space for which 
\[
M^\mathcal{E}:X\to X
\]
is bounded. Then $\ind_E\in X$ and $\ind_E\in X'$ with
\[
|E|\leq\|\ind _E\|_X\|\ind_E\|_{X'}\leq\|M^\mathcal{E}\|_{X\to X}|E|.
\]
for all $E\in\mathcal{E}$.

In particular, $X'$ is saturated and, hence, a Banach function space.
\end{lemma}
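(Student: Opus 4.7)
The plan is to verify each assertion separately, exploiting the pointwise lower bounds that averages provide for $M^\mathcal{E}$.

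First, to show $\ind_E \in X$ for every $E \in \mathcal{E}$, I would invoke the saturation property to pick a measurable $F \subseteq E$ of positive measure with $\ind_F \in X$. Then, directly from the definition of $M^\mathcal{E}$, we have
\[
M^\mathcal{E}\ind_F \geq \langle \ind_F\rangle_{1,E}\,\ind_E = \frac{|F|}{|E|}\,\ind_E,
\]
so the ideal property together with the boundedness of $M^\mathcal{E}$ on $X$ forces $\ind_E \in X$.

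Next, for $\ind_E \in X'$, I would test against an arbitrary $f \in X$. The pointwise estimate
\[
M^\mathcal{E} f \geq \Bigl(\tfrac{1}{|E|}\!\int_E\! |f|\,\mathrm{d}x\Bigr)\,\ind_E
\]
combined with taking the $X$-norm yields
\[
\|f\ind_E\|_{L^1(\Omega)}\,\|\ind_E\|_X \leq |E|\,\|M^\mathcal{E}\|_{X\to X}\,\|f\|_X,
\]
which shows $\ind_E \in X'$ with the upper bound $\|\ind_E\|_X\|\ind_E\|_{X'} \leq \|M^\mathcal{E}\|_{X\to X}|E|$. The matching lower bound $|E|\leq\|\ind_E\|_X\|\ind_E\|_{X'}$ comes for free by testing the K\"othe dual norm with the function $\ind_E/\|\ind_E\|_X \in X$, since $\int \ind_E\cdot\ind_E\,\mathrm{d}x = |E|$.

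Finally, for the saturation of $X'$: given any measurable $G \subseteq \Omega$ of positive measure, property \ref{it:basis1} of a basis together with $\sigma$-finiteness yields some $E \in \mathcal{E}$ with $|E \cap G|>0$, and then $F := E \cap G \subseteq G$ satisfies $\ind_F \leq \ind_E \in X'$, so $\ind_F \in X'$ by the ideal property. Combined with the fact that $\|\cdot\|_{X'}$ is already a norm (Proposition~\ref{prop:weakorderunit}) and that $X'$ has the ideal and Fatou properties in general, this upgrades $X'$ to a genuine Banach function space. I do not anticipate any serious obstacle; the only point of minor delicacy is being careful that $\|\ind_E\|_X > 0$ whenever we divide by it, which is guaranteed by the earlier step $\ind_E \in X$ together with $\|\ind_E\|_{X'} > 0$ (coming from $F$ of positive measure inside $E$ with $\ind_F \in X$).
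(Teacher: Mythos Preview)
Your argument is correct and follows essentially the same route as the paper's proof: the paper uses a weak order unit $\rho>0$ (from Proposition~\ref{prop:weakorderunit}) in place of your set $F\subseteq E$ from saturation to establish $\ind_E\in X$, but the remaining steps are identical. One small imprecision: in the saturation step for $X'$, invoking property~\ref{it:basis1} together with $\sigma$-finiteness is not quite enough when $\mathcal{E}$ is uncountable---you need the countable subcollection $\mathcal{E}'$ from property~\ref{it:basis3} (which still covers $\Omega$) to conclude that some $E'\in\mathcal{E}'$ meets $G$ in positive measure, exactly as the paper does.
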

\begin{proof}
By Proposition~\ref{prop:weakorderunit} there is a weak order unit $\rho\in X$. Fix $E\in\mathcal{E}$. Then by the definition of $M^\mathcal{E}$ we have
\[
\langle\rho\rangle_{1,E}\ind_E\leq M^\mathcal{E}\rho\in X.
\]
Since $\langle\rho\rangle_{1,E}>0$, it follows from the ideal property that $\ind_E\in X$.

To see that also $\ind_E\in X'$, note that for any $f\in X$ we have
\begin{align*}
\int_\Omega\!\ind_E |f|\,\mathrm{d}x
&=\langle f\rangle_{1,E}|E|=\frac{|E|}{\|\ind_E\|_X}\|\langle f\rangle_{1,E}\ind_E\|_X\\
&\leq\|M^\mathcal{E}\|_{X\to X}\frac{|E|}{\|\ind_E\|_X}\|f\|_X.
\end{align*}
Hence, $\ind_E\in X'$ with
\[
\|\ind_E\|_{X'}\leq\|M^\mathcal{E}\|_{X\to X}\frac{|E|}{\|\ind_E\|_X}.
\]
Noting that also $|E|=\|\ind_E\ind_E\|_{L^1(\Omega)}\leq\|\ind_E\|_X\|\ind_{E'}\|_{X'}$, the first assertion follows.

Finally, to see that $X'$ is saturated, let $\mathcal{E'}$ be as in property \ref{it:basis3} of a basis of sets and let $E\subseteq\Omega$ with $|E|>0$. Since $\mathcal{E'}$ is countable and satisfies $\bigcup_{E'\in\mathcal{E}'} E'=\Omega$, there exists a set $E'\in\mathcal{E}'$ such that $|E'\cap E|>0$. Since $\ind_{E'\cap E}\leq\ind_{E'}\in X'$, it follows from the ideal property of $X'$ that $\ind_{E'\cap E}\in X'$. We conclude that the set $F:=E'\cap E\subseteq E$ has the desired properties, proving that $X'$ is saturated, as asserted.
\end{proof}
\begin{proof}[Proof of Proposition~\ref{prop:maxopnorm}]
We have
\[
\|\ind_E\|_X=\|\langle \ind_E\rangle_{1,E}\ind_E\|_X\leq\|M^\mathcal{E}\ind_E\|_X\leq\|M^\mathcal{E}\|_{X\to X}\|\ind_E\|_X.
\]
Hence, by Lemma~\ref{lem:indE}, $\|M^\mathcal{E}\|_{X\to X}\geq 1$, as asserted.
\end{proof}

Since $X'$ is saturated when $M^\mathcal{E}$ is bounded on $X$, this means that $X''$ is a Banach function space containing $X$ by Proposition~\ref{prop:envelopeembedding}. As it turns out, in this case we also have $M^\mathcal{E}:X''\to X''$.
\begin{proposition}
Let $\mathcal{E}$ be a basis of sets in $\Omega$. Let $X$ be a quasi-Banach function space over $\Omega$ and suppose that
\[
M^\mathcal{E}:X\to X
\]
is bounded. Then 
\[
M^\mathcal{E}:X''\to X''
\]
is also bounded with $\|M^\mathcal{E}\|_{X''\to X''}\leq\|M^\mathcal{E}\|_{X\to X}$.
\end{proposition}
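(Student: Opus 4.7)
The argument rests on duality together with a linearization of the maximal operator against the countable sub-basis. First, the hypothesis that $M^{\mc E}:X\to X$ is bounded combines with Lemma~\ref{lem:indE} to give $\ind_E\in X'$ for every $E\in\mc E$, so $X'$ is a saturated Banach function space. Then $X''$ is a Banach function space containing $X$ with $\|\cdot\|_{X''}\leq\|\cdot\|_X$ by Proposition~\ref{prop:envelopeembedding}, and every $f\in X''$ lies in $L^1_{\mc E}(\Omega)$, since $\int_E|f|\,\mathrm dx\leq\|f\|_{X''}\|\ind_E\|_{X'}$ for $E\in\mc E$; in particular $M^{\mc E}f$ is a well-defined measurable function by Proposition~\ref{prop:mmeasurable}.

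Fix $0\leq f\in X''$. The norming formula on $X''=(X')'$ reads
\[
\|M^{\mc E}f\|_{X''}=\sup\bigl\{\textstyle\int_\Omega M^{\mc E}f\cdot g\,\mathrm dx:0\leq g\in X',\,\|g\|_{X'}\leq 1\bigr\},
\]
so it suffices to bound the pairing for a single such $g$. The key step is to linearize $M^{\mc E}$: with $\mc E'=\{E_k\}_{k\in\N}$ the countable subcollection from property \ref{it:basis3} (so that $M^{\mc E}f=M^{\mc E'}f$ by Proposition~\ref{prop:mmeasurable}), for a fixed $\varepsilon>0$ I assign to each $x$ the smallest index $k(x)$ with $x\in E_{k(x)}$ and $\langle f\rangle_{1,E_{k(x)}}\geq(1-\varepsilon)M^{\mc E}f(x)$. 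This produces a measurable partition $\{A_k\}_{k\in\N}$ of $\Omega$ with $A_k\subseteq E_k$, and Fubini rewrites the pairing as
\[
\int_\Omega M^{\mc E}f\cdot g\,\mathrm dx\leq\tfrac{1}{1-\varepsilon}\sum_k\langle f\rangle_{1,E_k}\int_{A_k}\!g\,\mathrm dx=\tfrac{1}{1-\varepsilon}\int_\Omega f\,\widetilde g\,\mathrm dy,
\]
where the ``adjoint'' test function is defined by $\widetilde g(y):=\sum_k\tfrac{\ind_{E_k}(y)}{|E_k|}\int_{A_k}\!g\,\mathrm dx$.

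To close the estimate it remains to show $\|\widetilde g\|_{X'}\leq\|M^{\mc E}\|_{X\to X}\|g\|_{X'}$. For an arbitrary $0\leq h\in X$, another Fubini gives
\[
\int_\Omega \widetilde g\,h\,\mathrm dy=\sum_k \int_{A_k}g(x)\langle h\rangle_{1,E_k}\,\mathrm dx\leq \int_\Omega g\cdot M^{\mc E}h\,\mathrm dx\leq\|g\|_{X'}\|M^{\mc E}\|_{X\to X}\|h\|_X,
\]
using that $\{A_k\}$ is a partition with $A_k\subseteq E_k$ in the middle step. Substituting this bound on $\|\widetilde g\|_{X'}$, estimating $\int f\,\widetilde g\,\mathrm dy\leq\|f\|_{X''}\|\widetilde g\|_{X'}$, taking a supremum over $g$, and letting $\varepsilon\downarrow 0$ delivers $\|M^{\mc E}f\|_{X''}\leq\|M^{\mc E}\|_{X\to X}\|f\|_{X''}$.

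The main technical obstacle is the possibility that $M^{\mc E}f(x)=\infty$ on a set of positive measure, which would obstruct the selection of $k(x)$. The cleanest remedy is to run the whole argument above with $M^{\mc E}f$ replaced by the truncation $\min(M^{\mc E}f,N)\ind_{\Omega_N}$ where $\Omega_N:=\bigcup_{k\leq N}E_k$ (so that the selection is made only inside the finite level set), obtain the inequality at level $N$, and then invoke the Fatou property of the Banach function space $X''$ to pass to the limit $N\to\infty$.
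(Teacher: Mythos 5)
Your argument is correct, and in substance it proves the same key inequality as the paper, but by a more hands-on route. The paper never linearizes the supremum: it introduces the sequence spaces $X'[\ell^1(\mathcal{E})]$ and $X''[\ell^\infty(\mathcal{E})]$, shows that the averaging operator $\mathcal{M}'g=\sum_{E}\big(\frac{1}{|E|}\int_\Omega g_E\big)\ind_E$ is bounded $X'[\ell^1(\mathcal{E})]\to X'$ with norm at most $\|M^\mathcal{E}\|_{X\to X}$, and then takes its Banach-space adjoint, identifying $(\mathcal{M}')^\ast$ along the canonical embeddings $X''\hookrightarrow (X')^\ast$ and $X''[\ell^\infty(\mathcal{E})]\hookrightarrow (X'[\ell^1(\mathcal{E})])^\ast$ with the map $f\mapsto(\langle f\rangle_{1,E})_{E}$; the bound on $X''$ then falls out with no selection, truncation, or limiting argument. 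Your construction is the concrete shadow of this: your $\widetilde g$ is exactly $\mathcal{M}'$ applied to the sequence $(g\ind_{A_k})_k$, whose $X'[\ell^1(\mathcal{E})]$-norm is at most $\|g\|_{X'}$ because the $A_k$ are pairwise disjoint, so your estimate $\|\widetilde g\|_{X'}\leq\|M^\mathcal{E}\|_{X\to X}\|g\|_{X'}$ is the same inequality as the paper's bound for $\mathcal{M}'$; the price you pay for avoiding adjoints and biduals is the measurable selection, the $(1-\varepsilon)$-loss, and the truncation/Fatou step, all of which you handle correctly (the selection is well defined and measurable on each truncated level set, and the truncations lie in $X''$ since $\min(M^\mathcal{E}f,N)\ind_{\Omega_N}\leq N\sum_{k\leq N}\ind_{E_k}$). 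Two small points to tidy up: in the truncated argument the sets $A_k$ partition $\Omega_N$ rather than $\Omega$, which is harmless since the pairing is only over $\Omega_N$; and before writing $\int_\Omega f\widetilde g\,\mathrm{d}y\leq\|f\|_{X''}\|\widetilde g\|_{X'}$ you should observe that $\widetilde g<\infty$ a.e., so that $\widetilde g$ genuinely belongs to $X'$ — this follows by testing your bound $\int_\Omega\widetilde g\,h\,\mathrm{d}y\leq\|M^\mathcal{E}\|_{X\to X}\|h\|_X\|g\|_{X'}$ against a weak order unit $\rho>0$ of $X$, which exists by Proposition~\ref{prop:weakorderunit}.
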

\begin{proof}
By Proposition~\ref{prop:mmeasurable} we may assume without loss of generality that $\mathcal{E}$ is countable by replacing it by $\mathcal{E}'$ if necessary. Equipping $\mathcal{E}$ with the counting measure, for $p\in[1,\infty]$ we define the space $X[\ell^p(\mathcal{E})]$ as the space of sequences of measurable functions $(f_E)_{E\in\mathcal{E}}$ for which
\[
\|(f_E)_{E\in\mathcal{E}}\|_{X[\ell^p(\mathcal{E})]}:=\big\|\|(f_E)_{E\in\mathcal{E}}\|_{\ell^p(\mathcal{E})}\big\|_X<\infty.
\]
For $E\in\mathcal{E}$ we define the linearized average $\langle f\rangle_E:=\frac{1}{|E|}\int_E\!f\,\mathrm{d}x$ and we define
\[
\mathcal{M}':X'[\ell^1(\mathcal{E})]\to X',\quad \mathcal{M}'g:=\sum_{E\in\mathcal{E}}\left(\frac{1}{|E|}\int_\Omega\!g_E\,\mathrm{d}x\right)\ind_E.
\]
Note that this is well-defined, since for any $f\in X$ and $g=(g_E)_{E\in\mathcal{E}}\in X'[\ell^1(\mathcal{E})]$ we have
\[
\left|\int_\Omega\!f\mathcal{M}'g\,\mathrm{d}x\right|=\left|\int_\Omega\!\sum_{E\in\mathcal{E}}\langle f\rangle_E g_E\,\mathrm{d}x\right|\leq\|M^\mathcal{E}f\|_X\|g\|_{X'[\ell^1(\mathcal{E})]}
\]
so that $\|\mathcal{M}'\|_{X'[\ell^1(\mathcal{E})]\to X'}\leq\|M^\mathcal{E}\|_{X\to X}$.

Since $\mathcal{M}'$ is a bounded linear operator between Banach spaces, it has a bounded dual operator
\[
(\mathcal{M}')^\ast:(X')^\ast\to (X'[\ell^1(\mathcal{E})])^\ast
\]
with the same operator norm as $\mathcal{M}'$. Let $\iota:X''\hookrightarrow (X')^\ast$ and $\kappa:X''[\ell^\infty(\mathcal{E})]\hookrightarrow (X'[\ell^1(\mathcal{E})])^\ast$ denote the canonical isometric embeddings
\[
 \iota(f)(g):=\int_\Omega\!fg\,\mathrm{d}x,\quad \kappa((f_E)_{E\in\mathcal{E}})((g_E)_{E\in\mathcal{E}}):=\int_\Omega\!\sum_{E\in\mathcal{E}} f_Eg_E\,\mathrm{d}x
\]
and define
\[
\mathcal{M}:X''\to X''[\ell^\infty(\mathcal{E})],\quad\mathcal{M}f:=(\langle f\rangle_{E})_{E\in\mathcal{E}}.
\]
We claim that
\[
\kappa\circ \mathcal{M}=(\mathcal{M}')^\ast\circ\iota
\]
and hence, $\mathcal{M}$ is a well-defined bounded operator with 
\begin{equation}\label{eq:mboundbidual1}
\|\mathcal{M}\|_{X''\to X''[\ell^\infty(\mathcal{E})]}\leq\|(\mathcal{M}')^\ast\|_{(X')^\ast\to (X'[\ell^1(\mathcal{E})])^\ast}=\|\mathcal{M}'\|_{X'[\ell^1(\mathcal{E})]\to X'}\leq\|M^\mathcal{E}\|_{X\to X}.
\end{equation}

To prove the claim, note that for $g=(g_E)_{E\in\mathcal{E}}\in X'[\ell^1(\mathcal{E})]$ we have
\begin{align*}
(\mathcal{M}')^\ast(\iota(f))(g)&=\iota(f)(\mathcal{M}'g)=\int_\Omega\!f\sum_{E\in\mathcal{E}}\left(\frac{1}{|E|}\int_\Omega\!g_E\,\mathrm{d}x\right)\ind_E\,\mathrm{d}y\\
&=\int_\Omega\!\sum_{E\in\mathcal{E}}\langle f\rangle_E g_E\,\mathrm{d}x=\kappa(\mathcal{M} f)(g),
\end{align*}
as desired.

Now, it follows from \eqref{eq:mboundbidual1} that
\[
\|M^\mathcal{E}f\|_{X''}=\|\mathcal{M}(|f|)\|_{X''[\ell^\infty(\mathcal{E})]}\leq\|M^\mathcal{E}\|_{X\to X}\||f|\|_{X''}=\|M^\mathcal{E}\|_{X\to X}\|f\|_{X''}.
\]
The assertion follows.
\end{proof}

Next, we present several rescaling results:
\begin{proposition}\label{prop:mqboundrescale}
Let $X$ be a quasi-Banach function space over $\Omega$ and let $q\in(0,\infty)$. Then the following are equivalent:
\begin{enumerate}[(i)]
\item\label{it:mqbound1} $M^\mathcal{E}:X^q\to X^q$ is bounded;
\item\label{it:mqbound2} $M^\mathcal{E}_q:X\to X$ is bounded.
\end{enumerate}
Moreover, in this case we have
\[
\|M_q^\mathcal{E}\|^q_{X\to X}=\|M^\mathcal{E}\|_{X^q\to X^q}.
\]
\end{proposition}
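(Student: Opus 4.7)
The plan is to observe the pointwise identity
\[
M^\mathcal{E}_q f = \bigl(M^\mathcal{E}(|f|^q)\bigr)^{\frac{1}{q}}
\]
for every $f \in L^0(\Omega)$, which reduces the equivalence to a direct translation between the $X$-norm and the $X^q$-norm under the concavification. First I would verify the identity: by definition,
\[
M^\mathcal{E}_q f = \sup_{E\in\mathcal{E}}\langle f\rangle_{q,E}\ind_E = \sup_{E\in\mathcal{E}}\bigl(\langle |f|^q\rangle_{1,E}\bigr)^{\frac{1}{q}}\ind_E,
\]
and since $t\mapsto t^{\frac{1}{q}}$ is an increasing bijection on $[0,\infty)$, one can pull the $\frac{1}{q}$-th power out of the supremum, yielding the claimed formula. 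Passing to a countable subcollection $\mathcal{E}'$ as in Proposition~\ref{prop:mmeasurable} ensures everything is measurable.

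From here the two implications fall out of the definition of $X^q$, namely $\|g\|_{X^q}=\||g|^{\frac{1}{q}}\|_X^q$. For \ref{it:mqbound1}$\Rightarrow$\ref{it:mqbound2}, I would compute
\[
\|M^\mathcal{E}_q f\|_X^q = \bigl\|\bigl(M^\mathcal{E}(|f|^q)\bigr)^{\frac{1}{q}}\bigr\|_X^q = \|M^\mathcal{E}(|f|^q)\|_{X^q} \leq \|M^\mathcal{E}\|_{X^q\to X^q}\|\,|f|^q\|_{X^q} = \|M^\mathcal{E}\|_{X^q\to X^q}\|f\|_X^q,
\]
giving $\|M^\mathcal{E}_q\|_{X\to X}^q \leq \|M^\mathcal{E}\|_{X^q\to X^q}$. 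Conversely, for \ref{it:mqbound2}$\Rightarrow$\ref{it:mqbound1}, given $0\leq g\in X^q$ set $f:=g^{\frac{1}{q}}\in X$, so that $|f|^q=g$ and $\|f\|_X^q=\|g\|_{X^q}$; the same identity then produces
\[
\|M^\mathcal{E} g\|_{X^q} = \|M^\mathcal{E}_q f\|_X^q \leq \|M^\mathcal{E}_q\|_{X\to X}^q\|g\|_{X^q},
\]
and by the ideal property this extends to arbitrary $g$ via $|g|$. Combining the two inequalities yields the sharp norm identity $\|M^\mathcal{E}_q\|_{X\to X}^q = \|M^\mathcal{E}\|_{X^q\to X^q}$.

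There is no real obstacle here; the only subtle point is justifying the commutation of $(\cdot)^{\frac{1}{q}}$ with the (essential) supremum over $\mathcal{E}$, which is why I would first reduce to a countable basis via Proposition~\ref{prop:mmeasurable} so that both suprema are literal pointwise suprema over a countable family, making the identity a pointwise statement.
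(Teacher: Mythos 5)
Your proposal is correct and follows essentially the same route as the paper: both rest on the rescaling identity $M^\mathcal{E}_q f=(M^\mathcal{E}(|f|^q))^{1/q}$ together with $\|g\|_{X^q}=\||g|^{1/q}\|_X^q$, the paper simply writing it as one chain of equalities of operator norms via the substitution $h=|f|^{1/q}$, while you split it into two inequalities. (Your caution about countability is unnecessary, since $t\mapsto t^{1/q}$ is increasing and continuous on $[0,\infty)$ and so commutes with arbitrary pointwise suprema of nonnegative functions; measurability is handled by Proposition~\ref{prop:mmeasurable} in either case.)
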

\begin{proof}
Note that
\begin{align*}
\|M^\mathcal{E}\|_{X^q\to X^q}
&=\sup_{\|f\|_{X^q}=1}\|M^\mathcal{E}f\|_{X^q}
=\sup_{\||f|^{\frac{1}{q}}\|_X=1}\|(M^\mathcal{E}f)^{\frac{1}{q}}\|_X^q\\
&=\sup_{\|h\|_X=1}\|M_q^\mathcal{E}h\|_X^q
=\|M_q^\mathcal{E}\|_{X\to X}^q,
\end{align*}
which proves the result.
\end{proof}

\begin{corollary}\label{cor:smallconcavification}
Let $X$ be a Banach function space over $\Omega$ and let $q\in(0,1)$. If $M^\mathcal{E}:X\to X$ is bounded, then so is $M^\mathcal{E}:X^q\to X^q$, with
\[
\|M^\mathcal{E}\|_{X^q\to X^q}\leq \|M^\mathcal{E}\|_{X\to X}^q.
\]
\end{corollary}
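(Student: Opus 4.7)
The plan is to combine the rescaling identity from Proposition~\ref{prop:mqboundrescale} with an elementary pointwise Jensen-type comparison between $M^{\mathcal{E}}_q$ and $M^{\mathcal{E}}$. The key observation is that for $q\in(0,1)$, the function $t\mapsto t^q$ is concave on $[0,\infty)$, so for each $E\in\mathcal{E}$ and each $f\in L^0(\Omega)$, Jensen's inequality applied to the probability measure $|E|^{-1}\ind_E\dd x$ yields
\[
\langle f\rangle_{q,E}^{\,q} \;=\; \frac{1}{|E|}\int_E |f|^q\dd x \;\leq\; \Bigl(\frac{1}{|E|}\int_E |f|\dd x\Bigr)^{q} \;=\; \langle f\rangle_{1,E}^{\,q},
\]
hence $\langle f\rangle_{q,E}\leq\langle f\rangle_{1,E}$. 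Taking a supremum over all $E\in\mathcal{E}$ of $\langle f\rangle_{q,E}\ind_E\leq \langle f\rangle_{1,E}\ind_E$ gives the pointwise bound $M^{\mathcal{E}}_q f\leq M^{\mathcal{E}} f$.

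From this pointwise bound together with the ideal property of $X$, I would then conclude that
\[
\|M^{\mathcal{E}}_q f\|_X \;\leq\; \|M^{\mathcal{E}} f\|_X \;\leq\; \|M^{\mathcal{E}}\|_{X\to X}\,\|f\|_X
\]
for every $f\in X$, i.e., $\|M^{\mathcal{E}}_q\|_{X\to X}\leq \|M^{\mathcal{E}}\|_{X\to X}$. Finally, invoking Proposition~\ref{prop:mqboundrescale}, which already gives the equivalence between boundedness of $M^{\mathcal{E}}$ on $X^q$ and of $M^{\mathcal{E}}_q$ on $X$ together with the identity $\|M^{\mathcal{E}}\|_{X^q\to X^q}=\|M^{\mathcal{E}}_q\|_{X\to X}^q$, I would obtain
\[
\|M^{\mathcal{E}}\|_{X^q\to X^q} \;=\; \|M^{\mathcal{E}}_q\|_{X\to X}^q \;\leq\; \|M^{\mathcal{E}}\|_{X\to X}^{q},
\]
which is exactly the claim.

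There is essentially no main obstacle here; the proof is a two-line consequence of Jensen's inequality and Proposition~\ref{prop:mqboundrescale}. The only point worth double-checking is that the Jensen step requires only $q\in(0,1)$ and uses neither the Banach function space structure of $X$ nor any property of the basis $\mathcal{E}$ beyond the fact that each $E$ has finite positive measure so that $|E|^{-1}\ind_E\dd x$ is a genuine probability measure. The Banach function space hypothesis enters only implicitly, via Proposition~\ref{prop:mqboundrescale}, to justify the concavification $X^q$ and the identity $\|M^{\mathcal{E}}\|_{X^q\to X^q}=\|M^{\mathcal{E}}_q\|_{X\to X}^q$.
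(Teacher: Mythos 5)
Your proposal is correct and follows essentially the same route as the paper: the paper also deduces the pointwise bound $M^\mathcal{E}_q f\leq M^\mathcal{E}f$ (which you justify via Jensen, i.e.\ concavity of $t\mapsto t^q$ for $q\in(0,1)$), applies the ideal property of $X$, and concludes with Proposition~\ref{prop:mqboundrescale}. No gaps.
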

\begin{proof}
Since $M^\mathcal{E}_q f\leq M^\mathcal{E}f$ for all $f\in X$, it follows from the ideal property of $X$ that
\[
\|M^\mathcal{E}_q f\|_X\leq \| M^\mathcal{E}f\|_X.
\]
The result now follows from Proposition~\ref{prop:mqboundrescale}.
\end{proof}

\begin{proposition}\label{prop:mboundbiggerrs}
Let $r\in(0,\infty)$, $s\in(0,\infty]$ with $r<s$, and let $X$ be an $r$-convex and $s$-concave quasi-Banach function space over $\Omega$. Suppose that $M^\mathcal{E}:L^p(\Omega)\to L^p(\Omega)$ is bounded for all $p\in(1,\infty]$, and that
\[
M^\mathcal{E}:X_{r,s}\to X_{r,s}
\]
is bounded.

Then for every $0<\widetilde{r}<r$, $s\leq\widetilde{s}\leq\infty$ we also have that
\[
M^\mathcal{E}:X_{\widetilde{r},\widetilde{s}}\to X_{\widetilde{r},\widetilde{s}},
\]
with
\[
\|M^\mathcal{E}\|^{\frac{1}{\widetilde{r}}-\frac{1}{\widetilde{s}}}_{X_{\widetilde{r},\widetilde{s}}\to X_{\widetilde{r},\widetilde{s}}}\lesssim_{\widetilde{r},\widetilde{s}} \|M^\mathcal{E}\|^{\frac{1}{r}-\frac{1}{s}{}}_{X_{r,s}\to X_{r,s}}.
\]
In particular, we have
\[
M^\mathcal{E}:X^{\widetilde{r}}\to X^{\widetilde{r}}
\]
is bounded for all $0<\widetilde{r}<r$ with
\[
\|M^\mathcal{E}\|^{\frac{1}{\widetilde{r}}}_{X^{\widetilde{r}}\to X^{\widetilde{r}}}\lesssim_{\widetilde{r}} \|M^\mathcal{E}\|^{\frac{1}{r}-\frac{1}{s}{}}_{X_{r,s}\to X_{r,s}}.
\]
\end{proposition}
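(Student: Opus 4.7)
The plan is to show that the rescaled space $X_{\widetilde{r},\widetilde{s}}$ admits a Calder\'on--Lozanovskii factorization of the form $X_{\widetilde{r},\widetilde{s}} = (X_{r,s})^\theta \cdot L^w$ with $\theta\in(0,1)$ and $w\in(1,\infty]$, and then to combine the assumed boundedness of $M^\mathcal{E}$ on $X_{r,s}$ and on $L^{w/q}$ via a pointwise H\"older inequality to conclude the boundedness of $M^\mathcal{E}$ on $X_{\widetilde{r},\widetilde{s}}$.

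To derive the factorization, observe that since $\widetilde{r}\leq r$ and $\widetilde{s}\geq s$, the space $X$ is also $\widetilde{r}$-convex and $\widetilde{s}$-concave. Corollary~\ref{cor:factorization} then yields two representations of $X$:
\[
X = (X_{r,s})^{1/r-1/s} \cdot L^s = (X_{\widetilde{r},\widetilde{s}})^{1/\widetilde{r}-1/\widetilde{s}} \cdot L^{\widetilde{s}}.
\]
Writing $L^s = L^{\widetilde{s}} \cdot L^v$ with $1/v = 1/s - 1/\widetilde{s}$ and canceling the common $L^{\widetilde{s}}$ factor leads to
\[
X_{\widetilde{r},\widetilde{s}} = (X_{r,s})^\theta \cdot L^w, \qquad \theta := \frac{1/r-1/s}{1/\widetilde{r}-1/\widetilde{s}}, \qquad w := \frac{s(\widetilde{s}-\widetilde{r})}{\widetilde{r}(\widetilde{s}-s)},
\]
with the obvious interpretations when $\widetilde{s}\in\{s,\infty\}$. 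One checks $\theta \in (0,1)$ since $\widetilde{r}<r$ and $\widetilde{s}\geq s$, and an elementary computation yields $w(1-\theta) = 1 + s\widetilde{s}(r-\widetilde{r})/[r\widetilde{r}(\widetilde{s}-s)] > 1$.

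Given this factorization, fix $f \in X_{\widetilde{r},\widetilde{s}}$ and write $|f| \leq hk$ with $h\in (X_{r,s})^\theta$ and $k\in L^w$. Setting $p := 1/\theta$ and $q := p' = 1/(1-\theta)$, H\"older's inequality applied inside each averaging set gives the pointwise bound
\[
M^\mathcal{E} f \leq (M^\mathcal{E}(h^p))^{1/p} \cdot (M^\mathcal{E}(k^q))^{1/q}.
\]
The identity $p\theta = 1$ together with the definition of the $\theta$-concavification gives
\[
\|(M^\mathcal{E}(h^p))^{1/p}\|_{(X_{r,s})^\theta} = \|M^\mathcal{E}(h^{1/\theta})\|_{X_{r,s}}^\theta \leq \|M^\mathcal{E}\|_{X_{r,s}\to X_{r,s}}^\theta \|h\|_{(X_{r,s})^\theta},
\]
while
\[
\|(M^\mathcal{E}(k^q))^{1/q}\|_{L^w} = \|M^\mathcal{E}(k^q)\|_{L^{w/q}}^{1/q} \lesssim_{\widetilde{r},\widetilde{s}} \|k\|_{L^w},
\]
since $w/q = w(1-\theta) > 1$ makes the hypothesis $M^\mathcal{E}:L^{w/q}\to L^{w/q}$ applicable. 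Multiplying these estimates, taking the infimum over admissible factorizations $|f|\leq hk$, and finally raising the resulting operator-norm inequality to the $(1/\widetilde{r}-1/\widetilde{s})$-th power while using $\theta(1/\widetilde{r}-1/\widetilde{s}) = 1/r-1/s$, yields the claimed bound. The second assertion of the proposition is the special case $\widetilde{s}=\infty$, where $X_{\widetilde{r},\infty} = X^{\widetilde{r}}$.

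The main obstacle is the rigorous justification of the cancellation step in the factorization of $X_{\widetilde{r},\widetilde{s}}$, since Calder\'on--Lozanovskii products do not admit cancellation in general. One handles this either by verifying both inclusions $\subseteq$ and $\supseteq$ directly using Theorem~\ref{thm:spacesplitting}, or by applying Theorem~\ref{thm:forcedfactorization} to the product $(X_{r,s})^\theta \cdot L^w$ to identify it as $X_{\widetilde{r},\widetilde{s}}$.
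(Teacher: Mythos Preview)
Your proposal is correct and follows essentially the same strategy as the paper: establish the factorization $X_{\widetilde{r},\widetilde{s}} = (X_{r,s})^\theta \cdot L^w$, then interpolate the maximal operator via the pointwise H\"older bound $M^\mathcal{E}f \leq (M^\mathcal{E}h)^\theta (M^\mathcal{E}k)^{1-\theta}$ for $|f|\leq h^\theta k^{1-\theta}$ (this is precisely the content of Lemma~\ref{lem:mboundinterpolation} in the paper). The only real difference lies in how the factorization is obtained: the paper proves it as Lemma~\ref{lem:mboundbiggerrs} by applying Theorem~\ref{thm:spacesplitting} directly to $(X^{\widetilde{r}})'$ with $q = r/\widetilde{r}$, then raising to the power $(\widetilde{s}/\widetilde{r})'$ and invoking Lozanovskii duality, whereas you arrive at the same identity by comparing two applications of Corollary~\ref{cor:factorization} and heuristically canceling an $L^{\widetilde{s}}$ factor. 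As you yourself note, this cancellation is not a priori justified; your first suggested fix (verifying the inclusions via Theorem~\ref{thm:spacesplitting}) is exactly what the paper does, so once carried out the arguments coincide.
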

For the proof we require the following lemma:
\begin{lemma}\label{lem:mboundinterpolation}
Let $\theta\in[0,1]$ and suppose that $X$, $Y$ are two quasi-Banach function spaces over $\Omega$ for which $M^\mathcal{E}:X\to X$ and $M^\mathcal{E}:Y\to Y$ are bounded. Then
\[
M^\mathcal{E}:X^{1-\theta}\cdot Y^\theta\to X^{1-\theta}\cdot Y^\theta
\]
is bounded with
\[
\|M^\mathcal{E}\|_{X^{1-\theta}\cdot Y^\theta\to X^{1-\theta}\cdot Y^\theta}\leq\|M^\mathcal{E}\|_{X\to X}^{1-\theta}\|M^\mathcal{E}\|_{Y\to Y}^{\theta}.
\]
\end{lemma}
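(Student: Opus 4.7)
The plan is to exploit the factorization that defines $X^{1-\theta}\cdot Y^\theta$ and show that the maximal operator respects it, by reducing the pointwise control of $M^\mathcal{E}f$ to a H\"older-type estimate on averages over each individual set $E\in\mathcal{E}$.

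First I would fix $f\in X^{1-\theta}\cdot Y^\theta$ and pick, for any given $\eps>0$, a factorization $|f|\leq f_1^{1-\theta}f_2^\theta$ with $0\leq f_1\in X$, $0\leq f_2\in Y$ and $\|f_1\|_X^{1-\theta}\|f_2\|_Y^\theta\leq(1+\eps)\|f\|_{X^{1-\theta}\cdot Y^\theta}$. The central pointwise bound is then: for each $E\in\mathcal{E}$, H\"older's inequality applied with the conjugate exponents $\tfrac{1}{1-\theta}$ and $\tfrac{1}{\theta}$ yields
\[
\langle f\rangle_{1,E}\leq\langle f_1^{1-\theta}f_2^\theta\rangle_{1,E}\leq \langle f_1\rangle_{1,E}^{1-\theta}\langle f_2\rangle_{1,E}^\theta.
\]
Multiplying by $\ind_E=\ind_E^{1-\theta}\ind_E^\theta$ and bounding each averaged factor by the corresponding maximal function gives $\langle f\rangle_{1,E}\ind_E\leq (M^\mathcal{E}f_1)^{1-\theta}(M^\mathcal{E}f_2)^\theta$ pointwise on $\Omega$, and taking the supremum over $E\in\mathcal{E}$ produces the key domination
\[
M^\mathcal{E}f\leq (M^\mathcal{E}f_1)^{1-\theta}(M^\mathcal{E}f_2)^\theta.
\]

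With this in hand the rest is bookkeeping. The right-hand side is, by definition, an admissible factorization witnessing membership of $M^\mathcal{E}f$ in $X^{1-\theta}\cdot Y^\theta$, so
\[
\|M^\mathcal{E}f\|_{X^{1-\theta}\cdot Y^\theta}\leq \|M^\mathcal{E}f_1\|_X^{1-\theta}\|M^\mathcal{E}f_2\|_Y^\theta\leq \|M^\mathcal{E}\|_{X\to X}^{1-\theta}\|M^\mathcal{E}\|_{Y\to Y}^\theta\|f_1\|_X^{1-\theta}\|f_2\|_Y^\theta.
\]
Taking the infimum over all admissible factorizations of $|f|$ (equivalently, letting $\eps\downarrow 0$ above) gives the claimed operator-norm bound. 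The endpoint cases $\theta\in\{0,1\}$ are handled directly, and the edge case where $f_1$ or $f_2$ vanishes on a set of positive measure is absorbed by the ideal property.

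The only non-routine point is the pointwise H\"older step, but once one observes that the average $\langle f\rangle_{1,E}$ is against normalized Lebesgue measure on $E$ (so the dual exponents $\tfrac{1}{1-\theta}$ and $\tfrac{1}{\theta}$ cleanly reproduce $\langle f_1\rangle_{1,E}^{1-\theta}\langle f_2\rangle_{1,E}^\theta$ without any volume factor), no obstacle remains. I do not expect any difficulty: the quasi-Banach setting poses no issue because we never invoke the triangle inequality, only the ideal property and the definition of the product quasi-norm.
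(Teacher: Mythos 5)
Your proof is correct and follows essentially the same route as the paper: factor $|f|\leq h^{1-\theta}k^\theta$, apply H\"older to the averages over each $E\in\mathcal{E}$ to get the pointwise bound $M^\mathcal{E}f\leq (M^\mathcal{E}h)^{1-\theta}(M^\mathcal{E}k)^\theta$, and then take the infimum over factorizations.
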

\begin{proof}
Let $f\in X^{1-\theta}\cdot Y^\theta$. Then there exist $0\leq h\in X$, $0\leq k\in Y$ such that $|f|\leq h^{1-\theta}k^\theta$. Hence, by H\"older's inequality,
\[
\langle f\rangle_{1,E}\leq\langle h\rangle_{1,E}^{1-\theta}\langle k\rangle^\theta_{1,E}
\]
for all $E\in\mathcal{E}$ so that $M^\mathcal{E} f\leq (M^\mathcal{E}h)^{1-\theta}(M^\mathcal{E}k)^\theta$. Hence,
\[
\|M^\mathcal{E} f\|_{X^{1-\theta}\cdot Y^\theta}\leq\|M^\mathcal{E} h\|_X^{1-\theta}\|M^\mathcal{E} k\|_Y^\theta
\leq \|M^\mathcal{E}\|_{X\to X}^{1-\theta}\|M^\mathcal{E}\|_{Y\to Y}^{\theta}\|h\|_X^{1-\theta}\| k\|_Y^\theta.
\]
Taking an infimum over all possible $0\leq h\in X$, $0\leq k\in Y$ with $|f|\leq h^{1-\theta}k^\theta$ now proves the result.
\end{proof}
\begin{lemma}\label{lem:mboundbiggerrs}
Let $0<\widetilde{r}\leq r$, $s\leq\widetilde{s}\leq\infty$ and let $X$ be an $r$-convex and $s$-concave quasi-Banach function space over $\Omega$. Set
\[
\theta:=\frac{\frac{1}{r}-\frac{1}{s}}{\frac{1}{\widetilde{r}}-\frac{1}{\widetilde{s}}}\in(0,1]
\]
and
\[
\alpha=\frac{\frac{1}{\widetilde{r}}-\frac{1}{r}}{\frac{1}{\widetilde{r}}-\frac{1}{r}+\frac{1}{s}-\frac{1}{\widetilde{s}}}\in[0,1].
\]
Then
\[
X_{\widetilde{r},\widetilde{s}}=X_{r,s}^\theta\cdot (L^{\frac{1}{1-\alpha}}(\Omega))^{1-\theta}.
\]
\end{lemma}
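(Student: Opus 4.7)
The assumption that $X$ is $r$-convex and $s$-concave together with $\widetilde{r}\le r$ and $\widetilde{s}\ge s$ forces $X$ to also be $\widetilde{r}$-convex and $\widetilde{s}$-concave. Consequently Corollary~\ref{cor:factorization} applies with both pairs $(r,s)$ and $(\widetilde{r},\widetilde{s})$, giving the two factorizations
\[
X=X_{r,s}^{\frac{1}{r}-\frac{1}{s}}\cdot L^s(\Omega)=X_{\widetilde{r},\widetilde{s}}^{\frac{1}{\widetilde{r}}-\frac{1}{\widetilde{s}}}\cdot L^{\widetilde{s}}(\Omega).
\]
Set $Z:=X_{r,s}^{\theta}\cdot L^{\frac{1}{1-\alpha}}(\Omega)^{1-\theta}$. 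Since $\theta\in(0,1]$, $\alpha\in[0,1]$, and both $X_{r,s}$ and $L^{\frac{1}{1-\alpha}}(\Omega)$ are Banach function spaces, $Z$ is a Calder\'on--Lozanovskii product and, by Theorem~\ref{prop:lozprod}, is itself a Banach function space. The goal is to show $Z=X_{\widetilde{r},\widetilde{s}}$.

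The first step is the purely algebraic identity $Z^{\frac{1}{\widetilde{r}}-\frac{1}{\widetilde{s}}}\cdot L^{\widetilde{s}}(\Omega)=X$. Unwinding the definitions of $\theta$ and $\alpha$ yields the two relations
\[
\theta\Big(\frac{1}{\widetilde{r}}-\frac{1}{\widetilde{s}}\Big)=\frac{1}{r}-\frac{1}{s},\qquad(1-\alpha)(1-\theta)\Big(\frac{1}{\widetilde{r}}-\frac{1}{\widetilde{s}}\Big)=\frac{1}{s}-\frac{1}{\widetilde{s}},
\]
which, combined with the elementary rules $(A\cdot B)^p=A^p\cdot B^p$, $(L^q)^p=L^{q/p}$, and $L^p\cdot L^q=L^{\frac{1}{1/p+1/q}}$, give
\[
Z^{\frac{1}{\widetilde{r}}-\frac{1}{\widetilde{s}}}=X_{r,s}^{\frac{1}{r}-\frac{1}{s}}\cdot L^{\frac{1}{1/s-1/\widetilde{s}}}(\Omega),
\]
and then $Z^{\frac{1}{\widetilde{r}}-\frac{1}{\widetilde{s}}}\cdot L^{\widetilde{s}}(\Omega)=X_{r,s}^{\frac{1}{r}-\frac{1}{s}}\cdot L^s(\Omega)=X$ upon merging the two Lebesgue factors.

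The second step is a uniqueness argument: for any Banach function space $A$ satisfying $A^{\frac{1}{\widetilde{r}}-\frac{1}{\widetilde{s}}}\cdot L^{\widetilde{s}}(\Omega)=X$, I will show $A=X_{\widetilde{r},\widetilde{s}}$. Taking the $\widetilde{r}$-concavification and rewriting $L^{\widetilde{s}/\widetilde{r}}=(L^1(\Omega))^{\widetilde{r}/\widetilde{s}}$ produces $X^{\widetilde{r}}=A^{1-\widetilde{r}/\widetilde{s}}\cdot(L^1(\Omega))^{\widetilde{r}/\widetilde{s}}$, a Calder\'on--Lozanovskii product whose exponents now sum to one. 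Lozanovskii's duality theorem (Theorem~\ref{prop:lozprod}) then yields $(X^{\widetilde{r}})'=(A')^{1-\widetilde{r}/\widetilde{s}}\cdot(L^\infty(\Omega))^{\widetilde{r}/\widetilde{s}}=(A')^{1-\widetilde{r}/\widetilde{s}}$, and since $(1-\widetilde{r}/\widetilde{s})(\widetilde{s}/\widetilde{r})'=1$, a further concavification gives $[(X^{\widetilde{r}})']^{(\widetilde{s}/\widetilde{r})'}=A'$. Taking one more K\"othe dual and invoking the Lorentz--Luxemburg theorem (applicable since $A$ is a Banach function space) one concludes $X_{\widetilde{r},\widetilde{s}}=A''=A$. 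Applying this to $A=Z$ finishes the proof. The main delicacy is the exponent bookkeeping: verifying the two displayed arithmetic identities from the definitions of $\theta$ and $\alpha$, and, critically, rewriting the trailing $L$-factor as a concavification of $L^1$ at the right moment so that Lozanovskii's duality can be applied to a Calder\'on product whose exponents sum to one.
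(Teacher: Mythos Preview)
Your proof is correct but takes a somewhat different route from the paper's. The paper works entirely on the dual side: it applies Theorem~\ref{thm:spacesplitting} with $Y=X^{\widetilde{r}}$ and $q=r/\widetilde{r}$ to obtain
\[
(X^{\widetilde{r}})'=[(X^r)']^{\widetilde{r}/r}\cdot L^{(r/\widetilde{r})'}(\Omega),
\]
then raises both sides to the $(\widetilde{s}/\widetilde{r})'$ power, verifying via the same exponent arithmetic that the right-hand side becomes $\big([(X^r)']^{(s/r)'}\big)^{\theta}\cdot(L^{1/\alpha}(\Omega))^{1-\theta}$, and finally applies Lozanovskii's duality theorem to take the K\"othe dual and arrive at $X_{\widetilde{r},\widetilde{s}}=X_{r,s}^{\theta}\cdot(L^{1/(1-\alpha)}(\Omega))^{1-\theta}$.

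Your approach instead works on the primal side via Corollary~\ref{cor:factorization}, factoring $X$ two ways and then proving a uniqueness statement for the factor in $X=A^{1/\widetilde{r}-1/\widetilde{s}}\cdot L^{\widetilde{s}}(\Omega)$. This uniqueness step is essentially the paper's computation run in reverse (it also lands on $A'=[(X^{\widetilde{r}})']^{(\widetilde{s}/\widetilde{r})'}$ via Lozanovskii), so the two arguments are equivalent in content. The paper's version is shorter; yours isolates the useful general fact that the rescaled space $X_{\widetilde{r},\widetilde{s}}$ is the \emph{unique} Banach function space $A$ satisfying the factorization $X=A^{1/\widetilde{r}-1/\widetilde{s}}\cdot L^{\widetilde{s}}(\Omega)$, which is a clean way to think about it.
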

\begin{proof}
By Theorem~\ref{thm:spacesplitting} we have
\[
(X^{\widetilde{r}})'=(X^{r\frac{\widetilde{r}}{r}})'=[(X^r)']^{\frac{\widetilde{r}}{r}}\cdot L^{\big(\frac{r}{\widetilde{r}}\big)'}(\Omega).
\]
Hence, we have
\[
[(X^{\widetilde{r}})']^{\big(\frac{\widetilde{s}}{\widetilde{r}}\big)'}=\Big([(X^r)']^{\big(\frac{s}{r}\big)'}\Big)^{\theta}
\cdot (L^{\frac{1}{\alpha}}(\Omega))^{1-\theta},
\]
so that the result follows from Lozanovskii's duality theorem.
\end{proof}

\begin{proof}[Proof of Proposition~\ref{prop:mboundbiggerrs}]
We wish to combine Lemma~\ref{lem:mboundinterpolation} and Lemma~\ref{lem:mboundbiggerrs}. Since $\widetilde{r}<r$ we have $\alpha>0$ and hence, $\frac{1}{1-\alpha}>1$. Thus, we obtain
\[
\|M^\mathcal{E}\|_{X_{\widetilde{r},\widetilde{s}}\to X_{\widetilde{r},\widetilde{s}}}
\leq \|M^\mathcal{E}\|_{X_{r,s}\to X_{r,s}}^\theta\|M^\mathcal{E}\|_{L^{\frac{1}{1-\alpha}}(\Omega)\to L^{\frac{1}{1-\alpha}}(\Omega)}^{1-\theta},
\]
which proves the result.
\end{proof}

If we are in $\R^d$ with one of the usual bases of balls, cubes, or a dyadic grid, we have the property that if $w\in A_1$, then there is a dimensional constant $C_d>0$ such that if $q>1$ satisfies $C_dq'\geq[w]_{A_1}$, then $w$ satisfies the reverse H\"older property
\[
\langle w\rangle_{q,Q}\leq 2\langle w\rangle_{1,Q},
\]
and hence, $M^\mathcal{E}_q w\leq 2 M^\mathcal{E} w$.
\begin{definition}
Let $\mathcal{E}$ be a basis of sets in $\Omega$. We say that $\mathcal{E}$ has the \emph{$A_1$ self-improvement property} if for each $w\in A_1(\mathcal{E})$ there is a $q>1$ such that $w^q\in A_1(\mathcal{E})$.

We say that $\mathcal{E}$ has the \emph{sharp $A_1$ self-improvement property} if there are constants $C_1,C_2>0$ such that for every $w\in A_1(\mathcal{E})$ and any $q>1$ satisfying $C_1q'\geq [w]_1^\mathcal{E}$ we have $M_q^\mathcal{E}w\leq C_2 M^\mathcal{E} w$.
\end{definition}

With this property, we get the following self-improvement result, which is a sharp version of one of the main results in \cite{LO10}:
\begin{theorem}\label{thm:maximalselfimprove}
Let $\mathcal{E}$ be a basis of sets in $\Omega$ with the $A_1$ self-improvement property.

Let $X$ be a Banach function space over $\Omega$ and suppose that
\[
M^\mathcal{E}:X\to X
\]
is bounded. Then there exists a $q>1$ such that
\[
M_q^\mathcal{E}:X\to X,
\]
is bounded. If $\mathcal{E}$ has the sharp $A_1$ self-improvement property, then we also obtain the following quantitative statement:

Let $q>1$ with $C_1q'\geq 2\|M\|_{X\to X}$. Then
\begin{equation}\label{eq:sharpsi1}
M_q^\mathcal{E}f\leq C_2\sum_{k=0}^\infty\frac{(M^\mathcal{E})^{k+1}f}{(C_1q')^k}
\end{equation}
for every $f\in X$, and
\begin{equation}\label{eq:sharpsi2}
\|M_q^\mathcal{E}\|_{X\to X}\leq 2C_2\|M^\mathcal{E}\|_{X\to X}.
\end{equation}
\end{theorem}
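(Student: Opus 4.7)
The plan is to run a Rubio de Francia iteration to manufacture, for each nonnegative $f\in X$, an $A_1$-weight whose $A_1$-constant is controlled uniformly by $\|M^\mathcal{E}\|_{X\to X}$, and then to transfer the reverse-H\"older improvement furnished by the self-improvement hypothesis from this weight down to $f$ itself. Since $M_q^\mathcal{E}f=M_q^\mathcal{E}|f|$, I may reduce to $f\geq 0$. In the sharp setting, fix $q>1$ with $C_1q'\geq 2\|M^\mathcal{E}\|_{X\to X}$, set $\lambda:=C_1q'$, and define the iterate
\[
Rf:=\sum_{k=0}^\infty\frac{(M^\mathcal{E})^k f}{\lambda^k}.
\]
Three properties drive the argument: (i) $f\leq Rf$ pointwise, from the $k=0$ term; (ii) $\|Rf\|_X\leq 2\|f\|_X$, since $\sum_{k=0}^\infty(\|M^\mathcal{E}\|_{X\to X}/\lambda)^k\leq\sum_{k=0}^\infty 2^{-k}=2$ and $X$ obeys the triangle inequality together with the Fatou property (so the partial sums converge in $X$ with the claimed control); (iii) the countable sublinearity of $M^\mathcal{E}$ on nonnegative inputs gives
\[
M^\mathcal{E}(Rf)\leq\sum_{k=0}^\infty\frac{(M^\mathcal{E})^{k+1}f}{\lambda^k}=\lambda(Rf-f)\leq\lambda\,Rf,
\]
so $Rf\in A_1(\mathcal{E})$ with $[Rf]_1^\mathcal{E}\leq\lambda=C_1q'$.

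With (iii) in hand, the hypothesis of the sharp $A_1$ self-improvement property is satisfied at $w=Rf$, producing $M_q^\mathcal{E}(Rf)\leq C_2 M^\mathcal{E}(Rf)$. Combining this with (i) and the middle estimate in (iii) yields
\[
M_q^\mathcal{E}f\leq M_q^\mathcal{E}(Rf)\leq C_2 M^\mathcal{E}(Rf)\leq C_2\sum_{k=0}^\infty\frac{(M^\mathcal{E})^{k+1}f}{(C_1q')^k},
\]
which is exactly \eqref{eq:sharpsi1}. Taking $X$-norms and invoking (ii) gives
\[
\|M_q^\mathcal{E}f\|_X\leq C_2\|M^\mathcal{E}(Rf)\|_X\leq C_2\|M^\mathcal{E}\|_{X\to X}\|Rf\|_X\leq 2C_2\|M^\mathcal{E}\|_{X\to X}\|f\|_X,
\]
which is \eqref{eq:sharpsi2}.

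For the purely qualitative first statement under only the $A_1$ self-improvement property, the same iterate with $\lambda=2\|M^\mathcal{E}\|_{X\to X}$ has $[Rf]_1^\mathcal{E}\leq\lambda$ independently of $f$, so the weights $\{Rf:f\in X,\,f\geq 0\}$ all lie in a fixed $A_1$-ball. Applying the self-improvement hypothesis on this ball produces a single $q>1$ and a constant $C$ with $M_q^\mathcal{E}(Rf)\leq C M^\mathcal{E}(Rf)$ for every such $f$, and the norm estimate above then closes. I expect the most delicate point to be precisely this uniformity of the improvement exponent across the family $\{Rf\}$: the self-improvement of a single $A_1$ weight a priori produces a $q$ depending on that weight, and one needs the standard fact that the improvement depends only on the $A_1$-constant. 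This is exactly what the Rubio de Francia iterate delivers, since its $A_1$-constant is bounded by $2\|M^\mathcal{E}\|_{X\to X}$ uniformly in the input $f$, and the sharp version of the hypothesis makes this quantitative dependence explicit.
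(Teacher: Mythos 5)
Your argument is essentially the paper's proof: the same Rubio de Francia iterate $Rf=\sum_{k\ge 0}(M^\mathcal{E})^kf/\lambda^k$ with $\lambda=C_1q'$ (resp.\ $2\|M^\mathcal{E}\|_{X\to X}$), the same bound $[Rf]_1^\mathcal{E}\le\lambda$ and $\|Rf\|_X\le 2\|f\|_X$, the self-improvement hypothesis applied to the weight $Rf$, and the same chain $M_q^\mathcal{E}f\le M_q^\mathcal{E}(Rf)\le C_2M^\mathcal{E}(Rf)\le C_2\sum_{k\ge 0}(M^\mathcal{E})^{k+1}f/(C_1q')^k$ followed by taking $X$-norms. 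The uniformity-in-$q$ subtlety you flag for the qualitative statement is treated no differently in the paper, whose proof also obtains $q$ by applying the plain $A_1$ self-improvement property to the $f$-dependent weight $Rf$, so your proposal matches the paper's approach.
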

\begin{proof}
Fix $f\in X$, $K\geq 2\|M\|_{X\to X}$, and define
\[
w:=\sum_{k=0}^\infty\frac{(M^\mathcal{E})^k f}{K^k}.
\]
then $w\in A_1(\mathcal{E})$ with $[w]^\mathcal{E}_1\leq K$. Hence, per assumption, there is a $q>1$ such that $w^q\in A_1(\mathcal{E})$. Since $|f|\leq w$, we have
\begin{equation}\label{eq:mqestimate1}
M_q^\mathcal{E}f\leq M_q^\mathcal{E}w\leq([w^q]^\mathcal{E}_1)^{\frac{1}{q}} w,
\end{equation}
and thus, since $\|w\|_X\leq 2\|f\|_X$, we have
\begin{equation}\label{eq:mqestimate2}
\|M_q^\mathcal{E} f\|_X\leq([w^q]^\mathcal{E}_1)^{\frac{1}{q}}\|w\|_X\leq 2([w^q]_1^\mathcal{E})^{\frac{1}{q}}\|f\|_X,
\end{equation}
proving the first result.

For the second result, suppose $q>1$ satisfies $C_1q'\geq 2\|M\|_{X\to X}$ and choose $K=C_1q'$. Then we have $[w]_1^\mathcal{E}\leq C_1q'$ and hence, it follows from the first inequality in \eqref{eq:mqestimate1} that
\[
M_q^\mathcal{E} f\leq C_2 M^\mathcal{E} w\leq C_2\sum_{k=0}^\infty\frac{(M^\mathcal{E})^{k+1}f}{(C_1q')^k},
\]
proving \eqref{eq:sharpsi1}. Finally, we note that \eqref{eq:sharpsi1} implies that
\[
\|M_q^\mathcal{E}f\|_X\leq C_2\sum_{k=0}^\infty\frac{\|(M^\mathcal{E})^{k+1}f\|_X}{2^k\|M\|^k_X}\leq 2C_2\|M\|_{X\to X}\|f\|_X,
\]
proving \eqref{eq:sharpsi2}. The assertion follows.
\end{proof}

\begin{remark}
Let $\Omega=\R^d$ with the basis of cubes. As a curiosity, we note that a consequence of this result is that there are dimensional constants $c_d,C_d>0$ such that for $q>1$ we have
\begin{equation}\label{eq:mrsi}
\sum_{k=0}^\infty\frac{M^{k+1}f}{(c_d q')^k}\lesssim M_q f\lesssim \sum_{k=0}^\infty\frac{M^{k+1}f}{(C_d q')^k}.
\end{equation}
Indeed, the second inequality follows from Theorem~\ref{thm:maximalselfimprove}, whereas for the first inequality we can use \cite[Theorem~7.2.7]{Gr14a}. Indeed, this result states that there is a dimensional constant $K_d>0$ such that $M(M_qf)\leq K_d q' M_q f$. Hence,
\[
M^{k+1}f=M^k(Mf)\leq M^k(M_q f)\leq (K_d q')^k M_q f.
\]
so that
\[
\sum_{k=0}^\infty\frac{M^{k+1}f}{(2K_d q')^k}\leq \sum_{k=0}^\infty\frac{M_q f}{2^k}=2 M_q f,
\]
proving the first inequality in \eqref{eq:mrsi} with $c_d=2K_d$. Interestingly enough, the equivalence \eqref{eq:mrsi} for $M_q$ is certainly well-known and is implicitly at the heart of most self-improvement results for $M$, but to the author's knowledge, it has not been explicitly written in the literature.
\end{remark}

\begin{theorem}\label{thm:mboundnos}
Let $\mathcal{E}$ be a basis of sets in $\Omega$ with the $A_1$ self-improvement property. Let $r,r_0\in(0,\infty)$, $s\in(0,\infty]$ with $r<r_0<s$, and let $X$ be an $r_0$-convex and $s$-concave quasi-Banach function space. If $M^\mathcal{E}:L^p(\Omega)\to L^p(\Omega)$ is bounded for all $p\in(1,\infty]$ and $M^\mathcal{E}:X_{r,s}\to X_{r,s}$ is bounded, then
\[
M^\mathcal{E}:X^r\to X^r
\]
and, if $\mathcal{E}$ has the sharp $A_1$ self-improvement property, we have
\begin{equation}\label{eq:mboundthm1}
\|M^\mathcal{E}\|_{X^r\to X^r}^{\frac{1}{r}}\lesssim_{r,s} \|M^\mathcal{E}\|^{\frac{1}{r}-\frac{1}{s}}_{X_{r,s}\to X_{r,s}},
\end{equation}
where the implicit constant depend on the constants $C_1$, $C_2$ appearing in the sharp $A_1$ self-improvement property.
\end{theorem}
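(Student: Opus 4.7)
The plan is to combine the sharp $A_1$ self-improvement property on $X_{r,s}$ with an identification of $(X_{r,s})^q$ as a rescaled space $X_{r',s}$ for some $r' \in (r, r_0]$, which brings the problem into the scope of Proposition~\ref{prop:mboundbiggerrs}. Writing $A := \|M^\mathcal{E}\|_{X_{r,s} \to X_{r,s}}$, I first apply Theorem~\ref{thm:maximalselfimprove} to produce a $q > 1$ with $C_1 q' \geq 2A$ such that $\|M_q^\mathcal{E}\|_{X_{r,s} \to X_{r,s}} \leq 2C_2 A$, which by Proposition~\ref{prop:mqboundrescale} is equivalent to $\|M^\mathcal{E}\|_{(X_{r,s})^q \to (X_{r,s})^q} \leq (2C_2 A)^q$.

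Next, I would observe that in the degenerate case $\widetilde{s} = s$ of Lemma~\ref{lem:mboundbiggerrs} the exponent $\alpha$ equals $1$, so $L^{1/(1-\alpha)} = L^\infty$ is absorbed into the product and one obtains the identity $X_{r,s} = (X_{r',s})^{\theta'}$ with $\theta' = (1/r' - 1/s)/(1/r - 1/s)$; equivalently $(X_{r,s})^q = X_{r',s}$ when $q = 1/\theta'$, i.e.
\[
\frac{1}{r'} = \frac{1}{s} + \frac{1/r - 1/s}{q}.
\]
For $A$ large the resulting $r'$ is close to $r$ and in particular inside $(r, r_0]$, while for small $A$ one truncates $q$ at the fixed threshold $q_0 := (1/r - 1/s)/(1/r_0 - 1/s)$, which self-improvement still respects. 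This gives $\|M^\mathcal{E}\|_{X_{r',s} \to X_{r',s}} \leq (2C_2 A)^q$.

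I would then apply Proposition~\ref{prop:mboundbiggerrs} with the pair $(r',s)$ in place of $(r,s)$ and target pair $(\widetilde{r}, \widetilde{s}) = (r, \infty)$; this is admissible since $X$ is $r'$-convex (as $r' \leq r_0$) and $s$-concave, and since $r < r'$. One obtains boundedness $M^\mathcal{E}: X^r = X_{r,\infty} \to X^r$ together with the estimate
\[
\|M^\mathcal{E}\|_{X^r \to X^r}^{1/r} \lesssim \|M^\mathcal{E}\|_{X_{r',s} \to X_{r',s}}^{1/r' - 1/s} \leq (2C_2 A)^{q(1/r' - 1/s)} = (2C_2 A)^{1/r - 1/s},
\]
where the exponents collapse because our choice $q\theta' = 1$ forces $q(1/r' - 1/s) = 1/r - 1/s$.

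The main obstacle I expect is controlling the implicit constant from Proposition~\ref{prop:mboundbiggerrs}: its proof inserts a factor $\|M^\mathcal{E}\|_{L^{1/(1-\alpha)}}^{1-\theta}$ whose parameters depend on $r'$, and as $r' \to r^+$ (forced for large $A$) the exponent of this $L$-space tends to $1$ while the norm a priori blows up. To keep the final constant dependent only on $r, s, C_1, C_2$ I would unpack Proposition~\ref{prop:mboundbiggerrs} and track constants quantitatively, likely invoking sharp $A_1$ self-improvement a second time on the fixed Lebesgue space $L^{s/r}$ to dominate $\|M^\mathcal{E}\|_{L^{1/(1-\alpha)}}$, so that the exponents arrange for the extra growth in $A$ to be exactly absorbed into the desired $A^{1/r - 1/s}$ bound.
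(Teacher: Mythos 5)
Your argument is, in essence, the paper's own proof: Theorem~\ref{thm:maximalselfimprove} supplies the admissible $q$, the identification $(X_{r,s})^q=X_{\widetilde{r},s}$ with $\widetilde{r}\in(r,r_0]$ (your $r'$, including the truncation of $q$ that guarantees $\widetilde{r}\leq r_0$) is exactly Lemma~\ref{lem:mboundsir}, and the passage to $X^r=X_{r,\infty}$ with the exponents collapsing to $(2C_2\|M^\mathcal{E}\|_{X_{r,s}\to X_{r,s}})^{\frac{1}{r}-\frac{1}{s}}$ is precisely the application of Proposition~\ref{prop:mboundbiggerrs} with target pair $(r,\infty)$, so your exponent bookkeeping matches the paper's. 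The only divergence is your final paragraph: the paper does not carry out the extra constant tracking you anticipate, but simply quotes Proposition~\ref{prop:mboundbiggerrs} and absorbs the factor $\|M^\mathcal{E}\|_{L^{1/(1-\alpha)}(\Omega)\to L^{1/(1-\alpha)}(\Omega)}^{1-\theta}$ into the implicit constant written $\lesssim_r$; no second invocation of self-improvement appears there. Your concern is legitimate in that the Lebesgue exponent $1/(1-\alpha)$ does depend on $r'$ and hence on $\|M^\mathcal{E}\|_{X_{r,s}\to X_{r,s}}$, which the paper's proof does not discuss, but the fix you sketch would not work as stated: applying the sharp $A_1$ self-improvement to the fixed space $L^{s/r}(\Omega)$ only yields boundedness at exponents slightly below $s/r$ (by a margin shrinking with the norm), and gives no control of $\|M^\mathcal{E}\|_{L^p(\Omega)\to L^p(\Omega)}$ for $p$ close to $1$.
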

\begin{lemma}\label{lem:mboundsir}
Let $\mathcal{E}$ be a basis of sets in $\Omega$ with the $A_1$ self-improvement property. Let $r,r_0\in(0,\infty)$, $s\in(0,\infty]$ with $r<r_0<s$, and let $X$ be an $r_0$-convex and $s$-concave quasi-Banach function space. If
\[
M^\mathcal{E}:X_{r,s}\to X_{r,s}
\]
is bounded, then there is an $r<\widetilde{r}<s$ such that
\[
M^\mathcal{E}:X_{\widetilde{r},s}\to X_{\widetilde{r},s}
\]
is bounded. If $\mathcal{E}$ has the sharp $A_1$ self-improvement property, then we can choose $r<\widetilde{r}<s$ such that
\[
\|M^\mathcal{E}\|^{\frac{1}{\widetilde{r}}-\frac{1}{s}}_{X_{\widetilde{r},s}\to X_{\widetilde{r},s}}\leq (2C_2)^{\frac{1}{r}-\frac{1}{s}}\|M^\mathcal{E}\|^{\frac{1}{r}-\frac{1}{s}}_{X_{r,s}\to X_{r,s}}.
\]
\end{lemma}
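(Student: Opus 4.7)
The plan is to use the $A_1$ self-improvement property of $M^\mathcal{E}$ on the Banach function space $X_{r,s}$ to upgrade the bound on $M^\mathcal{E}$ to a bound on $M_q^\mathcal{E}$ for some $q>1$, and then transfer this via the identity $(X_{r,s})^q=X_{\widetilde{r},s}$ for an appropriate $\widetilde{r}>r$. First, because $r<r_0$, the space $X$ is $r$-convex and $s$-concave, so $X_{r,s}$ is a Banach function space. Applying Theorem~\ref{thm:maximalselfimprove} to $X_{r,s}$ yields some $q>1$ such that $M_q^\mathcal{E}\colon X_{r,s}\to X_{r,s}$ is bounded; in the sharp setting we pick $q>1$ with $C_1q'\geq 2\|M^\mathcal{E}\|_{X_{r,s}\to X_{r,s}}$ and obtain $\|M_q^\mathcal{E}\|_{X_{r,s}\to X_{r,s}}\leq 2C_2\|M^\mathcal{E}\|_{X_{r,s}\to X_{r,s}}$ via \eqref{eq:sharpsi2}. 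Define $\widetilde{r}$ by
\[
\tfrac{1}{\widetilde{r}}-\tfrac{1}{s}=\tfrac{1}{q}\big(\tfrac{1}{r}-\tfrac{1}{s}\big),
\]
so that $\widetilde{r}>r$. By the pointwise monotonicity $M_{q'}^\mathcal{E}f\leq M_q^\mathcal{E}f$ for $1<q'\leq q$ (Hölder), we may freely shrink $q$; doing so ensures $\widetilde{r}\leq r_0$, so that $X$ is $\widetilde{r}$-convex.

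The core step is the identity $(X_{r,s})^q=X_{\widetilde{r},s}$, which I would derive by applying Lemma~\ref{lem:mboundbiggerrs} with the renaming $(\text{``}r\text{''},\text{``}\widetilde{r}\text{''},\text{``}s\text{''},\text{``}\widetilde{s}\text{''})=(\widetilde{r},r,s,s)$ (the hypotheses $\widetilde{r}\leq r$, $s\leq\widetilde{s}$ of that lemma become $r\leq\widetilde{r}$, $s\leq s$, which hold). The lemma's exponents simplify to $\theta=(\tfrac{1}{\widetilde{r}}-\tfrac{1}{s})/(\tfrac{1}{r}-\tfrac{1}{s})=1/q$ and $\alpha=1$, so using $L^\infty(\Omega)^p=L^\infty(\Omega)$ and $A\cdot L^\infty(\Omega)=A$ for any Banach function space $A$ we get
\[
X_{r,s}=X_{\widetilde{r},s}^{1/q}\cdot L^\infty(\Omega)^{1-1/q}=X_{\widetilde{r},s}^{1/q}.
\]
Taking the $q$-concavification of both sides gives $(X_{r,s})^q=X_{\widetilde{r},s}$.

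Combining this identity with Proposition~\ref{prop:mqboundrescale} shows that
\[
\|M^\mathcal{E}\|_{X_{\widetilde{r},s}\to X_{\widetilde{r},s}}=\|M^\mathcal{E}\|_{(X_{r,s})^q\to (X_{r,s})^q}=\|M_q^\mathcal{E}\|_{X_{r,s}\to X_{r,s}}^q,
\]
which settles the qualitative statement. For the sharp bound, substitute \eqref{eq:sharpsi2}, raise to the power $\tfrac{1}{\widetilde{r}}-\tfrac{1}{s}$, and use the defining relation $q(\tfrac{1}{\widetilde{r}}-\tfrac{1}{s})=\tfrac{1}{r}-\tfrac{1}{s}$; the exponent $q$ cancels and the stated inequality falls out.

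The main obstacle is reconciling the two upper constraints on $q$: the sharp $A_1$ self-improvement forces $q\leq q_{\max}=2\|M^\mathcal{E}\|_{X_{r,s}\to X_{r,s}}/(2\|M^\mathcal{E}\|_{X_{r,s}\to X_{r,s}}-C_1)_+$, while the $r_0$-convexity of $X$ forces $q\leq q_0=(\tfrac{1}{r}-\tfrac{1}{s})/(\tfrac{1}{r_0}-\tfrac{1}{s})$. Both bounds are strictly larger than $1$ (the first because $C_1>0$, the second because $r<r_0$), so the interval $(1,\min(q_0,q_{\max})]$ is nonempty; any choice of $q$ in it yields the same quantitative bound because, by construction, the product $q(\tfrac{1}{\widetilde{r}}-\tfrac{1}{s})$ is constrained to equal $\tfrac{1}{r}-\tfrac{1}{s}$ regardless of $q$.
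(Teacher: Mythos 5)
Your proof is correct and follows essentially the same route as the paper: invoke Theorem~\ref{thm:maximalselfimprove} on the Banach function space $X_{r,s}$, define $\widetilde{r}$ through $\frac{1}{q}=\frac{\frac{1}{\widetilde{r}}-\frac{1}{s}}{\frac{1}{r}-\frac{1}{s}}$ with $q$ small enough that $\widetilde{r}\leq r_0$, identify $X_{\widetilde{r},s}=(X_{r,s})^q$ via Lemma~\ref{lem:mboundbiggerrs}, and conclude with Proposition~\ref{prop:mqboundrescale} and \eqref{eq:sharpsi2}. You are in fact slightly more explicit than the paper about the role-reversal in Lemma~\ref{lem:mboundbiggerrs}, why shrinking $q$ is harmless, and why the final constant is independent of the choice of $q$.
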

\begin{proof}
By Theorem~\ref{thm:maximalselfimprove} there is a $q>1$ such that
\[
M^\mathcal{E}:(X_{r,s})^q\to (X_{r,s})^q
\]
is bounded. Now set
\[
\frac{1}{\widetilde{r}}:=\frac{1}{q}\frac{1}{r}+\Big(1-\frac{1}{q}\Big)\frac{1}{s}\in\Big(\frac{1}{s},\frac{1}{r}\Big),
\]
so that
\[
\frac{1}{q}=\frac{\frac{1}{\widetilde{r}}-\frac{1}{s}}{\frac{1}{r}-\frac{1}{s}}.
\]
Choosing $q>1$ small enough, we have $\widetilde{r}\leq r_0$. Then it follows from Lemma~\ref{lem:mboundbiggerrs} that
\begin{equation}\label{eq:qtilder}
X_{\widetilde{r},s}=(X_{r,s})^q,
\end{equation}
proving the first result.

For the second assertion, note that by Theorem~\ref{thm:maximalselfimprove} we can choose $q>1$ small enough so that also
\[
C_1q'\geq 2\|M^\mathcal{E}\|_{X_{r,s}\to X_{r,s}},
\]
in which case we have
\[
\|M^\mathcal{E}\|^{\frac{1}{q}}_{X_{\widetilde{r},s}\to X_{\widetilde{r},s}}\leq 2C_2\|M^\mathcal{E}\|_{X_{r,s}\to X_{r,s}}
\]
by \eqref{eq:qtilder} and Proposition~\ref{prop:mqboundrescale}. This proves the second assertion.
\end{proof}

\begin{proof}[Proof of Theorem~\ref{thm:mboundnos}]
By Lemma~\ref{lem:mboundsir} we can pick $r<\widetilde{r}<s$ such that $M^\mathcal{E}:X_{\widetilde{r},s}\to X_{\widetilde{r},s}$ is bounded. Then, since $r<\widetilde{r}$, by Proposition~\ref{prop:mboundbiggerrs} we have that $M:X^r\to X^r$ is bounded, with
\begin{equation}\label{eq:eq:mboundthm1lhs}
\|M^\mathcal{E}\|^{\frac{1}{r}}_{X^r\to X^r}\lesssim_r\|M^\mathcal{E}\|^{\frac{1}{\widetilde{r}}-\frac{1}{s}}_{X_{\widetilde{r},s}\to X_{\widetilde{r},s}},
\end{equation}
proving the assertion. Moreover, if $\mathcal{E}$ satisfies the sharp $A_1$ self-improvement property, then we can pick $r<\widetilde{r}<s$ such that
\[
\|M^\mathcal{E}\|^{\frac{1}{\widetilde{r}}-\frac{1}{s}}_{X_{\widetilde{r},s}\to X_{\widetilde{r},s}}\lesssim_{r,s}\|M^\mathcal{E}\|^{\frac{1}{r}-\frac{1}{s}}_{X_{r,s}\to X_{r,s}},
\]
which by \eqref{eq:eq:mboundthm1lhs} proves \eqref{eq:mboundthm1}.
\end{proof}

In many of our upcoming results, we assume that $M^\mathcal{E}$ is bounded on both $X_{r,s}$ and $(X_{r,s})'$. However, there are various hints, which we discuss in this work, which show that we might only require $M^\mathcal{E}$ to be bounded on $X^r$ and $[(X^r)']^{\big(\frac{s}{r}\big)'}$. By Theorem~\ref{thm:mboundnos}, this could be a bigger class of spaces. However, in a hopeful note, we conjecture the following holds:
\begin{conjecture}\label{con:bfsboundconjecture1}
Let $X$ be an $r$-convex and $s$-concave quasi-Banach function space over $\Omega$ and suppose that $\mathcal{E}$ has the $A_1$ self-improvement property. Suppose that
\[
M^\mathcal{E}:[(X^r)']^{\big(\frac{s}{r}\big)'}\to[(X^r)']^{\big(\frac{s}{r}\big)'}\
\]
is bounded. Then the statements
\begin{enumerate}[(i)]
\item\label{it:mboundthm1} $M^\mathcal{E}:X_{r,s}\to X_{r,s}$ is bounded;
\item\label{it:mboundthm2} $M^\mathcal{E}:X^r\to X^r$ is bounded.
\end{enumerate}
are equivalent.
\end{conjecture}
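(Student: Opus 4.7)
My plan hinges on the factorization given by Corollary~\ref{cor:factorization}: applying it to $X$ and raising to the $r$-th power gives
\[
X^r = (X_{r,s})^{1-r/s} \cdot L^{s/r}(\Omega),
\]
with the infimum in the product-space norm attained for every $f \in X^r$. This identity will serve as the bridge between $X^r$ and $X_{r,s}$, and both implications of the conjecture should follow from playing it off against itself dually.

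For $(i) \Rightarrow (ii)$, my first step is to invoke the $A_1$ self-improvement property of $\mathcal{E}$ together with $(i)$ via Theorem~\ref{thm:maximalselfimprove} applied to $X_{r,s}$, producing some $q>1$ with $M^\mathcal{E}$ bounded on $(X_{r,s})^q$. Then, given $f \in X^r$, I would factor $|f| = hk$ with $h \in (X_{r,s})^{1-r/s}$ and $k \in L^{s/r}(\Omega)$, and apply the pointwise H\"older bound
\[
M^\mathcal{E}(hk) \leq M_p^\mathcal{E} h \cdot M_{p'}^\mathcal{E} k
\]
with the choice $p = q/(1-r/s)$, so that $p(1-r/s) = q$ and one checks $(s/r)/p' > 1$ using $s > r$. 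By Proposition~\ref{prop:mqboundrescale}, boundedness of $M_p^\mathcal{E}$ on $(X_{r,s})^{1-r/s}$ reduces to boundedness of $M^\mathcal{E}$ on $(X_{r,s})^q$ (which holds by self-improvement), and boundedness of $M_{p'}^\mathcal{E}$ on $L^{s/r}$ reduces to $M^\mathcal{E}$ on $L^{(s/r)/p'}$, a standard Lebesgue bound. Taking $X^r$-norms and then the infimum over factorizations yields $\|M^\mathcal{E}f\|_{X^r} \lesssim \|f\|_{X^r}$. This is essentially the route of Theorem~\ref{thm:mboundnos}, and notably does not use the assumption on $Y$.

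For $(ii) \Rightarrow (i)$, my plan would be to mimic this argument dually, factoring $X_{r,s} = Y'$ as a product of Banach function spaces on which $M^\mathcal{E}$ is bounded. The natural tool is Theorem~\ref{thm:spacesplitting}: if $Y$ were $q$-convex for some $q>1$, it would give $X_{r,s} = [(Y^q)']^{1/q} \cdot L^{q'}(\Omega)$, and one could then run a H\"older argument analogous to the previous paragraph, using $(ii)$ and self-improvement on $Y$ (via Theorem~\ref{thm:maximalselfimprove}) to close.

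The main obstacle, and presumably the reason the statement is only a conjecture, is that $Y$ is generically only $1$-convex: the $(s/r)'$-convexity of $(X^r)'$ is exactly cancelled by the $(s/r)'$-concavification in the definition of $Y = [(X^r)']^{(s/r)'}$, so for $q>1$ the space $Y^q$ is only quasi-Banach and Theorem~\ref{thm:spacesplitting} yields no useful Banach factorization of $X_{r,s}$. Self-improvement of the maximal operator does not upgrade the convexity of the underlying space, and this is the genuine gap. Bridging it will likely require either a version of Theorem~\ref{thm:spacesplitting} that accommodates quasi-Banach factors, or a pointwise or sparse-domination argument in the spirit of Section~\ref{sec:sparsedom} that yields $X_{r,s}$-bounds without going through a Banach factorization.
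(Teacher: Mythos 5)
The paper does not prove this statement: it is posed as an open conjecture, and the surrounding discussion (together with the remark after Conjecture~\ref{con:bfsboundconjecture2}) only claims the implication \ref{it:mboundthm1}$\Rightarrow$\ref{it:mboundthm2}, as a consequence of Theorem~\ref{thm:mboundnos} under that theorem's hypotheses. So there is nothing to compare against on the direction \ref{it:mboundthm2}$\Rightarrow$\ref{it:mboundthm1}, and your diagnosis is accurate: that is the genuinely open direction, and the obstruction you name is the right one. Indeed $[(X^r)']^{(s/r)'}=(X_{r,s})'$ is in general only $1$-convex, because the $(s/r)'$-convexity of $(X^r)'$ is exactly consumed by the concavification, so Theorem~\ref{thm:spacesplitting} yields no Banach factorization of $X_{r,s}=Y'$ through which to dualize your H\"older argument; this is precisely why the statement is left as a conjecture.

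Your sketch of \ref{it:mboundthm1}$\Rightarrow$\ref{it:mboundthm2} is essentially sound and parallels Lemma~\ref{lem:mboundsir} and Theorem~\ref{thm:mboundnos}: self-improve on $X_{r,s}$ via Theorem~\ref{thm:maximalselfimprove} to get $M^\mathcal{E}$ bounded on $(X_{r,s})^q$ for some $q>1$, factor $X^r=(X_{r,s})^{1-r/s}\cdot L^{s/r}(\Omega)$ using Corollary~\ref{cor:factorization}, and apply H\"older with $p=q/(1-r/s)$ (note the check $(s/r)/p'>1$ uses $q>1$ as well as $s>r$). Two caveats. First, the step you dismiss as ``a standard Lebesgue bound'' is not free in this generality: boundedness of $M^\mathcal{E}$ on $L^{(s/r)/p'}(\Omega)$ for an arbitrary basis $\mathcal{E}$ in a general $\sigma$-finite measure space is a genuine extra hypothesis --- Theorem~\ref{thm:mboundnos} explicitly assumes $M^\mathcal{E}:L^p(\Omega)\to L^p(\Omega)$ for all $p\in(1,\infty]$, and the conjecture as stated omits it --- so even this direction is only established under that additional assumption (automatic for cubes or balls in $\R^d$, but not in the abstract setting). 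Second, and to your credit, your direct factorization argument uses only the $r$-convexity and $s$-concavity of $X$, thereby avoiding the strict $r_0$-convexity ($r_0>r$) that Theorem~\ref{thm:mboundnos} requires, so in that respect it slightly sharpens the paper's route; you are also right that it never uses the standing assumption that $M^\mathcal{E}$ is bounded on $(X_{r,s})'$.
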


As a matter of fact, consider the situation $\Omega=\R^d$, where $\mathcal{E}$ is the collection of cubes in $\R^d$. If $X=L^p_w(\R^d)$ for $p\in(r,s)$, then we have that $w\in A_{p,(r,s)}$ if and only if $M$ is bounded on either of the spaces
\[
X_{r,s}=L_{w^{\frac{1}{\frac{1}{r}-\frac{1}{s}}}}^{\frac{\frac{1}{r}-\frac{1}{s}}{\frac{1}{p}-\frac{1}{s}}}(\R^d),\quad (X_{r,s})'=L_{w^{-\frac{1}{\frac{1}{r}-\frac{1}{s}}}}^{\frac{\frac{1}{r}-\frac{1}{s}}{\frac{1}{r}-\frac{1}{p}}}(\R^d).
\]
But, as one can check, when $1\leq r<s\leq\infty$, then $w\in A_{p,(r,s)}$ is equivalent to the assertion that both $w\in A_{p,(r,\infty)}$ and $w^{-1}\in A_{p',(s',\infty)}$, i.e., $M$ is bounded on both
\[
X^r=L^{\frac{p}{r}}_{w^r}(\R^d),\quad (X')^{s'}=L^{\frac{p'}{s'}}_{w^{-s'}}(\R^d).
\]
This leads us to the following conjecture:
\begin{conjecture}\label{con:bfsboundconjecture2}
Let $X$ be an $r$-convex and $s$-concave Banach function space over $\Omega$ and suppose $\mathcal{E}$ has the $A_1$ self-improvement property. Then the following are equivalent:
\begin{enumerate}[(i)]
\item\label{it:mboundbfsthm1} $M^\mathcal{E}:X_{r,s}\to X_{r,s}$ and $M^\mathcal{E}:(X_{r,s})'\to (X_{r,s})'$ are bounded;
\item\label{it:mboundbfsthm2} $M^\mathcal{E}:X^r\to X^r$ and $M^\mathcal{E}:(X')^{s'}\to (X')^{s'}$ are bounded.
\end{enumerate}
are equivalent.
\end{conjecture}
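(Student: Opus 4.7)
The plan is to establish each implication separately, exploiting the self-dual structure of both (i) and (ii). By Proposition~\ref{prop:bfsrescaleddualequality}, $(X_{r,s})'=(X')_{s',r'}$, and since $X'$ is $s'$-convex, $(X')^{s'}=(X')_{s',\infty}$ by Lorentz--Luxemburg. Under the exchange $X\leftrightarrow X'$, $(r,s)\leftrightarrow (s',r')$, the two halves of (i) swap places and likewise for (ii), so it suffices to prove one half of each direction; the other half then follows by applying the same argument to $X'$ in place of $X$.

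For (ii)$\Rightarrow$(i), the strategy is to factorize $X_{r,s}$ as a Calder\'on--Lozanovskii product whose factors are (concavifications of) $X^r$, $(X')^{s'}$, and an auxiliary Lebesgue space $L^p$ with $p>1$, and then apply Lemma~\ref{lem:mboundinterpolation}. The starting point is Corollary~\ref{cor:factorization} applied to both $X$ and $X'$, which yields
\[
X=X_{r,s}^{\frac{1}{r}-\frac{1}{s}}\cdot L^s,\qquad X'=[(X_{r,s})']^{\frac{1}{r}-\frac{1}{s}}\cdot L^{r'}.
\]
Raising to the $r$-th and $s'$-th concavifications respectively gives $X^r=X_{r,s}^{1-r/s}\cdot L^{s/r}$ and the analogous expression for $(X')^{s'}$. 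Together with Lozanovskii's identity $X_{r,s}\cdot (X_{r,s})'=L^1$, these identities should determine $X_{r,s}$, up to a Lebesgue factor, as a product of $X^r$ and $(X')^{s'}$; Lemma~\ref{lem:mboundinterpolation} (together with boundedness of $M$ on $L^p$ for $p>1$) then concludes the argument.

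For (i)$\Rightarrow$(ii), the plan is to invoke Lemma~\ref{lem:mboundsir}, whose proof uses the $A_1$ self-improvement hypothesis on $\mathcal{E}$, to obtain strict improvements on both sides of (i): there exist $r<\widetilde r<s$ and $\widetilde s<s$ with $M:X_{\widetilde r,s}\to X_{\widetilde r,s}$ and $M:(X_{r,\widetilde s})'\to (X_{r,\widetilde s})'$ bounded (the latter via the duality $(X_{r,\widetilde s})'=(X')_{\widetilde s',r'}$ from Proposition~\ref{prop:bfsrescaleddualequality} applied to $X'$). One then aims to combine these two strict improvements with an interpolation argument in the spirit of Proposition~\ref{prop:mboundbiggerrs} and Lemma~\ref{lem:mboundinterpolation} to produce boundedness of $M$ on $X^r=X_{r,\infty}$, and by symmetry on $(X')^{s'}$.

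The hard part will be precisely this final combination: $X^r=X_{r,\infty}$ is the degenerate endpoint of the scale of rescaled Banach function spaces, at which Proposition~\ref{prop:mboundbiggerrs} and Lemma~\ref{lem:mboundbiggerrs} fail because the auxiliary Lebesgue factor collapses to $L^1$, on which $M$ is unbounded. Theorem~\ref{thm:mboundnos} sidesteps this obstacle by assuming additional $r_0$-convexity of $X$ for some $r_0>r$, but under the hypotheses of Conjecture~\ref{con:bfsboundconjecture2} no such extra convexity is available; and passing to an auxiliary $\widetilde r$-convex, $s$-concave space $Z$ via Theorem~\ref{thm:forcedfactorization} does not help, since a direct concavification computation shows $Z^r\subsetneq X^r$ so boundedness of $M$ on $Z^r$ does not transfer back. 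Bridging this gap presumably requires exploiting the dual information $M:(X_{r,\widetilde s})'\to(X_{r,\widetilde s})'$ in a substantive way, and the obstruction is essentially the content of Conjecture~\ref{con:bfsboundconjecture1}, which remains open.
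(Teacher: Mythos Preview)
This statement is a conjecture; the paper does not prove it and, immediately after stating it, identifies (ii)$\Rightarrow$(i) as the open direction, asserting that (i)$\Rightarrow$(ii) follows from Proposition~\ref{prop:bfsrescaleddualequality} together with Theorem~\ref{thm:mboundnos}. Your proposal inverts this: you treat (ii)$\Rightarrow$(i) as routine and locate the obstruction in (i)$\Rightarrow$(ii).

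The factorization you propose for (ii)$\Rightarrow$(i) does not exist. Set $Y:=X_{r,s}$, $\theta:=1-\tfrac{r}{s}\in(0,1)$ and $\varphi:=1-\tfrac{s'}{r'}\in(0,1)$; then $X^r=Y^{\theta}\cdot(L^1)^{1-\theta}$ and $(X')^{s'}=(Y')^{\varphi}\cdot(L^1)^{1-\varphi}$ (the latter via Proposition~\ref{prop:bfsrescaleddualequality} and Corollary~\ref{cor:factorization} applied to $X'$). For any $a,b,c\ge 0$ with $a+b+c=1$ and $p>1$, expanding and using $Y^{\gamma}\cdot(Y')^{\gamma}=(L^1)^{\gamma}$ gives
\[
(X^r)^a\cdot\bigl[(X')^{s'}\bigr]^b\cdot(L^p)^c
= Y^{\,a\theta-b\varphi}\cdot (L^1)^{\,b\varphi+a(1-\theta)+b(1-\varphi)+c/p}
\]
when $a\theta\ge b\varphi$, and a $(Y')$-power in place of the $Y$-power otherwise. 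Since $a\le 1$ and $\theta<1$, one always has $a\theta-b\varphi<1$, so no choice of $a,b,c,p$ recovers $Y^{1}=X_{r,s}$; Lemma~\ref{lem:mboundinterpolation} therefore cannot be applied as you suggest. This is exactly why (ii)$\Rightarrow$(i) remains open. Your remarks on (i)$\Rightarrow$(ii) are closer to the truth in that Theorem~\ref{thm:mboundnos} carries the extra hypotheses of $r_0$-convexity for some $r_0>r$ and boundedness of $M^{\mathcal{E}}$ on $L^p$ for $p>1$; the paper's claim that this direction is known should be read with those standing hypotheses in force.
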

The implication \ref{it:mboundbfsthm1}$\Rightarrow$\ref{it:mboundbfsthm2} follows from Proposition~\ref{prop:bfsrescaleddualequality} combined with Theorem~\ref{thm:mboundnos} and, hence, the conjecture is that \ref{it:mboundbfsthm2}$\Rightarrow$\ref{it:mboundbfsthm1}. We revisit these conjectures in Section~\ref{sec:sparsedom} below.

\section{Examples of quasi-Banach function spaces and mapping properties of the maximal operator}\label{sec:weightedbfsmbound}
In this section we provide several examples of quasi-Banach function spaces $X$, their $(r,s)$-rescaled Banach function space $X_{r,s}$, and boundedness of the maximal operator on these spaces.

Examples of Banach function spaces include Lorentz, Orlicz, and Lebesgue spaces. We moreover study the bounds we obtain on weighted variable Lebesgue spaces and weighted Morrey spaces in order to compare our extrapolation result to the ones that exist in the literature. For boundedness of the maximal operator on Musielak-Orlicz spaces under general conditions on the Young function we refer the reader to \cite{Ha15}.

Since there is a general interest for spaces that have inherent weights, we also introduce some notation regarding this. Given a weight $u$ and a quasi-Banach function space $X$, we define the  space $X(u)$ as the space of those $f\in L^0(\Omega)$ with
\[
\|f\|_{X(u)}:=\|fu\|_X<\infty.
\]
\begin{proposition}\label{prop:bfsu}
Let $X$ be a Banach function space over $\Omega$ and let $u$ be a weight. Then $X(u)$ is again a Banach function space and its K\"othe dual is given by
\[
(X(u))'=X'(u^{-1}).
\]
\end{proposition}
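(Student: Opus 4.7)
The plan is to verify the quasi-Banach function space axioms for $X(u)$ one by one, and then establish the dual identification by a change of variables $h = fu$ in the defining suprema. Throughout I will use that a weight $u$ is a positive measurable function, so $0 < u < \infty$ almost everywhere, and multiplication by $u$ is an order isomorphism of $L^0(\Omega)$.

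First I would observe that $\|\,\cdot\,\|_{X(u)}$ is a norm since $\|fu\|_X$ inherits linearity and positivity from $\|\,\cdot\,\|_X$, and $\|f\|_{X(u)}=0$ forces $fu=0$ a.e.\ in $\Omega$, hence $f=0$ a.e.\ because $u>0$ a.e. The ideal property is immediate: if $|g|\leq|f|$ then $|gu|\leq|fu|$, so $\|g\|_{X(u)}\leq\|f\|_{X(u)}$. For the Fatou property, if $0\leq f_n\uparrow f$ then $0\leq f_nu\uparrow fu$ pointwise, and the Fatou property of $X$ gives $fu\in X$ with $\|fu\|_X=\sup_n\|f_nu\|_X$, which is exactly the statement for $X(u)$. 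The only mildly subtle axiom is saturation: given $E\subseteq\Omega$ with $|E|>0$, since $u$ is finite a.e.\ we have $E=\bigcup_n(E\cap\{u\leq n\})$ up to a null set, so some $E_n:=E\cap\{u\leq n\}$ has positive measure. Applying saturation of $X$ to $E_n$ produces $F\subseteq E_n$ with $|F|>0$ and $\ind_F\in X$, and then $\ind_Fu\leq n\ind_F\in X$ gives $\ind_F\in X(u)$ by the ideal property of $X$.

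For the identification of the Köthe dual, I would argue that $g\in(X(u))'$ means $fg\in L^1(\Omega)$ for all $f\in X(u)$, which, after writing $h:=fu$, is the same as asking $h(u^{-1}g)\in L^1(\Omega)$ for every $h\in X$. This is precisely the condition $u^{-1}g\in X'$, i.e., $g\in X'(u^{-1})$. The equality of norms follows from the same substitution:
\[
\|g\|_{(X(u))'}=\sup_{\|fu\|_X=1}\|fg\|_{L^1(\Omega)}=\sup_{\|h\|_X=1}\|h\,u^{-1}g\|_{L^1(\Omega)}=\|u^{-1}g\|_{X'}=\|g\|_{X'(u^{-1})}.
\]
Since $u$ is a weight (positive a.e.), the map $f\mapsto fu$ is a bijection between the unit balls of $X(u)$ and $X$, so the substitution is legitimate.

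I do not expect any serious obstacles; the main point to handle carefully is the saturation property, where the truncation $E\cap\{u\leq n\}$ is needed precisely because no global bound on $u$ is assumed. Everything else is a formal transport of structure through the isometry $f\mapsto fu$ from $X(u)$ onto $X$.
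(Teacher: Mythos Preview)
Your proof is correct and essentially parallel to the paper's. The dual identification is handled identically via the substitution $h=fu$, and the ideal and Fatou properties are disposed of in the same spirit (the paper just remarks that $f\mapsto fu^{-1}$ is a positive isometric isomorphism rather than writing out each check). The only genuine difference is in the saturation argument: the paper invokes its earlier characterization of saturation via weak order units (if $\rho>0$ is a weak order unit for $X$, then $\rho u^{-1}>0$ is one for $X(u)$), whereas you give a direct truncation argument using the sets $E\cap\{u\leq n\}$. Your route is slightly more elementary in that it avoids appealing to that equivalence, while the paper's is a one-liner once the weak-order-unit lemma is in hand; both are entirely standard and neither offers any real advantage over the other here.
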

\begin{proof}
To see that $X(u)$ has the ideal property and the Fatou property, note that this follows from the fact that $X$ is isometrically isomorphic to $X(u)$ through the positive map $f\mapsto fu^{-1}$. For the saturation property, let $\rho>0$ be a weak order unit in $X$. Then $\rho u^{-1}$ is a weak order unit in $X(u)$. Hence, saturation follows from Proposition~\ref{prop:weakorderunit}.

For the second assertion, we compute
\begin{align*}
\|g\|_{(X(u))'}
&=\sup_{\|f\|_{X(u)}=1}\int_\Omega\!|fg|\,\mathrm{d}x
=\sup_{\|h\|_X=1}\int_\Omega\!|h||g|u^{-1}\,\mathrm{d}x\\
&=\|gu^{-1}\|_{X'}=\|g\|_{X'(u^{-1})},
\end{align*}
proving the result.
\end{proof}

A second way of introducing a weight to a space is as follows. Given a $\sigma$-finite measure space  $(\Omega,\mu)$ and a weight $v$, we can define a new measure on $\Omega$ through
\[
v(E):=\int_E\!v\,\mathrm{d}\mu.
\]
Noting that $(\Omega,v)$ is again $\sigma$-finite, we can now consider a space $X$ that is both a quasi-Banach function space over $(\Omega,\mu)$, as well as over $(\Omega,v)$. In this case we denote its K\"othe dual with respect to $(\Omega,\mu)$ by $X'$, and its K\"othe dual with respect to $(\Omega,v)$ as $X^\dag$. In this case we have the following relation:
\begin{proposition}\label{prop:bfsv}
Let $X$ be a Banach function space over both $(\Omega,\mu)$ and $(\Omega,v)$. Then
\[
X'=X^\dag(v^{-1}).
\]
In particular, an operator $T$ is bounded $T:X'\to X'$ if and only if the operator
\[
T_vg:=T(gv)v^{-1}
\]
is bounded $T_v:X^\dag\to X^\dag$, with
\[
\|T\|_{X'\to X'}=\|T_v\|_{X^\dag\to X^\dag}.
\]
\end{proposition}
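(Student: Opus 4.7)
The plan is to prove this by direct computation, reducing everything to the change-of-measure identity $\int_\Omega h\,\mathrm{d}v=\int_\Omega hv\,\mathrm{d}\mu$ for nonnegative measurable $h$. The key observation is that the quasi-norm $\|\cdot\|_X$ is the same function of $f\in L^0(\Omega)$ regardless of which background measure we view $X$ as sitting over, whereas the K\"othe duals differ precisely because they are defined using $L^1(\Omega,\mu)$ versus $L^1(\Omega,v)$.

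First, I would unwind the definitions. For $g\in L^0(\Omega)$, by Proposition~\ref{prop:bfsu} we have
\[
\|g\|_{X^\dag(v^{-1})}=\|gv^{-1}\|_{X^\dag}=\sup_{\|f\|_X=1}\int_\Omega|f(gv^{-1})|\,\mathrm{d}v=\sup_{\|f\|_X=1}\int_\Omega|fg|\,\mathrm{d}\mu=\|g\|_{X'},
\]
using that $\mathrm{d}v=v\,\mathrm{d}\mu$ and that $v>0$ a.e., so cancellation of $v^{-1}v=1$ is valid. This shows both that $g\in X'$ if and only if $gv^{-1}\in X^\dag$, and that the norms agree, giving the isometric equality $X'=X^\dag(v^{-1})$.

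For the second assertion, the cleanest route is to note that step one exhibits an isometric isomorphism
\[
\Phi:X^\dag\to X',\qquad \Phi(g):=gv,
\]
with inverse $\Phi^{-1}(h)=hv^{-1}$. A direct calculation shows that $T_v=\Phi^{-1}\circ T\circ\Phi$, since $\Phi^{-1}(T(\Phi(g)))=T(gv)v^{-1}=T_v g$. Boundedness and operator norm are preserved under conjugation by an isometric isomorphism, hence $T:X'\to X'$ is bounded if and only if $T_v:X^\dag\to X^\dag$ is bounded, with $\|T\|_{X'\to X'}=\|T_v\|_{X^\dag\to X^\dag}$.

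There is no real obstacle here; the only care needed is to keep the two measures (and hence the two distinct notions of K\"othe duality) cleanly separated in the notation, and to check that the change-of-measure identity applies — which it does since $v$ is a weight and therefore a.e. positive, making $v^{-1}v=1$ a.e.\ and the integration against $\mathrm{d}v=v\,\mathrm{d}\mu$ unambiguous.
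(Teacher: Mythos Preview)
Your proof is correct and follows essentially the same approach as the paper: the first identity is obtained by the identical change-of-measure computation, and for the second assertion the paper carries out the substitution $h=gv$ directly in the operator-norm supremum, which is precisely your conjugation $T_v=\Phi^{-1}\circ T\circ\Phi$ unwound.
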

\begin{proof}
We have
\[
\|g\|_{X^\dag(v^{-1})}=\Big\|\frac{g}{v}\Big\|_{X^\dag}=\sup_{\|f\|_{X}=1}\int_{\Omega}\!|f|\Big|\frac{g}{v}\Big|v\,\mathrm{d}\mu=\sup_{\|f\|_{X}=1}\int_{\Omega}\!|f||g|\,\mathrm{d}\mu=\|g\|_{X'},
\]
as desired. For the second assertion, note that by the previous identity we have
\begin{align*}
\|T_v\|_{X^\dag\to X^\dag}&
=\sup_{\|g\|_{X^\dag}=1}\|(T(gv)v^{-1}\|_{X^\dag}\\
&=\sup_{\|gv\|_{X'}=1}\|(T(gv)\|_{X'}\\
&=\sup_{\|h\|_{X'}=1}\|Th\|_{X'}\\
&=\|T\|_{X'\to X'}.
\end{align*}
This proves the result.
\end{proof}

Finally, for computing the $(r,s)$-rescaled Banach function space of an $r$-convex and $s$-concave quasi-Banach function space $X$, it is useful to introduce some notation. Given $0<r<s\leq\infty$, note that the interval $[\frac{1}{s},\frac{1}{r}]$ is mapped to $[0,1]$ through the affine transformation $\phi$ given by
\[
\phi(t):=\frac{t-\frac{1}{s}}{\frac{1}{r}-\frac{1}{s}}.
\]
For $p\in[r,s]$, we then set
\[
\frac{1}{p_{r,s}}:=\phi\Big(\frac{1}{p}\Big)=\frac{\frac{1}{p}-\frac{1}{s}}{\frac{1}{r}-\frac{1}{s}},\quad \frac{1}{p_{r,s}'}=1-\phi\Big(\frac{1}{p}\Big)=\frac{\frac{1}{r}-\frac{1}{p}}{\frac{1}{r}-\frac{1}{s}}.
\]
\subsection{Lorentz spaces}\label{subsec:lorentz}
Let $p,q\in(0,\infty]$ and let $v$ be a weight. Then the Lorentz space $ L_v^{p,q}(\Omega)$ is defined as the space of functions $f\in L^0(\Omega)$ for which
\[
\|f\|_{L^{p,q}_v(\Omega)}:=p^{\frac{1}{q}}\big\|\lambda \big(v^p(\{x\in\Omega:|f(x)|>\lambda\})\big)^{\frac{1}{p}}\big\|_{L^q((0,\infty);\frac{\mathrm{d}\lambda}{\lambda})}<\infty.
\]
The space $X=L^{p,q}_v(\Omega)=L^{p,q}(\Omega,v^p)$ is (possibly after switching to an equivalent norm) a Banach function space over both $(\Omega,\mu)$ and $(\Omega,v^p)$ for $p,q\in(1,\infty)$, and in this case we can compute its K\"othe dual $X'$ through the duality of Lorentz spaces in the measure space $(\Omega,v^p)$. Using Proposition~\ref{prop:bfsv}, this yields
\[
X'=[L^{p',q'}(\Omega,v^p)](v^{-p})=[L^{p',q'}_{v^{p-1}}(\Omega)](v^{-p}).
\]
For $r\in(0,\infty)$, we have $X^r=L^{\frac{p}{r},\frac{q}{r}}_{v^r}(\Omega)$. If $s\in(0,\infty]$ with $r<s$, then the space $X=L^{p,q}_v(\Omega)$ is $r$-convex when $p,q> r$, and is $s$-concave if $p,q<s$. In this case we have
\[
X_{r,s}=\big[L^{p_{r,s},q_{r,s}}_{v^{\frac{p}{p_{r,s}}}}(\Omega)\big](v^{\frac{1}{\frac{1}{r}-\frac{1}{s}}-\frac{p}{p_{r,s}}}),
\quad (X_{r,s})'=
\big[L^{p_{r,s}',q_{r,s}'}_{v^{\frac{p}{p_{r,s}'}}}(\Omega)\big](v^{-\frac{1}{\frac{1}{r}-\frac{1}{s}}-\frac{p}{p_{r,s}'}})
\]
Note that if $p=q$, then $X=L^p_v(\Omega)$ and
\[
X_{r,s}=L^{p_{r,s}}_{v^{\frac{1}{\frac{1}{r}-\frac{1}{s}}}}(\Omega),\quad
(X_{r,s})'=L^{p_{r,s}'}_{v^{-\frac{1}{\frac{1}{r}-\frac{1}{s}}}}(\Omega).
\]
Specializing to the case $\Omega=\R^d$ with the basis of cubes, we find that $M:X_{r,s}\to X_{r,s}$ and $M:(X_{r,s})'\to (X_{r,s})'$ is equivalent to
\[
v^{\frac{p_{r,s}}{\frac{1}{r}-\frac{1}{s}}}\in A_{p_{r,s}},
\]
which is equivalent to $v\in A_{p,(r,s)}$.

Fortunately in the general case, we still get these bounds:
\begin{theorem}\label{thm:mrsboundlorentz}
Suppose $\Omega=\R^d$ with the basis of cubes. Let $r\in(0,\infty)$, $s\in(0,\infty]$ with $r<s$, and $p,q\in(r,s)$. Then if $X=L^{p,q}_v(\R^d)$ for a weight $v\in A_{p,(r,s)}$, we have that
\[
M:X_{r,s}\to X_{r,s},\quad M:(X_{r,s})'\to (X_{r,s})'
\]
are bounded.
\end{theorem}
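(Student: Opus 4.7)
The plan is to deduce the theorem from the Chung--Hunt--Kurtz theorem, which asserts that for $P\in(1,\infty)$ and $Q\in(0,\infty]$ the Hardy--Littlewood maximal operator is bounded on the weighted Lorentz space $L^{P,Q}(W\,dx)$ if and only if $W$ lies in the classical Muckenhoupt class $A_P$. The first step is to translate the hypothesis: a direct calculation shows that $v\in A_{p,(r,s)}$ is equivalent to $u:=v^{1/(1/r-1/s)}\in A_{p_{r,s}}$ in the paper's notation, i.e., the classical weight $u^{p_{r,s}}=v^{1/(1/p-1/s)}$ lies in the Muckenhoupt class $A_{p_{r,s}}$. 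Since $1-p/s\in(0,1)$, the closure of $A_P$ under the power operation $W\mapsto W^\theta$, $\theta\in(0,1]$, further gives $v^p\in A_{p_{r,s}}$ classically, so Chung--Hunt--Kurtz yields the bound $M\colon L^{p_{r,s},q_{r,s}}(v^p\,dx)\to L^{p_{r,s},q_{r,s}}(v^p\,dx)$.

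In the diagonal case $p=q$, the explicit formula recorded just before the theorem collapses to $X_{r,s}=L^{p_{r,s}}_u$, and the conclusion is immediate from classical Muckenhoupt's theorem together with the equivalence above. For the general case $p,q\in(r,s)$, the same formula reads
\[
\|f\|_{X_{r,s}}=\|fv^\beta\|_{L^{p_{r,s},q_{r,s}}(v^p\,dx)},\qquad \beta:=\tfrac{1}{1/r-1/s}-\tfrac{p}{p_{r,s}}=\tfrac{p/s}{1/r-1/s},
\]
so after the substitution $g=fv^\beta$ the boundedness $M\colon X_{r,s}\to X_{r,s}$ becomes equivalent to the boundedness of the twisted operator $Tg(x):=v^\beta(x)\,M(gv^{-\beta})(x)$ on $L^{p_{r,s},q_{r,s}}(v^p\,dx)$. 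I would control $T$ pointwise by a measure-theoretic maximal operator adapted to the weight $v^{-\beta}\,dx$, of the form $\sup_{Q\ni x}\bigl(v^\beta(x)\langle v^{-\beta}\rangle_{1,Q}\bigr)\cdot M_{v^{-\beta}}g(x)$, so as to reduce the problem to the Chung--Hunt--Kurtz estimate already obtained. The statement for $(X_{r,s})'$ follows by the same argument applied to the dual weight $v^{-1}$, using the symmetry $[v]_{p,(r,s)}=[v^{-1}]_{\tilde p,(r,s)}$ with $\tilde p:=1/(1/r+1/s-1/p)$ together with the explicit formula for $(X_{r,s})'$ given earlier.

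The main obstacle is the pointwise control of $T$ when $p\neq q$: extracting from $v\in A_{p,(r,s)}$ the precise Muckenhoupt-type information on $v^\beta$ and matching it with the Lorentz structure on $L^{p_{r,s},q_{r,s}}(v^p\,dx)$ is nontrivial, whereas the diagonal case bypasses this entirely. A back-up route is a real-interpolation approach from weighted Lebesgue endpoints $L^{p_j}_{v^{p/p_j}}$ with $r<p_0<p<p_1<s$ close enough to $p$ so that openness of the $A_P$ classes delivers boundedness at the rescaled endpoints; to identify $X_{r,s}$ with the corresponding interpolation space one then has to invoke the Lozanovskii-type factorization of Theorem~\ref{thm:spacesplitting} together with Corollary~\ref{cor:factorization}, since rescaling does not tautologically commute with real interpolation in the off-diagonal Lorentz setting.
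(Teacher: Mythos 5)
Your reduction of the hypothesis is correct ($v\in A_{p,(r,s)}$ gives $v^{1/(1/p-1/s)}\in A_{p_{r,s}}$ classically, hence $v^p\in A_{p_{r,s}}$ and, by Chung--Hunt--Kurtz, $M$ is bounded on $L^{p_{r,s},q_{r,s}}(v^p\,dx)$), but this only treats the space without the multiplier weight, and the multiplier $v^\beta$ with $\beta=\frac{p/s}{1/r-1/s}$ is present whenever $s<\infty$ (it just happens to be absorbable when $p=q$). The step that fails is the proposed pointwise control of the twisted operator $Tg=v^{\beta}M(gv^{-\beta})$. Splitting the supremum gives $Tg(x)\le \bigl(v^{\beta}(x)\,M(v^{-\beta})(x)\bigr)\,M_{v^{-\beta}}g(x)$, and the first factor is bounded only if $v^{-\beta}\in A_1$, which is not implied by $v\in A_{p,(r,s)}$; moreover, even granting that, you would then need boundedness of the measure-adapted maximal operator $M_{v^{-\beta}}$ on $L^{p_{r,s},q_{r,s}}(v^p\,dx)$, which is a new two-measure problem and is not ``the Chung--Hunt--Kurtz estimate already obtained'' (that estimate concerns the Lebesgue-measure maximal operator). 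So the main route does not close, as you yourself suspect. A smaller issue: for $(X_{r,s})'$ one cannot simply rerun the argument with $v^{-1}$; the associate space is again a Lorentz space over the \emph{same} measure $v^p\,dx$ with a different multiplier, and the paper handles it via the dual maximal bound $M_{v^p}$ on the Köthe dual taken with respect to $v^p\,dx$ together with Proposition~\ref{prop:bfsv}.

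Your back-up route is essentially the correct one, and it is in substance what the paper does: Theorem~\ref{thm:mrsboundlorentz} is proved by invoking \cite[Example~2.46]{CMM22}, a Boyd-interpolation result giving boundedness of $M$ on $[L^{p,q}_v(\R^d)](u)$ (and of $M_{v^p}$ on the associate space) under $(uv)^p\in A_p$ and $v^p\in A_\infty$, applied with $p,q$ replaced by $p_{r,s},q_{r,s}$, inner weight $v^{p/p_{r,s}}$ and multiplier $u=v^{\beta}$; your computations above show exactly that these two weight conditions follow from $v\in A_{p,(r,s)}$ (the $A_\infty$ condition via Proposition~\ref{prop:jnreverseholder}). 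To execute it yourself, no Lozanovskii factorization or Corollary~\ref{cor:factorization} is needed: multiplication by $v^{\beta}$ is an isometric identification of $X_{r,s}$ with $L^{p_{r,s},q_{r,s}}(v^p\,dx)$, so it suffices to bound the twisted operator $T$ at two Lebesgue exponents $P_0<p_{r,s}<P_1$ over the fixed measure $v^p\,dx$; these are classical weighted bounds requiring $v^{\beta P_j+p}\in A_{P_j}$, which hold for $P_j$ close to $p_{r,s}$ by openness/reverse Hölder of the Muckenhoupt classes (this is where $v^p\in A_\infty$, i.e.\ $v^p\in A_{p/r}$, is used), and then real (Boyd/Calderón) interpolation over the fixed measure yields the Lorentz bound, including the quasi-Banach range $q_{r,s}\in(0,\infty]$.
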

\begin{proof}
In \cite[Example~2.46]{CMM22} it is shown through Boyd's interpolation theorem that if $p,q\in(1,\infty)$, $(uv)^p\in A_p$ and $v^p\in A_\infty$, then
\begin{equation}\label{eq:bfslorentzaprs1}
M:[L^{p,q}_v(\R^d)](u)\to M:[L^{p,q}_v(\R^d)](u)
\end{equation}
and
\[
M_{v^p}:[L^{p,q}(\R^d,v^p)^\dag](u^{-1})\to M:[L^{p,q}(\R^d,v^p)^\dag](u^{-1})
\]
are bounded. Note that by Proposition~\ref{prop:bfsv}, the second bound is equivalent to the bound
\begin{equation}\label{eq:bfslorentzaprs2}
M:[L^{p',q'}_{v^{p-1}}(\Omega)](v^{-p}u^{-1})\to [L^{p',q'}_{v^{p-1}}(\Omega)](v^{-p}u^{-1}).
\end{equation}

We apply this result with $p$, $q$ replaced by $p_{r,s},q_{r,s}\in(1,\infty)$ respectively, and with $v$ replaced by $v^{\frac{p}{p_{r,s}}}$ and $u:=v^{\frac{1}{\frac{1}{r}-\frac{1}{s}}-\frac{p}{p_{r,s}}}$. Note that now \eqref{eq:bfslorentzaprs1} becomes $M:X_{r,s}\to X_{r,s}$ and \eqref{eq:bfslorentzaprs2} becomes $M:(X_{r,s})'\to (X_{r,s})'$. The conditions on the weight for these bounds to hold now become
\[
v^{\frac{p_{r,s}}{\frac{1}{r}-\frac{1}{s}}}=\big(v^{\frac{p}{p_{r,s}}}u\big)^{p_{r,s}}\in A_{p_{r,s}},
\]
which is equivalent to $v\in A_{p,(r,s)}$, and
\[
v^p=(v^{\frac{p}{p_{r,s}}})^{p_{r,s}}\in A_\infty,
\]
which is true, since $v\in A_{p,(r,s)}$ implies that $v^p\in A_{\frac{p}{r}}\subseteq A_\infty$ by Proposition~\ref{prop:jnreverseholder}. The assertion follows.
\end{proof}

We refer the reader to \cite{Ma04} for further convexity and concavity results of Lorentz spaces and their generalizations.
\subsection{Variable Lebesgue spaces}\label{subsec:variablelebesgue}
Let $(\Omega,|\cdot|)$ be a $\sigma$-finite measure space and let $v$ be a weight. For a measurable function $p:\Omega\to(0,\infty]$ we set
\[
\Omega_\infty:=\{x\in\Omega:p(x)=\infty\}
\]
and define the weighted \emph{variable Lebesgue space} $L_v^{p(\cdot)}(\Omega)$ as the space of functions $f\in L^0(\Omega)$ for which
\[
\|f\|_{L_v^{p(\cdot)}(\Omega)}=\inf\Big\{\lambda>0:\int_{\Omega\backslash\Omega_\infty}\!\left(\frac{|f(x)|v(x)}{\lambda}\right)^{p(x)}\,\mathrm{d}\mu(x)+\|fv\|_{L^\infty(\Omega_\infty)}\leq 1\Big\}.
\]
This is a quasi-Banach function space and, when $p(x)\geq 1$ for a.e. $x\in\Omega$, a Banach function space. In the latter case, the K\"othe dual is given by
\[
(L_v^{p(\cdot)}(\Omega))'=L_{v^{-1}}^{p'(\cdot)}(\Omega),
\]
where $p'(\cdot)$ is defined through
\[
\frac{1}{p'(x)}=1-\frac{1}{p(x)}.
\]
For $r\in(0,\infty)$, the $r$-concavification of $L_v^{p(\cdot)}(\Omega)$ is given by
\[
(L_v^{p(\cdot)}(\Omega))^r=L_{v^r}^{\frac{p(\cdot)}{r}}(\Omega).
\]
In conclusion, $L_v^{p(\cdot)}(\Omega)$ is $r$-convex and $s$-concave precisely when $p(x)\in[r,s]$ for a.e. $x\in\Omega$, and in this case we have
\[
X_{r,s}=L^{p_{r,s}(\cdot)}_{v^{\frac{1}{\frac{1}{r}-\frac{1}{s}}}}(\Omega),\quad (X_{r,s})'=L^{p'_{r,s}(\cdot)}_{v^{-\frac{1}{\frac{1}{r}-\frac{1}{s}}}}(\Omega)
\]
where
\[
\frac{1}{p_{r,s}(x)}=\phi\Big(\frac{1}{p(x)}\Big)
=\frac{\frac{1}{p(x)}-\frac{1}{s}}{\frac{1}{r}-\frac{1}{s}},\quad \frac{1}{p'_{r,s}(x)}=1-\frac{1}{p_{r,s}(x)}=\frac{\frac{1}{r}-\frac{1}{p(x)}}{\frac{1}{r}-\frac{1}{s}}.
\]
\begin{definition}
Let $\mathcal{E}$ be a basis of sets in $\Omega$. We say that $v\in A_{p(\cdot),(r,s)}(\mathcal{E})$ when
\[
\sup_{E\in\mathcal{E}}|E|^{-\big(\frac{1}{r}-\frac{1}{s}\big)}\|v\ind_Q\|_{L^{\frac{1}{\frac{1}{p(\cdot)}-\frac{1}{s}}}(\Omega)}\|v^{-1}\ind_Q\|_{L^{\frac{1}{\frac{1}{r}-\frac{1}{p(\cdot)}}}(\Omega)}<\infty.
\]
\end{definition}
Exactly as in the case where $p(\cdot)$ is constant, we have $v\in A_{p(\cdot),(r,s)}(\mathcal{E})$ if and only if
\[
v^{\frac{1}{\frac{1}{r}-\frac{1}{s}}}\in A_{p_{r,s}(\cdot),(1,\infty)}(\mathcal{E}).
\]
We now specialize to the case $\Omega=\R^d$ with the basis of cubes.
\begin{definition}
Let $p:\R^d\to[1,\infty)$ be a measurable function with $\|p\|_{L^\infty(\Omega)}<\infty$. We say that $p(\cdot)\in LH_0$ when there is a $C>0$ such that for a.e. $x,y\in\R^d$ with $|x-y|<\frac{1}{2}$ we have
\[
\left|\frac{1}{p(x)}-\frac{1}{p(y)}\right|\leq \frac{C}{-\log(|x-y|)}.
\]
Moreover, we say that $p(\cdot)\in LH_\infty$ if there is a $C>0$ and a $p_\infty\in[1,\infty)$ such that
\[
\left|\frac{1}{p(x)}-\frac{1}{p_\infty}\right|\leq\frac{C}{\log(e+|x|)}
\]
for a.e. $x\in\R^d$.
\end{definition}
We now have the following result:
\begin{theorem}\label{thm:mrsboundvariablelebesgue}
Suppose $\Omega=\R^d$ with the basis of cubes. Let $r\in(0,\infty)$, $s\in(0,\infty]$ with $r<s$, and $p:\Omega\to(r,s)$ a measurable function with the following properties:
\begin{itemize}
\item $r<\essinf p\leq\esssup p<s$;
\item $p_{r,s}\in LH_0\cap LH_\infty$.
\end{itemize}
Then if $X=L^{p(\cdot)}_v(\R^d)$ for a weight $v\in A_{p(\cdot),(r,s)}$, we have that
\[
M:X_{r,s}\to X_{r,s},\quad M:(X_{r,s})'\to (X_{r,s})'
\]
are bounded.
\end{theorem}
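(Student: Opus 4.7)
The plan is to reduce the theorem to the known boundedness of the Hardy--Littlewood maximal operator on weighted variable Lebesgue spaces (in the classical sense, i.e.\ exponents in $(1,\infty)$), so that no new harmonic analysis is needed --- only bookkeeping of rescalings.

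First I would unfold the identifications of $X_{r,s}$ and $(X_{r,s})'$ already worked out earlier in this subsection. Setting $q(\cdot):=p_{r,s}(\cdot)$ and $u:=v^{\frac{1}{1/r-1/s}}$, one has
\[
X_{r,s}=L^{q(\cdot)}_{u}(\R^d),\qquad (X_{r,s})'=L^{q'(\cdot)}_{u^{-1}}(\R^d).
\]
From $r<\essinf p\le\esssup p<s$ one checks, directly from the definition $1/q(x)=(1/p(x)-1/s)/(1/r-1/s)$, that $1<\essinf q\le\esssup q<\infty$. The log-H\"older hypotheses $q\in LH_0\cap LH_\infty$ are assumed outright, and hence also $q'\in LH_0\cap LH_\infty$, since $1/q'=1-1/q$ preserves both log-H\"older conditions when $q$ is bounded away from $1$ and $\infty$.

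Next I would translate the weight condition. A direct computation shows $v\in A_{p(\cdot),(r,s)}$ if and only if
\[
\sup_Q|Q|^{-1}\bigl\|u\ind_Q\bigr\|_{L^{q(\cdot)}(\R^d)}\bigl\|u^{-1}\ind_Q\bigr\|_{L^{q'(\cdot)}(\R^d)}<\infty,
\]
which is precisely the Muckenhoupt condition $u\in A_{q(\cdot)}$ for the variable exponent $q(\cdot)$. By the symmetry $u\in A_{q(\cdot)} \iff u^{-1}\in A_{q'(\cdot)}$, the same hypothesis also yields the $A_{q'(\cdot)}$ condition needed for the dual.

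At this point I would invoke the known weighted extension of the Cruz-Uribe--Fiorenza--Neugebauer boundedness theorem (as stated, e.g., in the Diening--Harjulehto--H\"ast\"o--R\r u\v zi\v cka monograph and in Cruz-Uribe--Fiorenza's book): if $q\in LH_0\cap LH_\infty$ with $1<\essinf q\le\esssup q<\infty$ and $u\in A_{q(\cdot)}$, then $M:L^{q(\cdot)}_u(\R^d)\to L^{q(\cdot)}_u(\R^d)$ is bounded. Applied once with the pair $(q,u)$ it yields the bound on $X_{r,s}$, and applied once with $(q',u^{-1})$ it yields the bound on $(X_{r,s})'$.

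The only real subtlety --- the ``hard part'' --- is the bookkeeping in the translation $v\in A_{p(\cdot),(r,s)}\Leftrightarrow u\in A_{q(\cdot)}$, which uses that the H\"older-type normalization $\|\cdot\|_{L^{1/(1/p(\cdot)-1/s)}}$ rescales cleanly to $\|\cdot\|_{L^{q(\cdot)}}$ after the substitution $v=u^{1/r-1/s}$; the exponent identity $(1/r-1/s)/(1/p(x)-1/s)=1/q(x)$ is exactly what makes this go through. Once this identification is made, the theorem is an immediate consequence of classical weighted variable Lebesgue space theory.
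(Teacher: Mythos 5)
Your proposal is correct and follows essentially the same route as the paper: identify $X_{r,s}=L^{p_{r,s}(\cdot)}_{v^{1/(1/r-1/s)}}(\R^d)$ and its K\"othe dual, translate $v\in A_{p(\cdot),(r,s)}$ into the classical variable-exponent Muckenhoupt condition for the rescaled weight (and, by symmetry, for its inverse with the conjugate exponent), and then apply the weighted boundedness theorem of Cruz-Uribe--Fiorenza--Neugebauer twice. The paper's proof is exactly this argument, citing \cite[Theorem~1.5]{CFN12}.
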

\begin{proof}
In \cite[Theorem~1.5]{CFN12} it was shown that if $p(\cdot):\R^d\to(1,\infty)$ satisfies $1<\essinf p\leq\esssup p<\infty$ and $p(\cdot)\in LH_0\cap LH_\infty$, then for any $v\in A_{p(\cdot),(1,\infty)}$ we have that
\[
M:L^{p(\cdot)}_v(\R^d)\to L^{p(\cdot)}_v(\R^d)
\]
is bounded.

Replacing $p(\cdot)$ by $p_{r,s}(\cdot)$ and $v$ by $v^{\frac{1}{\frac{1}{r}-\frac{1}{s}}}\in A_{p_{r,s}(\cdot),(1,\infty)}$, we conclude that $M$ is bounded on $L^{p_{r,s}(\cdot)}_{v^{\frac{1}{\frac{1}{r}-\frac{1}{s}}}}(\Omega)=X_{r,s}$.

For the dual result, note that also $p'_{r,s}\in LH_0\cap LH_\infty$ and $v^{-\frac{1}{\frac{1}{r}-\frac{1}{s}}}\in A_{p'_{r,s}(\cdot),(1,\infty)}$, which implies that $M$ is bounded on $L^{p'_{r,s}(\cdot)}_{v^{-\frac{1}{\frac{1}{r}-\frac{1}{s}}}}(\Omega)=(X_{r,s})'$. The assertion follows.
\end{proof}
The conditions for $p(\cdot)$ are sufficient, but not necessary, whereas the condition on the weight is necessary \cite{CFN12}, or see Lemma~\ref{lem:indE}. We refer the reader to \cite{DHHR17} for an overview.

\subsection{Morrey spaces}\label{subsec:morreyspaces}
We restrict ourselves to the case $\Omega=\R^d$ with the basis of cubes $\mathcal{Q}$. Let $p\in(0,\infty)$, $q\in[p,\infty)$, and let $v$ be a weight. Then the weighted Morrey space $\mathcal{L}^{p,q}_v(\R^d)$ is defined as the space of those $f\in L^0(\R^d)$ with
\[
\|f\|_{\mathcal{L}^{p,q}_v(\R^d)}:=\sup_Q |Q|^{\frac{1}{q}}\langle fv\rangle_{p,Q}=\sup_Q \left(\frac{1}{|Q|^{1-\frac{p}{q}}}\int_Q\!|fv|^p\,\mathrm{d}x\right)^{\frac{1}{p}}<\infty.
\]
This is a quasi-Banach function space and, as was established in \cite{ST15}, if $p\geq 1$, a Banach function space. As a matter of fact, they show that this space has the ideal property and the Fatou property, but fails the condition that $\ind_E\in(\mathcal{L}^{p,q}(\R^d))'$ for all sets $E\subseteq\R^d$ of finite measure. Fortunately, however, it does satisfy the property that $\ind_Q\in\mathcal{L}^{p,q}(\R^d)$ for all cubes $Q$ and, thus, it satisfies the saturation property, meaning it is a Banach function space in our sense.

Several characterizations of the K\"othe dual space are given in the literature at various levels of generality, e.g., see \cite{AX12, ST15, MST18, ZZ22}. If $p\in(1,\infty)$, $q\in[p,\infty)$, then we have
\[
(\mathcal{L}^{p,q}(\R^d))'=\mathcal{B}^{p',q'}(\R^d),
\]
where $\mathcal{B}^{p',q'}(\R^d)$ is a block space. To define this space, we say that a function $b\in L^0(\R^d)$ is a $(p',q')$-block if there is a cube $Q\in\mathcal{Q}$ such that $\supp b\subseteq Q$ and
\[
|Q|^{\frac{1}{q'}}\langle b\rangle_{p',Q}\leq 1.
\]
We say that $f\in\mathcal{B}^{p',q'}(\R^d)$ if there exists a sequence $(\lambda_n)_{n\in\N}\in\ell^1(\N)$ and a sequence $(b_n)_{n\in\N}$ of $(p',q')$-blocks such that
\[
f=\sum_{n\in\N}\lambda_nb_n.
\]
Moreover, we set
\[
\|f\|_{\mathcal{B}^{p',q'}(\R^d)}:=\inf\|(\lambda_n)_{n\in\N}\|_{\ell^1(\N)},
\]
where the infimum runs over all possible representations $f=\sum_{n\in\N}\lambda_nb_n$.

In \cite[Proposition~5.2]{DR20} it was shown that if $p\geq 1$ and the Hardy-Littlewood maximal operator $M$ satisfies the boundedness
\[
M:\mathcal{L}^{p,q}_v(\R^d)\to\mathcal{L}^{p,q}_v(\R^d),
\]
then we have $v^p\in A_{\frac{p}{q'}+1}$. Conversely, their extrapolation theorem \cite[Theorem~1.1]{DR20} establishes that $M$ satisfies the above bounds for the weights $v^p\in A_{\frac{p}{q'}+1}$ of the form
\[
v(x)=|x|^{\alpha d} w(x),
\]
where $w^p\in A_p\cap RH_{\frac{q}{p}}$, $\alpha\in[0,\frac{1}{p}-\frac{1}{q})$. We note that by Proposition~\ref{prop:jnreverseholder} this condition on $w$ is equivalent to
\[
w\in A_{p,(1,\frac{1}{\frac{1}{p}-\frac{1}{q}})}=A_{q,(\frac{1}{\frac{1}{q'}+\frac{1}{p}},\infty)}.
\]
In particular, we have
\begin{equation}\label{eq:mboundsweightedmorrey}
M:\mathcal{L}^{p,q}_v(\R^d)\to\mathcal{L}^{p,q}_v(\R^d)
\end{equation}
whenever $v(x)=|x|^{\beta d}$ with $\beta\in(-\frac{1}{q},\frac{1}{q'})$. We note that this coincides exactly with the condition $v^q\in A_q$ for this weight, and hence, the conjecture is that \eqref{eq:mboundsweightedmorrey} holds for all $v^q\in A_q$.

Since $\mathcal{L}_v^{p,q}(\R^d)=[\mathcal{L}^{p,q}(\R^d)](v)$, it follows from Proposition~\ref{prop:bfsu} that
\[
(\mathcal{L}_v^{p,q}(\R^d))'=[\mathcal{B}^{p',q'}(\R^d)](v^{-1})=:\mathcal{B}_{v^{-1}}^{p',q'}(\R^d).
\]
To the best of our knowledge, there are no known bounds for $M$ in weighted Block spaces. Hence, we prove some here.

\begin{theorem}\label{thm:mboundblockspaces}
Let $p\in(1,\infty)$, $q\in(1,p]$, $r\in[1,q)$ and $v^p\in A_{p(\frac{1}{r}-\frac{1}{q})+1}$. Then
\[
M_r:\mathcal{B}_{v}^{p,q}(\R^d)\to \mathcal{B}_{v}^{p,q}(\R^d)
\]
is bounded.
\end{theorem}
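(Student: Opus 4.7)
The plan is to reduce boundedness of $M_r$ on $\mathcal{B}^{p,q}_v(\R^d)$ to boundedness of the conjugated operator
\[
T_v g := v\,M_r(g/v)
\]
on the unweighted block space $\mathcal{B}^{p,q}(\R^d)$. Since $r\geq 1$, Minkowski's inequality on each cube gives the pointwise sublinearity $M_r(f+g)\leq M_r f + M_r g$, and hence $T_v$ is sublinear. Combined with the ideal property of $\mathcal{B}^{p,q}$ and its atomic decomposition, this reduces matters to establishing the uniform bound $\|T_v b\|_{\mathcal{B}^{p,q}}\lesssim 1$ for every $(p,q)$-block $b$ supported in a cube $Q$ with $\|b\|_{L^p(Q)}\leq|Q|^{1/p-1/q}$.

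Given such a block, I would split $T_v b$ into annular pieces
\[
T_v b = (T_vb)\ind_{2Q} + \sum_{k\geq 1}(T_vb)\ind_{A_k},\qquad A_k:=2^{k+1}Q\setminus 2^kQ,
\]
and exhibit each piece, after normalization, as a $(p,q)$-block with summable coefficients. The central piece is easy: set $\beta:=p(\tfrac{1}{r}-\tfrac{1}{q})+1$; since $\beta\leq p/r$ one has $v^p\in A_\beta\subseteq A_{p/r}$, so $\|M_r(b/v)\|_{L^p(v^p)}\lesssim\|b\|_{L^p(Q)}$ by the classical Muckenhoupt theorem for $M_r$, whence $|2Q|^{1/q}\langle T_vb\,\ind_{2Q}\rangle_{p,2Q}\lesssim 1$. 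For $k\geq 1$, writing $Q_k:=2^{k+1}Q$, a geometric argument (any cube containing a point of $A_k$ and meeting $Q$ has measure $\gtrsim|Q_k|$) yields the pointwise estimate
\[
M_r(b/v)(x)\lesssim |Q_k|^{-1/r}\|b/v\|_{L^r(Q)}\qquad(x\in A_k).
\]

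The crux of the argument, and where I expect the main obstacle, is extracting geometric decay in $k$ from this pointwise bound. A naive application of H\"older, $\|b/v\|_{L^r(Q)}\leq\|b\|_{L^p(Q)}\|v^{-1}\|_{L^{pr/(p-r)}(Q)}$, paired with the $A_{p/r}$ condition at scale $Q_k$, gives \emph{exactly} a zero power of $|Q|/|Q_k|$ and thus no decay. To overcome this I would exploit the openness of the $A_\beta$ class on the cube basis in $\R^d$: there is an $\epsilon>0$ with $v^p\in A_{\beta-\epsilon}$, which translates into
\[
\langle v\rangle_{p,R}\,\langle v^{-1}\rangle_{\tilde t,R}\lesssim 1,\qquad \tilde t := \tfrac{p}{\beta-\epsilon-1}>\tfrac{1}{1/r-1/q},
\]
uniformly in $R$. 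Applying H\"older at this strictly larger exponent, with $1/u=1/r-1/\tilde t$ so that $u<q\leq p$, and using the block bound $\|b\|_{L^u(Q)}\leq|Q|^{1/u-1/q}$, produces $\|b/v\|_{L^r(Q)}\leq|Q|^{1/r-1/q}\langle v^{-1}\rangle_{\tilde t,Q}$. Combining with the nesting $\langle v^{-1}\rangle_{\tilde t,Q}\leq(|Q_k|/|Q|)^{1/\tilde t}\langle v^{-1}\rangle_{\tilde t,Q_k}$ and the improved weight bound on $Q_k$, I obtain
\[
|Q_k|^{1/q}\langle T_vb\,\ind_{A_k}\rangle_{p,Q_k}\lesssim\Big(\tfrac{|Q|}{|Q_k|}\Big)^{(1/r-1/q)-1/\tilde t}\eqsim 2^{-kd\gamma},
\]
with $\gamma:=(1/r-1/q)-1/\tilde t>0$. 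Summing the geometric series gives the required atomic decomposition. The whole argument thus hinges on pushing the H\"older exponent \emph{strictly} past the critical value $1/(1/r-1/q)$, which is precisely the effect of the openness of the $A_\beta$ class.
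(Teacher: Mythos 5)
Your proposal is correct and follows essentially the same route as the paper's proof: reduction to a single $(p,q)$-block via sublinearity of $M_r$ and the atomic structure of the block space, decomposition into the doubled cube plus dyadic annuli, treatment of the central piece through $v^p\in A_{\beta}\subseteq A_{p/r}$ with $\beta:=p(\frac1r-\frac1q)+1$, the same pointwise estimate on each annulus, and geometric decay in $k$ obtained from the openness (self-improvement) of the $A_\beta$ class. The only difference is where that openness is spent: the paper keeps H\"older at the exponent $\frac{1}{\frac1r-\frac1p}$ on $Q$, transfers $\|v\|_{L^p(Q)}$ to $\|v\|_{L^p(2^{k+1}Q)}$ via the $A_t$ doubling estimate with $t<\beta$, and pairs with $[v]_{p,(r,\infty)}$, whereas you push the H\"older exponent for $v^{-1}$ strictly past $\frac{1}{\frac1r-\frac1q}$ and invoke the $A_{\beta-\epsilon}$ product condition at the large scale; both versions produce the identical decay $2^{-kd\epsilon/p}$.
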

We need the following lemma:
\begin{lemma}\label{lem:morreyblock}
Let $b$ be a $(p,q)$-block and $v^p\in A_{p(\frac{1}{r}-\frac{1}{q})+1}$. Then
\[
\|M_r(bv^{-1})\|_{\mathcal{B}^{p,q}_v(\R^d)}\lesssim_{d,p,q,r,v}1.
\]
\end{lemma}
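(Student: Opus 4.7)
The plan is to decompose $M_r(bv^{-1})v$ annularly around the cube $Q_0$ supporting $b$ (for which $\|b\|_{L^p(Q_0)}\leq |Q_0|^{\frac{1}{p}-\frac{1}{q}}$, and hence $\|b\|_{L^q(Q_0)}\leq 1$ by H\"older since $q\leq p$). Writing $f_0:=M_r(bv^{-1})v\,\ind_{2Q_0}$ and $f_k:=M_r(bv^{-1})v\,\ind_{2^{k+1}Q_0\setminus 2^kQ_0}$ for $k\geq 1$, I will show that each $f_k$ is a bounded multiple of a $(p,q)$-block supported in $2^{k+1}Q_0$ with coefficients $\lambda_k$ summable in $k$; the block norm $\|M_r(bv^{-1})\|_{\mathcal{B}^{p,q}_v}=\|M_r(bv^{-1})v\|_{\mathcal{B}^{p,q}}$ is then at most $\sum_{k\geq 0}\lambda_k$.

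For the central piece, the hypothesis $v^p\in A_{p(1/r-1/q)+1}$ combined with $q\leq p$ implies $v^p\in A_{p/r}$, which through the identity $(M_rf)^r=M(|f|^r)$ is equivalent to $M_r:L^p_v(\R^d)\to L^p_v(\R^d)$. Hence $\|f_0\|_{L^p}\lesssim\|b\|_{L^p(Q_0)}\leq|Q_0|^{\frac{1}{p}-\frac{1}{q}}$, giving $\lambda_0\lesssim 1$. For $k\geq 1$ and $x\in 2^{k+1}Q_0\setminus 2^kQ_0$, every cube through $x$ that meets $Q_0$ has side length $\gtrsim 2^k\ell(Q_0)$, so since $b$ is supported in $Q_0$,
\[
M_r(bv^{-1})(x)\lesssim|2^kQ_0|^{-\frac{1}{r}}\left(\int_{Q_0}|b|^rv^{-r}\right)^{\frac{1}{r}}\lesssim|2^kQ_0|^{-\frac{1}{r}}|Q_0|^{\frac{q-r}{qr}}\langle v^{-1}\rangle_{\frac{qr}{q-r},Q_0},
\]
where the second step uses H\"older with exponents $q/r$ and $(q/r)'$ together with $\|b\|_{L^q(Q_0)}\leq 1$. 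Multiplying by $v$, taking $L^p$-norms against $\|v\|_{L^p(2^{k+1}Q_0)}$, and normalizing to the block constraint on $2^{k+1}Q_0$ produces, after all $|Q_0|$-powers cancel exactly,
\[
\lambda_k\lesssim 2^{kd\big(\frac{1}{q}-\frac{1}{r}\big)}\,\langle v^{-1}\rangle_{\frac{qr}{q-r},Q_0}\,\langle v\rangle_{p,2^{k+1}Q_0}.
\]

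The main obstacle is converting this estimate into a summable bound, since the nominal decay factor $2^{kd(1/q-1/r)}$ alone may be undone by the growth of $\langle v\rangle_{p,2^{k+1}Q_0}$. The Muckenhoupt condition applied to $2^{k+1}Q_0$ gives $\langle v\rangle_{p,2^{k+1}Q_0}\langle v^{-1}\rangle_{\frac{qr}{q-r},2^{k+1}Q_0}\leq[v^p]_{A_t}^{1/p}$ with $t=p(\tfrac{1}{r}-\tfrac{1}{q})+1$, so it suffices to compare $\langle v^{-1}\rangle_{\frac{qr}{q-r},Q_0}$ to $\langle v^{-1}\rangle_{\frac{qr}{q-r},2^{k+1}Q_0}$. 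Muckenhoupt duality yields $v^{-qr/(q-r)}\in A_{t'}\subset A_\infty$; the classical reverse-doubling for $A_\infty$ weights then provides constants $C,\delta>0$ (depending only on $[v^p]_{A_t}$) such that
\[
\langle v^{-1}\rangle_{\frac{qr}{q-r},Q_0}\leq C\,2^{(k+1)d(1-\delta)\big(\frac{1}{r}-\frac{1}{q}\big)}\langle v^{-1}\rangle_{\frac{qr}{q-r},2^{k+1}Q_0}.
\]
Combining the last two displays gives $\lambda_k\lesssim 2^{-kd\delta(\frac{1}{r}-\frac{1}{q})}$, which is geometrically summable since $\delta>0$ and $r<q$. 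Hence $M_r(bv^{-1})v=\sum_{k\geq 0}\lambda_k\tilde b_k$ with $\tilde b_k$ a $(p,q)$-block supported in $2^{k+1}Q_0$ and $\sum_k\lambda_k\lesssim_{d,p,q,r,v}1$, proving the lemma.
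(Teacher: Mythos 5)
Your proof is correct, and its skeleton is the same as the paper's: the identical annular decomposition $f_k=M_r(bv^{-1})v\,\ind_{2^{k+1}Q_0\setminus 2^kQ_0}$, the central piece handled via $v^p\in A_{p(\frac1r-\frac1q)+1}\subseteq A_{p/r}$, i.e.\ boundedness of $M_r$ on $L^p_v(\R^d)$, and the same pointwise bound $M_r(bv^{-1})(x)\lesssim \bigl(2^{kd}|Q_0|\bigr)^{-1/r}\bigl(\int_{Q_0}|b|^rv^{-r}\bigr)^{1/r}$ on the annuli. Where you genuinely diverge is in how the tail terms are turned into summable block coefficients. The paper applies H\"older with exponent $\frac{p}{r}$ (keeping $\|v^{-1}\|_{L^{1/(\frac1r-\frac1p)}(Q_0)}$ on the small cube), then invokes openness of the Muckenhoupt classes to choose $t<p(\frac1r-\frac1q)+1$ with $v^p\in A_t$, uses the growth estimate $\|v\|_{L^p(2^{k+1}Q_0)}\leq[v^p]_{A_t}^{1/p}2^{(k+1)dt/p}\|v\|_{L^p(Q_0)}$ to move the large-cube average of $v$ down to $Q_0$, and closes with the $A_{p,(r,\infty)}$ constant; the strict inequality $t<p(\frac1r-\frac1q)+1$ is what produces the geometric decay. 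You instead apply H\"older with exponent $\frac{q}{r}$ (using $\|b\|_{L^q(Q_0)}\leq 1$, keeping $\langle v^{-1}\rangle_{\frac{qr}{q-r},Q_0}$), use the critical $A_t$ condition with $t=p(\frac1r-\frac1q)+1$ directly on the dilated cube to pair $\langle v\rangle_{p,2^{k+1}Q_0}$ with $\langle v^{-1}\rangle_{\frac{qr}{q-r},2^{k+1}Q_0}$, and extract the decay from the $A_\infty$ measure-comparison (reverse H\"older) property of the dual weight $v^{-\frac{qr}{q-r}}\in A_{t'}$, namely $w(Q_0)/w(2^{k+1}Q_0)\lesssim 2^{-(k+1)d\delta}$. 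Both mechanisms are instances of the self-improvement of Muckenhoupt weights and give constants depending on $[v^p]_{A_t}$ only; your version avoids choosing a strictly smaller index $t$ and the auxiliary growth lemma, at the price of invoking the quantitative $A_\infty$ smallness property for the dual weight, so neither route is stronger than the other, and both yield the stated non-quantitative bound $\lesssim_{d,p,q,r,v}1$.
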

\begin{proof}
Suppose $Q$ is a cube so that $\supp b\subseteq Q$ and
\[
|Q|^{\frac{1}{q}}\langle b\rangle_{p,Q}\leq 1.
\]
Define $m_0:=\ind_{2Q}M_r(bv^{-1})$ and $m_k:=\ind_{2^{k+1}Q\backslash 2^k Q}M_r(bv^{-1})$ for $k\geq 1$ so that
\[
M_r(bv^{-1})=\sum_{k=0}^\infty m_k.
\]
We claim that there is a constant $C_{d,p,q,r,v}>0$ and an $t<\frac{1}{r}(\frac{p}{q'}+1)$ depending on $d$, $v$, $p$, $q$, $r$ such that
\[
C_{d,p,q,r,v} 2^{-(k+1)d(\frac{t-1}{p}-\frac{1}{q'})}m_k v
\]
is a $(p,q)$-block with respect to the cube $2^{k+1}Q$ for all integers $k\geq 0$. 

Indeed, for $k=0$ we have
\begin{align*}
\|m_0v\|_{L^p(\R^d)}&\leq\|M(bv^{-1})\|_{L^p_v(\R^d)}\leq\|M\|_{L^p_v(\R^d)\to L^p_v(\R^d)}\|bv^{-1}\|_{L^p_v(\R^d)}\\
&=\|M\|_{L^p_v(\R^d)\to L^p_v(\R^d)}\|b\|_{L^p(Q)}\leq\|M\|_{L^p_v(\R^d)\to L^p_v(\R^d)}|Q|^{\frac{1}{p}-\frac{1}{q}}\\
&=2^{\frac{d}{q}-\frac{d}{p}}\|M\|_{L^p_v(\R^d)\to L^p_v(\R^d)}|2Q|^{\frac{1}{p}-\frac{1}{q}}.
\end{align*}
We note here that $v^p\in A_{p(\frac{1}{r}-\frac{1}{q})+1}\subseteq A_{p(\frac{1}{r}-\frac{1}{p})+1}=A_{\frac{p}{r}}$ so that $\|M_r\|_{L^p_v(\R^d)\to L^p_v(\R^d)}<\infty$. 

For $k\geq 1$, if $x\in2^{k+1}Q\backslash 2^k Q$, since for any $Q'$ containing $x$ with $Q'\cap Q\neq\emptyset$ we must have at least $\ell(Q')\gtrsim 2^k\ell(Q)$, we have
\begin{equation}\label{eq:mboundblock1}
\begin{split}
M_r(bv^{-1})(x)
&\lesssim\left(\frac{1}{2^{kd}|Q|}\int_Q\!b^r v^{-r}\,\mathrm{d}x\right)^{\frac{1}{r}}
\leq 2^{-\frac{kd}{r}}|Q|^{-\frac{1}{r}}\| b\|_{L^p(Q)}\|v^{-1}\|_{L^{\frac{1}{\frac{1}{r}-\frac{1}{p}}}(Q)}\\
&\leq 2^{\frac{d}{r}}|2^{k+1}Q|^{-\frac{1}{r}}|Q|^{\frac{1}{p}-\frac{1}{q}}\|v^{-1}\|_{L^{\frac{1}{\frac{1}{r}-\frac{1}{p}}}(Q)}.
\end{split}
\end{equation}
Moreover, by self-improvement of the Muckenhoupt classes, we can pick $t<p(\frac{1}{r}-\frac{1}{q})+1$ depending on $d$, $v$, $p$, $q$, $r$ such that $v^p\in A_t$. Hence, using \cite[Proposition~7.1.5(8)]{Gr14a} with the cube $2^{k+1}Q$ and $f=\ind_Q$, we obtain
\[
\|v\|_{L^p(2^{k+1}Q)}\leq [v]_{A_t}^{\frac{1}{p}} 2^{(k+1)d\frac{t}{p}}\|v\|_{L^p(Q)}.
\]

Thus, combining this with \eqref{eq:mboundblock1} yields
\begin{align*}
\|m_kv\|_{L^p(\R^d)}
&\lesssim_{d,r}  |2^{k+1}Q|^{-\frac{1}{r}}|Q|^{\frac{1}{p}-\frac{1}{q}}\|v^{-1}\|_{L^{\frac{1}{\frac{1}{r}-\frac{1}{p}}}(Q)}\|v\|_{L^p(2^{k+1}Q)}\\
&\lesssim_{p,q,v} 2^{(k+1)d(\frac{t-1}{p}-(\frac{1}{r}-\frac{1}{q}))}|2^{k+1}Q|^{\frac{1}{p}-\frac{1}{q}}|Q|^{-\frac{1}{r}}\|v\|_{L^p(Q)}\|v^{-1}\|_{L^{\frac{1}{\frac{1}{r}-\frac{1}{p}}}(Q)}\\
&\leq[v]_{p,(r,\infty)}2^{(k+1)d(\frac{t-1}{p}-(\frac{1}{r}-\frac{1}{q}))}|2^{k+1}Q|^{\frac{1}{p}-\frac{1}{q}}
\end{align*}
as desired.

Since $t<p(\frac{1}{r}-\frac{1}{q})+1$ we have $\frac{t-1}{p}-(\frac{1}{r}-\frac{1}{q})<0$ and hence
\[
\sum_{k=1}^\infty2^{(k+1)d(\frac{t-1}{p}-(\frac{1}{r}-\frac{1}{q}))}\lesssim_{d,p,q,r,v}1,
\]
we conclude that $M_r(bv^{-1})v\in\mathcal{B}^{p,q}(\R^d)$ with
\[
\|M_r(bv^{-1})\|_{\mathcal{B}_{v}^{p,q}(\R^d)}\lesssim_{d,p,q,r,v} 1,
\]
as asserted.
\end{proof}
\begin{proof}[Proof of Theorem~\ref{thm:mboundblockspaces}]
Let $f\in\mathcal{B}^{p,q}_v(\R^d)$ and let $fv=\sum_{n\in\N} \lambda_n b_n$ be a representation. Then, by Lemma~\ref{lem:morreyblock}, we have
\[
\|M_rf\|_{\mathcal{B}^{p,q}_v(\R^d)}\leq \sum_{n\in\N}|\lambda_n|\|M_r(b_n v^{-1})\|_{\mathcal{B}^{p,q}_v(\R^d)}\lesssim_{d,p,q,r,v}\|(\lambda_n)_{n\in\N}\|_{\ell^1(\N)}.
\]
Thus, the result follows from taking an infimum over all possible representations of $f$.
\end{proof}

The Morrey space $\mathcal{L}^{p,q}_v(\R^d)$ is $r$-convex when $p\geq r$, with
\[
(\mathcal{L}^{p,q}_v(\R^d))^r=\mathcal{L}^{\frac{p}{r},\frac{q}{r}}_{v^r}(\R^d).
\]
However, when $p\neq q$, Morrey spaces are only $\infty$-concave and not $s$-concave for any $s<\infty$. This means that the Block space $\mathcal{B}^{p,q}_v(\R^d)$ is $1$-convex and not $r$-convex for any $r>1$. However, we do claim that for $r\in[1,p]$ we have
\[
\big[(\mathcal{B}^{p,q}_v(\R^d))^r\big]'=\mathcal{L}^{(\frac{p}{r})',(\frac{q}{r})'}_{v^{-r}}(\R^d).
\]
Hence, if the claim is true, for $p\in[r,s]$, $q\in[p,s)$, we have
\begin{equation}\label{eq:xrsblockmorrey}
X_{r,s}=\mathcal{L}^{p_{r,s},q_{r,s}}_{v^{\frac{1}{\frac{1}{r}-\frac{1}{s}}}}(\R^d),\quad (X_{r,s})'=\mathcal{B}^{p'_{r,s},q'_{r,s}}_{v^{-\frac{1}{\frac{1}{r}-\frac{1}{s}}}}(\R^d).
\end{equation}
We now prove the claim:
\begin{proposition}\label{prop:dualofpowerofblock}
Let $p\in[1,\infty)$, $q\in[p,\infty)$, $r\in[1,p]$, and $v$ a weight. Then
\[
\big[(\mathcal{B}^{p,q}_v(\R^d))^r\big]'=\mathcal{L}^{(\frac{p}{r})',(\frac{q}{r})'}_{v^{-r}}(\R^d)
\]
with equal norm.
\end{proposition}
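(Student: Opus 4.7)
The plan is to reduce to the unweighted case and then match the two norms directly via the block decomposition.

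First I would separate off the weight. By the definition of the weighted block space and of $r$-concavification, a direct computation gives $\|f\|_{(\mathcal{B}^{p,q}_v)^r}=\||f|^{1/r}v\|_{\mathcal{B}^{p,q}}^r=\|fv^r\|_{(\mathcal{B}^{p,q})^r}$, i.e.\ $(\mathcal{B}^{p,q}_v)^r=[(\mathcal{B}^{p,q})^r](v^r)$. The identity $[X(u)]'=X'(u^{-1})$ with equal norm holds for any quasi-Banach function space $X$ and weight $u$ by computing $\|g\|_{[X(u)]'}=\sup_{\|fu\|_X=1}\int|fg|=\|gu^{-1}\|_{X'}$, so $[(\mathcal{B}^{p,q}_v)^r]'=[(\mathcal{B}^{p,q})^r]'(v^{-r})=\mathcal{L}^{(p/r)',(q/r)'}_{v^{-r}}(\R^d)$ once we establish the unweighted identity $[(\mathcal{B}^{p,q})^r]'=\mathcal{L}^{(p/r)',(q/r)'}$.

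For the unweighted identity I would rewrite the norm using $f=h^r$: $\|g\|_{[(\mathcal{B}^{p,q})^r]'}=\sup_{h\geq 0,\ \|h\|_{\mathcal{B}^{p,q}}\leq 1}\int h^r|g|\,\mathrm{d}x$. For the upper bound, take any admissible $h$ and a block decomposition $h=\sum_n\lambda_n b_n$ with $b_n\geq 0$ a $(p,q)$-block supported on a cube $Q_n$ and $\sum_n\lambda_n\leq 1+\varepsilon$. Since $r\geq 1$, the convexity of $t\mapsto t^r$ (Jensen's inequality) gives
\[
h^r\leq\Big(\sum_n\lambda_n b_n\Big)^r\leq \Big(\sum_n\lambda_n\Big)^{r-1}\sum_n\lambda_n b_n^r.
\]
Because $r\leq p$, a direct check shows $b_n^r$ is a $(p/r,q/r)$-block: $|Q_n|^{r/q}\langle b_n^r\rangle_{p/r,Q_n}=(|Q_n|^{1/q}\langle b_n\rangle_{p,Q_n})^r\leq 1$. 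By H\"older's inequality with exponents $p/r$ and $(p/r)'$,
\[
\int b_n^r|g|\,\mathrm{d}x\leq |Q_n|^{r/p}\langle b_n^r\rangle_{p/r,Q_n}\|g\|_{L^{(p/r)'}(Q_n)}\leq |Q_n|^{r/p-r/q}\|g\|_{L^{(p/r)'}(Q_n)}\leq\|g\|_{\mathcal{L}^{(p/r)',(q/r)'}},
\]
using that $1/(q/r)'-1/(p/r)'=r/p-r/q$. Summing over $n$ and letting $\varepsilon\downarrow 0$ yields $\|g\|_{[(\mathcal{B}^{p,q})^r]'}\leq\|g\|_{\mathcal{L}^{(p/r)',(q/r)'}}$.

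For the reverse inequality I would build a near-optimal test function concentrated on a single cube. Given $\varepsilon>0$, pick $Q$ with $|Q|^{r/p-r/q}\|g\|_{L^{(p/r)'}(Q)}\geq\|g\|_{\mathcal{L}^{(p/r)',(q/r)'}}-\varepsilon$, and by $L^{p/r}$-$L^{(p/r)'}$ duality pick $k\geq 0$ supported on $Q$ with $\|k\|_{L^{p/r}(Q)}=1$ and $\int_Q k|g|\,\mathrm{d}x\geq\|g\|_{L^{(p/r)'}(Q)}-\varepsilon$. Setting $h:=|Q|^{1/p-1/q}k^{1/r}\ind_Q$, one computes $\langle h\rangle_{p,Q}=|Q|^{1/p-1/q}|Q|^{-1/p}\|k\|_{L^{p/r}(Q)}^{1/r}=|Q|^{-1/q}$, so $|Q|^{1/q}\langle h\rangle_{p,Q}=1$, meaning $h$ is a $(p,q)$-block and hence $\|h\|_{\mathcal{B}^{p,q}}\leq 1$. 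Then $\int h^r|g|\,\mathrm{d}x=|Q|^{r/p-r/q}\int_Q k|g|\,\mathrm{d}x$ is within $O(\varepsilon)$ of $\|g\|_{\mathcal{L}^{(p/r)',(q/r)'}}$, which (after adjusting $\varepsilon$ based on the chosen $Q$) proves the matching lower bound.

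The main obstacle is the nonlinearity of $r$-concavification: a pointwise block sum does not raise termwise to the $r$-th power. The Jensen bound $(\sum\lambda_n b_n)^r\leq(\sum\lambda_n)^{r-1}\sum\lambda_n b_n^r$ is exactly the device that linearizes this, and it is crucially why the hypothesis $r\geq 1$ appears; the constraint $r\leq p$ is used to make $(p/r,q/r)$ a valid block exponent pair and to have $L^{p/r}$-duality available. Everything else is block/Hölder bookkeeping and the standard $L^{p/r}$ duality argument on a single cube.
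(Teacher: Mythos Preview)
Your proof is correct. The weight reduction and the single-cube testing direction (your lower bound, showing $\|g\|_{[(\mathcal{B}^{p,q})^r]'}\geq\|g\|_{\mathcal{L}^{(p/r)',(q/r)'}}$) match the paper's argument essentially verbatim. The genuine difference is in the inclusion $\mathcal{L}^{(p/r)',(q/r)'}\subseteq[(\mathcal{B}^{p,q})^r]'$: you linearize $(\sum_n\lambda_n b_n)^r$ pointwise via Jensen's inequality and then apply two-exponent H\"older on each block cube, whereas the paper writes $\|fg\|_{L^1}=\bigl\||f|^{1/r}|g|^{1/r}\bigr\|_{L^r}^r$, dualizes against an auxiliary $L^{r'}$-function, and uses a three-exponent H\"older inequality on $|f|^{1/r}\cdot b_n\cdot(\text{dual function})$, which keeps the block sum linear throughout and only raises to the $r$-th power at the very end. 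Both approaches use $r\geq 1$ in an essential way (convexity of $t\mapsto t^r$ for you, availability of $L^{r'}$-duality for the paper) and both yield exact equality of norms. Your route is more direct and elementary; the paper's route avoids having to raise an infinite series to a power pointwise, which sidesteps any worry about where $\sum|\lambda_n||b_n|$ might be infinite (though, as you implicitly use, that case is harmless since the inequality is then trivial).
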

\begin{proof}
Noting that $\mathcal{L}^{(\frac{p}{r})',(\frac{q}{r})'}_{v^{-r}}(\R^d)=\mathcal{L}^{(\frac{p}{r})',(\frac{q}{r})'}(\R^d)(v^{-r})$ and
\[
\big[(\mathcal{B}^{p,q}_v(\R^d))^r\big]'=\big[(\mathcal{B}^{p,q}(\R^d))^r(v^r)\big]'=\big[(\mathcal{B}^{p,q}(\R^d))^r\big]'(v^{-r}),
\]
we have reduced the problem to the case $v=1$.

Let $f\in\big[(\mathcal{B}^{p,q}_v(\R^d))^r\big]'$, let $Q$ be a cube, and let $g\in L^{\frac{p}{r}}(Q)$ be non-zero. Then 
\[
b:=|Q|^{\frac{1}{p}-\frac{1}{q}}\frac{|g|^{\frac{1}{r}}}{\|g\|^{\frac{1}{r}}_{L^{\frac{p}{r}}(Q)}}
\]
is a $(p,q)$-block in $Q$, so $f|b|^r\in L^1(Q)$ with
\[
|Q|^{\frac{r}{p}-\frac{r}{q}}\|fg\|_{L^1(Q)}=\|f|b|^r\|_{L^1(Q)}\|g\|_{L^{\frac{p}{r}}(Q)}\leq\|f\|_{\big[(\mathcal{B}^{p,q}(\R^d))^r\big]'}\|g\|_{L^{\frac{p}{r}}(Q)}.
\]
Hence
\[
|Q|^{\frac{r}{p}-\frac{r}{q}}\|f\|_{L^{(\frac{p}{r})'}(Q)}=\sup_{\|g\|_{L^{\frac{p}{r}}(Q)}=1}|Q|^{\frac{r}{p}-\frac{r}{q}}\|fg\|_{L^1(Q)}\leq\|f\|_{\big[(\mathcal{B}^{p,q}(\R^d))^r\big]'}.
\]
Taking a supremum over all cubes $Q$ we conclude that $f\in\mathcal{L}^{(\frac{p}{r})',(\frac{q}{r})'}(\R^d)$ with
\[
\|f\|_{\mathcal{L}^{(\frac{p}{r})',(\frac{q}{r})'}(\R^d)}\leq\|f\|_{\big[(\mathcal{B}^{p,q}(\R^d))^r\big]'}.
\]

For the converse, suppose $f\in\mathcal{L}^{(\frac{p}{r})',(\frac{q}{r})'}(\R^d)$. Let $g\in(\mathcal{B}^{p,q}(\R^d))^r$. Then $|g|^{\frac{1}{r}}\in\mathcal{B}^{p,q}(\R^d)$, so there is a representation $|g|^{\frac{1}{r}}=\sum_{n\in\N}\lambda_n b_n$ where $(\lambda_n)_{n\in\N}\in\ell^1(\N)$ and $b_n$ is a $(p,q)$-block over a cube $Q_n$. Let $h\in L^{r'}(\R^d)$. Then, since
\[
\frac{1}{r(\frac{p}{r})'}+\frac{1}{p}+\frac{1}{r}=\frac{1}{r}-\frac{1}{p}+\frac{1}{p}+\frac{1}{r'}=1,
\]
it follows from H\"older's inequality that
\begin{align*}
\int_{\R^d}\!|fg|^{\frac{1}{r}}|h|\,\mathrm{d}x&\leq\sum_{n\in\N}|\lambda_n|\int_{Q_n}\!|f|^{\frac{1}{r}}|b_n||h|\,\mathrm{d}x\\
&\leq\sum_{n\in\N}|\lambda_n|\|f\|_{L^{(\frac{p}{r})'}(Q_n)}^{\frac{1}{r}}\|b_n\|_{L^p(Q_n)}\|h\|_{L^{r'}(\R^d)}\\
&\leq\sum_{n\in\N}|\lambda_n|\big(|Q|^{\frac{r}{p}-\frac{r}{q}}\|f\|_{L^{(\frac{p}{r})'}(Q_n)}\big)^{\frac{1}{r}}\|h\|_{L^{r'}(\R^d)}\\
&\leq\|(\lambda_n)_{n\in\N}\|_{\ell^1(\N)}\|f\|^{\frac{1}{r}}_{\mathcal{L}^{(\frac{p}{r})',(\frac{q}{r})'}(\R^d)}\|h\|_{L^{r'}(\R^d)}.
\end{align*}
Hence, we have
\begin{align*}
\|fg\|_{L^1(\R^d)}&=\||fg|^{\frac{1}{r}}\|_{L^r(\R^d)}^r
=\Big(\sup_{\|h\|_{L^{r'}(\R^d)}=1}\int_{\R^d}\!|fg|^{\frac{1}{r}}|h|\,\mathrm{d}x\Big)^r\\
&\leq\|(\lambda_n)_{n\in\N}\|^r_{\ell^1(\N)}\|f\|_{\mathcal{L}^{(\frac{p}{r})',(\frac{q}{r})'}(\R^d)}.
\end{align*}
Taking an infimum over all possible representations of $|g|^{\frac{1}{r}}$, we conclude that
\[
\|fg\|_{L^1(\R^d)}\leq \||g|^{\frac{1}{r}}\|^r_{\mathcal{B}^{p,q}(\R^d)}\|f\|_{\mathcal{L}^{(\frac{p}{r})',(\frac{q}{r})'}(\R^d)}=\|g\|_{(\mathcal{B}^{p,q}(\R^d))^r}\|f\|_{\mathcal{L}^{(\frac{p}{r})',(\frac{q}{r})'}(\R^d)}
\]
and thus, $f\in\big[(\mathcal{B}^{p,q}(\R^d))^r\big]'$ with
\[
\|f\|_{\big[(\mathcal{B}^{p,q}(\R^d))^r\big]'}\leq\|f\|_{\mathcal{L}^{(\frac{p}{r})',(\frac{q}{r})'}(\R^d)}.
\]
The assertion follows.
\end{proof}

\begin{theorem}\label{thm:mrsboundmorrey}
Let $r\in(0,\infty)$, $s\in(0,\infty]$ with $r<s$. Let $p\in(r,s)$, $q\in[p,s)$, and $v\in A_{p,(r,\frac{1}{\frac{1}{p}-\frac{1}{q}+\frac{1}{s}})}$, i.e.,
\[
\sup_Q|Q|^{-\big(\frac{1}{r}-\frac{1}{s}-\frac{1}{p}+\frac{1}{q}\big)}\|v\ind_Q\|_{L^{\frac{1}{\frac{1}{q}-\frac{1}{s}}}(\R^d)}\|v^{-1}\ind_Q\|_{L^{\frac{1}{\frac{1}{r}-\frac{1}{p}}}(\R^d)}<\infty.
\]
Then, if $X=\mathcal{L}^{p,q}_v(\R^d)$,
\[
M:X_{r,s}\to X_{r,s},\quad M:(X_{r,s})'\to (X_{r,s})'
\]
are bounded. In particular, this is the case if $v(x)=|x|^{\beta d}$ and $\beta\in(-(\frac{1}{q}-\frac{1}{s}),\frac{1}{r}-\frac{1}{p})$. 
\end{theorem}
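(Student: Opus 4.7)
The plan is to exploit the identification \eqref{eq:xrsblockmorrey}, which gives $X_{r,s}=\mathcal{L}^{p_{r,s},q_{r,s}}_{u}(\R^d)$ and $(X_{r,s})'=\mathcal{B}^{p'_{r,s},q'_{r,s}}_{u^{-1}}(\R^d)$, where $u:=v^{1/(1/r-1/s)}$ is the rescaled weight. Writing $\tilde p:=p_{r,s}$ and $\tilde q:=q_{r,s}$, the argument reduces to proving boundedness of $M$ on the weighted Morrey space at the rescaled exponents $\tilde p,\tilde q$, and on its K\"othe dual weighted block space.

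The first step is a direct calculation showing that the hypothesis $v\in A_{p,(r,1/(1/p-1/q+1/s))}$ is equivalent to $u\in A_{\tilde p,(1,1/(1/\tilde p-1/\tilde q))}$. Factoring the exponent $1/r-1/s$ out of every $L$-norm through the identity $\|v^{a}\ind_Q\|_{L^b}=\|u\,\ind_Q\|_{L^{ab}}^{a}$ with $a=1/r-1/s$ collapses both conditions to the single bounded supremum $\sup_Q\langle u\rangle_{\tilde q,Q}\langle u^{-1}\rangle_{\tilde p',Q}<\infty$, the various $|Q|$-powers cancelling cleanly. By Proposition~\ref{prop:jnreverseholder} this same supremum is equivalent to $u^{\tilde p}\in A_{\tilde p}\cap \RH_{\tilde q/\tilde p}$, and by the classical $A_P$-duality identity $\langle w\rangle_Q\langle w^{-1/(P-1)}\rangle_Q^{P-1}\leq[w]_{A_P}$ it is also equivalent to $u^{-\tilde p'}\in A_{\tilde p'/\tilde q+1}$.

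For the Morrey factor, applying the extrapolation result \cite[Theorem~1.1]{DR20} (recalled at the start of Section~\ref{subsec:morreyspaces}) with $\alpha=0$ and $w=u$ — whose hypotheses are exactly $u^{\tilde p}\in A_{\tilde p}\cap\RH_{\tilde q/\tilde p}$ — yields $M:\mathcal{L}^{\tilde p,\tilde q}_u(\R^d)\to\mathcal{L}^{\tilde p,\tilde q}_u(\R^d)$, i.e., $M:X_{r,s}\to X_{r,s}$. For the block factor, the reformulation $u^{-\tilde p'}\in A_{\tilde p'/\tilde q+1}$ is precisely the weight hypothesis of Theorem~\ref{thm:mboundblockspaces} with the parameters there taken as $p\to\tilde p'$, $q\to\tilde q'$, $r\to 1$, $v\to u^{-1}$; the structural requirements $1<\tilde q'\leq\tilde p'<\infty$ unwind to $r<p\leq q<s$, which hold by hypothesis, so that theorem produces $M:(X_{r,s})'\to(X_{r,s})'$.

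The power weight claim follows by specialization: with $v(x)=|x|^{\beta d}$, setting $\gamma:=\beta/(1/r-1/s)$ gives $u(x)=|x|^{\gamma d}$, and the condition $u^{\tilde p}\in A_{\tilde p}\cap\RH_{\tilde q/\tilde p}$ holds precisely when $\gamma\in(-1/\tilde q,1/\tilde p')$; since $(1/r-1/s)/\tilde q=1/q-1/s$ and $(1/r-1/s)/\tilde p'=1/r-1/p$, this range for $\gamma$ transcribes to $\beta\in(-(1/q-1/s),1/r-1/p)$. The main obstacle is purely bookkeeping: carefully matching the three equivalent formulations of the weight class. Once this is verified, the Morrey bound is imported from \cite{DR20} and the block space bound from the newly proven Theorem~\ref{thm:mboundblockspaces}, and no further work is needed.
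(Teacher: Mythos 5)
Your proposal is correct and follows essentially the same route as the paper: identify $X_{r,s}$ and $(X_{r,s})'$ via \eqref{eq:xrsblockmorrey}, translate the hypothesis on $v$ into the rescaled condition $v^{1/(\frac1r-\frac1s)}\in A_{p_{r,s},(1,\frac{1}{1/p_{r,s}-1/q_{r,s}})}$ (equivalently $A_{p_{r,s}}\cap RH_{q_{r,s}/p_{r,s}}$ via Proposition~\ref{prop:jnreverseholder}), then import the Morrey bound from \cite[Theorem~1.1]{DR20} with $\alpha=0$ and the block-space bound from Theorem~\ref{thm:mboundblockspaces} with $r=1$, and specialize to power weights. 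No gaps; the bookkeeping you describe is exactly what the paper's proof does.
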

\begin{proof}
Note that we computed $X_{r,s}$ and $(X_{r,s})'$ in \eqref{eq:xrsblockmorrey}. As discussed above, it follows from taking $\alpha=0$ in \cite[Theorem~1.1]{DR20} that the first bound is true for
\[
v^{\frac{1}{\frac{1}{r}-\frac{1}{s}}}\in A_{p_{r,s},(1,\frac{1}{\frac{1}{p_{r,s}}-\frac{1}{q_{r,s}}})},
\]
which is equivalent to $v\in A_{p,(r,\frac{1}{\frac{1}{p}-\frac{1}{q}+\frac{1}{s}})}$. 

For the dual bound, note that by Theorem~\ref{thm:mboundblockspaces} we require that 
\[
v^{-\frac{1}{\frac{1}{r}-\frac{1}{s}}p_{r,s}'}\in A_{\frac{p'_{r,s}}{q_{r,s}}+1}.
\]
This is equivalent to $v^{-\frac{1}{\frac{1}{r}-\frac{1}{s}}}\in A_{p_{r,s}',(\frac{1}{\frac{1}{p_{r,s}'}+\frac{1}{q_{r,s}}},\infty)}$, or $v^{\frac{1}{\frac{1}{r}-\frac{1}{s}}}\in A_{p_{r,s},(1,\frac{1}{\frac{1}{p_{r,s}}-\frac{1}{q_{r,s}}})}$, which is precisely the condition that we imposed on $v$. This proves the assertion.

For the assertion about power weights, we use the fact that $v=|x|^{\beta d}\in A_{p,(r,s)}$ if and only if $\beta\in(-(\frac{1}{p}-\frac{1}{s}),\frac{1}{r}-\frac{1}{p})$, and replace $\frac{1}{s}$ by $\frac{1}{p}-\frac{1}{q}+\frac{1}{s}$. This proves the result.
\end{proof}
\begin{remark}
If we could prove that Theorem~\ref{thm:mboundblockspaces} holds for all weights of the form $v(x)=|x|^{\alpha d} w(x)$ with $w^p\in A_{\frac{p}{q'}+1}$ and $\alpha\in(-(\frac{1}{p}-\frac{1}{q}),0]$, then we could get all power weights $v(x)=|x|^{\beta d}$ with $\beta\in (-(\frac{1}{q}-\frac{1}{s}),\frac{1}{r}-\frac{1}{q})$. In this case we recover the bounds from \cite{DR20}. We leave this as an interesting open problem.
\end{remark}

\section{Abstract extrapolation}\label{sec:proofs}
In this section we state and prove our main results in the on diagonal case.
\subsection{Main abstract theorem}
\begin{theorem}\label{thm:mainabs}
Let $(\Omega,|\cdot|)$ be a $\sigma$-finite measure space with a basis of sets $\mathcal{E}$. Let $r_1,r_2\in[1,\infty)$, $q_1,q_2\in(0,\infty]$, with $\frac{1}{q}=\frac{1}{q_1}+\frac{1}{q_2}\in(0,\infty)$. Let $X_1,X_2$ be $q$-convex quasi-Banach function spaces over $\Omega$ and suppose the following conditions hold:
\begin{itemize}
\item If $q_2\neq \infty$, $X_2$ is $q$-convex and there exists a positive isometric isomorphism
\[
L_1:X_1^q\to (X_2^q)',
\]
and
\[
M_{r_1}:X_1^q\to (X_2^q)'
\]
is bounded;
\item If $q_1\neq \infty$, $X_1$ is $q$-convex and there exists a positive isometric isomorphism
\[
L_2:X_2^q\to (X_1^q)',
\]
and
\[
M_{r_2}:X_2^q\to (X_1^q)'
\]
is bounded.
\end{itemize}
Then for every $f_1\in X_1$, $f_2\in X_2$ there exist weights $W_1,W_2$ such that
\begin{equation}\label{eq:mainabs1}
\sup_{E\in\mathcal{E}}\langle W_1^{-1}\rangle_{r_1q_2,E}\langle W_2^{-1}\rangle_{r_2q_1,E}
\leq 2^{\frac{1}{q}}\|M_{r_1}^\mathcal{E}\|^{\frac{1}{q_2}}_{X_1^q\to (X_2^q)'}\|M_{r_2}^\mathcal{E}\|^{\frac{1}{q_1}}_{X_2^q\to (X_1^q)'}
\end{equation}
and
\begin{equation}\label{eq:mainabs2}
\|f_1W_1\|_{L^{q_1}(\Omega)}\|f_2W_2\|_{L^{q_2}(\Omega)}\leq 2^{\frac{1}{q}}\|f_1\|_{X_1}\|f_2\|_{X_2}.
\end{equation}

Moreover, suppose $X$ is a $q$-convex quasi-Banach function space satisfying:
\begin{itemize}
\item If $q_2\neq \infty$,
\[
M_{r_1}:X^q\to X^q
\]
is bounded;
\item If $q_1\neq\infty$, 
\[
M_{r_2}:(X^q)'\to (X_q)'
\]
is bounded.
\end{itemize}
Then for each $f\in X$, $g\in[(X^q)']^{\frac{1}{q}}$ there is a weight $W$ such that
\begin{equation}\label{eq:mainabs3}
\sup_{E\in\mathcal{E}}\langle W\rangle_{r_2q_1,E}\langle W^{-1}\rangle_{r_1q_2,E}
\leq 2^{\frac{1}{q}}\|M_{r_1}^\mathcal{E}\|^{\frac{1}{q_2}}_{X^q\to X^q}\|M_{r_2}^\mathcal{E}\|^{\frac{1}{q_1}}_{(X^q)'\to (X^q)'}
\end{equation}
and
\begin{equation}\label{eq:mainabs4}
\|f_1W\|_{L^{q_1}(\Omega)}\|f_2W^{-1}\|_{L^{q_2}(\Omega)}\leq 2^{\frac{1}{q}}\|f_1\|_{X}\|f_2\|_{[(X^q)']^{\frac{1}{q}}}.
\end{equation}
\end{theorem}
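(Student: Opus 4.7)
The approach is a generalization of the Rubio de Francia (RdF) iteration, with the positive isometric isomorphisms $L_1, L_2$ playing the role of the duality pairing that is normally provided by the Köthe dual.

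First I would set up the RdF algorithm inside the Banach function spaces $X_j^q$ (Banach by $q$-convexity of $X_j$). Since $L_j$ is a positive isometric isomorphism, the composed operators $\tilde{M}_j := L_j^{-1} \circ M_{r_j}: X_j^q \to X_j^q$ are positive, sublinear, and bounded with norm $N_j := \|M_{r_j}\|_{X_j^q \to (X_{3-j}^q)'}$. Starting from $h_j := |f_j|^q \in X_j^q$, the iterate
\[
H_j := \sum_{k=0}^\infty \frac{(\tilde{M}_j)^k h_j}{(2N_j)^k}
\]
converges absolutely in $X_j^q$ and satisfies $H_j \geq h_j$, $\|H_j\|_{X_j^q} \leq 2 \|f_j\|_{X_j}^q$, and $\tilde{M}_j H_j \leq 2 N_j H_j$. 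Applying $L_j$ (which preserves positivity) gives $M_{r_j} H_j \leq 2 N_j L_j(H_j)$, and hence for every $E \in \mathcal{E}$ the pointwise lower bound
\[
L_j(H_j)(x) \geq \frac{1}{2 N_j}\langle H_j \rangle_{r_j, E}\quad\text{for a.e.\ } x \in E.
\]

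I would then define the weights explicitly by
\[
W_j := \frac{L_{3-j}(H_{3-j})^{1/q_j}}{H_j^{1/q_{3-j}}}, \qquad j = 1, 2,
\]
with the convention that a factor carrying exponent $1/\infty$ equals $1$. For the norm inequality \eqref{eq:mainabs2}, the identity $1/q_j + 1/q_{3-j} = 1/q$ together with $H_j \geq |f_j|^q$ yields
\[
(f_j W_j)^{q_j} = \frac{|f_j|^{q_j} L_{3-j}(H_{3-j})}{H_j^{q_j/q_{3-j}}} \leq |f_j|^{q_j - q q_j/q_{3-j}} L_{3-j}(H_{3-j}) = h_j\, L_{3-j}(H_{3-j}).
\]
Integrating and using the $X_j^q \times (X_j^q)' \to L^1$ pairing, together with the isometry $\|L_{3-j}(H_{3-j})\|_{(X_j^q)'} = \|H_{3-j}\|_{X_{3-j}^q}$, we obtain $\|f_j W_j\|_{L^{q_j}}^{q_j} \leq 2 \|f_1\|_{X_1}^q \|f_2\|_{X_2}^q$, and taking a product over $j$ produces the claimed constant $2^{1/q}$. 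For \eqref{eq:mainabs1} I would write
\[
W_j^{-r_j q_{3-j}} = L_{3-j}(H_{3-j})^{-r_j q_{3-j}/q_j}\, H_j^{r_j},
\]
control the negative power by the pointwise lower estimate on $L_{3-j}(H_{3-j})$, average over $E$, and take the $(r_j q_{3-j})$-th root to obtain
\[
\langle W_j^{-1} \rangle_{r_j q_{3-j}, E} \leq (2 N_{3-j})^{1/q_j} \langle H_{3-j} \rangle_{r_{3-j}, E}^{-1/q_j} \langle H_j \rangle_{r_j, E}^{1/q_{3-j}}.
\]
Multiplying the $j = 1, 2$ estimates causes the $\langle H_1 \rangle$ and $\langle H_2 \rangle$ factors to cancel symmetrically, leaving $(2 N_1)^{1/q_2}(2 N_2)^{1/q_1} = 2^{1/q} N_1^{1/q_2} N_2^{1/q_1}$.

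For the single-weight statement I would apply the above to $X_1 = X$ and $X_2 = [(X^q)']^{1/q}$ with $L_1, L_2$ taken as the canonical identifications provided by $(X^q)'' = X^q$ (Lorentz--Luxemburg). Writing $H, K$ for the RdF iterates of $|f|^q, |g|^q$ in $X^q, (X^q)'$, the formula reduces to $W_1 = K^{1/q_1}/H^{1/q_2}$ and $W_2 = H^{1/q_2}/K^{1/q_1}$, so $W_2 = W_1^{-1}$ by design, and setting $W := W_1$ yields a single weight satisfying \eqref{eq:mainabs3} and \eqref{eq:mainabs4}. The main subtlety throughout is the degenerate situation in which some $q_j = \infty$: the associated hypothesis on $(L_j, M_{r_j})$ is then vacuous, the corresponding RdF iteration is omitted, the factor carrying exponent $1/q_j = 0$ in the definition of $W_{3-j}$ collapses to the constant $1$, and the Muckenhoupt average at the index $\infty$ has to be read as an essential supremum. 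Tracking these vacuous factors while preserving the cancellation of the $\langle H_j\rangle$ averages in the Muckenhoupt estimate is the step that demands the most care.
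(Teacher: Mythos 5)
Your construction is exactly the paper's: the same Rubio de Francia iterates built from $S_j=L_j^{-1}M_{r_j}^{\mathcal{E}}$ on $X_j^q$, the same weights $W_1=(R_1f_1)^{-1/q_2}(L_2R_2f_2)^{1/q_1}$ and $W_2=(L_1R_1f_1)^{1/q_2}(R_2f_2)^{-1/q_1}$, the same H\"older/K\"othe-pairing estimates yielding the constants $2^{1/q}$, and the same reduction of the single-weight statement by taking $X_1=X$, $X_2=[(X^q)']^{1/q}$ with $L_1,L_2$ the identity so that $W_2=W_1^{-1}$. The only points you leave implicit are the trivial case $f_1=0$ or $f_2=0$ (where one simply takes $W_1=W_2=1$) and the a.e.\ positivity of the iterates needed for the weights to be well defined, which the paper dispatches via Proposition~\ref{prop:nonzeromax}.
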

\begin{proof}
If $f_1=0$ or $f_2=0$, we can set $W_1=W_2=1$, where \eqref{eq:mainabs1} now follows from Proposition~\ref{prop:maxopnorm}. If both $f_1$ and $f_2$ are non-zero, we define
\[
S_j:X_j^q\to X_j^q \quad S_jf:=L_j^{-1}M_{r_j}^\mathcal{E} f,
\]
and set
\[
R_jf_j:=\sum_{k=0}^\infty\frac{S_j^k(|f_j|^q)}{2^k\|S_j\|^k_{X_j^q\to X_j^q}}
\]
for $j\in\{1,2\}$.  Note that by Proposition~\ref{prop:nonzeromax} we have $R_jf_j>0$ in $\Omega$ and hence,
\[
W_1:=(R_1f_1)^{-\frac{1}{q_2}}(L_2R_2f_2)^{\frac{1}{q_1}},\quad
W_2:=(L_1R_1f_1)^{\frac{1}{q_2}}(R_2f_2)^{-\frac{1}{q_1}},
\]
are well-defined weights. Since $L_j$ is positive,
\[
M_{r_j}^\mathcal{E} R_jf_j=L_jS_jR_jf_j\leq2\|S_j\|_{X_j^q\to (X_j^q)'}L_jR_jf_j
\]
so that for any $E\in\mathcal{E}$ we have
\begin{align*}
\langle &W_1^{-1}\rangle_{r_1q_2,E}\langle W_2^{-1}\rangle_{r_2q_1,E}\\
&\leq
\Big(\langle R_1f_1\rangle_{r_1,E}\langle (L_1R_1f_1)^{-1}\rangle_{\infty,E}\Big)^{\frac{1}{q_2}}
\Big(\langle R_2f_2\rangle_{r_2,E}\langle (L_2R_2f_2)^{-1}\rangle_{\infty,E}\Big)^{\frac{1}{q_1}}\\
&\leq 2^{\frac{1}{q}}
\|M_{r_1}^\mathcal{E}\|^{\frac{1}{q_2}}_{X_1^q\to (X_2^q)'}\|M_{r_2}^\mathcal{E}\|^{\frac{1}{q_1}}_{X_2^q\to (X_1^q)'},
\end{align*}
proving \eqref{eq:mainabs1}.

Since $|f_1|^q\leq R_1f_1$, we have $|f_1|W_1\leq|f_1|^{\frac{q}{q_1}}(L^2R_2f_2)^{\frac{1}{q_1}}$ so that
\begin{align*}
\|f_1W_1\|_{L^{q_1}(\Omega)}&
\leq\||f_1|^qL_2R_2f_2\|_{L^1(\Omega)}^{\frac{1}{q_1}}
\leq\||f_1|^q\|^{\frac{1}{q_1}}_{X_1^q}\|L_2R_2f_2\|^{\frac{1}{q_1}}_{(X_1^q)'}\\
&\leq\|f_1\|_{X_1}^{\frac{q}{q_1}}\|R_2f_2\|_{X_2^q}^{\frac{1}{q_1}}
\leq 2^{\frac{1}{q_1}}\|f_1\|_{X_1}^{\frac{q}{q_1}}\||f_2|^q\|_{X_2^q}^{\frac{1}{q_1}}\\
&=2^{\frac{1}{q_1}}\|f_1\|_{X_1}^{\frac{q}{q_1}}\|f_2\|_{X_2}^{\frac{q}{q_1}}.
\end{align*}
A similar estimate holds for $\|f_2W_2\|_{L^{q_2}(\Omega)}$, proving \eqref{eq:mainabs2}.

For the second assertion, set $X_1:=X$ and $X_2=[(X^q)']^{\frac{1}{q}}$. Then $X_1^q=(X_2^q)'$, so we can take $L_1$ and $L_2$ the identity operators. Setting $W:=W_1$, we note that now $W_2=W^{-1}$. Hence, \eqref{eq:mainabs3} and \eqref{eq:mainabs4} follow from \eqref{eq:mainabs1} and \eqref{eq:mainabs2} respectively. This proves the assertion.
\end{proof}

\begin{remark}\label{rem:splitfunctions}
Note that instead of \eqref{eq:mainabs2}, we actually get the stronger statement that for $j\in\{1,2\}$ we have $f_j\in L^{q_j}_{W_j}(\Omega)$ with
\[
\|f_j\|_{L^{q_j}_{W_j}(\Omega)}\leq 2^{\frac{1}{q_j}}\|f_1\|_{X_1}^{\frac{q}{q_1}}\|f_2\|_{X_2}^{\frac{q}{q_2}}.
\]
\end{remark}

The formulation of the statement simplifies when we are considering a single Banach function space $X$ and $q=1$. This gives us the following rather elegant result:
\begin{corollary}\label{cor:mainabs}
Let $p\in[1,\infty]$, let $(\Omega,|\cdot|)$ be a $\sigma$-finite measure space with a basis of sets $\mathcal{E}$, and let $X$ be a Banach function space over $\Omega$. Then for every $f\in X$, $g\in X'$ there exists a weight $w\in A_p(\mathcal{E})$ such that
\[
[w]^\mathcal{E}_p\leq 2\|M^\mathcal{E}\|_{X\to X}^{\frac{1}{p'}}\|M^\mathcal{E}\|_{X'\to X'}^{\frac{1}{p}},
\]
provided that
\begin{itemize}
\item If $p\neq 1$, 
\[
M^\mathcal{E}:X\to X
\]
is bounded;
\item If $p\neq\infty$,
\[
M^\mathcal{E}:X'\to X'
\]
is bounded.
\end{itemize}
Moreover, this weight $w$ can be chosen such that $f\in L^p_w(\Omega)$, $g\in L^{p'}_{w^{-1}}(\Omega)$ with
\[
\|f\|_{L^p_w(\Omega)}\|g\|_{L^{p'}_{w^{-1}}(\Omega)}\leq 2\|f\|_X\|g\|_{X'}.
\]
\end{corollary}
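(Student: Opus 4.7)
The plan is to obtain this as a direct specialization of the second assertion of Theorem~\ref{thm:mainabs}, with the parameter choices
\[
q=1,\quad q_1=p,\quad q_2=p',\quad r_1=r_2=1,
\]
and with the role of $X$ in that theorem played by the given Banach function space. Since $q=1$, the $q$-convexity hypothesis is automatic, and one has $X^q=X$ and $[(X^q)']^{1/q}=X'$. The two conditional boundedness hypotheses of Theorem~\ref{thm:mainabs} then read: if $q_2=p'\neq\infty$ (equivalently $p\neq 1$) then $M^\mathcal{E}:X\to X$ is bounded, and if $q_1=p\neq\infty$ then $M^\mathcal{E}:X'\to X'$ is bounded; these are exactly the two bulleted hypotheses of the corollary.

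With these substitutions, the weight bound \eqref{eq:mainabs3} becomes
\[
\sup_{E\in\mathcal{E}}\langle W\rangle_{p,E}\,\langle W^{-1}\rangle_{p',E}\;\leq\; 2\,\|M^\mathcal{E}\|_{X\to X}^{1/p'}\,\|M^\mathcal{E}\|_{X'\to X'}^{1/p},
\]
and unwinding the definition $[w]_p^\mathcal{E}=[w]_{p,(1,\infty)}^\mathcal{E}=\sup_E\langle w^{-1}\rangle_{p',E}\langle w\rangle_{p,E}$ recovers the claimed $A_p$ estimate with $w:=W$. The norm bound \eqref{eq:mainabs4} likewise specializes to $\|fW\|_{L^p(\Omega)}\|gW^{-1}\|_{L^{p'}(\Omega)}\leq 2\|f\|_X\|g\|_{X'}$, which is the second displayed inequality, while the individual memberships $f\in L^p_w(\Omega)$ and $g\in L^{p'}_{w^{-1}}(\Omega)$ with the analogous individual norm bounds follow from Remark~\ref{rem:splitfunctions}.

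There is no substantive obstacle here: the two isometric isomorphisms $L_1,L_2$ required for the two-space form of Theorem~\ref{thm:mainabs} collapse in this on-diagonal, single-space setting to the canonical identification $X=X''$ supplied by the Lorentz--Luxemburg theorem, so the Rubio de Francia iteration carried out inside the proof of Theorem~\ref{thm:mainabs} reduces to the familiar algorithm $Rf=\sum_{k\geq 0}(M^\mathcal{E})^k f/(2\|M^\mathcal{E}\|_{X\to X})^k$ on $X$ and an analogous expression on $X'$. Consequently the corollary is a notational unpacking of the main abstract theorem in the on-diagonal, single-space setting, and no additional argument is required beyond invoking it with the parameters above.
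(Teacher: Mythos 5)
Your proof is correct and is essentially identical to the paper's own argument, which likewise obtains the corollary by specializing Theorem~\ref{thm:mainabs} with $q=1$, $q_1=p$, $q_2=p'$ (equivalently $X_1=X$, $X_2=X'$, with $L_1$, $L_2$ the identity via $X=X''$). Your extra remarks on unwinding \eqref{eq:mainabs3}--\eqref{eq:mainabs4} and invoking Remark~\ref{rem:splitfunctions} for the individual memberships $f\in L^p_w(\Omega)$, $g\in L^{p'}_{w^{-1}}(\Omega)$ are accurate and require no changes.
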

\begin{proof}
This follows from setting $q=1$, $q_1=p$, $q_2=p'$, $X_1=X$, $X_2=X'$ in Theorem~\ref{thm:mainabs}.
\end{proof}

If one unwinds the proof of this corollary, one can explain the main ideas fairly concisely. Indeed, if $f\in X$ and $M^\mathcal{E}$ is bounded in $X$, then one can use the Rubio de Francia algorithm on $f$ to obtain a weight $w_1\in A_1(\mathcal{E})$ with
\[
[w_1^{-1}]_\infty^\mathcal{E}=[w_1]_1^\mathcal{E}\leq 2\|M\|_{X\to X}.
\]
One can then use the same procedure for $g\in X'$ to get a weight $w_2\in A_1(\mathcal{E})$ with $[w_2]_1^\mathcal{E}\leq 2\|M^\mathcal{E}\|_{X'\to X'}$. Then it follows from \eqref{eq:wfullrangesym} that $w:=w_1^{-\frac{1}{p'}}w_2^{\frac{1}{p}}\in A_p(\mathcal{E})$ with the desired properties.

At this point it becomes reasonable to try to use Proposition~\ref{prop:limitedrangeweightinterpolation} to see if this same procedure works in the limited range setting. As one can check, it turns out that this is the case, which yields the following corollary:
\begin{corollary}
Let $r\in(0,\infty)$, $s\in(0,\infty]$ with $r<s$, $\frac{1}{r}-\frac{1}{s}\geq 1$, and let $p\in[r,s]$. Let $(\Omega,|\cdot|)$ be a $\sigma$-finite measure space with a basis of sets $\mathcal{E}$. If $X$ is a Banach function space over $\Omega$ and $M_{\frac{1}{\frac{1}{r}-\frac{1}{s}}}^\mathcal{E}:X\to X$ is bounded when $p>r$ and $M_{\frac{1}{\frac{1}{r}-\frac{1}{s}}}^\mathcal{E}:X'\to X'$ is bounded when $p<s$, then for every $f\in X$, $g\in X'$ there exists a weight $w\in A_{p,(r,s)}(\mathcal{E})$ such that
\[
[w]^\mathcal{E}_{p,(r,s)}\leq 2\|M^\mathcal{E}\|_{X^{\frac{1}{\frac{1}{r}-\frac{1}{s}}}\to X^{\frac{1}{\frac{1}{r}-\frac{1}{s}}}}^{\frac{1}{r}-\frac{1}{p}}\|M^\mathcal{E}\|_{(X')^{\frac{1}{\frac{1}{r}-\frac{1}{s}}}\to (X')^{\frac{1}{\frac{1}{r}-\frac{1}{s}}}}^{\frac{1}{p}-\frac{1}{s}}
\]
and $f\in L^{\frac{\frac{1}{r}-\frac{1}{s}}{\frac{1}{p}-\frac{1}{s}}}_w(\Omega)$, $g\in L^{\frac{\frac{1}{r}-\frac{1}{s}}{\frac{1}{r}-\frac{1}{p}}}_{w^{-1}}(\Omega)$ with
\begin{equation}\label{eq:cormainabslimrange1}
\|f\|_{L^{\frac{\frac{1}{r}-\frac{1}{s}}{\frac{1}{p}-\frac{1}{s}}}_w(\Omega)}\|g\|_{L^{\frac{\frac{1}{r}-\frac{1}{s}}{\frac{1}{r}-\frac{1}{p}}}_{w^{-1}}(\Omega)}\leq 2\|f\|_X\|g\|_{X'}.
\end{equation}
\end{corollary}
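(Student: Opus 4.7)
The plan is to rerun the construction from the proof of Theorem~\ref{thm:mainabs} with $q=1$, adapted to the limited-range setting. Set $r_0:=\frac{1}{\frac{1}{r}-\frac{1}{s}}\in(0,1]$; by Proposition~\ref{prop:mqboundrescale}, boundedness of $M_{r_0}^\mathcal{E}$ on $X$ (resp.~$X'$) is equivalent to that of $M^\mathcal{E}$ on $X^{r_0}$ (resp.~$(X')^{r_0}$), with
\[
\|M_{r_0}^\mathcal{E}\|_{X\to X}=\|M^\mathcal{E}\|_{X^{r_0}\to X^{r_0}}^{\frac{1}{r_0}}
\]
and analogously for $X'$.

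First, I would form the Rubio de Francia iterates at the level of $X$ and $X'$ using $M_{r_0}^\mathcal{E}$ directly:
\[
R_1:=\sum_{k=0}^\infty\frac{(M_{r_0}^\mathcal{E})^k|f|}{(2\|M_{r_0}^\mathcal{E}\|_{X\to X})^k},\qquad R_2:=\sum_{k=0}^\infty\frac{(M_{r_0}^\mathcal{E})^k|g|}{(2\|M_{r_0}^\mathcal{E}\|_{X'\to X'})^k},
\]
obtaining $R_1\geq|f|$, $\|R_1\|_X\leq 2\|f\|_X$, and the pointwise bound $M_{r_0}^\mathcal{E}(R_1)\leq 2\|M_{r_0}^\mathcal{E}\|_{X\to X}R_1$, with the analogous estimates for $R_2$ (in the degenerate cases $p=r$ or $p=s$ only one of the two iterates is needed, matching the stated hypotheses on $M_{r_0}^\mathcal{E}$). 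Setting
\[
w:=R_1^{-\frac{1}{q_2}}R_2^{\frac{1}{q_1}},\qquad q_1:=\frac{\frac{1}{r}-\frac{1}{s}}{\frac{1}{p}-\frac{1}{s}},\quad q_2:=\frac{\frac{1}{r}-\frac{1}{s}}{\frac{1}{r}-\frac{1}{p}},
\]
one checks $\frac{1}{q_1}+\frac{1}{q_2}=1$ and $r_0q_1=\frac{1}{\frac{1}{p}-\frac{1}{s}}$, $r_0q_2=\frac{1}{\frac{1}{r}-\frac{1}{p}}$, which are precisely the Lebesgue exponents appearing in $[w]^\mathcal{E}_{p,(r,s)}$.

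The verification then runs essentially verbatim as in the proof of Theorem~\ref{thm:mainabs}. An $L^{r_0}$--$L^\infty$ H\"older application on each $E\in\mathcal{E}$ yields
\[
\langle w\rangle_{r_0q_1,E}\,\langle w^{-1}\rangle_{r_0q_2,E}\leq 2\,\|M_{r_0}^\mathcal{E}\|_{X\to X}^{\frac{1}{q_2}}\|M_{r_0}^\mathcal{E}\|_{X'\to X'}^{\frac{1}{q_1}},
\]
which upon applying the rescaling identity becomes the claimed bound on $[w]^\mathcal{E}_{p,(r,s)}$, since $\frac{1}{r_0q_2}=\frac{1}{r}-\frac{1}{p}$ and $\frac{1}{r_0q_1}=\frac{1}{p}-\frac{1}{s}$. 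For the functional inequality, the pointwise estimate $|f|^{q_1}R_1^{-q_1/q_2}\leq|f|$, valid because $R_1\geq|f|$ and $q_1(1-\frac{1}{q_2})=1$, gives $|fw|^{q_1}\leq|f|R_2$, so K\"othe duality produces $\|fw\|_{L^{q_1}}^{q_1}\leq\|f\|_X\|R_2\|_{X'}\leq 2\|f\|_X\|g\|_{X'}$; the symmetric bound for $gw^{-1}$ together with $\frac{1}{q_1}+\frac{1}{q_2}=1$ yields the product estimate with constant $2$. The main obstacle is organizational rather than substantive: the argument mirrors that of Theorem~\ref{thm:mainabs} with the role of the parameters $r_j\in[1,\infty)$ there played here by $r_0\leq 1$, and the only genuinely new ingredient is the use of the rescaling identity to translate operator norms between $M_{r_0}^\mathcal{E}$ and $M^\mathcal{E}$ on the rescaled spaces.
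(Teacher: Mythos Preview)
Your approach is essentially identical to the paper's: both specialize Theorem~\ref{thm:mainabs} with $q=1$, $q_1=\tfrac{\frac{1}{r}-\frac{1}{s}}{\frac{1}{p}-\frac{1}{s}}$, $q_2=\tfrac{\frac{1}{r}-\frac{1}{s}}{\frac{1}{r}-\frac{1}{p}}$, $r_1=r_2=r_0:=\tfrac{1}{\frac{1}{r}-\frac{1}{s}}$, $X_1=X$, $X_2=X'$, and then use Proposition~\ref{prop:mqboundrescale} to rewrite $\|M_{r_0}^{\mathcal{E}}\|_{X\to X}$ as $\|M^{\mathcal{E}}\|_{X^{r_0}\to X^{r_0}}^{1/r_0}$. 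The only difference is cosmetic: the paper cites Theorem~\ref{thm:mainabs} directly, while you unwind the Rubio de Francia construction by hand.

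One caveat you flag but do not resolve (and which the paper's own proof also glosses over): Theorem~\ref{thm:mainabs} is stated for $r_1,r_2\in[1,\infty)$, and the step $S_jR_jf_j\le 2\|S_j\|R_jf_j$ in its proof relies on the sublinearity of $M_{r_j}^{\mathcal{E}}$. For $r_0<1$ (i.e., $\tfrac{1}{r}-\tfrac{1}{s}>1$), the operator $M_{r_0}^{\mathcal{E}}$ is \emph{not} sublinear, because the $L^{r_0}$-average is only a quasi-norm; thus your claimed pointwise bound $M_{r_0}^{\mathcal{E}}(R_1)\le 2\|M_{r_0}^{\mathcal{E}}\|_{X\to X}R_1$ is unjustified in that regime. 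A clean fix is to run the Rubio de Francia iteration with $M^{\mathcal{E}}$ on the Banach function space $X^{r_0}$ applied to $|f|^{r_0}$ (which is legitimate since $M^{\mathcal{E}}$ is sublinear and $X$ is $r_0$-convex for $r_0\le 1$), and then take the $\tfrac{1}{r_0}$-th root; this produces a weight with the same structure but replaces the constants $2$ by $2^{1/r_0}$.
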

\begin{proof}
We set $\frac{1}{q_1}=\frac{\frac{1}{p}-\frac{1}{s}}{\frac{1}{r}-\frac{1}{s}}$, $\frac{1}{q_2}=\frac{\frac{1}{r}-\frac{1}{p}}{\frac{1}{r}-\frac{1}{s}}$ so that $q=1$, and take $\frac{1}{r_1}=\frac{1}{r_2}=\frac{1}{r}-\frac{1}{s}$, $X_1=X$, $X_2=X'$. Then it follows from Theorem~\ref{thm:mainabs} that for $f\in X$, $g\in X'$ there is a weight $w$ such that
\begin{align*}
[w]_{p,(r,s)}^\mathcal{E}&
=\sup_{E\in\mathcal{E}}\langle w\rangle_{r_2q_1,E}\langle w^{-1}\rangle_{r_1q_2,E}\\
&\leq 2\|M_{\frac{1}{\frac{1}{r}-\frac{1}{s}}}^\mathcal{E}\|_{X\to X}^{\frac{\frac{1}{r}-\frac{1}{p}}{\frac{1}{r}-\frac{1}{s}}}\|M_{\frac{1}{\frac{1}{r}-\frac{1}{s}}}^\mathcal{E}\|_{X'\to X'}^{\frac{\frac{1}{p}-\frac{1}{s}}{\frac{1}{r}-\frac{1}{s}}}\\
&=2\|M^\mathcal{E}\|_{X^{\frac{1}{\frac{1}{r}-\frac{1}{s}}}\to X^{\frac{1}{\frac{1}{r}-\frac{1}{s}}}}^{\frac{1}{r}-\frac{1}{p}}\|M^\mathcal{E}\|_{(X')^{\frac{1}{\frac{1}{r}-\frac{1}{s}}}\to (X')^{\frac{1}{\frac{1}{r}-\frac{1}{s}}}}^{\frac{1}{p}-\frac{1}{s}}
\end{align*}
by Proposition~\ref{prop:mqboundrescale}, and \eqref{eq:cormainabslimrange1} holds. This proves the result.
\end{proof}

Now, if one wants to show that an operator $T$ satisfying bounds in the limited weight classes $A_{p,(r,s)}(\mathcal{E})$ is bounded in an $r$-convex and $s$-concave quasi-Banach function space $X$, then, since $X$ might not be a Banach function space, one might first have to pass to the $r$-concavification $X^r$ before one can use a dualization argument. Indeed, we have
\begin{align*}
\|Tf\|_X
&=\||Tf|^r\|_{X^r}^{\frac{1}{r}}
=\sup_{\|g^r\|_{(X^r)'}=1}\||Tf|^r |g|^r\|_{L^1(\Omega)}^{\frac{1}{r}}\\
&=\sup_{\|g\|_{[(X^r)']^{\frac{1}{r}}}=1}\|(Tf)g\|_{L^r(\Omega)},
\end{align*}
Thus, to bound such an operator $T$, we need the following special corollary of Theorem~\ref{thm:mainabs} combined with the factorization result Corollary~\ref{cor:factorization}:
\begin{corollary}\label{cor:abstractaprsextrapolation}
Let $r\in(0,\infty)$, $s\in(0,\infty]$ with $r<s$ and let $p\in[r,s]$. Let $(\Omega,|\cdot|)$ be a $\sigma$-finite measure space with a basis of sets $\mathcal{E}$. Let $X$ be an $r$-convex and $s$-concave quasi-Banach function space over $\Omega$ satisfying:
\begin{itemize}
\item If $p\neq r$,
\[
M^\mathcal{E}:X_{r,s}\to X_{r,s}
\]
is bounded;
\item If $p\neq s$, 
\[
M^\mathcal{E}:(X_{r,s})'\to (X_{r,s})'
\]
is bounded.
\end{itemize}
Then for every $f\in X$, $g\in [(X^r)']^{\frac{1}{r}}$ there exists a weight $w\in A_{p,(r,s)}(\mathcal{E})$ such that
\[
[w]^\mathcal{E}_{p,(r,s)}\leq 2^{\frac{1}{r}-\frac{1}{s}}\|M^\mathcal{E}\|_{X_{r,s}\to X_{r,s}}^{\frac{1}{r}-\frac{1}{p}}\|M^\mathcal{E}\|_{(X_{r,s})'\to (X_{r,s})'}^{\frac{1}{p}-\frac{1}{s}}
\]
and $f\in L^p_w(\Omega)$, $g\in L^{\frac{1}{\frac{1}{r}-\frac{1}{p}}}_{w^{-1}}(\Omega)$ with
\[
\|f\|_{L^p_w(\Omega)}\|g\|_{L^{\frac{1}{\frac{1}{r}-\frac{1}{p}}}_{w^{-1}}(\Omega)}\leq 2^{\frac{1}{r}-\frac{1}{s}}\|f\|_X\|g\|_{[(X^r)']^{\frac{1}{r}}}.
\]
\end{corollary}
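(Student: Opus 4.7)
The plan is to deduce the corollary from the second assertion of Theorem~\ref{thm:mainabs} applied to the auxiliary $q$-convex quasi-Banach function space
\[
Z := (X_{r,s})^{\frac{1}{r}-\frac{1}{s}}, \qquad q := \frac{1}{\frac{1}{r}-\frac{1}{s}}.
\]
Since $X$ is $r$-convex and $s$-concave, $X_{r,s}$ is a Banach function space; equivalently, $Z$ is $q$-convex and $Z^q = X_{r,s}$, so also $(Z^q)' = (X_{r,s})'$. Consequently, the maximal-function hypotheses of the corollary are, upon taking $r_1 = r_2 = 1$, verbatim those required by Theorem~\ref{thm:mainabs} for the space $Z$.

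The second preliminary ingredient is the identification
\[
\bigl[(Z^q)'\bigr]^{\frac{1}{q}} = \bigl[(X_{r,s})'\bigr]^{\frac{1}{r}-\frac{1}{s}} = \bigl[(X^r)'\bigr]^{\frac{1}{r}}.
\]
The right-hand equality follows since $(X^r)'$ is $(s/r)'$-convex (because $X^r$ is $s/r$-concave), hence $[(X^r)']^{(s/r)'}$ is a Banach function space and the Lorentz--Luxemburg theorem gives $(X_{r,s})' = [(X^r)']^{(s/r)'}$; raising to the power $\frac{1}{(s/r)'} \cdot \frac{1}{r} = \frac{1}{r}-\frac{1}{s}$ yields the displayed equality. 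In particular, the $g$ from the corollary naturally belongs to $[(Z^q)']^{\frac{1}{q}}$.

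Now I choose
\[
q_1 := \tfrac{1}{\frac{1}{p}-\frac{1}{s}}, \qquad q_2 := \tfrac{1}{\frac{1}{r}-\frac{1}{p}},
\]
which satisfy $\frac{1}{q_1}+\frac{1}{q_2} = \frac{1}{r}-\frac{1}{s} = \frac{1}{q}$, with $q_2 = \infty$ precisely when $p = r$ and $q_1 = \infty$ precisely when $p = s$. Thus the case-splitting in Theorem~\ref{thm:mainabs} matches that of the corollary exactly. Given $f \in X$, Corollary~\ref{cor:factorization} produces $|f| = hk$ with $h \in Z$, $k \in L^s(\Omega)$, and $\|h\|_Z\|k\|_{L^s(\Omega)} = \|f\|_X$. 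Applying the second part of Theorem~\ref{thm:mainabs} to the pair $(h, g)$ yields a weight $w$ satisfying both
\[
\sup_{E\in\mathcal{E}}\langle w\rangle_{q_1,E}\langle w^{-1}\rangle_{q_2,E} \leq 2^{\frac{1}{r}-\frac{1}{s}}\|M^\mathcal{E}\|_{X_{r,s}\to X_{r,s}}^{\frac{1}{r}-\frac{1}{p}}\|M^\mathcal{E}\|_{(X_{r,s})'\to (X_{r,s})'}^{\frac{1}{p}-\frac{1}{s}},
\]
which is the required bound on $[w]^\mathcal{E}_{p,(r,s)}$, and
\[
\|hw\|_{L^{q_1}(\Omega)}\|gw^{-1}\|_{L^{q_2}(\Omega)} \leq 2^{\frac{1}{r}-\frac{1}{s}}\|h\|_Z\|g\|_{[(X^r)']^{\frac{1}{r}}}.
\]

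The final step is an application of H\"older's inequality to the pointwise identity $|f|w = (hw)\,k$. Since $\frac{1}{p} = \frac{1}{q_1}+\frac{1}{s}$, H\"older gives $\|fw\|_{L^p(\Omega)} \leq \|hw\|_{L^{q_1}(\Omega)}\|k\|_{L^s(\Omega)}$; combining with the product bound above and the factorization identity $\|h\|_Z\|k\|_{L^s(\Omega)} = \|f\|_X$ produces the desired norm inequality. The only delicate point is tracking that the constant $2^{\frac{1}{r}-\frac{1}{s}} = 2^{1/q}$ in the statement is exactly the $2^{1/q}$ produced by Theorem~\ref{thm:mainabs}, with no extra factors introduced by the factorization or H\"older steps; beyond that, the argument is essentially a matter of matching parameters.
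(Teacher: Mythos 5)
Your proposal is correct and follows essentially the same route as the paper's proof: factorize $|f|=hk$ via Corollary~\ref{cor:factorization}, take $\frac{1}{q_1}=\frac{1}{p}-\frac{1}{s}$, $\frac{1}{q_2}=\frac{1}{r}-\frac{1}{p}$, apply the second part of Theorem~\ref{thm:mainabs} to $(h,g)$ with $X_1=(X_{r,s})^{\frac{1}{r}-\frac{1}{s}}$ and $X_2=[(X^r)']^{\frac{1}{r}}$, and finish with H\"older. Your explicit Lorentz--Luxemburg verification of $[(X_{r,s})']^{\frac{1}{r}-\frac{1}{s}}=[(X^r)']^{\frac{1}{r}}$ is a welcome detail the paper leaves implicit, and your parameter bookkeeping is consistent where the paper's displayed choice of $q_2$ contains a sign slip.
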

\begin{proof}
Since $f\in X$, it follows from Corollary~\ref{cor:factorization} that there exist $0\leq h\in (X_{r,s})^{\frac{1}{r}-\frac{1}{s}}$ and $0\leq k\in L^s(\Omega)$ so that $|f|=hk$ and
\begin{equation}\label{eq:abstractlimrange1}
\|f\|_X=\|h\|_{(X_{r,s})^{\frac{1}{r}-\frac{1}{s}}}\|k\|_{L^s(\Omega)}.
\end{equation}
We set $\frac{1}{q_1}=\frac{1}{p}-\frac{1}{s}$, $\frac{1}{q_2}=\frac{1}{p}-\frac{1}{r}$ so that $\frac{1}{q}=\frac{1}{r}-\frac{1}{s}$. Taking $X_1:=(X_{r,s})^{\frac{1}{r}-\frac{1}{s}}$, $X_2=(X_{r,s}')^{\frac{1}{r}-\frac{1}{s}}=[(X^r)']^{\frac{1}{r}}$ we have $X_2^q=X'=(X_1^q)'$ and hence, by using Theorem~\ref{thm:mainabs} with $f_1=h$, $f_2=g$, there is a weight $w\in A_{p,(r,s)}(\mathcal{E})$ with
\[
[w]_{p,(r,s)}^\mathcal{E}\leq 2^{\frac{1}{r}-\frac{1}{s}}\|M^\mathcal{E}\|_{X_{r,s}\to X_{r,s}}^{\frac{1}{r}-\frac{1}{p}}\|M^\mathcal{E}\|_{(X_{r,s})'\to (X_{r,s})'}^{\frac{1}{p}-\frac{1}{s}}
\]
and
\[
\|h\|_{L_w^{\frac{1}{\frac{1}{p}-\frac{1}{s}}}(\Omega)}\|g\|_{L_{w^{-1}}^{\frac{1}{\frac{1}{r}-\frac{1}{p}}}(\Omega)}\leq 2^{\frac{1}{r}-\frac{1}{s}}\|h\|_{(X_{r,s})^{\frac{1}{r}-\frac{1}{s}}}\|g\|_{[(X^r)']^{\frac{1}{r}}}.
\]
Hence, by \eqref{eq:abstractlimrange1} we have
\begin{align*}
\|f\|_{L^p_w(\Omega)}\|g\|_{L^{\frac{1}{\frac{1}{r}-\frac{1}{p}}}_{w^{-1}}(\Omega)}
&\leq\|h\|_{L_w^{\frac{1}{\frac{1}{p}-\frac{1}{s}}}(\Omega)}\|k\|_{L^s(\Omega)}\|g\|_{L_{w^{-1}}^{\frac{1}{\frac{1}{r}-\frac{1}{p}}}(\Omega)}\\
&\leq 2^{\frac{1}{r}-\frac{1}{s}}\|h\|_{(X_{r,s})^{\frac{1}{r}-\frac{1}{s}}}\|k\|_{L^s(\Omega)}\|g\|_{[(X^r)']^{\frac{1}{r}}}\\
&=2^{\frac{1}{r}-\frac{1}{s}}\|f\|_X\|g\|_{[(X^r)']^{\frac{1}{r}}}.
\end{align*}
The assertion follows.
\end{proof}
We give the full argument to extrapolate bounds for an operator in the limited range setting in Theorem~\ref{thm:aprsextrapolation} below.

\subsection{Extrapolation formulated in a dual statement in favor of extrapolation pairs}\label{subsection:extrapolationpairssuck}
Usually in the literature, extrapolation theorems are formulated in terms of pairs of functions, rather than in the dual statement appearing in Theorem~\ref{thm:mainabs}. In this subsection we explain why the dual statement is more favorable. For simplicity, we consider the case where $\Omega=\R^d$ with the basis of cubes and $q=1$.

Typically, for the statement in terms of extrapolation pairs, one considers a family of pairs $\mathcal{F}\subseteq L^0(\R^d)\times L^0(\R^d)$. Then one assumes that there is an increasing function $\phi:[0,\infty)\to[0,\infty)$ such that for all $(f,g)\in\mathcal{F}$ and all $w^p\in A_p$ (which we recall is equivalent to $[w]_p<\infty$) we have
\begin{equation}\label{eq:extrapolationpairsin}
\|f\|_{L^p_w(\R^d)}\leq\phi([w]_p)\|g\|_{L^p_w(\R^d)}.
\end{equation}
At this point one wishes to deduce that for an appropriate class of Banach function spaces $X$, there is a constant $C_X>0$ for which
\begin{equation}\label{eq:extrapolationpairsout}
\|f\|_X\leq C_X\|g\|_X
\end{equation}
for all $(f,g)\in\mathcal{F}$. As a matter of fact, using Corollary~\ref{cor:mainabs}, we can show that this holds with
\[
C_X=2\phi(2\|M\|_{X\to X}^{\frac{1}{p'}}\|M\|_{X'\to X'}^{\frac{1}{p}}).
\]

Now, suppose we wish to use this result to extrapolate bounds for a given operator. More precisely, suppose $T$ is an operator that is well-defined on all functions $f\in L^p_w(\R^d)$ with $w^p\in A_p$ (i.e, $T$ is defined on $\bigcup_{w\in A_p}L^p_w(\R^d)$). Moreover, assume that there is an increasing function $\phi:[0,\infty)\to[0,\infty)$ such that for all $w^p\in A_p$ and $f\in L^p_w(\R^d)$ we have
\begin{equation}\label{eq:extrapolationpairsinT}
\|Tf\|_{L^p_w(\R^d)}\leq\phi([w]_p)\|f\|_{L^p_w(\R^d)}.
\end{equation}
The conclusion we wish to reach then is that for all appropriate Banach function spaces $X$, (i.e., the ones for which $M$ is bounded on $X$ and $X'$,) $Tf$ is well-defined for any $f\in X$ and
\begin{equation}\label{eq:extrapolationpairsoutT}
\|Tf\|_X\leq C_X\|f\|_X.
\end{equation}

The idea is to now consider the pairs of functions
\[
\mathcal{F}=\{(Tf,f):f\in S\},
\]
where $S\subseteq L^0(\R^d)$ is an appropriate class of functions. A condition that $S$ has to satisfy is that \eqref{eq:extrapolationpairsin} holds for the pair $(Tf,f)$ whenever $f\in S$. By \eqref{eq:extrapolationpairsinT}, this holds whenever $f\in L^p_w(\R^d)$ for some $w\in A_p$. However, since $S$ cannot depend on the choice of the weight $w\in A_p$, this means that we must have
\[
S\subseteq\bigcap_{w\in A_p} L^p_w(\R^d).
\]
This intersection is not empty, since, for example, it contains the space of bounded functions of compact support $L^\infty_c(\R^d)$. Indeed, if $w^p\in A_p$, then $w\in L^p_{\loc}(\R^d)$ and hence, if $f\in L^\infty_c(\R^d)$, then
\[
\|f\|_{L^p_w(\R^d)}\leq\|f\|_{L^\infty(\R^d)}\|w\|_{L^p(\supp f)}<\infty.
\]
Choosing $S=L^\infty_c(\R^d)$, the conclusion \eqref{eq:extrapolationpairsout} now becomes
\[
\|Tf\|_X\leq C_X\|f\|_X
\]
for all appropriate $X$ for all $f\in L^\infty_c(\R^d)$. To conclude \eqref{eq:extrapolationpairsoutT}, one would have to extend the operator $T$ to all of $X$ through a density argument. This requires some regularity of the operator $T$, e.g., it is (sub)linear, and it requires $L^\infty_c(\R^d)$ to be a dense subspace of $X$. While the former condition can be undesirable when we want to consider $T$ where $T$ is not necessarily an operator, the second condition is undesirable in that this means we would only get partial results for certain spaces. For example, $M$ is bounded on both $X=L^{p,\infty}(\R^d)$ and $X'=L^{p',1}(\R^d)$, but the space $L^\infty_c(\R^d)$ is not dense in $X$, and hence, we would not get bounds on the whole space $X$.

Another way is to take $S$ large enough so that it includes pairs $(Tf,f)$, where the left-hand side is still well-defined, but the right-hand side could be infinite. In this case one would have to delve back into the proof of the extrapolation theorem to see if the function $f$ having infinite norm in certain spaces does not lead to any problems. At this point one would also have to be careful whether $Tf$ is contained in $X$. These issues can usually be solved by an appropriate limiting argument.

We conclude that the formulation of extrapolation in terms of pairs of functions when applied to bounds for operators results in either a sub optimal result, or in a cumbersome argument. This is especially unfortunate since \emph{the conclusion \eqref{eq:extrapolationpairsoutT} follows from \eqref{eq:extrapolationpairsinT} without any limiting argument or additional assumptions on $T$ or $X$.}

As a matter of fact, in this case the formulation in terms of pairs of functions does a disservice to the original argument, and is more complicated without any benefit. Indeed, if $X$ is a Banach function space with $M$ bounded on $X$ and $X'$, then Corollary~\ref{cor:mainabs} tells us that if $f\in X$, then there exists a weight $w^p\in A_p$ such that $f\in L^p_w(\R^d)$ (i.e., $X\subseteq \bigcup_{w\in A_p}L^p_w(\R^d)$). Since $T$ is already defined on $L^p_w(\R^d)$, this means that $Tf$ is also well-defined, without needing any kind of extension argument.

For completeness, we give the full (rather short) argument in general spaces here.
\begin{theorem}\label{thm:apextrapolation}
Let $p\in[1,\infty]$. Let $(\Omega,|\cdot|)$ be a $\sigma$-finite measure space with a basis of sets $\mathcal{E}$. Suppose $T$ is an operator that is well-defined on all functions $f\in L^p_w(\Omega)$ with $w\in A_p(\mathcal{E})$. Moreover, assume that there is an increasing function $\phi:[0,\infty)\to[0,\infty)$ such that for all $w\in A_p(\mathcal{E})$ and $f\in L^p_w(\Omega)$ we have
\begin{equation}\label{eq:extrapolationpairsinTthm}
\|Tf\|_{L^p_w(\Omega)}\leq\phi([w]^\mathcal{E}_p)\|f\|_{L^p_w(\Omega)}.
\end{equation}

Let $X$ be a Banach function space over $\Omega$. Then $Tf$ is well-defined for any $f\in X$ with
\begin{equation}\label{eq:extrapolationpairsoutTthm}
\|Tf\|_X\leq 2\phi(2\|M^\mathcal{E}\|_{X\to X}^{\frac{1}{p'}}\|M^\mathcal{E}\|_{X'\to X'}^{\frac{1}{p}})\|f\|_X,
\end{equation}
provided that the following holds:
\begin{itemize}
\item If $p\neq 1$, 
\[
M^\mathcal{E}:X\to X
\]
is bounded.
\item If $p\neq\infty$,
\[
M^\mathcal{E}:X'\to X'
\]
is bounded.
\end{itemize}

\end{theorem}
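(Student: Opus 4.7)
The plan is to dualize through the K\"othe dual of $X$ and invoke Corollary~\ref{cor:mainabs} pairwise. Since $X$ is a Banach function space, the Lorentz--Luxemburg theorem gives $X=X''$ isometrically, so for any measurable $h\in L^0(\Omega)$ one has
\[
\|h\|_X=\sup_{\|g\|_{X'}=1}\int_\Omega\!|hg|\,\mathrm{d}x.
\]
This reduces everything to estimating the bilinear pairing $\int_\Omega |Tf\cdot g|\,\mathrm{d}x$ against test functions $g\in X'$ with $\|g\|_{X'}=1$.

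First I would dispose of the well-definedness claim. For $f\in X$, fixing any non-zero $g\in X'$ (which exists since $X'$ is a Banach function space whenever $X$ is, hence saturated, by Proposition~\ref{prop:weakorderunit}) and applying Corollary~\ref{cor:mainabs} to the pair $(f,g)$ produces a weight $w\in A_p(\mathcal{E})$ with $f\in L^p_w(\Omega)$; then $Tf$ is defined by the standing hypothesis on $T$.

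For the norm estimate, the key step is that for every $g\in X'$ with $\|g\|_{X'}=1$, Corollary~\ref{cor:mainabs} supplies a weight $w\in A_p(\mathcal{E})$ tailored to the pair $(f,g)$, with both
\[
[w]^\mathcal{E}_p\leq K:=2\|M^\mathcal{E}\|_{X\to X}^{\frac{1}{p'}}\|M^\mathcal{E}\|_{X'\to X'}^{\frac{1}{p}}
\quad\text{and}\quad
\|f\|_{L^p_w(\Omega)}\|g\|_{L^{p'}_{w^{-1}}(\Omega)}\leq 2\|f\|_X.
\]
H\"older's inequality bounds $\int_\Omega|Tf\cdot g|\,\mathrm{d}x$ by $\|Tf\|_{L^p_w(\Omega)}\|g\|_{L^{p'}_{w^{-1}}(\Omega)}$, and the assumption \eqref{eq:extrapolationpairsinTthm} together with monotonicity of $\phi$ upgrades this to $\phi([w]^\mathcal{E}_p)\|f\|_{L^p_w(\Omega)}\|g\|_{L^{p'}_{w^{-1}}(\Omega)}\leq 2\phi(K)\|f\|_X$. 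Taking the supremum over $g$ yields \eqref{eq:extrapolationpairsoutTthm}.

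There is no genuine obstacle here: the endpoint cases $p=1$ and $p=\infty$ require only the single-sided maximal bound retained by the hypotheses, the corresponding exponent in $K$ collapses to $0$, and these are exactly the conditional cases already handled by Corollary~\ref{cor:mainabs}. The conceptual point worth emphasizing, as foreshadowed in Section~\ref{subsection:extrapolationpairssuck}, is that by working with pairs $(Tf,g)$ for $g\in X'$ rather than with extrapolation pairs $(Tf,f)$, the weight $w$ is permitted to depend on the test function $g$; this is what allows the argument to completely bypass density and extension considerations.
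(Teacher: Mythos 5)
Your proposal is correct and follows essentially the same route as the paper's proof: dualize through $\|Tf\|_X=\sup_{\|g\|_{X'}=1}\|(Tf)g\|_{L^1(\Omega)}$, apply Corollary~\ref{cor:mainabs} to each pair $(f,g)$ to obtain a weight $w\in A_p(\mathcal{E})$ with the stated constant and factorized norm bound, and then invoke \eqref{eq:extrapolationpairsinTthm} together with H\"older's inequality before taking the supremum over $g$. The well-definedness of $Tf$ is handled in the same way, since the weight produced by Corollary~\ref{cor:mainabs} already gives $f\in L^p_w(\Omega)$.
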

\begin{proof}
Fix $f\in X$ and $g\in X'$. Then by Corollary~\ref{cor:mainabs}, there exists a weight $w\in A_p(\mathcal{E})$ with
\[
[w]_p\leq 2\|M^\mathcal{E}\|_{X\to X}^{\frac{1}{p'}}\|M^\mathcal{E}\|_{X'\to X'}^{\frac{1}{p}}
\]
and $f\in L^p_w(\Omega)$, $g\in L^{p'}_{w^{-1}}(\Omega)$ with
\[
\|f\|_{L^p_w(\Omega)}\|g\|_{L^{p'}_{w^{-1}}(\Omega)}\leq 2\|f\|_X\|g\|_{X'}.
\]
Thus, by \eqref{eq:extrapolationpairsinTthm} we have
\begin{align*}
\|(Tf)g\|_{L^1(\Omega)}&\leq\phi([w]_p)\|f\|_{L^p_w(\Omega)}\|g\|_{L^{p'}_{w^{-1}}(\Omega)}\\
&\leq 2\phi(2\|M^\mathcal{E}\|_{X\to X}^{\frac{1}{p'}}\|M^\mathcal{E}\|_{X'\to X'}^{\frac{1}{p}})\|f\|_X\|g\|_{X'}.
\end{align*}
Since
\[
\|Tf\|_X=\sup_{\|g\|_{X'}=1}\|(Tf)g\|_{L^1(\Omega)},
\]
this proves \eqref{eq:extrapolationpairsoutTthm}.
\end{proof}

However, it could still be the case that one requires the utility given by pairs of weights. In this case we offer an alternative solution. We start with a set $V$, not even necessarily of functions, and consider a map $S:V\to L^0(\Omega)$. The statement of the result then becomes that if $T$ is well-defined for all $f\in V$ and $w\in A_p(\mathcal{E})$ for which $Sf\in L^p_w(\Omega)$, (i.e., $T$ is defined on $\bigcup_{w\in A_p(\mathcal{E})}S^{-1}(L^p_w(\Omega))$,) with
\[
\|Tf\|_{L^p_w(\Omega)}\leq\phi([w]_{p})\|Sf\|_{L^p_w(\Omega)},
\]
then $Tf$ is well-defined for any $f\in V$ with $Sf\in X$, and
\[
\|Tf\|_X\leq 2\phi(2\|M^\mathcal{E}\|_{X\to X}^{\frac{1}{p'}}\|M^\mathcal{E}\|_{X'\to X'}^{\frac{1}{p}})\|f\|_X.
\]

We give the complete argument in the limited range case here:
\begin{theorem}\label{thm:aprsextrapolation}
Let $r\in(0,\infty)$, $s\in(0,\infty]$ with $r<s$ and let $p\in[r,s]$. Let $(\Omega,|\cdot|)$ be a $\sigma$-finite measure space with a basis of sets $\mathcal{E}$. Let $V$ be a set and let $S:V\to L^0(\Omega)$ be a map. Moreover, suppose
\[
T:\bigcup_{w\in A_{p,(r,s)}(\mathcal{E})}S^{-1}(L^p_w(\Omega))\to L^0(\Omega)
\]
is a map for which there is an increasing function $\phi:[0,\infty)\to[0,\infty)$ such that for all $w\in A_{p,(r,s)}(\mathcal{E})$ and $f\in V$ with $Sf\in L^p_w(\Omega)$ we have
\begin{equation}\label{eq:aprsextrapolationin}
\|Tf\|_{L^p_w(\Omega)}\leq\phi([w]^\mathcal{E}_{p,(r,s)})\|Sf\|_{L^p_w(\Omega)}.
\end{equation}

Let $X$ be an $r$-convex and $s$-concave quasi-Banach function spaces over $(\Omega,|\cdot|)$ for which the following conditions hold:
\begin{itemize}
\item If $p\neq r$,
\[
M^\mathcal{E}:X_{r,s}\to X_{r,s}
\]
is bounded;
\item If $p\neq s$, 
\[
M^\mathcal{E}:(X_{r,s})'\to (X_{r,s})'
\]
is bounded.
\end{itemize}
Then $Tf$ is well-defined for any $f\in V$ with $Sf\in X$ and
\begin{equation}\label{eq:aprsextrapolationout}
\|Tf\|_X\leq 2^{\frac{1}{r}-\frac{1}{s}}\phi(2^{\frac{1}{r}-\frac{1}{s}}\|M^\mathcal{E}\|_{X_{r,s}\to X_{r,s}}^{\frac{1}{r}-\frac{1}{p}}\|M^\mathcal{E}\|_{(X_{r,s})'\to (X_{r,s})'}^{\frac{1}{p}-\frac{1}{s}})\|Sf\|_X.
\end{equation}
\end{theorem}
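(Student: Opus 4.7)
The strategy is to reduce the desired $X$-norm estimate to the weighted $L^p$ hypothesis via a dualization against $[(X^r)']^{1/r}$, combined with the weight factorization provided by Corollary~\ref{cor:abstractaprsextrapolation}. Since $X$ is $r$-convex, $X^r$ is a Banach function space, so by the Lorentz--Luxemburg theorem $X^r=(X^r)''$ isometrically. Taking $r$-th roots, for any measurable $h$ we obtain
\[
\|h\|_X \;=\; \||h|^r\|_{X^r}^{1/r} \;=\; \sup_{\|g\|_{[(X^r)']^{1/r}}\leq 1}\|hg\|_{L^r(\Omega)}.
\]
Moreover, $(X^r)'$ is saturated (as the K\"othe dual of a Banach function space), so the supremum is nontrivial.

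First I would verify that $Tf$ is defined. Fix $f\in V$ with $Sf\in X$ and pick any nonzero $g\in[(X^r)']^{1/r}$. Applying Corollary~\ref{cor:abstractaprsextrapolation} to the pair $(Sf,g)\in X\times[(X^r)']^{1/r}$ produces a weight $w\in A_{p,(r,s)}(\mathcal{E})$ with
\[
[w]^\mathcal{E}_{p,(r,s)}\;\leq\; 2^{\frac{1}{r}-\frac{1}{s}}\|M^\mathcal{E}\|_{X_{r,s}\to X_{r,s}}^{\frac{1}{r}-\frac{1}{p}}\|M^\mathcal{E}\|_{(X_{r,s})'\to (X_{r,s})'}^{\frac{1}{p}-\frac{1}{s}},
\]
such that $Sf\in L^p_w(\Omega)$ and $g\in L^{1/(1/r-1/p)}_{w^{-1}}(\Omega)$, together with
\[
\|Sf\|_{L^p_w(\Omega)}\|g\|_{L^{1/(1/r-1/p)}_{w^{-1}}(\Omega)}\;\leq\; 2^{\frac{1}{r}-\frac{1}{s}}\|Sf\|_X\|g\|_{[(X^r)']^{1/r}}.
\]
The membership $Sf\in L^p_w(\Omega)$ places $f$ in the domain of $T$, so $Tf$ is an unambiguous element of $L^0(\Omega)$.

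Now, for every admissible $g$ in the unit ball of $[(X^r)']^{1/r}$, repeat the same application of Corollary~\ref{cor:abstractaprsextrapolation} to obtain its associated weight $w=w(g)$. Since $1/r=1/p+(1/r-1/p)$, H\"older's inequality after inserting $w\cdot w^{-1}$ gives
\[
\|(Tf)g\|_{L^r(\Omega)}\;\leq\;\|Tf\|_{L^p_w(\Omega)}\|g\|_{L^{1/(1/r-1/p)}_{w^{-1}}(\Omega)}.
\]
Applying the hypothesis \eqref{eq:aprsextrapolationin} to $\|Tf\|_{L^p_w(\Omega)}$ and then the product bound from the corollary yields
\[
\|(Tf)g\|_{L^r(\Omega)}\;\leq\; 2^{\frac{1}{r}-\frac{1}{s}}\phi\bigl([w]^\mathcal{E}_{p,(r,s)}\bigr)\|Sf\|_X\|g\|_{[(X^r)']^{1/r}}.
\]
The estimate on $[w]^\mathcal{E}_{p,(r,s)}$ is uniform in $g$, so, using that $\phi$ is increasing, the supremum over $g$ with $\|g\|_{[(X^r)']^{1/r}}\leq 1$ produces exactly \eqref{eq:aprsextrapolationout}.

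The only genuine subtlety is the well-definedness of $Tf$, since the hypothesis on $T$ demands $Sf\in L^p_w(\Omega)$ for some $w\in A_{p,(r,s)}(\mathcal{E})$ before $Tf$ makes sense; the rest of the proof is bookkeeping between the dualization identity, H\"older's inequality, and the factorization corollary. That subtlety is precisely what Corollary~\ref{cor:abstractaprsextrapolation} resolves for us, since the weight it produces automatically satisfies $Sf\in L^p_w(\Omega)$.
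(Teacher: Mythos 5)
Your proposal is correct and follows essentially the same route as the paper's own proof: dualize $\|Tf\|_X$ through $[(X^r)']^{1/r}$ via the $r$-concavification and Lorentz--Luxemburg, invoke Corollary~\ref{cor:abstractaprsextrapolation} for each pair $(Sf,g)$ to produce the weight $w\in A_{p,(r,s)}(\mathcal{E})$ with the stated constant and the factorized norm bound, apply the hypothesis together with H\"older's split $\tfrac{1}{r}=\tfrac{1}{p}+(\tfrac{1}{r}-\tfrac{1}{p})$, and take the supremum over $g$. Your explicit remarks on well-definedness of $Tf$ and on the H\"older step only spell out what the paper leaves implicit, so there is nothing to add.
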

\begin{proof}
Let $f\in V$ with $Sf\in X$ and $g\in [(X^r)']^{\frac{1}{r}}$. Then by Corollary~\ref{cor:abstractaprsextrapolation} there is a weight $w\in A_{p,(r,s)}(\mathcal{E})$ such that
\[
[w]^\mathcal{E}_{p,(r,s)}\leq 2^{\frac{1}{r}-\frac{1}{s}}\|M^\mathcal{E}\|_{X_{r,s}\to X_{r,s}}^{\frac{1}{r}-\frac{1}{p}}\|M^\mathcal{E}\|_{(X_{r,s})'\to (X_{r,s})'}^{\frac{1}{p}-\frac{1}{s}}
\]
and $Sf\in L^p_w(\Omega)$, $g\in L^{\frac{1}{\frac{1}{r}-\frac{1}{p}}}_{w^{-1}}(\Omega)$ with
\[
\|Sf\|_{L^p_w(\Omega)}\|g\|_{L^{\frac{1}{\frac{1}{r}-\frac{1}{p}}}_{w^{-1}}(\Omega)}\leq 2^{\frac{1}{r}-\frac{1}{s}}\|Sf\|_X\|g\|_{[(X^r)']^{\frac{1}{r}}}.
\]
Then by \eqref{eq:aprsextrapolationin} we have
\begin{align*}
\|(Tf)g\|_{L^r(\Omega)}&\leq\phi([w]_{p,(r,s)})\|Sf\|_{L^p_w(\Omega)}\|g\|_{L^{\frac{1}{\frac{1}{r}-\frac{1}{p}}}_{w^{-1}}(\Omega)}\\
&\leq 2^{\frac{1}{r}-\frac{1}{s}}\phi(2^{\frac{1}{r}-\frac{1}{s}}\|M^\mathcal{E}\|_{X_{r,s}\to X_{r,s}}^{\frac{1}{r}-\frac{1}{p}}\|M^\mathcal{E}\|_{(X_{r,s})'\to (X_{r,s})'}^{\frac{1}{p}-\frac{1}{s}})\|Sf\|_X\|g\|_{[(X^r)']^{\frac{1}{r}}}.
\end{align*}
Thus, the result follows from the fact that
\begin{align*}
\|Tf\|_X
&=\||Tf|^r\|_{X^r}^{\frac{1}{r}}
=\sup_{\|g^r\|_{(X^r)'}=1}\||Tf|^r |g|^r\|_{L^1(\Omega)}^{\frac{1}{r}}\\
&=\sup_{\|g\|_{[(X^r)']^{\frac{1}{r}}}=1}\|(Tf)g\|_{L^r(\Omega)}.\qedhere
\end{align*}
\end{proof}

When $V=L^0(\Omega)$ and $S$ is the identity operator, we recover the limited range extrapolation theorem for operators $T$.

\begin{remark}\label{rem:extrapolationpairs}
We point out that writing the result this way is actually more general than writing things in terms of extrapolation pairs. Indeed, if one has a family $\mathcal{F}\subseteq L^0(\Omega)\times L^0(\Omega)$, then one can take $V=\mathcal{F}$, $S(f,g):=g$, and $T(f,g):=f$. The assertion \eqref{eq:aprsextrapolationout} can then be formulated as saying that such in inequality holds for $(f,g)\in\mathcal{F}$, provided that $g\in X$, i.e., provided that the right-hand side is finite, which is how this is often written in the literature.
\end{remark}

\subsection{Vector-valued estimates through extrapolation}\label{subsection:fubini}
When $(\Omega_1,|\cdot|_1)$ and $(\Omega_2,|\cdot|_2)$ are two $\sigma$-finite measure spaces and $X$ and $Y$ are quasi-Banach function spaces over $\Omega_1$ and $\Omega_2$ respectively, then one can define a new quasi-Banach function space over the measure space $\Omega_1\times\Omega_2$ equipped with the product measure by considering the functions $f\in L^0(\Omega_1\times\Omega_2)$ so that for each $x\in\Omega_1$ we have $f(x,\cdot)\in Y$ and the function $h_f(x):=\|f(x,\cdot)\|_Y$ is in $X$. This space is denoted as $X(Y)$, and has quasi-norm
\[
\|f\|_{X(Y)}:=\|h_f\|_X.
\]
Sometimes this space is also referred to as a mixed norm space.

Suppose again we have $\Omega_1=\R^d$ with the Lebesgue measure and write $\Omega:=\Omega_2$. For $p\in[1,\infty]$, suppose we are given an operator $T:\bigcup_{w^p\in A_p}L^p_w(\R^d)\to L^0(\R^d)$. If we define the set $U\subseteq L^0(\Omega_1\times\Omega_2)$ by
\[
U:=\Big\{f\in L^0(\R^d\times\Omega):f(\cdot,y)\in\bigcup_{w^p\in A_p}L^p_w(\R^d)\quad\text{for all $y\in\Omega_2$}\Big\},
\]
then we can define a new operator $\widetilde{T}:U\to L^0(\R^d\times\Omega)$ through
\[
\widetilde{T}f(x,y):=T(f(\cdot,y))(x).
\]
A natural question to ask is if we can extend the conclusion of the extrapolation theorem to give us bounds of the form
\begin{equation}\label{eq:vvboundsbfs}
\|\widetilde{T}f\|_{X(Y)}\leq C_{X,Y}\|f\|_{X(Y)}.
\end{equation}
A classical case of this is where $Y=\ell^p(\N)$, in which case we have $\Omega=\N$, equipped with the counting measure. For $n\in\N$, if we write $f_n=f(\cdot,n)$, then the space $U$ is the space of sequences $(f_n)_{n\in\N}$ where $f_n\in L^p_w(\R^d)$ for some $w^p\in A_p$ for all $n\in\N$, and $\widetilde{T}$ is the operator that applies $T$ pointwise, i.e.,
\[
\widetilde{T}(f_n)_{n\in\N}=(Tf_n)_{n\in\N}.
\]
In this case, the extrapolation theorem gives us bounds of the form \eqref{eq:vvboundsbfs} more or less for free. For a weight $w$ and $X=L^p_w(\R^d)$, we write $X(Y)=L^p_w(\R^d;\ell^p(\N))$ and $Y(X)=\ell^p(\N;L^p_w(\R^d))$. The thing that freely allows us to obtain bounds in this space is the fact that
\begin{equation}\label{eq:fubinibfs}
\big\|(f_n)_{n\in\N}\big\|_{L^p_w(\R^d;\ell^p(\N))}=\big\|(\|f_n\|_{L_w^p(\R^d)})_{n\in\N}\big\|_{\ell^p(\N)}
\end{equation}
by Fubini's theorem. We present the argument here.
\begin{theorem}
Let $(\Omega,|\cdot|)$ be a $\sigma$-finite measure space with a basis of sets $\mathcal{E}$. Let $p\in[1,\infty]$, let $V$ be a set, and let $S:V\to L^0(\Omega)$ be a map. Moreover, suppose
\[
T:\bigcup_{w\in A_p(\mathcal{E})}S^{-1}(L^p_w(\Omega))\to L^0(\Omega)
\]
is a map for which there is an increasing function $\phi:[0,\infty)\to[0,\infty)$ such that for all $w\in A_p(\mathcal{E})$ and $f\in V$ with $Sf\in L^p_w(\Omega)$ we have
\begin{equation}\label{eq:extrapolationpairsinTthmvv}
\|Tf\|_{L^p_w(\Omega)}\leq\phi([w]^\mathcal{E}_p)\|Sf\|_{L^p_w(\Omega)}.
\end{equation}
Let $X$ be a Banach function space over $\Omega$. Then $Tf$ is well-defined for any $f\in V$ with $Sf\in X$ and for all sequences $(f_n)_{n\in\N}$ in $V$ with $(Sf_n)_{n\in\N}\in X(\ell^p(\N))$ we have
\begin{equation}\label{eq:extrapolationpairsoutTthmvv}
\big\|\|(Tf_n)_{n\in\N}\|_{\ell^p(\N)}\big\|_X\leq 2\phi(2\|M^\mathcal{E}\|_{X\to X}^{\frac{1}{p'}}\|M^\mathcal{E}\|_{X'\to X'}^{\frac{1}{p}})\big\|\|(Sf_n)_{n\in\N}\|_{\ell^p(\N)}\big\|_X,
\end{equation}
provided that the following holds:
\begin{itemize}
\item If $p\neq 1$, 
\[
M^\mathcal{E}:X\to X
\]
is bounded.
\item If $p\neq\infty$,
\[
M^\mathcal{E}:X'\to X'
\]
is bounded.
\end{itemize}

\end{theorem}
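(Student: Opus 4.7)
The well-definedness of $Tf$ for $f \in V$ with $Sf \in X$ is immediate from Theorem~\ref{thm:apextrapolation}, so I focus on the vector-valued estimate. The strategy is to reduce the claim to the scalar extrapolation statement already in our hands, using nothing beyond Fubini and the Rubio de Francia-type construction from Corollary~\ref{cor:mainabs}.

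Fix a sequence $(f_n)_{n\in\N}$ in $V$ with $(Sf_n)_{n\in\N}\in X(\ell^p(\N))$. The plan is to associate to this sequence the single scalar majorant
\[
F:=\Big(\sum_{n\in\N}\abs{Sf_n}^p\Big)^{\frac{1}{p}}\in X,
\]
(with the usual modification $F=\sup_n|Sf_n|$ when $p=\infty$), and to apply Corollary~\ref{cor:mainabs} to the pair $(F,g)$ for an arbitrary $g\in X'$. This yields a weight $w\in A_p(\mathcal{E})$ with
\[
[w]^\mathcal{E}_p\leq 2\|M^\mathcal{E}\|_{X\to X}^{\frac{1}{p'}}\|M^\mathcal{E}\|_{X'\to X'}^{\frac{1}{p}}=:C_0,
\]
such that $F\in L^p_w(\Omega)$, $g\in L^{p'}_{w^{-1}}(\Omega)$, and $\|F\|_{L^p_w(\Omega)}\|g\|_{L^{p'}_{w^{-1}}(\Omega)}\leq 2\|F\|_X\|g\|_{X'}$.

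The Fubini identity
\[
\|F\|_{L^p_w(\Omega)}^p=\sum_{n\in\N}\|Sf_n\|_{L^p_w(\Omega)}^p
\]
(and its obvious analogue for $p=\infty$) shows that each $Sf_n$ lies in $L^p_w(\Omega)$, so by hypothesis each $Tf_n$ is well-defined and satisfies $\|Tf_n\|_{L^p_w(\Omega)}\leq\phi([w]^\mathcal{E}_p)\|Sf_n\|_{L^p_w(\Omega)}$. Raising to the $p$-th power, summing over $n$, and applying Fubini in reverse yields
\[
\Big\|\|(Tf_n)_{n\in\N}\|_{\ell^p(\N)}\Big\|_{L^p_w(\Omega)}\leq\phi([w]^\mathcal{E}_p)\|F\|_{L^p_w(\Omega)}.
\]
The scalar extrapolation is thus transported verbatim to the vector-valued Lebesgue space.

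To return to $X$, I dualize: using $\|\cdot\|_X=\sup_{\|g\|_{X'}=1}\|\cdot\,g\|_{L^1(\Omega)}$, H\"older's inequality in $L^p_w\times L^{p'}_{w^{-1}}$, and the norm bound from Corollary~\ref{cor:mainabs}, I estimate
\begin{align*}
\int_\Omega\|(Tf_n)_n\|_{\ell^p(\N)}|g|\,\mathrm{d}x
&\leq\Big\|\|(Tf_n)_n\|_{\ell^p(\N)}\Big\|_{L^p_w(\Omega)}\|g\|_{L^{p'}_{w^{-1}}(\Omega)}\\
&\leq\phi([w]^\mathcal{E}_p)\|F\|_{L^p_w(\Omega)}\|g\|_{L^{p'}_{w^{-1}}(\Omega)}\\
&\leq 2\phi(C_0)\|F\|_X\|g\|_{X'},
\end{align*}
where I used monotonicity of $\phi$ in the last step. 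Taking the supremum over $g\in X'$ with $\|g\|_{X'}=1$ gives \eqref{eq:extrapolationpairsoutTthmvv}, since $\|F\|_X=\big\|\|(Sf_n)_n\|_{\ell^p(\N)}\big\|_X$. The only minor technical point is handling $p\in\{1,\infty\}$, where the Fubini step degenerates to either a sum or a countable supremum, both of which are straightforward; no real obstacle arises, since the argument is essentially just packaging the scalar extrapolation with Fubini.
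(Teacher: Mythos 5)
Your proof is correct and follows essentially the same route as the paper: the Fubini identity transports the scalar weighted hypothesis to the $\ell^p$-valued setting, and the conclusion is then obtained from the Rubio de Francia-type weight construction of Corollary~\ref{cor:mainabs} together with dualization against $g\in X'$. The only difference is packaging: the paper applies Theorem~\ref{thm:apextrapolation} as a black box to the auxiliary maps $\mathcal{T}f:=\|(Tf_n)_{n\in\N}\|_{\ell^p(\N)}$ and $\mathcal{S}f:=\|(Sf_n)_{n\in\N}\|_{\ell^p(\N)}$, whereas you inline its short proof, which amounts to the same argument.
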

\begin{proof}
Define
\[
V:=\bigcup_{w\in A_p(\mathcal{E})} L^p_w(\Omega;\ell^p(\N))
\]
and define $\mathcal{S}:V\to L^0(\Omega)$,
\[
\mathcal{S}f(x):=h_f(x)=\big\|(Sf_n(x))_{n\in\N}\big\|_{\ell^p(\N)}.
\]
Note that now
\[
\bigcup_{w\in A_p(\mathcal{E})}S^{-1}(L^p_w(\Omega))=V,
\]
and we define $\mathcal{T}:V\to L^0(\Omega)$ by
\[
\mathcal{T}f(x):=\big\|(Tf_n(x))_{n\in\N}\big\|_{\ell^p(\N)}.
\]
By combining \eqref{eq:fubinibfs} with $\R^d$ replaced by $\Omega$ with \eqref{eq:extrapolationpairsinTthmvv}, we obtain
\begin{align*}
\|\mathcal{T}f\|_{L^p_w(\Omega)}&=\big\|(\|Tf_n\|_{L_w^p(\Omega)})_{n\in\N}\big\|_{\ell^p(\N)}\\
&\leq\phi([w]^\mathcal{E}_p)\big\|(\|Sf_n\|_{L_w^p(\Omega)})_{n\in\N}\big\|_{\ell^p(\N)}\\
&=\phi([w]^\mathcal{E}_p)\|\mathcal{S}f\|_{L^p_w(\Omega)}
\end{align*}
for all $f\in V$. Now, it follows from Theorem~\ref{thm:aprsextrapolation} that
then $\mathcal{T}f$ is well-defined for any $f\in V$ with $\mathcal{S}f\in X$, and
\[
\|\mathcal{T}f\|_X\leq 2\phi(2\|M^\mathcal{E}\|_{X\to X}^{\frac{1}{p'}}\|M^\mathcal{E}\|_{X'\to X'}^{\frac{1}{p}})\|\mathcal{S}f\|_X.
\]
The conclusion follows if we can show that $\{f\in V:\mathcal{S}f\in X\}=X(\ell^p(\N))$. The inclusion ``$\subseteq$'' is immediate. For the converse inclusion, pick $f\in X(\ell^p(\N))$. Then $h_f\in X$ and hence, there is a $w\in A_p(\mathcal{E})$ such that $h_f\in L^p_w(\Omega)$. But this means that $f\in V$, proving the result.
\end{proof}

\begin{remark}\label{rem:vvq}
In the case of $\Omega=\R^d$ with the basis of cubes, by extrapolation, the initial assumption \eqref{eq:extrapolationpairsinTthmvv} is true with a similar estimate in $L^q_w(\R^d)$ for all $q\in(1,\infty)$. Hence, by running the same argument again with $q$ replaced by $p$, the conclusion \eqref{eq:extrapolationpairsoutTthmvv} actually also holds with $\ell^p(\N)$ replaced by $\ell^q(\N)$ for all $q\in (1,\infty)$.
\end{remark}

\begin{remark}\label{rem:vvanylebesgue}
One can replace the measure space $\N$ by any $\sigma$-finite measure space $\Omega_1$ to obtain bounds in $X(L^p(\Omega_1))$, with the exact same argument. This is because the Fubini argument \eqref{eq:fubinibfs} still holds in this setting, i.e., we have $L^p_w(\Omega;L^p(\Omega_1))=L^p(\Omega_1;L^p_w(\Omega))$ for any such space.
\end{remark}

\begin{remark}
Having Remark~\ref{rem:vvq} and Remark~\ref{rem:vvanylebesgue} in mind, one can actually iterate the argument and obtain bounds in the space $X(Y)$ for any $Y$ given by an iterated mixed norm Lebesgue space with exponents in $(1,\infty)$, i.e., of the form
\[
Y=Y_1(Y_2(\ldots(Y_N))
\]
with $Y_n=L^{q_n}(\Omega_n)$ for a $\sigma$-finite measure space $\Omega_n$ and $q_n\in(1,\infty)$ for all $n\in\{1,\ldots,N\}$. The details are left to the interested reader.
\end{remark}

At this point one may wonder if this argument works for any Banach function space $Y$ that is not a Lebesgue space. However, it turns out that this is not the case. Indeed, by the Kolmogorov-Nagumo Theorem, see \cite[Theorem~3.1]{BBS02}, if a Banach function space $Y$ has the property that $L^p(\R^d;Y)=Y(L^p(\R^d))$ with equivalent norm, then one must have that $Y=L^p_v(\Omega)$ for some weight $v$. As a matter of fact, the full result says that if $X(Y)=Y(X)$ for any pair of spaces, then they must both be a weighted Lebesgue space, and hence, there is no hope in using the Fubini argument in any other Banach function space.

Fortunately there is a rich theory for obtaining bounds in the spaces $L^p_w(\R^d;Y)$ through different means. In \cite[Theorem~5]{Ru85} Rubio de Francia showed that one can obtain bounds in these spaces as long as $Y$ satisfies the so-called $\UMD$ property. This result was extended to the limited range setting $0<r<s\leq\infty$, where the assumption on the space becomes that $Y$ must be a $r$-convex and $s$-concave quasi-Banach function space, and $Y_{r,s}$ has the $\UMD$ property. Indeed, this was first done for $r\neq 1$, $s=\infty$ in \cite{ALV17} and for general $0<r<s\leq\infty$ in \cite{LN19}. As a matter of fact, the weaker notion of $Y\in\UMD_{r,s}$, which is equivalent to assuming that $Y$ is order-continuous and $Y^r$ and $(Y_{r,s})'$ have the so-called Hardy-Littlewood property, was introduced in \cite{LN22} and it was shown in \cite{Nidiss} that the vector-valued extrapolation theorem still holds for such spaces.

As a matter of fact, using the result from this work we now obtain the following result:
\begin{theorem}\label{thm:vectorvaluedmain}
Let $r\in(0,\infty)$, $s\in(0,\infty]$ with $r<s$. Suppose $T$ is a (sub)linear operator such that for all $p\in(r,s)$ it is well-defined on all functions $f\in L^p_w(\R^d)$ with $w\in A_{p,(r,s)}$ and, moreover, there is an increasing function $\phi_p:[0,\infty)\to[0,\infty)$ such that for all $w\in A_{p,(r,s)}$ and $f\in L^p_w(\R^d)$ we have
\[
\|Tf\|_{L^p_w(\R^d)}\leq\phi([w]_p)\|f\|_{L^p_w(\R^d)}.
\]
Let $Y$ be an order-continuous $r$-convex and $s$-concave quasi-Banach function space over a $\sigma$-finite measure space $(\Omega,|\cdot|)$ such that for all simple functions $f\in L^\infty_c(\R^d;Y)$ the function
\[
\widetilde{T}f(x,y):=T(f(\cdot,y))(x)
\]
is strongly measurable. Suppose $Y^r$ and $(Y_{r,s})'$ have the Hardy-Littlewood property, then for each $r$-convex and $s$-concave quasi-Banach function space $X$ such that 
\[
M:X_{r,s}\to X_{r,s},\quad M:(X_{r,s})'\to (X_{r,s})'
\]
are bounded, there is an increasing function $\phi_{Y,r,s}:[0,\infty)\times[0,\infty)\to[0,\infty)$ such that $\widetilde{T}:X(Y)\to X(Y)$ is bounded with
\[
\|\widetilde{T}\|_{X(Y)\to X(Y)}\leq\phi_{Y,r,s}(\|M\|_{X_{r,s}\to X_{r,s}},\|M\|_{(X_{r,s})'\to (X_{r,s})'}).
\]
\end{theorem}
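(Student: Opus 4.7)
The plan is to combine two extrapolation mechanisms: first the existing weighted vector-valued extrapolation theorem of \cite{LN19, Nidiss} (which upgrades scalar Lebesgue bounds to Bochner bounds in $L^p_w(\R^d;Y)$), and then the new scalar extrapolation Theorem~\ref{thm:aprsextrapolation} applied on $\R^d$ with an auxiliary map $S$ built from the $Y$-norm. No additional properties of $X$ beyond the stated maximal operator bounds are needed.

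First, because $Y$ is order-continuous, $r$-convex, $s$-concave, and both $Y^r$ and $(Y_{r,s})'$ have the Hardy-Littlewood property, we have $Y\in\UMD_{r,s}$ in the sense of \cite{LN22}. Applying the vector-valued extrapolation theorem of \cite{LN19, Nidiss} to the scalar hypothesis on $T$, and using the strong measurability of $\widetilde{T}f$ on simple functions $f\in L^\infty_c(\R^d;Y)$ together with order-continuity to extend $\widetilde{T}$ by density, we obtain, for each $p\in(r,s)$, an increasing function $\psi_{p,Y,r,s}$ such that
\[
\|\widetilde{T}f\|_{L^p_w(\R^d;Y)}\leq \psi_{p,Y,r,s}([w]_{p,(r,s)})\,\|f\|_{L^p_w(\R^d;Y)}
\]
for every $w\in A_{p,(r,s)}$ and every $f\in L^p_w(\R^d;Y)$.

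Next I reduce the mixed-norm claim to a scalar extrapolation. Fix any $p_0\in(r,s)$ and let
\[
V:=\bigcup_{w\in A_{p_0,(r,s)}}L^{p_0}_w(\R^d;Y).
\]
Define $S,\mathcal{T}:V\to L^0(\R^d)$ by
\[
Sf(x):=\|f(x,\cdot)\|_Y,\qquad \mathcal{T}f(x):=\|\widetilde{T}f(x,\cdot)\|_Y.
\]
By Fubini, $\|Sf\|_{L^{p_0}_w(\R^d)}=\|f\|_{L^{p_0}_w(\R^d;Y)}$ and $\|\mathcal{T}f\|_{L^{p_0}_w(\R^d)}=\|\widetilde{T}f\|_{L^{p_0}_w(\R^d;Y)}$, so the bound from the previous paragraph reads
\[
\|\mathcal{T}f\|_{L^{p_0}_w(\R^d)}\leq\psi_{p_0,Y,r,s}([w]_{p_0,(r,s)})\,\|Sf\|_{L^{p_0}_w(\R^d)}
\]
for all $w\in A_{p_0,(r,s)}$ and all $f\in V$ with $Sf\in L^{p_0}_w(\R^d)$. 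Applying Theorem~\ref{thm:aprsextrapolation} with $p=p_0$, this set $V$, this map $S$, and $\mathcal{T}$ in place of $T$, yields, for every $r$-convex and $s$-concave quasi-Banach function space $X$ on $\R^d$ with $M$ bounded on $X_{r,s}$ and $(X_{r,s})'$, the estimate
\[
\|\mathcal{T}f\|_X\leq 2^{\frac{1}{r}-\frac{1}{s}}\psi_{p_0,Y,r,s}\!\left(2^{\frac{1}{r}-\frac{1}{s}}\|M\|_{X_{r,s}\to X_{r,s}}^{\frac{1}{r}-\frac{1}{p_0}}\|M\|_{(X_{r,s})'\to(X_{r,s})'}^{\frac{1}{p_0}-\frac{1}{s}}\right)\|Sf\|_X
\]
for every $f\in V$ with $Sf\in X$. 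Translating via $\|\mathcal{T}f\|_X=\|\widetilde{T}f\|_{X(Y)}$ and $\|Sf\|_X=\|f\|_{X(Y)}$, and defining $\phi_{Y,r,s}$ to be the evident composition, gives the claimed bound.

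The only remaining point is to verify that every $f\in X(Y)$ actually lies in $V$, so that $\widetilde{T}f$ is a priori well-defined. But $Sf=\|f(\cdot,\cdot)\|_Y\in X$, so Corollary~\ref{cor:abstractaprsextrapolation} applied to the scalar function $Sf$ produces a weight $w\in A_{p_0,(r,s)}$ with $Sf\in L^{p_0}_w(\R^d)$; by Fubini this means $f\in L^{p_0}_w(\R^d;Y)\subseteq V$. The main obstacle is really contained in Step~1: the existing vector-valued weighted extrapolation theorem of \cite{LN19, Nidiss} is the nontrivial ingredient, and the measurability/approximation of $\widetilde{T}f$ for non-simple $f$ must be traced through its proof. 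Once that input is invoked, the rest is a clean bookkeeping argument that pushes the bound through the mixed-norm space using only Fubini and the scalar theorem already proved in this work.
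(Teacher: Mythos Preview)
Your proposal is correct and follows essentially the same approach as the paper: first invoke the vector-valued extrapolation theorem of \cite{LN19, Nidiss} to obtain weighted Bochner bounds $\widetilde{T}:L^p_w(\R^d;Y)\to L^p_w(\R^d;Y)$, then apply Theorem~\ref{thm:aprsextrapolation} with $Sf(x)=\|f(x,\cdot)\|_Y$ and $\mathcal{T}f(x)=\|\widetilde{T}f(x,\cdot)\|_Y$ to pass to $X(Y)$. The paper fixes the specific exponent $p=1+\tfrac{r}{s'}$ at the end, whereas you leave $p_0\in(r,s)$ arbitrary, and you make explicit the domain verification $X(Y)\subseteq V$ via Corollary~\ref{cor:abstractaprsextrapolation}; both are cosmetic differences.
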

We refer the reader to \cite{LN22, Nidiss} for the relevant definitions and an overview of the theory.
\begin{proof}
Fix $p\in(r,s)$. By \cite[Theorem 9.1.1]{Nidiss} there is an increasing function $\phi_{Y,p,r,s}:[0,\infty)\to[0,\infty)$ such that for all $w\in A_{p,(r,s)}$, $\widetilde{T}:L^p_w(\R^d;Y)\to L^p_w(\R^d;Y)$ is bounded with
\[
\|\widetilde{T}f\|_{L^p_w(\R^d;Y)}\leq\phi_{Y,p,r,s}([w]_{p,(r,s)})\|f\|_{L^p_w(\R^d;Y)}.
\]
Then it follows from Theorem~\ref{thm:aprsextrapolation} with $V=\bigcup_{w\in A_{p,(r,s)}}L_w^p(\Omega;Y)$ applied with $Tf$ replaced by $\mathcal{T}f(x):=\|\widetilde{T}f(x,\cdot)\|_Y$ and  $Sf(x):=\|f(x,\cdot)\|_Y$. That for all $f\in X(Y)$ we have
\begin{align*}
\|\widetilde{T}f\|_{X(Y)}&=\|\mathcal{T}f\|_X\leq 2^{\frac{1}{r}-\frac{1}{s}}\phi_{Y,p,r,s}(2^{\frac{1}{r}-\frac{1}{s}}\|M\|_{X_{r,s}\to X_{r,s}}^{\frac{1}{r}-\frac{1}{p
}}\|M\|_{(X_{r,s})'\to (X_{r,s})'}^{\frac{1}{p}
-\frac{1}{s}})\|Sf\|_X\\
&=2^{\frac{1}{r}-\frac{1}{s}}\phi_{Y,p,r,s}(2^{\frac{1}{r}-\frac{1}{s}}\|M\|_{X_{r,s}\to X_{r,s}}^{\frac{1}{r}-\frac{1}{p
}}\|M\|_{(X_{r,s})'\to (X_{r,s})'}^{\frac{1}{p}
-\frac{1}{s}})\|f\|_{X(Y)}.
\end{align*}
The assertion follows by taking $p=1+\frac{r}{s'}$.
\end{proof}

\begin{remark}
The extrapolation arguments from \cite{LN19, Nidiss} are not sharp, since key ingredients of the proof are versions of several self-improvement results related to the Hardy-Littlewood property, for which a quantitative result did not exist in the literature. However, with Theorem~\ref{thm:maximalselfimprove} in hand it should be possible to prove sharp versions of these results. We leave the details to the interested reader.
\end{remark}

\subsection{Sharp estimates in terms of the operator norms of the maximal operator}
Sharp extrapolation in terms of the weight constant has been used in various places throughout the literature, and was first formally presented in \cite{DGPP05}. However, in our current framework, the question of finding sharp estimates in terms of the operator norms of $M$ on $X$ and $X'$ becomes relevant in an intrinsic sense. When extrapolating the initial estimate
\[
\|Tf\|_{L^p_w(\R^d)}\leq\phi([w]_p)\|f\|_{L^p_w(\R^d)}.
\]
Our arguments allow us to deduce that for an appropriate class of Banach function spaces $X$, there is a constant $C_X>0$ for which
\[
\|Tf\|_X\leq C_X\|f\|_X
\]
for all $(f,g)\in\mathcal{F}$. In Theorem~\ref{thm:apextrapolation} we have shown that this holds with
\begin{equation}\label{eq:sharpxext}
C_X=2\phi(2\|M\|_{X\to X}^{\frac{1}{p'}}\|M\|_{X'\to X'}^{\frac{1}{p}}).
\end{equation}
However, using classical techniques, e.g., see \cite[Theorem~3.1]{Du11}, one can even show that if $X=L^q_v(\R^d)$ with $v^q\in A_q$, then we can take
\begin{equation}\label{eq:sharplpext}
C_X=\begin{cases}
2^{1-\frac{q}{p}}\phi\big([v]^{\frac{q}{p}}_q\|M\|^{1-\frac{q}{p}}_{L^q_v(\R^d)\to L^q_v(\R^d)}\big) & \text{if $q\leq p$};\\
2^{1-\frac{q'}{p'}}\phi\big([v]^{\frac{q'}{p'}}_q\|M\|^{1-\frac{q'}{p'}}_{L^{q'}_{v^{-1}}(\R^d)\to L^{q'}_{v^{-1}}(\R^d)}\big) & \text{if $q\geq p$}.
\end{cases}
\end{equation}
Using Buckley's bound at this stage allows us to recover the sharp extrapolation theorem in terms of the weight constant $[v]_q$.

The idea here is that, for example, if one wants to extrapolate to $q<p$, then one can use the fact that $M$ is bounded on $L^p_v(\R^d)$ to use the Rubio de Francia algorithm to obtain a weight $u\in A_1$. Then, since $v^q\in A_q$, one can consider the weight
\[
w:=u^{-\big(1-\frac{q}{p}\big)}v^{\frac{q}{p}},
\]
which satisfies $w^p\in A_p$. Plugging this weight into the initial bound then precisely gives us \eqref{eq:sharplpext}. The case $q>p$ is treated analogously, this time using the Rubio de Francia algorithm in $L^{q'}_{v^{-1}}(\R^d)$.

In essence, we are using two facts of the space $X=L^q_v(\R^d)$ that allow us to give a better bound than \eqref{eq:sharpxext}. The first is the fact that either $q>p$ or $q<p$ allows us to give our extrapolation a sense of direction. We are either  `extrapolating up' or `extrapolating down' to $L^q_v(\R^d)$. The second is the fact that we have the weight $v$ in the space to work with. This means that once we have used the Rubio de Francia algorithm once, we do not need to use it a second time to get a weight $w^p\in A_p$. Indeed, in the above example of $q<p$, we note that the weight $u$ obtained from the Rubio de Francia algorithm satisfies $[u^{-1}]_\infty=[u]_1<\infty$. Since also $v^q\in A_q$, and $p\in[q,\infty]$, this means we can create a weight $w$ by interpolating $u^{-1}$ and $v$, which is exactly what we did.

The first idea of extrapolating up or down, can be realized for general Banach function spaces as well. Say, our initial bound is at $p\in(1,\infty)$ and we again take $X=L^q_v(\R^d)$. Our main remark now, is that $q\geq p$ if and only if $X$ is $p$-convex and $q\leq p$ if and only if $X$ is $p$-concave. Indeed, $X$ is $p$-convex exactly when
\[
X^p=L^{\frac{q}{p}}_{v^p}(\R^d)
\]
is a Banach function space, which is true exactly when $\frac{q}{p}\geq 1$, and $X$ is $p$-concave exactly when
\[
(X')^{p'}=L^{\frac{q'}{p'}}_{v^{-p'}}(\R^d)
\]
is a Banach function space, which happens precisely when $\frac{q'}{p'}\geq 1$, or $p\geq q$. In conclusion, the idea of extrapolating up or down can be rephrased as considering Banach function spaces $X$ that are either $p$-convex or $p$-concave.

As a side remark, we point out that the only spaces that are both $p$-convex and $p$-concave are the spaces of the form $X=L^p_w(\R^d)$ for some weight $w$, and hence, it makes sense to have these spaces as the ones in which we have our initial estimate.

By applying our limited range extrapolation theorem for $r=p$ and $s=p$ respectively, we can obtain the following result:
\begin{theorem}\label{thm:pconcorpconvext}
Let $p\in(0,\infty]$. Let $(\Omega,|\cdot|)$ be a $\sigma$-finite measure space with a basis of sets $\mathcal{E}$.

Let $V$ be a set and let $S:V\to L^0(\Omega)$ be a map. Moreover, suppose
\[
T:\bigcup_{w\in W}S^{-1}(L^p_w(\Omega))\to L^0(\Omega)
\]
is a map for which there is an increasing function $\phi:[0,\infty)\to[0,\infty)$ such that we have one of the following properties:
\begin{enumerate}[(i)]
\item\label{it:pconcorpconvin1} $W:=\{w:w^p\in A_1(\mathcal{E})\}$ and $\|Tf\|_{L^p_w(\Omega)}\leq\phi([w^p]^\mathcal{E}_1)\|Sf\|_{L^p_w(\Omega)}$ for all $w^p\in A_1(\mathcal{E})$ and $f\in V$ with $Sf\in L^p_w(\Omega)$;
\item\label{it:pconcorpconvin2} $W:=\{w:w^{-p'}\in A_1(\mathcal{E})\}$ and $\|Tf\|_{L^p_w(\Omega)}\leq\phi([w^{-p'}]^\mathcal{E}_1)\|Sf\|_{L^p_w(\Omega)}$ for all $w^{-p'}\in A_1(\mathcal{E})$ and $f\in V$ with $Sf\in L^p_w(\Omega)$.
\end{enumerate}
Let $X$ be a quasi-Banach function space over $(\Omega,|\cdot|)$ for which one of the following conditions hold:
\begin{enumerate}[(I)]
\item\label{it:pconcorpconv1} We have $p<\infty$, $X$ is $p$-convex, and $M^\mathcal{E}:(X^p)'\to (X^p)'$ is bounded;
\item\label{it:pconcorpconv2} We have $p>1$, $X$ is a $p$-concave Banach function space, and $M^\mathcal{E}:[(X')^{p'}]'\to [(X')^{p'}]'$ is bounded.
\end{enumerate}
Then for all $f\in V$ with $Sf\in X$ we have
\begin{equation}\label{eq:pconcorpconvout}
\|Tf\|_X\leq \begin{cases}
2^{\frac{1}{p}}\phi(2\|M^\mathcal{E}\|_{(X^p)'\to (X^p)'})\|Sf\|_X & \text{if \ref{it:pconcorpconvin1} and \ref{it:pconcorpconv1} are satisfied;}\\
2^{\frac{1}{p'}}\phi(2\|M^\mathcal{E}\|_{[(X')^{p'}]'\to [(X')^{p'}]'})\|Sf\|_X & \text{if \ref{it:pconcorpconvin2} and \ref{it:pconcorpconv2} are satisfied.}
\end{cases}
\end{equation}
\end{theorem}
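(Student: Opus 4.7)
My proof plan is to recognize Theorem~\ref{thm:pconcorpconvext} as the two endpoint cases of the limited range extrapolation theorem~\ref{thm:aprsextrapolation}. Case~\ref{it:pconcorpconv1}, in which $X$ is $p$-convex (and automatically $\infty$-concave), corresponds to the parameters $r=p$, $s=\infty$, while case~\ref{it:pconcorpconv2}, in which $X$ is $1$-convex (i.e.\ a Banach function space) and $p$-concave, corresponds to $r=1$, $s=p$. In both cases the assumed boundedness of $M^\mathcal{E}$ on $(X^p)'$ or on $[(X')^{p'}]'$ matches exactly the side of Theorem~\ref{thm:aprsextrapolation} that is required once one computes $X_{p,\infty}$ and $X_{1,p}$.

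The first step is therefore to identify these rescaled spaces. By the Lorentz--Luxemburg theorem $X_{p,\infty}=[(X^p)']'=X^p$, since $X^p$ is a Banach function space by $p$-convexity, and similarly with $r=1$, $s=p$ one has $(s/r)'=p'$ and $X_{1,p}=[(X')^{p'}]'$, whose K\"othe dual is $(X')^{p'}$ by Proposition~\ref{prop:envelopeembedding}. Checking the two trigger conditions in Theorem~\ref{thm:aprsextrapolation}: in case~\ref{it:pconcorpconv1}, $p=r$ so the bound on $M^\mathcal{E}:X_{r,s}\to X_{r,s}$ is not required, whereas $p\neq s=\infty$ (as $p<\infty$) so the bound on $M^\mathcal{E}:(X_{r,s})'\to (X_{r,s})'=(X^p)'\to (X^p)'$ is needed and is hypothesised; in case~\ref{it:pconcorpconv2}, $p=s$ so the dual bound is not required while the bound on $M^\mathcal{E}:X_{1,p}\to X_{1,p}=[(X')^{p'}]'\to [(X')^{p'}]'$ is hypothesised.

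The second step is to translate between the $A_1$ constants used in the hypotheses and the $A_{p,(r,s)}$ constants that appear in Theorem~\ref{thm:aprsextrapolation}. A direct calculation, using that $\essinf_E w = \|w^{-1}\ind_E\|_\infty^{-1}$, yields the identities $[w]_{p,(p,\infty)}^\mathcal{E} = \big([w^p]_1^\mathcal{E}\big)^{1/p}$ and, by the symmetry \eqref{eq:wfullrangesym}, $[w]_{p,(1,p)}^\mathcal{E}=\big([w^{-p'}]_1^\mathcal{E}\big)^{1/p'}$. Consequently the assumption in~\ref{it:pconcorpconvin1} is precisely the hypothesis of Theorem~\ref{thm:aprsextrapolation} with $r=p$, $s=\infty$ and the reparametrised function $\widetilde{\phi}(t):=\phi(t^p)$, and analogously~\ref{it:pconcorpconvin2} becomes its hypothesis with $r=1$, $s=p$ and $\widetilde{\phi}(t):=\phi(t^{p'})$.

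Applying Theorem~\ref{thm:aprsextrapolation} in case~\ref{it:pconcorpconv1} gives, using $\tfrac{1}{r}-\tfrac{1}{s}=\tfrac{1}{p}$, $\tfrac{1}{r}-\tfrac{1}{p}=0$, $\tfrac{1}{p}-\tfrac{1}{s}=\tfrac{1}{p}$,
\[
\|Tf\|_X\leq 2^{\frac{1}{p}}\widetilde{\phi}\big(2^{\frac{1}{p}}\|M^\mathcal{E}\|_{(X^p)'\to (X^p)'}^{\frac{1}{p}}\big)\|Sf\|_X = 2^{\frac{1}{p}}\phi\big(2\|M^\mathcal{E}\|_{(X^p)'\to (X^p)'}\big)\|Sf\|_X,
\]
which is the first line of \eqref{eq:pconcorpconvout}; the second line follows from the analogous computation with exponents $\tfrac{1}{r}-\tfrac{1}{s}=\tfrac{1}{p'}=\tfrac{1}{r}-\tfrac{1}{p}$ and $\tfrac{1}{p}-\tfrac{1}{s}=0$. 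There is no genuine obstacle here: the substantive content is wholly contained in Theorem~\ref{thm:aprsextrapolation}, and the only care needed is the identification of the rescaled spaces and the bookkeeping of the $\phi$ reparametrisation so that the two outgoing constants collapse to the clean form $\phi(2\|M^\mathcal{E}\|_{\cdot\to\cdot})$.
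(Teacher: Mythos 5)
Your proposal is correct and follows essentially the same route as the paper: the paper's proof consists precisely of the identities $[w]_{p,(p,\infty)}^\mathcal{E}=([w^p]_1^\mathcal{E})^{1/p}$ and $[w]_{p,(1,p)}^\mathcal{E}=([w^{-p'}]_1^\mathcal{E})^{1/p'}$ followed by an application of Theorem~\ref{thm:aprsextrapolation} with $r=p$, $s=\infty$ and $r=1$, $s=p$ respectively, which is exactly your reduction (your identification of $X_{p,\infty}=X^p$ and $X_{1,p}=[(X')^{p'}]'$ and the reparametrisation of $\phi$ are the same bookkeeping, just spelled out in more detail).
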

\begin{remark}
Note that if
\[
\|Tf\|_{L^p_w(\Omega)}\leq\phi([w]^\mathcal{E}_p)\|Sf\|_{L^p_w(\Omega)}
\]
for $p\in(1,\infty)$ and all weights $w\in A_p(\mathcal{E})$, then both \ref{it:pconcorpconvin1} and \ref{it:pconcorpconvin2} are satisfied. Indeed, one readily checks that
\[
[w]^\mathcal{E}_p\leq\min\big(([w^p]^\mathcal{E}_1)^{\frac{1}{p}},([w^{-p'}]^\mathcal{E}_1)^{\frac{1}{p'}}\big).
\]
\end{remark}

\begin{proof}[Proof of Theorem~\ref{thm:pconcorpconvext}]
The first case follows from noting that $[w]_{p,(p,\infty)}^\mathcal{E}=\big([w^p]_1^\mathcal{E}\big)^{\frac{1}{p}}$ and applying Theorem~\ref{thm:aprsextrapolation} with $r=p$, $s=\infty$ and the second case follows from noting that $[w]_{p,(1,p)}^\mathcal{E}=\big([w^{-p'}]_1^\mathcal{E}\big)^{\frac{1}{p'}}$ and applying Theorem~\ref{thm:aprsextrapolation} with $r=1$, $s=p$.
\end{proof}

Note that we can also extract limited range versions of this result from Theorem~\ref{thm:aprsextrapolation} by taking different $r$ or $s$ in their respective cases.

The first case of this result recovers the Banach function space extrapolation result from \cite{CMP11}, while the second result provides a dual version. We point out that extrapolation from the point of view of the dual weight belonging to $A_1$ was first established in \cite{HMS88}, and has since appeared in works such as \cite{Ni19, CPR19, NR23, LOR22}.

Unfortunately this result is far from satisfactory. Even though we have now reduced our result to one that looks more like \eqref{eq:sharplpext}, we get a much less precise result. Indeed, plugging $X=L^q_v(\R^d)$ back into the above result, yields that for $q\geq p$ we need $M$ to be bounded on
\[
(X^p)'=L^{\big(\frac{q}{p}\big)'}_{v^{-p}}(\R^d),
\]
which happens if and only if $v^{q}\in A_{\frac{q}{p}}$, meaning that we do not get bounds for the general case $v^q\in A_q$. A similar thing happens in the case $q\leq p$. The reason for this is because in the above result we only took the convexity and the concavity of the space $X$ into account, but not the possibility of a weight appearing in the space. Since we are interested in intrinsic properties of the space $X$ without specializing to a weight in the space, this leaves us with the problem of whether we can replace the boundedness of $M:(X^p)'\to (X^p)'$ or $M:[(X')^{p'}]'\to [(X')^{p'}]'$ with the boundedness of $M:X'\to X'$ or $M:X\to X$ respectively. We discuss the results we obtain when the space $X$ has an explicit weight, which is the case for example when $X=L^q_v(\R^d)$, in Subsection~\ref{subsec:weightedbfs}. We conclude this subsection with a possible intrinsic method of obtaining weights, without reaching any further results.

Again, similarly to the case \ref{it:pconcorpconv1} above, we let $p\in(1,\infty)$ and assume that $X$ is a $p$-convex quasi-Banach function space, but this time we assume that
\[
M:X'\to X'
\]
is bounded.

Fixing $g\in X'$, we set
\[
u:=\sum_{k=0}^\infty\frac{M^k g}{2^k\|M\|_{X'\to X'}^k},
\]
so that $u\in X'$ with $\|u\|_{X'}\leq 2\|g\|_{X'}$. By applying Theorem~\ref{thm:spacesplitting} with $Y=X$, $q=p$, we find that
\begin{equation}\label{eq:sharpextsplitting}
X'=\big[(X^p)'\big]^{\frac{1}{p}}\cdot L^{p'}(\R^d)
\end{equation}
and there exist $0<w\in\big[(X^p)'\big]^{\frac{1}{p}}$, $0<k\in L^{p'}(\R^d)$ such that $u=wk$ and
\[
\|u\|_{X'}=\|w\|_{\big[(X^p)'\big]^{\frac{1}{p}}}\|k\|_{L^{p'}(\R^d)}.
\]
Note that for any $f\in X$ we now have
\[
\|fw\|_{L^p(\R^d)}
=\||f|^p w^p\|_{L^1(\R^d)}^{\frac{1}{p}}
\leq\||f|^p\|^{\frac{1}{p}}_{X^p}\|w^p\|^{\frac{1}{p}}_{(X^p)'}
=\|f\|_{X}\|w\|_{\big[(X^p)'\big]^{\frac{1}{p}}},
\]
and, since $|g|\leq u=wk$, this yields
\begin{align*}
\|fw\|_{L^p(\R^d)}\|gw^{-1}\|_{L^{p'}(\R^d)}
&\leq\|f\|_X\|\|w\|_{\big[(X^p)'\big]^{\frac{1}{p}}}\|k\|_{L^{p'}(\R^d)}\\
&=\|f\|_X\|u\|_{X'}\\
&\leq 2\|f\|_X\|g\|_{X'}.
\end{align*}
We conclude that the weight $w$ satisfies the second key inequality in Corollary~\ref{cor:mainabs} and it remains to check that $w^p\in A_p$. Unfortunately, this is the step where it is unclear to proceed at this level of generality.

In the case $X=L^q_v(\R^d)$, the factorization \eqref{eq:sharpextsplitting} takes the form
\[
L^{q'}_{v^{-1}}(\R^d)=L^{\frac{1}{\frac{1}{p}-\frac{1}{q}}}_{v^{-1}}(\R^d)\cdot L^{p'}(\R^d),
\]
and to factor $u=wk\in L^{q'}_{v^{-1}}(\R^d)$ one can take
\[
w:=u^{1-\frac{q'}{p'}}v^{\frac{q'}{p'}}\in L^{\frac{1}{\frac{1}{p}-\frac{1}{q}}}_{v^{-1}}(\R^d),\quad k:=\big(uv^{-1}\big)^{\frac{q'}{p'}}\in L^{p'}(\R^d).
\]
Hence, if $v^q\in A_q$, we have $w^p\in A_p$ by interpolation of weights. Unfortunately it seems that $X=L^q_v(\R^d)$ is the only choice of space where $w$ can have this particular form and, hence, it appears to be impossible to use this strategy in any other space.

\subsection{Extrapolation in \texorpdfstring{$A_\infty$}{Fujii-Wilson classes}}
In the classical setting of $\R^d$ with the basis of cubes, the $A_\infty$ class is defined as
\[
A_\infty=\bigcup_{p\in[1,\infty)}A_p.
\]
Moreover, we have $w\in A_\infty$ if and only if it has a so-called finite Fujii-Wilson $A_\infty$ constant:
\[
[w]_{FW}:=\sup_{Q}\frac{\langle M(w\ind_Q)\rangle_{1,Q}}{\langle w\rangle_{1,Q}}<\infty.
\]
We observe that for weights in this class, we have the following result:
\begin{proposition}\label{prop:ainftymbound}
Let $p\in(0,\infty)$. Then $w^p\in A_\infty$, if and only if there exists an $r\in(0,p]$ such that
\begin{equation}\label{eq:ainftymbound}
 M:L^{\big(\frac{p}{r}\big)'}_{w^{-r}}(\R^d)\to L^{\big(\frac{p}{r}\big)'}_{w^{-r}}(\R^d)
\end{equation}
is bounded.
\end{proposition}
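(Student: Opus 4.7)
The plan is to reduce the proposition to classical Muckenhoupt $A_p$ theory by decoding the weighted Lebesgue boundedness of $M$ into a condition on $w^p$, and then invoking the definition $A_\infty=\bigcup_{q\in[1,\infty)}A_q$.

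First I would apply the standard characterization ``$M:L^s_u(\R^d)\to L^s_u(\R^d)$ is bounded iff $u\in A_s(\mathcal{Q})$, equivalently $u^s$ lies in the classical Muckenhoupt class $A_s$'' with $s=(p/r)'$ and $u=w^{-r}$. This yields the condition $w^{-r(p/r)'}\in A_{(p/r)'}$. Using the classical duality of Muckenhoupt classes together with $(p/r)'-1=r/(p-r)$, a short computation shows
\[
\bigl(w^{-r(p/r)'}\bigr)^{1-(p/r)}=w^{p},
\]
so this is in turn equivalent to $w^p\in A_{p/r}$ in the classical sense.

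The endpoint $r=p$ must be handled separately: here $(p/r)'=\infty$, and boundedness of $M$ on $L^\infty_{w^{-p}}(\R^d)$ unpacks directly, since $\|f\|_{L^\infty_{w^{-p}}}\leq K$ means $|f|\leq Kw^p$ a.e., to the pointwise bound $M(w^p)\leq Cw^p$, i.e.\ $w^p\in A_1$.

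With this translation in hand the proposition is essentially tautological: as $r$ ranges over $(0,p]$, the value $p/r$ ranges over $[1,\infty)$, so the existence of an $r\in(0,p]$ for which the corresponding $M$-boundedness holds is exactly the existence of a $q\in[1,\infty)$ with $w^p\in A_q$, which is the very definition of $w^p\in A_\infty$. I do not anticipate any real obstacle; the only mildly delicate point is the $r=p$ endpoint, but the $A_1$ characterization via $Mw^p\leq Cw^p$ disposes of it cleanly.
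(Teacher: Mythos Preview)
Your proposal is correct and follows essentially the same route as the paper: both arguments translate the boundedness of $M$ on $L^{(p/r)'}_{w^{-r}}(\R^d)$ into the classical condition $w^p\in A_{p/r}$ (the paper does this via its constant $[w]_{p,(r,\infty)}=[w^{-r}]_{(p/r)'}^{1/r}$, while you do the duality computation directly and handle $r=p$ separately), and then observe that the map $r\mapsto p/r$ sets up a bijection between $(0,p]$ and $[1,\infty)$, reducing the equivalence to the definition $A_\infty=\bigcup_{q\geq 1}A_q$.
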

\begin{proof}
Our first observation is that the bound \eqref{eq:ainftymbound} is equivalent to the condition
\[
[w]_{p,(r,\infty)}=[w^r]^{\frac{1}{r}}_{\frac{p}{r}}=[w^{-r}]^{\frac{1}{r}}_{\big(\frac{p}{r}\big)'}<\infty,
\]
and in this case we have $w^p=(w^r)^{\frac{p}{r}}\in A_{\frac{p}{r}}$, so $w^p\in A_\infty$.

Conversely, if $w^p\in A_\infty$, there is a $q\in[1,\infty)$ such that $w^p\in A_q$. Setting $r:=\frac{p}{q}\in(0,p]$, we have
\[
[w]_{p,(r,\infty)}=[w^p]^{\frac{1}{p}}_{A_q}<\infty,
\]
proving that \eqref{eq:ainftymbound} holds. The assertion follows.
\end{proof}

For a $\sigma$-finite measure space $(\Omega,|\cdot|)$ with a basis of sets $\mathcal{E}$, we can define
\[
[w]^\mathcal{E}_{FW}:=\sup_{E\in\mathcal{E}}\frac{\langle M^\mathcal{E}_E(w)\rangle_{1,E}}{\langle w\rangle_{1,E}},
\]
where
\[
M^\mathcal{E}_E(w):=\sup_{\substack{F\in\mathcal{E}\\ F\subseteq E}}\langle w\rangle_{1,F}\ind_F.
\]

Noting that $[w]^\mathcal{E}_{FW}\leq [w]_1^\mathcal{E}$, Proposition~\ref{prop:ainftymbound} now motivates the following corollary of Theorem~\ref{thm:pconcorpconvext}:
\begin{theorem}
Let $V$ be a set and let $S:V\to L^0(\R^d)$ be a map. Moreover, let
\[
T:\bigcup_{p\in(0,\infty)}\bigcup_{w^p\in A_{FW}(\mathcal{E})}S^{-1}(L^p_w(\Omega))\to L^0(\Omega)
\]
be a map for which for each $p\in(0,\infty)$ there is an increasing function $\phi_p:[0,\infty)\to[0,\infty)$ such that
\begin{equation}\label{eq:ainftyextrapin}
\|Tf\|_{L^p_w(\Omega)}\leq\phi_p([w^p]^\mathcal{E}_{FW})\|Sf\|_{L^p_w(\Omega)}
\end{equation}
for all $w^p\in A_{FW}(\mathcal{E})$ and $f$ such that $Sf\in L^p_w(\Omega)$.

Then for all quasi-Banach function spaces $X$ over $\Omega$ that are $r$-convex for some $r\in(0,\infty)$ and
\[
 M^\mathcal{E}:(X^r)'\to (X^r)'
\]
is bounded, we have
\[
\|Tf\|_X\leq 2^{\frac{1}{r}}\phi_r(2\|M^{\mathcal{E}}\|_{(X^r)'\to (X^r)'})\|Sf\|_X
\]
for all $f$ with $Sf\in X$.
\end{theorem}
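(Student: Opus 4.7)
The plan is to reduce this statement to a direct application of Theorem~\ref{thm:pconcorpconvext} with exponent chosen to match the convexity of $X$. Specifically, given the $r$-convex space $X$ with $M^\mathcal{E}:(X^r)'\to(X^r)'$ bounded, I will invoke the theorem with $p:=r$ and in case \ref{it:pconcorpconvin1}/\ref{it:pconcorpconv1}. The function $\phi_r$ appearing in the hypothesis is the one provided by the statement at the exponent $p=r$; the values of $\phi_p$ for $p\neq r$ play no role. Thus only a single endpoint is used, and the flexibility of having bounds at every $p\in(0,\infty)$ is just what lets us align the extrapolation exponent with the intrinsic convexity of $X$.

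The main check is that the $A_{FW}$ hypothesis can be fed into the $A_1$-based assumption \ref{it:pconcorpconvin1} of Theorem~\ref{thm:pconcorpconvext}. For this I would use the elementary containment $A_1(\mathcal{E})\subseteq A_{FW}(\mathcal{E})$, together with the pointwise inequality $[w]_{FW}^\mathcal{E}\leq [w]_1^\mathcal{E}$ noted just before the theorem statement, and the monotonicity of $\phi_r$. Then for every $w$ with $w^r\in A_1(\mathcal{E})$ and every $f\in V$ with $Sf\in L^r_w(\Omega)$, the hypothesis at $p=r$ yields
\[
\|Tf\|_{L^r_w(\Omega)}\leq\phi_r\bigl([w^r]^\mathcal{E}_{FW}\bigr)\|Sf\|_{L^r_w(\Omega)}\leq\phi_r\bigl([w^r]^\mathcal{E}_1\bigr)\|Sf\|_{L^r_w(\Omega)},
\]
which is exactly the form required by \ref{it:pconcorpconvin1} with $\phi:=\phi_r$ and $p=r$.

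The spatial hypothesis \ref{it:pconcorpconv1} at $p=r$ requires $r<\infty$, $X$ to be $r$-convex, and $M^\mathcal{E}:(X^r)'\to(X^r)'$ to be bounded; these are precisely our assumptions. Applying Theorem~\ref{thm:pconcorpconvext} therefore gives, for every $f\in V$ with $Sf\in X$,
\[
\|Tf\|_X\leq 2^{\frac{1}{r}}\phi_r\bigl(2\|M^\mathcal{E}\|_{(X^r)'\to(X^r)'}\bigr)\|Sf\|_X,
\]
which is the claimed bound with the stated constants. I do not expect a genuine obstacle here — the whole argument is a one-line substitution once the $A_1$-to-$A_{FW}$ comparison has been observed; the only mildly delicate point is the bookkeeping that confirms the family of exponents $\{\phi_p\}$ and the class $A_{FW}$ align correctly with the single-exponent $A_1$ input needed by the abstract machine.
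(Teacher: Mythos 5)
Your proposal is correct and follows exactly the paper's own argument: one observes $[w^r]^{\mathcal{E}}_{FW}\leq[w^r]^{\mathcal{E}}_{1}$ (so the $A_1$-weight hypothesis of case \ref{it:pconcorpconvin1} is implied by the $A_{FW}$ hypothesis at $p=r$, via monotonicity of $\phi_r$) and then applies the first case of Theorem~\ref{thm:pconcorpconvext} with $p=r$. Your write-up is just a more detailed unpacking of the same one-line reduction.
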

\begin{proof}
Since
\[
[w^p]^{\mathcal{E}}_{FW}\leq[w^p]^\mathcal{E}_{1},
\]
this follows from the first case in Theorem~\ref{thm:pconcorpconvext} with $p=r$.
\end{proof}
While this result may seem rather cheap, it is important to note that in the case $\Omega=\R^d$ with the basis of cubes, the condition \eqref{eq:ainftyextrapin} only needs to hold for a single $p\in(0,\infty)$. Indeed, by the $A_\infty$ extrapolation result \cite{CMP04}, it then holds for all $p\in (0,\infty)$, allowing us to apply our theorem. Thus, in this case, we have obtained an extension of the $A_\infty$ extrapolation theorem.

We also point out that this version is an extension of the $A_\infty$ extrapolation theorems for rearrangement invariant Banach function spaces from \cite{CGMP06, CMP11}.

\subsection{Weak-type estimates in weak Banach function spaces}
\begin{definition}
Let $(\Omega,|\cdot|)$ be a $\sigma$-finite measure space and let $X$ be a quasi-Banach function space over $\Omega$. Then we define the weak space $wX$ as the space of those $f\in L^0(\Omega)$ for which
\[
\|f\|_{wX}:=\sup_{\lambda>0}\lambda\|\ind_{\{x\in\Omega:|f(x)|>\lambda\}}\|_X<\infty.
\]
\end{definition}
\begin{proposition}\label{prop:weakxinclusion}
If $X$ is a quasi-Banach function space, then so is $wX$. Moreover, we have $X\subseteq wX$ with
\[
\|f\|_{wX}\leq\|f\|_X.
\]
\end{proposition}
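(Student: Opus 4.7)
The plan is to verify in sequence that $wX$ is a vector space, that $\|\cdot\|_{wX}$ is a quasi-norm satisfying the ideal and Fatou property and saturation, and that the continuous embedding $X \hookrightarrow wX$ holds. The key observation that drives every estimate is that comparison of level sets translates into comparison of indicator functions inside $X$, so all properties of $wX$ will be inherited from the corresponding properties of $X$ via the ideal property.

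First I would dispatch the inclusion $X \subseteq wX$: for $f\in X$ and $\lambda>0$, the pointwise inequality $\lambda \ind_{\{|f|>\lambda\}} \leq |f|$ combined with the ideal property of $X$ gives $\lambda\|\ind_{\{|f|>\lambda\}}\|_X \leq \|f\|_X$, and taking the supremum over $\lambda$ yields $\|f\|_{wX}\leq \|f\|_X$. This already shows that $wX$ is saturated, since any weak order unit $\rho\in X$ (which exists by Proposition~\ref{prop:weakorderunit}) lies in $wX$ and remains strictly positive a.e., so $wX$ is saturated again by Proposition~\ref{prop:weakorderunit}. The ideal property of $wX$ is immediate: if $|g|\leq |f|$ then $\{|g|>\lambda\}\subseteq\{|f|>\lambda\}$, and the ideal property of $X$ gives $\|\ind_{\{|g|>\lambda\}}\|_X \leq \|\ind_{\{|f|>\lambda\}}\|_X$, so $\|g\|_{wX}\leq\|f\|_{wX}$.

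Next I would handle the quasi-norm property. Homogeneity is straightforward from the identity $\{|cf|>\lambda\}=\{|f|>\lambda/|c|\}$. For the quasi-triangle inequality I would use the standard level set splitting
\[
\{|f_1+f_2|>\lambda\}\subseteq\{|f_1|>\lambda/2\}\cup\{|f_2|>\lambda/2\},
\]
which gives $\ind_{\{|f_1+f_2|>\lambda\}}\leq \ind_{\{|f_1|>\lambda/2\}}+\ind_{\{|f_2|>\lambda/2\}}$. Applying the quasi-norm of $X$ and multiplying by $\lambda$ yields
\[
\lambda\|\ind_{\{|f_1+f_2|>\lambda\}}\|_X \leq 2K_X\bigl(\tfrac{\lambda}{2}\|\ind_{\{|f_1|>\lambda/2\}}\|_X+\tfrac{\lambda}{2}\|\ind_{\{|f_2|>\lambda/2\}}\|_X\bigr)\leq 2K_X(\|f_1\|_{wX}+\|f_2\|_{wX}),
\]
so $\|\cdot\|_{wX}$ is a quasi-norm with $K_{wX}\leq 2K_X$.

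Finally I would verify the Fatou property. Given $0\leq f_n\uparrow f$ with $\sup_n\|f_n\|_{wX}=:C<\infty$, for each fixed $\lambda>0$ the sets $\{f_n>\lambda\}$ increase to $\{f>\lambda\}$, so $\ind_{\{f_n>\lambda\}}\uparrow \ind_{\{f>\lambda\}}$. Since $\|\ind_{\{f_n>\lambda\}}\|_X\leq C/\lambda$ is uniformly bounded, the Fatou property of $X$ yields $\ind_{\{f>\lambda\}}\in X$ with $\|\ind_{\{f>\lambda\}}\|_X=\sup_n\|\ind_{\{f_n>\lambda\}}\|_X$, and multiplying by $\lambda$ and taking the supremum over $\lambda$ gives $\|f\|_{wX}\leq C$; the reverse inequality $\sup_n\|f_n\|_{wX}\leq\|f\|_{wX}$ follows from the ideal property since $f_n\leq f$, so equality holds. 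No step is particularly obstructive here; the only subtle point is being careful in the Fatou step to invoke the version of the Fatou property of $X$ to conclude $\ind_{\{f>\lambda\}}\in X$ from the uniform bound on $\ind_{\{f_n>\lambda\}}$, which is exactly its content.
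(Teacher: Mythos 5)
Your proof is correct and follows essentially the same route as the paper: the pointwise bound $\lambda\ind_{\{|f|>\lambda\}}\leq|f|$ for the embedding, and the level-set Fatou argument with $\ind_{\{f_n>\lambda\}}\uparrow\ind_{\{f>\lambda\}}$ for the Fatou property. Your extra verifications (the quasi-triangle inequality with $K_{wX}\leq 2K_X$, and saturation via a weak order unit rather than directly from $X\subseteq wX$ and the saturation of $X$) are fine and only make explicit what the paper leaves implicit.
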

If $X$ is a Banach function space, then $wX$ is not necessarily also a Banach function space. Indeed, consider $X=L^1(\R^d)$. Then $wX=L^{1,\infty}(\R^d)$, which is not a Banach function space.
\begin{proof}[Proof of Proposition~\ref{prop:weakxinclusion}]
Note that since
\[
\lambda\ind_{\{x\in\Omega:|f(x)|>\lambda\}}\leq|f|,
\]
it follows from the ideal property of $X$ that $X\subseteq wX$ with
\[
\|f\|_{wX}\leq\|f\|_X.
\]
Next, we prove the ideal and Fatou properties. Let $0\leq f_n\uparrow f$ a.e. with $\sup_{n\in\N}\|f_n\|_{wX}<\infty$. Then, for a fixed $\lambda>0$, the functions
\[
h_n:=\lambda\ind_{\{x\in\Omega:|f_n(x)|>\lambda\}}
\]
satisfy $0\leq h_n\uparrow \lambda\ind_{\{x\in\Omega:|f(x)|>\lambda\}}$ and
\[
\sup_{n\in\N}\|h_n\|_X\leq\sup_{n\in\N}\|f_n\|_{wX}<\infty.
\]
Hence, by the Fatou property of $X$, we have $\lambda\ind_{\{x\in\Omega:|f(x)|>\lambda\}}\in X$
with
\[
\lambda\|\ind_{\{x\in\Omega:|f(x)|>\lambda\}}\|_X=\sup_{n\in\N}\lambda\|\ind_{\{x\in\Omega:|f_n(x)|>\lambda\}}\|_X.
\]
Taking a supremum over $\lambda>0$ proves the result.

For the saturation property, note that this follows from the fact that $X$ has the saturation property and $X\subseteq wX$. This proves the assertion.
\end{proof}
Lemma~\ref{lem:indE} still holds if we assume the weaker statement that $M^\mathcal{E}:X\to wX$ is bounded.
\begin{proposition}
Let $\mathcal{E}$ be a basis of sets in $\Omega$ and suppose that
\[
M^\mathcal{E}:X\to wX
\]
is bounded. Then $\ind_E\in X$ and $\ind_E\in X'$ with
\[
\|\ind_E\|_X\|\ind_E\|_{X'}\leq\|M^\mathcal{E}\|_{X\to wX}|E|
\]
for all $E\in\mathcal{E}$.
\end{proposition}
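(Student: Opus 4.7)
The approach is to adapt the proof of Lemma~\ref{lem:indE} to the weak setting, replacing direct ideal bounds in $X$ by the level-set identities defining the weak norm. First, I extract a weak order unit $\rho\in X$ from the saturation property via Proposition~\ref{prop:weakorderunit}, and apply $M^\mathcal{E}$ to it. Since $M^\mathcal{E}\rho\in wX$ and $M^\mathcal{E}\rho(x)\geq\langle\rho\rangle_{1,E}$ for every $x\in E$, the set $E$ is contained in each superlevel set $\{M^\mathcal{E}\rho>\lambda\}$ for $\lambda<\langle\rho\rangle_{1,E}$. The very definition of the quasi-norm on $wX$ guarantees that the indicator of such a superlevel set lies in $X$ with a quantitative bound, so the ideal property of $X$ gives $\ind_E\in X$.

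Second, the same superlevel-set argument, now applied to an arbitrary $f\in X$, yields
\[
\lambda\|\ind_E\|_X\leq\lambda\|\ind_{\{M^\mathcal{E}f>\lambda\}}\|_X\leq\|M^\mathcal{E}f\|_{wX}\leq\|M^\mathcal{E}\|_{X\to wX}\|f\|_X
\]
for every $\lambda<\langle f\rangle_{1,E}$. Letting $\lambda\uparrow\langle f\rangle_{1,E}$ gives
\[
\langle f\rangle_{1,E}\|\ind_E\|_X\leq\|M^\mathcal{E}\|_{X\to wX}\|f\|_X.
\]
Multiplying by $|E|$ and using the identity $\int_\Omega\ind_E|f|\,\mathrm{d}x=|E|\langle f\rangle_{1,E}$, I obtain
\[
\int_\Omega\!\ind_E|f|\,\mathrm{d}x\leq\frac{\|M^\mathcal{E}\|_{X\to wX}|E|}{\|\ind_E\|_X}\|f\|_X,
\]
which, upon taking the supremum over $f\in X$ with $\|f\|_X=1$, shows that $\ind_E\in X'$ with $\|\ind_E\|_{X'}\leq\|M^\mathcal{E}\|_{X\to wX}|E|/\|\ind_E\|_X$. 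Rearranging gives exactly the claimed inequality.

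No step here is genuinely difficult; the only conceptual substitution compared to Lemma~\ref{lem:indE} is that the pointwise bound $\langle f\rangle_{1,E}\ind_E\leq M^\mathcal{E}f$ cannot be fed into the ideal property of $X$ directly (since $M^\mathcal{E}f$ only lies in $wX$, not in $X$), and must instead be converted into an $X$-norm estimate through the distributional definition of $\|\cdot\|_{wX}$. If one wanted the analogue of the lower bound $|E|\leq\|\ind_E\|_X\|\ind_E\|_{X'}$ from Lemma~\ref{lem:indE}, this would follow at once from $|E|=\|\ind_E\ind_E\|_{L^1(\Omega)}\leq\|\ind_E\|_X\|\ind_E\|_{X'}$, but it is not part of the stated conclusion.
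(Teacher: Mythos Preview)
Your proof is correct and follows essentially the same route as the paper's. The only organizational difference is that the paper invokes the ideal property of $wX$ together with the identity $\|\ind_E\|_{wX}=\|\ind_E\|_X$ (computed from the level-set description of an indicator), whereas you unpack the weak norm directly via the superlevel sets $\{M^\mathcal{E}f>\lambda\}$; both arguments amount to the same estimate $\langle f\rangle_{1,E}\|\ind_E\|_X\leq\|M^\mathcal{E}\|_{X\to wX}\|f\|_X$.
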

\begin{proof}
By Proposition~\ref{prop:weakorderunit} there is a weak order unit $\rho\in X$. Fix $E\in\mathcal{E}$. Then by the definition of $M^\mathcal{E}$ we have
\[
\langle\rho\rangle_{1,E}\ind_E\leq M^\mathcal{E}\rho\in wX.
\]
Since $\langle\rho\rangle_{1,E}>0$, it follows from the ideal property that $\ind_E\in wX$. Now, note that for any $f\in X$ we have
\begin{align*}
\int_\Omega\!\ind_E |f|\,\mathrm{d}x
&=\langle f\rangle_{1,E}|E|=\frac{|E|}{\|\ind_E\|_{wX}}\|\langle f\rangle_{1,E}\ind_E\|_{wX}\\
&\leq\|M^\mathcal{E}\|_{X\to wX}\frac{|E|}{\|\ind_E\|_{wX}}\|f\|_X.
\end{align*}
Hence, $\ind_E\in X'$ with
\[
\|\ind_E\|_{X'}\leq\|M^\mathcal{E}\|_{X\to wX}\frac{|E|}{\|\ind_E\|_{wX}}.
\]
The result now follows from the observation that
\[
\ind_{\{x\in\Omega:\ind_E(x)>\lambda\}}=\ind_E
\]
when $\lambda\in(0,1)$, and equals $0$ when $\lambda\geq 1$, so that
\[
\|\ind_E\|_{wX}=\sup_{\lambda\in(0,1)}\lambda\|\ind_E\|_X=\|\ind_E\|_X.
\]
\end{proof}

Extrapolating from weak type estimates can be deduced from the strong type case using a trick that can already be found in \cite{GM04}.
\begin{theorem}
Let $r\in(0,\infty)$, $s\in(0,\infty]$ with $r<s$ and let $p\in[r,s]$. Let $(\Omega,|\cdot|)$ be a $\sigma$-finite measure space with a basis of sets $\mathcal{E}$. Let $V$ be a set and let $S:V\to L^0(\Omega)$ be a map. Moreover, suppose
\[
T:\bigcup_{w\in A_{p,(r,s)}(\mathcal{E})}S^{-1}(L^p_w(\Omega))\to L^0(\Omega)
\]
is a map for which there is an increasing function $\phi:[0,\infty)\to[0,\infty)$ such that for all $w\in A_{p,(r,s)}(\mathcal{E})$ and $f\in V$ with $Sf\in L^p_w(\Omega)$ we have
\begin{equation}\label{eq:strongtoweakin}
\|Tf\|_{L^{p,\infty}_w(\Omega)}\leq\phi([w]^\mathcal{E}_{p,(r,s)})\|Sf\|_{L^p_w(\Omega)}.
\end{equation}

Let $X$ be an $r$-convex and $s$-concave quasi-Banach function spaces over $(\Omega,|\cdot|)$ for which the following conditions hold:
\begin{itemize}
\item If $p\neq r$,
\[
M^\mathcal{E}:X_{r,s}\to X_{r,s}
\]
is bounded;
\item If $p\neq s$, 
\[
M^\mathcal{E}:(X_{r,s})'\to (X_{r,s})'
\]
is bounded.
\end{itemize}
Then $Tf$ is well-defined for any $f\in V$ with $Sf\in X$ with
\begin{equation}\label{eq:strongtoweakout}
\|Tf\|_{wX}\leq 2^{\frac{1}{r}-\frac{1}{s}}\phi(2^{\frac{1}{r}-\frac{1}{s}}\|M^\mathcal{E}\|_{X_{r,s}\to X_{r,s}}^{\frac{1}{r}-\frac{1}{p}}\|M^\mathcal{E}\|_{(X_{r,s})'\to (X_{r,s})'}^{\frac{1}{p}-\frac{1}{s}})\|Sf\|_{X}.
\end{equation}
\end{theorem}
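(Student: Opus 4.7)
The plan is to reduce the weak-type extrapolation to the already-established strong-type extrapolation Theorem~\ref{thm:aprsextrapolation} via the standard level-set linearization trick of Grafakos--Martell. The key observation is that, by the definition of $wX$ and of $L^{p,\infty}_w(\Omega)$, one has the identities
\[
\|h\|_{wX}=\sup_{\lambda>0}\bigl\|\lambda\ind_{\{|h|>\lambda\}}\bigr\|_X,\qquad \|h\|_{L^{p,\infty}_w(\Omega)}=\sup_{\lambda>0}\bigl\|\lambda\ind_{\{|h|>\lambda\}}\bigr\|_{L^p_w(\Omega)},
\]
so the weak-type inequality for $T$ can be reformulated as a uniform-in-$\lambda$ family of strong-type inequalities for the truncations of $Tf$.

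Concretely, for each fixed $\lambda>0$, I would introduce the map
\[
T_\lambda:\bigcup_{w\in A_{\vec p,(\vec r,\vec s)}(\mathcal{E})}S^{-1}(L^p_w(\Omega))\to L^0(\Omega),\qquad T_\lambda f:=\lambda\,\ind_{\{x\in\Omega:|Tf(x)|>\lambda\}}.
\]
By the second identity displayed above together with the hypothesis \eqref{eq:strongtoweakin}, $T_\lambda$ satisfies
\[
\|T_\lambda f\|_{L^p_w(\Omega)}\leq\|Tf\|_{L^{p,\infty}_w(\Omega)}\leq\phi\bigl([w]^{\mathcal E}_{p,(r,s)}\bigr)\|Sf\|_{L^p_w(\Omega)}
\]
for every $w\in A_{p,(r,s)}(\mathcal{E})$ and every $f\in V$ with $Sf\in L^p_w(\Omega)$, with the same increasing function $\phi$ on the right-hand side \emph{uniformly in $\lambda$}. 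This is exactly the strong-type hypothesis of Theorem~\ref{thm:aprsextrapolation} applied to $T_\lambda$ in place of $T$ (with the same $V$ and $S$).

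Invoking Theorem~\ref{thm:aprsextrapolation} then yields, for every $\lambda>0$ and every $f\in V$ with $Sf\in X$,
\[
\|T_\lambda f\|_X\leq 2^{\frac{1}{r}-\frac{1}{s}}\phi\bigl(2^{\frac{1}{r}-\frac{1}{s}}\|M^\mathcal{E}\|_{X_{r,s}\to X_{r,s}}^{\frac{1}{r}-\frac{1}{p}}\|M^\mathcal{E}\|_{(X_{r,s})'\to (X_{r,s})'}^{\frac{1}{p}-\frac{1}{s}}\bigr)\|Sf\|_X,
\]
and because the right-hand constant is independent of $\lambda$, one can take the supremum over $\lambda>0$ on the left. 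By the first identity above this supremum equals $\|Tf\|_{wX}$, which gives exactly \eqref{eq:strongtoweakout}.

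There is essentially no obstacle beyond bookkeeping: the only point that needs brief verification is that $T_\lambda$ has the correct domain $\bigcup_w S^{-1}(L^p_w(\Omega))$ and maps into $L^0(\Omega)$, which is immediate since $Tf$ is a well-defined measurable function on its domain and the characteristic function of a measurable set is measurable. Note that the same argument also yields a weak-type version of Theorem~\ref{thm:aprsextrapolation} where the operator $T$ satisfies stronger-type bounds into $L^{p,\infty}_w(\Omega)$ rather than $L^p_w(\Omega)$, and more generally one could replace $L^{p,\infty}_w(\Omega)$ on the input side by any Lorentz scale $L^{p,q}_w(\Omega)$ using its analogous distributional characterization.
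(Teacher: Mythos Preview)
Your proof is correct and essentially identical to the paper's: both define $T_\lambda f:=\lambda\ind_{\{|Tf|>\lambda\}}$, observe that the weak-type hypothesis gives a strong $L^p_w$ bound for $T_\lambda$ uniformly in $\lambda$, apply Theorem~\ref{thm:aprsextrapolation} to $T_\lambda$, and take the supremum over $\lambda$. The only cosmetic slip is the notation $A_{\vec p,(\vec r,\vec s)}$ in the domain of $T_\lambda$, which should read $A_{p,(r,s)}$ in this on-diagonal setting.
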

\begin{proof}
Fix $\lambda>0$ and define
\[
T_\lambda:\bigcup_{w\in A_{p,(r,s)}(\mathcal{E})}S^{-1}(L^p_w(\Omega))\to L^0(\Omega)
\]
by
\[
T_\lambda f:=\lambda\ind_{\{x\in\Omega:|Tf(x)|>\lambda\}}.
\]
Then, by \eqref{eq:strongtoweakin}, we have
\[
\|T_\lambda f\|_{L^p_w(\R^d)}\leq\|Tf\|_{L^{p,\infty}_w(\R^d)}\leq\phi([w]^\mathcal{E}_{p,(r,s)})\|Sf\|_{L^p_w(\Omega)}
\]
Thus, by Theorem~\ref{thm:aprsextrapolation} we have
\begin{align*}
\lambda&\|\ind_{\{x\in\Omega:|Tf(x)|>\lambda\}}\|_X
=\|T_\lambda f\|_{X}\\
&\leq 2^{\frac{1}{r}-\frac{1}{s}}\phi(2^{\frac{1}{r}-\frac{1}{s}}\|M^\mathcal{E}\|_{X_{r,s}\to X_{r,s}}^{\frac{1}{r}-\frac{1}{p}}\|M^\mathcal{E}\|_{(X_{r,s})'\to (X_{r,s})'}^{\frac{1}{p}-\frac{1}{s}})\|Sf\|_{X}.
\end{align*}
The result follows by taking a supremum over $\lambda>0$.
\end{proof}

\subsection{Weighted Banach function spaces}\label{subsec:weightedbfs}
In the paper \cite{CMM22}, various extrapolation theorems in weighted Banach function spaces are presented. In this section we compare our results to theirs.

The weighted Banach function spaces that are considered in this work are defined as follows. Consider a non-atomic $\sigma$-finite measure space $(\Omega,\mu)$. Then, for a weight $v\in L^0(\Omega,\mu)$ consider the new measure $v=v\,\mathrm{d}\mu$ on $\Omega$. They then define a Banach function space over $(\Omega, v)$ as a space $X_v\subseteq L^0(\Omega,v)$ with a norm $\|\cdot\|_{X_v}$ satisfying the ideal property, the Fatou property, and the following property:
\begin{enumerate}[(i)]
\item\label{it:cmmsaturation} If $E\subseteq\Omega$ is a measurable set with $v(E)<\infty$, then $\ind_E\in X_v$ and there is a constant $C_E>0$ such that $\int_E\!|f|v\,\mathrm{d}\mu\leq C_E\|f\|_{X_v}$ for all $f\in X_v$.
\end{enumerate}
Noting that $L^0(\Omega,v)=L^0(\Omega,\mu)$, we claim that such a space is also a Banach function space in our sense over the (unweighted) measure space $(\Omega,\mu)$.
\begin{proposition}\label{prop:cmmbfscomp}
The weighted Banach function spaces $X_v$ in the sense of \cite{CMM22} are Banach function spaces in our sense over both the weighted measure space $(\Omega,v)$ and the unweighted measure space $(\Omega,\mu)$.
\end{proposition}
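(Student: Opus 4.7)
The plan is to observe that the weighted Banach function space framework of \cite{CMM22} is strictly more restrictive than the paper's framework, so the proposition reduces to verifying the saturation property (which is the only axiom not explicitly assumed in the CMM22 definition) and checking that all the axioms make sense over both measure spaces. The key observation is that a weight $v$ is positive $\mu$-a.e., so $L^0(\Omega,v)=L^0(\Omega,\mu)$ as sets of equivalence classes of measurable functions, and the notions of ``a.e.'' with respect to $\mu$ and $v$ coincide. In particular, the ideal property (which is purely pointwise) and the Fatou property (which only refers to a.e.\ monotone convergence) are properties of the space $X_v$ that hold over $(\Omega,\mu)$ if and only if they hold over $(\Omega,v)$, and both are assumed by definition.

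The only substantive step is to deduce the saturation property. First I would verify saturation over $(\Omega,v)$. Since $(\Omega,\mu)$ is $\sigma$-finite and $v$ is a strictly positive measurable weight, $(\Omega,v)$ is also $\sigma$-finite: decompose $\Omega=\bigcup_{n\in\N}E_n$ where each $E_n$ has finite $\mu$-measure and is contained in a level set $\{1/n<v<n\}$, so that $v(E_n)<\infty$. Property \ref{it:cmmsaturation} of the CMM22 definition then gives $\ind_{E_n}\in X_v$ for every $n$. Given any measurable $E\subseteq\Omega$ with $v(E)>0$, there must be some $n$ with $v(E\cap E_n)>0$, and since $\ind_{E\cap E_n}\leq\ind_{E_n}\in X_v$, the ideal property gives $\ind_{E\cap E_n}\in X_v$. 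Setting $F:=E\cap E_n$ yields a subset of $E$ of positive $v$-measure with $\ind_F\in X_v$, establishing saturation with respect to $(\Omega,v)$.

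Next I would transfer saturation to $(\Omega,\mu)$. Because $v>0$ $\mu$-a.e., a measurable set has positive $\mu$-measure if and only if it has positive $v$-measure. Thus given $E\subseteq\Omega$ with $\mu(E)>0$, we also have $v(E)>0$, and the set $F$ produced in the previous paragraph satisfies $\mu(F)>0$ as well. This gives saturation with respect to $(\Omega,\mu)$. Combining this with the ideal and Fatou properties, we conclude that $X_v$ is a Banach function space in our sense over both $(\Omega,v)$ and $(\Omega,\mu)$.

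There is no real obstacle here; the proposition is essentially a bookkeeping statement whose only purpose is to reconcile terminology, and the CMM22 condition \ref{it:cmmsaturation} was evidently designed to be at least as strong as saturation. The mildest subtlety is just remembering that weights are strictly positive a.e., which is what allows the two measure-theoretic frameworks to be identified.
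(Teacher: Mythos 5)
Your proof is correct and follows essentially the same route as the paper: reduce everything to the saturation property (since the ideal and Fatou properties are insensitive to replacing $\mu$ by $v$, the measures having the same null sets), verify saturation over $(\Omega,v)$ by combining $\sigma$-finiteness of $(\Omega,v)$ with property \ref{it:cmmsaturation}, and then transfer it to $(\Omega,\mu)$ using that positive $\mu$-measure is equivalent to positive $v$-measure. The only cosmetic difference is that you construct the $\sigma$-finite decomposition of $(\Omega,v)$ explicitly via level sets of $v$, whereas the paper simply cites this fact from \cite{CMM22}; both are fine.
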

\begin{proof}
Note that since the measure $v$ and $\mu$ have the same zero-sets, both the notions of the ideal property and the Fatou property do not depends on whether one considers the measure $v$ or $\mu$. Hence, it remains to check the saturation property. We first check that this holds in $(\Omega,v)$. If $E\subseteq\Omega$ is a measurable set with $v(E)>0$, then, since $(\Omega,v)$ is again $\sigma$-finite measure space (see \cite[Page~10]{CMM22}), there is a set $U\subseteq\Omega$ with $0<v(U\cap E)<\infty$. Setting $F:=U\cap E\subseteq E$, it follows from \ref{it:cmmsaturation} that $\ind_F\in X$, proving the result.

As for the saturation property in $(\Omega,\mu)$, let $E\subseteq \Omega$ be a measurable set with $\mu(E)>0$. Since $(\Omega,\mu)$ is $\sigma$-finite, there is a measurable set $U\subseteq\Omega$ with $\mu(U)<\infty$ and $\mu(U\cap E)>0$. Since we then also have $v(U\cap E)>0$, we can use the saturation property from $(\Omega,v)$ to find an $F\subseteq E\cap U$ with $v(F)>0$ and $\ind_F\in X$. The assertion follows.
\end{proof}
Note that we did not require the second part of property \ref{it:cmmsaturation} to obtain the saturation property. We conclude that the Banach function spaces we consider in this work include the ones considered in \cite{CMM22}.

Conversely, one might wonder if the spaces we are considering are more general. As it turns out, this is indeed the case. Setting $u=v=1$, property \ref{it:cmmsaturation} is equivalent to saying that for every set $E\subseteq\Omega$ with $|E|<\infty$ we have $\ind_E\in X$ and $\ind_E\in X'$ (where $C_E=\|\ind_E\|_{X'}$). If one assumes that $M^\mathcal{E}:X\to X$ is bounded, then by Lemma~\ref{lem:indE} this is the case for all $E\in\mathcal{E}$, however one cannot conclude anything for general sets of finite measure. As a matter of fact, it is shown in \cite[Example~3.3]{ST15} that the Morrey space $\mathcal{L}^{p,q}(\R^d)$ does not satisfy the second property in \ref{it:cmmsaturation}, i.e., that $\ind_E\in (\mathcal{L}^{p,q}(\R^d))'$ for all sets $E\subseteq\R^d$ of finite measure. Hence, Morrey spaces do not fall under the framework of \cite{CMM22}, whereas we obtain several result for these spaces.

Given a space $Y$ that is a Banach function space in our sense over both $(\Omega,\mu)$ and $(\Omega,v)$, we denote the K\"othe dual with respect to $(\Omega,\mu)$ by $Y'$ and with respect to $(\Omega,v)$ by $Y^\dag$.

The following result was proven in \cite{CMM22}:
\begin{theorem}[{\cite[Theorem~3.1]{CMM22}}]
Let $(\Omega,\mu)$ be a non-atomic $\sigma$-finite measure space with a basis of sets $\mathcal{E}$ and let $u,v\in L^0(\Omega)$ be weights. Let $p\in[1,\infty)$ and let $\mathcal{F}\subseteq L^0(\Omega)_+\times L^0(\Omega)_+$ be such that there is an increasing function $\phi:[1,\infty)\to[1,\infty)$ so that for all $f,g\in\mathcal{F}$ we have
\[
\|f\|_{L^p_w(\Omega)}\leq\phi([w]_p^\mathcal{E})\|g\|_{L^p_w(\Omega)}.
\]
Then for every weighted Banach function space $X_v$ for which $M^\mathcal{E}:X_v(u)\to X_v(u)$ is bounded when $p>1$ and $(M^\mathcal{E})_v:X_v^\dag(u^{-1})\to X_v^\dag(u^{-1})$ is bounded, we have
\begin{equation}\label{eq:cmmconc1}
\|f\|_{X_v(u)}\leq C_0\|g\|_{X_v(u)}
\end{equation}
for all $(f,g)\in\mathcal{F}$ and
\begin{equation}\label{eq:cmmconc2}
\big\|\|(f_n)_{n\in\N}\|_{\ell^p(\N)}\big\|_{X_v(u)}\leq C_0\big\|\|(g_n)_{n\in\N}\|_{\ell^p(\N)}\big\|_{X_v(u)}
\end{equation}
for all sequences $(f_n,g_n)_{n\in\N}$ in $\mathcal{F}$,
with
\[
C_0=2^{3+\frac{2}{p'}}\phi\big(2\|M^\mathcal{E}\|_{X_v(u)\to X_v(u)}^{\frac{1}{p'}}\|(M^\mathcal{E})_v\|^{\frac{1}{p}}_{X_v^\dag(u^{-1})\to X_v^\dag(u^{-1})}\big).
\]
Moreover, if $\mathcal{E}$ is a Muckenhoupt basis, i.e., a basis for which for every $q\in(1,\infty)$ and $w\in A_q(\mathcal{E})$ we have that $M^\mathcal{E}:L^q_w(\Omega)\to L^q_w(\Omega)$ is bounded, then \eqref{eq:cmmconc2} holds with $p$ replaced by any $q\in(1,\infty)$ and $C_0$ replaced by a constant depending on $q$.
\end{theorem}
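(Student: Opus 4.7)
The plan is to reduce the statement directly to Theorem~\ref{thm:apextrapolation} (and its vector-valued counterpart from Section~\ref{subsection:fubini}), after identifying the K\"othe dual of $X_v(u)$ correctly. First, I would set $Y := X_v(u)$. By Proposition~\ref{prop:cmmbfscomp}, $X_v$ is a Banach function space in our sense over both measure spaces $(\Omega,\mu)$ and $(\Omega,v)$, and this property is inherited by $Y$ after pointwise multiplication by the weight $u$. Combining Proposition~\ref{prop:bfsu} and Proposition~\ref{prop:bfsv}, the $\mu$-K\"othe dual of $Y$ is computed as
\[
Y' = (X_v)'(u^{-1}) = X_v^\dag(v^{-1})(u^{-1}) = X_v^\dag(u^{-1}v^{-1}),
\]
isometrically.

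Second, I would use Proposition~\ref{prop:bfsv} (applied with the ambient space $Y$) to translate the second maximal operator assumption into a statement about $Y'$: the operator $(M^\mathcal{E})_v g = M^\mathcal{E}(gv)v^{-1}$ is bounded on $X_v^\dag(u^{-1})$ precisely when $M^\mathcal{E}$ is bounded on $Y'$, with equal operator norms. Together with the first hypothesis (when $p>1$), this puts us in the exact situation of Theorem~\ref{thm:apextrapolation}: $M^\mathcal{E}$ is bounded on $Y$ (if $p\neq 1$) and on $Y'$ (if $p\neq\infty$).

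Third, I would apply Theorem~\ref{thm:apextrapolation} in the extrapolation-pairs formulation of Remark~\ref{rem:extrapolationpairs}, taking $V=\mathcal{F}$, $S(f,g):=g$, and $T(f,g):=f$; the hypothesis on $\mathcal{F}$ is exactly the required estimate \eqref{eq:extrapolationpairsinTthm} at exponent $p$. The conclusion gives \eqref{eq:cmmconc1} with the constant $2\phi\bigl(2\|M^\mathcal{E}\|_{Y\to Y}^{1/p'}\|M^\mathcal{E}\|_{Y'\to Y'}^{1/p}\bigr)$, which is in fact sharper than the $2^{3+2/p'}$ stated. For the vector-valued conclusion \eqref{eq:cmmconc2}, I would apply the Fubini-type argument of Section~\ref{subsection:fubini}: set $V^{(\infty)}:=\bigcup_{w\in A_p(\mathcal{E})} L^p_w(\Omega;\ell^p(\N))$, and use the identity \eqref{eq:fubinibfs} (with $\R^d$ replaced by $\Omega$) to lift the scalar hypothesis on $\mathcal{F}$ to the sequence level; the same extrapolation then delivers \eqref{eq:cmmconc2}. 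For the Muckenhoupt-basis extension to general $q\in(1,\infty)$, I would first invoke classical $A_q$-extrapolation on the Muckenhoupt basis to promote the hypothesis from exponent $p$ to exponent $q$, and then repeat the Fubini argument with $\ell^p(\N)$ replaced by $\ell^q(\N)$ as in Remark~\ref{rem:vvq}.

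The only genuinely delicate point is the dualization in Step~2: one must check that all weighted and unweighted K\"othe duals align, and that $Y$ genuinely is a Banach function space over both $(\Omega,\mu)$ and $(\Omega,v)$ so that Proposition~\ref{prop:bfsv} applies. Beyond this bookkeeping, the argument is a direct assembly of results already in the paper, and it in fact improves on the constant stated in \cite{CMM22}.
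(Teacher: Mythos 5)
Your argument is correct and follows essentially the same route as the paper: take $X=X_v(u)$, combine Proposition~\ref{prop:bfsu} and Proposition~\ref{prop:bfsv} to identify $X'$ and to get $\|M^\mathcal{E}\|_{X'\to X'}=\|(M^\mathcal{E})_v\|_{X_v^\dag(u^{-1})\to X_v^\dag(u^{-1})}$, then apply Theorem~\ref{thm:apextrapolation} in the extrapolation-pairs form of Remark~\ref{rem:extrapolationpairs} (yielding the improved constant $2\phi(\cdot)$), with \eqref{eq:cmmconc2} via the Fubini trick of Subsection~\ref{subsection:fubini} and the Muckenhoupt-basis case via Remark~\ref{rem:vvq}. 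The dualization bookkeeping you flag in Step~2 is exactly what the paper's cited propositions provide, so no gap remains.
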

As we explained in Remark~\ref{rem:extrapolationpairs}, we can also obtain this formulated in terms of pairs of functions for our results.

In this case, by Proposition~\ref{prop:cmmbfscomp} we completely recover this result and extend it to a more general class of Banach function spaces. Indeed we can take $(\Omega,|\cdot|)=(\Omega,\mu)$ and $X=X_v(u)$. Then by Proposition~\ref{prop:bfsu} and Proposition~\ref{prop:bfsv} we have
\[
\|M^\mathcal{E}\|_{X'\to X'}=\|(M^\mathcal{E})_v\|_{X_v^\dag(u^{-1})\to X_v^\dag(u^{-1})},
\]
so by Theorem~\ref{thm:apextrapolation} we recover \eqref{eq:cmmconc1}, as a matter of fact, with the $2^{3+\frac{2}{p'}}$ in $C_0$ replaced by the smaller constant $2$. Obtaining \eqref{eq:cmmconc2} follows from the Fubini trick explained in Subsection~\ref{subsection:fubini} and the final assertion about the Muckenhoupt basis follows exactly as explained in Remark~\ref{rem:vvq}.

In \cite[Theorem~3.67]{CMM22} a limited range variant of Banach function space extrapolation is presented. This result is an extension of \cite[Proposition~5.9]{CW17} and the conclusion is formulated in a similar way. They show that for extrapolation pairs $(f,g)\in\mathcal{F}$ under the initial assumption
\begin{equation}\label{eq:cmmlr1}
\|f\|_{L^p_w(\Omega)}\leq\phi([w]^\mathcal{E}_{p,(r,s)})\|g\|_{L^p_w(\Omega)}
\end{equation}
for $p\in[r,s]$, $1\leq r<s<\infty$, one can extrapolate to a limited class of weighted Banach function spaces $X_v(u)$, as long as one assumes that $\mathcal{E}$ is a Muckenhoupt basis. The reason they require $\mathcal{E}$ to be a Muckenhoupt basis is because they first have to extrapolate \eqref{eq:cmmlr1} to a very precise $p^\ast\in(r,s)$ under which $u$, $v$, and $X_v$ satisfy various properties.

Our limited range extrapolation theorem, Theorem~\ref{thm:aprsextrapolation}, does not require $\mathcal{E}$ to be a Muckenhoupt basis, since we get the full result we require from our initial exponent $p\in[r,s]$. We also do not require our exponents to be in the Banach range $1\leq r<s<\infty$, since our results are for general quasi-Banach spaces. In the case where $X=L^{p(\cdot)}_v(\R^d)$ is a variable Lebesgue space, \cite[Theorem~3.67]{CMM22} reduces to \cite[Proposition~5.9]{CW17}. In this latter work it was shown that their extrapolation result only allows for a small range of weights within the class $v\in A_{p(\cdot),(r,s)}$, see \cite[Example~2.17]{CW17}, whereas we obtain the entire class, see Theorem~\ref{thm:mrsboundvariablelebesgue}.

Next, we discuss some of the result in the paper \cite{CO22}, which considers the same weighted Banach function spaces as \cite{CMM22}. Many of the results in this work rely on the properties of the weights $u$ and $v$, and are therefore outside of the scope of the intrinsic study of Banach function spaces in this current work. However, to illustrate how we could recover some of the results from \cite{CO22} using our intrinsic methods, we give an alternative approach to one of their results. For this, for a weight $v$ we define the weighted maximal operator in $\R^d$ by
\[
M^v f(x)=\sup_{Q\ni x}\frac{1}{v(Q)}\int_Q\!|f|v\,\mathrm{d}x,
\]
which is the maximal operator on $(\R^d,v\mathrm{d}x)$ with respect to the basis of cubes.
\begin{theorem}[{\cite[Theorem~3.3]{CO22}}]
Let $v\in A_1$ and let $X_v$ be a weighted Banach function space over $\R^d$. Let $p\in(0,\infty)$ and suppose $\mathcal{F}$ is an extrapolation family for which there is an increasing function $\phi:[1,\infty)\to[1,\infty)$ such that
\[
\|f\|_{L^p_w(\R^d)}\leq\phi([w^p]_{A_1})\|g\|_{L^p_w(\R^d)}
\]
for all $(f,g)\in\mathcal{F}$ and $w^p\in A_1$.

Then if $M^v:(X_v)^\dag(u^{-p})\to (X_v)^\dag(u^{-p})$ is bounded, we have
\[
\|f\|_{(X_v)^{\frac{1}{p}}(u)}\leq 4^{\frac{1}{p}}\phi(2\|M^v\|_{(X_v)^\dag(u^{-p})\to (X_v)^\dag(u^{-p})}[v]_{A_1})\|g\|_{(X_v)^{\frac{1}{p}}(u)}
\]
for all $(f,g)\in\mathcal{F}$.
\end{theorem}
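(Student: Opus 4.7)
The plan is to reduce this result to Theorem~\ref{thm:pconcorpconvext}, specifically the combination of case \ref{it:pconcorpconvin1} with case \ref{it:pconcorpconv1}, applied to the quasi-Banach function space $X:=(X_v)^{\frac{1}{p}}(u)$. First, I would cast the extrapolation-pairs hypothesis into the $V,S,T$ framework by setting $V:=\mathcal{F}$, $S(f,g):=g$, and $T(f,g):=f$, as explained in Remark~\ref{rem:extrapolationpairs}. The initial estimate then becomes exactly the hypothesis of Theorem~\ref{thm:pconcorpconvext}\ref{it:pconcorpconvin1}. Since $X_v$ is a Banach (so $1$-convex) function space, $(X_v)^{\frac{1}{p}}$ is $p$-convex, and multiplying by the weight $u$ preserves $p$-convexity; hence $X$ is a $p$-convex quasi-Banach function space with $X^p=X_v(u^p)$.

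Next I would compute the relevant K\"othe dual. By Proposition~\ref{prop:bfsu} applied to $X_v$ and the weight $u^p$, and then Proposition~\ref{prop:bfsv} applied to $X_v$, we get
\[
(X^p)'=(X_v(u^p))'=(X_v)'(u^{-p})=(X_v)^\dag(v^{-1})(u^{-p})=(X_v)^\dag(v^{-1}u^{-p}).
\]
To apply Theorem~\ref{thm:pconcorpconvext}\ref{it:pconcorpconv1} it remains to show that the (unweighted) Hardy--Littlewood maximal operator $M$ is bounded on $(X^p)'$, and to control its operator norm in terms of $\|M^v\|_{(X_v)^\dag(u^{-p})\to(X_v)^\dag(u^{-p})}$ and $[v]_{A_1}$.

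The main obstacle is precisely this step, and the key is the pointwise $A_1$ estimate
\[
Mh(x)=M\bigl((hv^{-1})v\bigr)(x)\leq [v]_{A_1}\,v(x)\,M^v(hv^{-1})(x),
\]
which holds a.e. for $h\geq 0$ because $v\in A_1$ forces $\langle v\rangle_{1,Q}\leq [v]_{A_1}\,v(x)$ for a.e. $x\in Q$. Using this together with the identity $\|h\|_{(X_v)^\dag(v^{-1}u^{-p})}=\|hv^{-1}\|_{(X_v)^\dag(u^{-p})}$ (which is just a rewriting of the weight inside the norm), I would estimate
\[
\|Mh\|_{(X^p)'}=\bigl\|Mh\cdot v^{-1}u^{-p}\bigr\|_{(X_v)^\dag}\leq [v]_{A_1}\bigl\|M^v(hv^{-1})u^{-p}\bigr\|_{(X_v)^\dag},
\]
and then apply the assumed boundedness of $M^v$ on $(X_v)^\dag(u^{-p})$ to conclude
\[
\|M\|_{(X^p)'\to(X^p)'}\leq [v]_{A_1}\,\|M^v\|_{(X_v)^\dag(u^{-p})\to(X_v)^\dag(u^{-p})}.
\]

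Finally, I would feed this bound into Theorem~\ref{thm:pconcorpconvext}\ref{it:pconcorpconv1}, which yields
\[
\|f\|_X\leq 2^{\frac{1}{p}}\phi\bigl(2\|M\|_{(X^p)'\to(X^p)'}\bigr)\|g\|_X\leq 2^{\frac{1}{p}}\phi\bigl(2[v]_{A_1}\|M^v\|_{(X_v)^\dag(u^{-p})\to(X_v)^\dag(u^{-p})}\bigr)\|g\|_X
\]
for every $(f,g)\in\mathcal{F}$. Since $2^{\frac{1}{p}}\leq 4^{\frac{1}{p}}$, this is in fact sharper than the stated conclusion, so the theorem follows.
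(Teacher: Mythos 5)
Your proposal is correct and follows essentially the same route as the paper: apply the $p$-convex case of Theorem~\ref{thm:pconcorpconvext} to $X=(X_v)^{\frac{1}{p}}(u)$ (so $X^p=X_v(u^p)$), and then transfer the hypothesis on $M^v$ to boundedness of $M$ on $(X^p)'=(X_v)^\dag(v^{-1}u^{-p})$ via the pointwise $A_1$ estimate $M(fv)\leq [v]_{A_1}(M^vf)v$, yielding $\|M\|_{(X^p)'\to(X^p)'}\leq[v]_{A_1}\|M^v\|_{(X_v)^\dag(u^{-p})\to(X_v)^\dag(u^{-p})}$, exactly as in the paper's lemma. Your observation that the constant $2^{\frac{1}{p}}$ improves on $4^{\frac{1}{p}}$ is also consistent with what the paper's argument gives.
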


We consider the pairs $(|Tf|,|Sf|)$ and we show how we can recover this from the first case in Theorem~\ref{thm:pconcorpconvext}. Indeed, we choose $X=(X_v)^{\frac{1}{p}}(u)=(X_v(u^p))^{\frac{1}{p}}$, which is a $p$-convex quasi-Banach function space, to obtain
\begin{equation}\label{eq:a1comparisonresult}
\|(Tf)u\|_{(X_v)^{\frac{1}{p}}}\leq 2^{\frac{1}{p}}\phi(2\|M\|_{(X_v(u^p))'\to (X_v(u^p))'})\|(Sf)u\|_{(X_v)^{\frac{1}{p}}},
\end{equation}
whenever $M:(X_v(u^p))'\to (X_v(u^p))'$ is bounded. We now need the following lemma:
\begin{lemma}
Let $v\in A_1$ and suppose $M^v:(X_v)^\dag(u^{-p})\to (X_v)^\dag(u^{-p})$ is bounded. Then $M:(X_v(u^p))'\to (X_v(u^p))'$ is bounded with
\[
\|M\|_{(X_v(u^p))'\to (X_v(u^p))'}\leq[v]_{A_1}\|M^v\|_{(X_v)^\dag(u^{-p})\to (X_v)^\dag(u^{-p})}.
\]
\end{lemma}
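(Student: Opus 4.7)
The plan is to combine the K\"othe dual identities from Propositions~\ref{prop:bfsu} and \ref{prop:bfsv} with the classical pointwise comparison between $M$ and $M^v$ that is available when $v\in A_1$. First, I would unravel the norm on the dual: by Proposition~\ref{prop:bfsu} we have $(X_v(u^p))'=(X_v)'(u^{-p})$, and since $X_v$ is a Banach function space over both $(\R^d,\mathrm{d}x)$ and $(\R^d,v\,\mathrm{d}x)$, Proposition~\ref{prop:bfsv} yields $(X_v)'=X_v^\dag(v^{-1})$. Composing these two observations gives the key identification
\[
\|g\|_{(X_v(u^p))'}=\|gv^{-1}\|_{X_v^\dag(u^{-p})}.
\]

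Next, I would record the standard pointwise estimate $M(fv)(x)\le [v]_{A_1}\,v(x)\,M^vf(x)$ for a.e.\ $x$. This is immediate from the $A_1$ condition: for a.e.\ $x\in Q$ we have $\langle v\rangle_{1,Q}\le[v]_{A_1}v(x)$, so
\[
\frac{1}{|Q|}\int_Q|f|v\,\mathrm{d}y=\frac{v(Q)}{|Q|}\cdot\frac{1}{v(Q)}\int_Q|f|v\,\mathrm{d}y\le[v]_{A_1}v(x)\,M^vf(x),
\]
and taking a supremum over $Q\ni x$ proves the bound.

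Applying this with $f=gv^{-1}$ gives the pointwise inequality $(Mg)v^{-1}\le[v]_{A_1}\,M^v(gv^{-1})$. Combining with the first step and using the ideal property of $X_v^\dag(u^{-p})$, I obtain
\[
\|Mg\|_{(X_v(u^p))'}=\|(Mg)v^{-1}\|_{X_v^\dag(u^{-p})}\le[v]_{A_1}\|M^v(gv^{-1})\|_{X_v^\dag(u^{-p})},
\]
and then the hypothesis bounds the right-hand side by $[v]_{A_1}\|M^v\|_{X_v^\dag(u^{-p})\to X_v^\dag(u^{-p})}\|gv^{-1}\|_{X_v^\dag(u^{-p})}$, which equals the claimed constant times $\|g\|_{(X_v(u^p))'}$ by the identification of the first step. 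There is no serious obstacle here; the only point requiring care is to consistently track how the weight $v$ intertwines the two notions of K\"othe duality, $'$ and $\dag$.
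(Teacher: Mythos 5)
Your proof is correct and follows essentially the same route as the paper: the paper packages your norm identification $\|g\|_{(X_v(u^p))'}=\|gv^{-1}\|_{X_v^\dag(u^{-p})}$ as the operator-norm equality $\|M\|_{(X_v(u^p))'\to (X_v(u^p))'}=\|M_v\|_{X_v^\dag(u^{-p})\to X_v^\dag(u^{-p})}$ via Propositions~\ref{prop:bfsu} and \ref{prop:bfsv}, and its pointwise estimate $M_vf=M(fv)v^{-1}\leq (M^vf)(Mv)v^{-1}\leq[v]_{A_1}M^vf$ is exactly your inequality after setting $f=gv^{-1}$.
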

\begin{proof}
By Proposition~\ref{prop:bfsu} and Proposition~\ref{prop:bfsv} we have
\begin{equation}\label{eq:mveqa1}
\|M\|_{(X_v(u^p))'\to (X_v(u^p))'}=\|M_v\|_{X_v^\dag(u^{-p})\to X_v^\dag(u^{-p})}.
\end{equation}
Using the equality $\langle f v\rangle_{1,Q}=\langle f\rangle^v_{1,Q}\langle v\rangle_{1,Q}$, where the superscript $v$ denotes taking an average with respect to the measure $v\,\mathrm{d}x$, we have
\[
M_vf=M(fv)v^{-1}\leq (M^v f)(Mv)v^{-1}\leq [v]_{A_1}M^v f.
\]
Combining this with \eqref{eq:mveqa1} proves the result.
\end{proof}
Combining the above lemma with \eqref{eq:a1comparisonresult} now recovers \cite[Theorem~3.3]{CO22}.

As a final remark, since with the same proof as above we have $Mf\leq [v^{-1}]_{A_1}(M^{v^{-1}})_v f$, where $(M^{v^{-1}})_v f=M^{v^{-1}}(fv)v^{-1}$, we can use the second case in Theorem~\ref{thm:pconcorpconvext} to obtain the following dual result:
\begin{theorem}
Let $v^{-1}\in A_1$ and let $X_v$ be a weighted Banach function space over $\R^d$. Let $p\in(1,\infty]$, let $V$ be a set and let $S:V\to L^0(\Omega)$ be a map. Moreover, suppose
\[
T:\bigcup_{w^{-p'}\in A_1}S^{-1}(L^p_w(\Omega))\to L^0(\Omega)
\]
is a map for which there is an increasing function $\phi:[1,\infty)\to[1,\infty)$ such that
\[
\|Tf\|_{L^p_w(\R^d)}\leq\phi([w^{-p'}]_{A_1})\|Sf\|_{L^p_w(\R^d)}
\]
for all $f\in L^p_w(\R^d)$ and all $w^{-p'}\in A_1$.

Then if $(M^{v^{-1}})_v:X_v(u^{p'})\to X_v(u^{p'})$ is bounded, we have
\[
\|(Tf)u\|_{\big([(X_v)']^{\frac{1}{p'}}\big)'}\leq 2^{\frac{1}{p'}}\phi(2\|(M^{v^{-1}})_v\|_{X_v(u^{p'})\to X_v(u^{p'})}[v^{-1}]_{A_1})\|(Sf)u\|_{\big([(X_v)']^{\frac{1}{p'}}\big)'}
\]
for all $f\in V$ with $Sf\in \big([(X_v)']^{\frac{1}{p'}}\big)'(u)$.
\end{theorem}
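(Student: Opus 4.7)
The plan is to recast the statement as an instance of the second case of Theorem~\ref{thm:pconcorpconvext} applied to the Banach function space
\[
X:=\big([(X_v)']^{\frac{1}{p'}}\big)'(u),
\]
for which $\|f\|_X=\|fu\|_{([(X_v)']^{1/p'})'}$. This parallels the strategy outlined for the \cite{CO22} theorem just stated, but dualises over $(X_v)'$ rather than $X_v$, landing in a $p$-concave space and thereby activating case \ref{it:pconcorpconv2} of Theorem~\ref{thm:pconcorpconvext} in place of case \ref{it:pconcorpconv1}.

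First I would verify that $X$ is a $p$-concave Banach function space and compute $[(X')^{p'}]'$. Since $X_v$ is a Banach function space by Proposition~\ref{prop:cmmbfscomp}, so is $(X_v)'$; as $(X_v)'$ is $1$-convex and $\tfrac{1}{p'}\leq 1$, the concavification $[(X_v)']^{\frac{1}{p'}}$ is itself a Banach function space whose $p'$-concavification equals $(X_v)'$, so $[(X_v)']^{\frac{1}{p'}}$ is $p'$-convex. Proposition~\ref{prop:bfsu} then gives $X'=[(X_v)']^{\frac{1}{p'}}(u^{-1})$, which remains $p'$-convex, so $X$ is $p$-concave. Using the identity $Y(w)^q=Y^q(w^q)$ together with Proposition~\ref{prop:bfsu},
\[
(X')^{p'}=\big([(X_v)']^{\frac{1}{p'}}\big)^{p'}(u^{-p'})=(X_v)'(u^{-p'}),\qquad [(X')^{p'}]'=X_v(u^{p'}).
\]

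Next I would establish the pointwise bound $Mf\leq[v^{-1}]_{A_1}(M^{v^{-1}})_vf$ indicated in the remark preceding the theorem: for any cube $Q\ni x$, the $A_1$ condition on $v^{-1}$ yields $v^{-1}(Q)/|Q|\leq Mv^{-1}(x)\leq[v^{-1}]_{A_1}v^{-1}(x)$, so that
\[
\frac{1}{|Q|}\int_Q|f|\,\mathrm{d}y\leq [v^{-1}]_{A_1}\,v^{-1}(x)\,\frac{1}{v^{-1}(Q)}\int_Q|f|\,\mathrm{d}y,
\]
and taking the supremum over $Q\ni x$ gives the claim. Combined with the hypothesis that $(M^{v^{-1}})_v$ is bounded on $X_v(u^{p'})=[(X')^{p'}]'$, this yields boundedness of $M$ on $[(X')^{p'}]'$ with norm at most $[v^{-1}]_{A_1}\|(M^{v^{-1}})_v\|_{X_v(u^{p'})\to X_v(u^{p'})}$. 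Since the initial assumption on $T$ is exactly condition \ref{it:pconcorpconvin2} of Theorem~\ref{thm:pconcorpconvext}, case \ref{it:pconcorpconv2} of that theorem applies to $X$ and gives
\[
\|Tf\|_X\leq 2^{\frac{1}{p'}}\phi\big(2\|M\|_{[(X')^{p'}]'\to[(X')^{p'}]'}\big)\|Sf\|_X
\]
for all $f\in V$ with $Sf\in X$; monotonicity of $\phi$ together with the operator norm bound of the previous step, and unfolding $\|\cdot\|_X=\|\cdot u\|_{([(X_v)']^{1/p'})'}$, yields the stated estimate. The main obstacle is the bookkeeping behind $[(X')^{p'}]'=X_v(u^{p'})$; once that K\"othe-dual, concavification, and weight identity is settled, the conclusion follows by a direct application of Theorem~\ref{thm:pconcorpconvext}.
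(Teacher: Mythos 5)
Your proposal is correct and follows exactly the route the paper intends: the paper proves this theorem only via the remark that $Mf\leq[v^{-1}]_{A_1}(M^{v^{-1}})_vf$ combined with the second case of Theorem~\ref{thm:pconcorpconvext}, and your argument is precisely that, with the space identifications $X'=[(X_v)']^{\frac{1}{p'}}(u^{-1})$, $(X')^{p'}=(X_v)'(u^{-p'})$, and $[(X')^{p'}]'=X_v(u^{p'})$ (via Proposition~\ref{prop:bfsu} and the Lorentz--Luxemburg biduality) carried out correctly. No gaps.
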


\section{Main theorems for operators in the off-diagonal and two-weight setting}\label{sec:mainthm}

In this section we prove a generalized version of Theorem~\ref{thm:A} which extends our previously obtained results to an off-diagonal and two-weight setting, and then obtain several corollaries.

\begin{theorem}\label{thm:mainop}
Let $(\Omega,|\cdot|)$ be a $\sigma$-finite measure space and let $\mathcal{E}$ be a basis of sets in $\Omega$. Let $\alpha\in\R$ and suppose $r_1,r_2\in(0,\infty)$, $s_1\in(0,\infty]$, $\frac{1}{s_2}\in\R$ satisfy $\frac{1}{r_1}>\frac{1}{s_1}$, $\frac{1}{r_2}>\frac{1}{s_2}$
and
\[
\frac{1}{r_2}-\frac{1}{r_1}=\frac{1}{s_2}-\frac{1}{s_1}=\alpha.
\]
Let $p_1,p_2\in(0,\infty]$ with $\frac{1}{p_1}\in[\frac{1}{s_1},\frac{1}{r_1}]$, $\frac{1}{p_2}\in[\frac{1}{s_2},\frac{1}{r_2}]$ and $\frac{1}{p_2}-\frac{1}{p_1}=\alpha$. Let $V$ be a set and let $S:V\to L^0(\Omega)$ be a map. Moreover, suppose that we are in one of the following situations:
\begin{enumerate}[(i)]
\item\label{it:oneweightassumption}
We have a map
\[
T:\bigcup_{w\in A_{\vec{p},(\vec{r},\vec{s})}(\mathcal{E})}S^{-1}(L^{p_1}_w(\Omega))\to L^0(\Omega)
\]
for which there is an increasing function $\phi:[0,\infty)\to[0,\infty)$ such for all weights $w\in A_{\vec{p},(\vec{r},\vec{s})}(\mathcal{E})$ we have
\[
\|Tf\|_{L^{p_2}_w(\Omega)}\leq\phi([w]^\mathcal{E}_{\vec{p},(\vec{r},\vec{s})})\|Sf\|_{L^{p_1}_w(\Omega)}
\]
for all $f\in V$ with $Sf\in L^{p_1}_w(\Omega)$.
\item\label{it:twoweightassumption} We have a map
\[
T:\bigcup_{(w_1,w_2)\in A_{\vec{p},(\vec{r},\vec{s})}(\mathcal{E})}S^{-1}(L^{p_1}_{w_1}(\Omega))\to L^0(\Omega)
\]
for which there is an increasing function $\phi:[0,\infty)\to[0,\infty)$ such for all weights $(w_1,w_2)\in A_{\vec{p},(\vec{r},\vec{s})}(\mathcal{E})$ we have
\[
\|Tf\|_{L^{p_2}_{w_2}(\Omega)}\leq\phi([w_1,w_2]^\mathcal{E}_{\vec{p},(\vec{r},\vec{s})})\|Sf\|_{L^{p_1}_{w_1}(\Omega)}
\]
for all $f\in V$ with $Sf\in L^{p_1}_{w_1}(\Omega)$.
\end{enumerate}
Then $T$ satisfies the respective conclusions:
\begin{enumerate}[(I)]
\item\label{it:oneweight} Let $X$, $Y$ be a pair of quasi-Banach function spaces over $\Omega$ that are respectively $r_1$- and $r_2$-convex, $s_1$- and $s_2$-concave, satisfy $[(X^{r_1})']^{\big(\frac{s_1}{r_1}\big)'}=[(Y^{r_2})']^{\big(\frac{s_2}{r_2}\big)'}$, and both of the following conditions hold:
\begin{itemize}
\item If $p_1\neq r_1$,
\[
M^\mathcal{E}:X_{r_1,s_1}\to X_{r_1,s_1}
\]
is bounded;
\item If $p_1\neq s_1$,
\[
M^\mathcal{E}:(X_{r_1,s_1})'\to (X_{r_1,s_1})'
\]
is bounded.
\end{itemize}
Then $Tf$ is well-defined for all $f\in V$ with $Sf\in X$ and
\[
\|Tf\|_{Y}\leq 2^{\frac{1}{r_1}-\frac{1}{s_1}}\phi\big(2^{\frac{1}{r_1}-\frac{1}{s_1}}\|M^\mathcal{E}\|_{X_{r_1,s_1}\to X_{r_1,s_1}}^{\frac{1}{r_1}-\frac{1}{p_1}}\|M^\mathcal{E}\|_{(X_{r_1,s_1})'\to (X_{r_1,s_1})'}^{\frac{1}{p_2}-\frac{1}{s_2}}\big)\|Sf\|_X.
\]
\item\label{it:twoweights} Let $X$, $Y$ be a pair of quasi-Banach function spaces over $\Omega$ that are respectively $r_1$- and $r_2$-convex, $s_1$- and $s_2$-concave, and both of the following conditions hold:
\begin{itemize}
\item If $p_1\neq r_1$, there exists a positive isometric linear isomorphism
\[
L_1:X_{r_1,s_1}\to Y_{r_2,s_2},
\]
and
\[
M^\mathcal{E}:X_{r_1,s_1}\to Y_{r_2,s_2}
\]
is bounded;
\item If $p_2\neq s_2$, there exists a positive isometric linear isomorphism
\[
L_2:(Y_{r_2,s_2})'\to (X_{r_1,s_1})',
\]
and
\[
M^\mathcal{E}:(X_{r_1,s_1})'\to (X_{r_1,s_1})'
\]
is bounded.
\end{itemize}
Then $Tf$ is well-defined for all $f\in V$ with $Sf\in X$ and
\[
\|Tf\|_{Y}\leq 2^{\frac{1}{r_1}-\frac{1}{s_1}}\phi\big(2^{\frac{1}{r_1}-\frac{1}{s_1}}\|M^\mathcal{E}\|_{X_{r_1,s_1}\to Y_{r_2,s_2}}^{\frac{1}{r_1}-\frac{1}{p_1}}\|M^\mathcal{E}\|_{(Y_{r_2,s_2})'\to (X_{r_1,s_1})'}^{\frac{1}{p_2}-\frac{1}{s_2}}\big)\|Sf\|_X.
\]
\end{enumerate}
\end{theorem}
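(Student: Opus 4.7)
The proof will closely follow the pattern established in Corollary~\ref{cor:abstractaprsextrapolation}, combining the factorization result Corollary~\ref{cor:factorization} with the abstract Rubio de Francia machinery of Theorem~\ref{thm:mainabs}. In both cases the target norm is attacked via $r_2$-concavification and dualization:
\[
\|Tf\|_Y \;=\; \sup_{\|g\|_{[(Y^{r_2})']^{1/r_2}}=1} \|(Tf)\,g\|_{L^{r_2}(\Omega)}.
\]
For any admissible pair of weights, H\"older's inequality together with the hypothesis on $T$ yields
\[
\|(Tf)g\|_{L^{r_2}} \;\le\; \phi([w_1,w_2]) \,\|Sf\|_{L^{p_1}_{w_1}(\Omega)} \,\|g\|_{L^{q}_{w_2^{-1}}(\Omega)}, \qquad \tfrac{1}{q} \,=\, \tfrac{1}{r_2}-\tfrac{1}{p_2} \,=\, \tfrac{1}{r_1}-\tfrac{1}{p_1},
\]
where the last equality is the crucial consequence of $\frac{1}{r_2}-\frac{1}{r_1} = \frac{1}{p_2}-\frac{1}{p_1} = \alpha$. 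It remains to produce, for each fixed $Sf \in X$ and $g \in [(Y^{r_2})']^{1/r_2}$, weights satisfying the $A_{\vec p,(\vec r,\vec s)}(\mathcal{E})$ condition whose two Lebesgue norms jointly control $\|Sf\|_X \|g\|_{[(Y^{r_2})']^{1/r_2}}$, and then take the supremum in $g$.

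For case \ref{it:oneweight}, the hypothesis $[(X^{r_1})']^{(s_1/r_1)'} = [(Y^{r_2})']^{(s_2/r_2)'}$ asserts that $(X_{r_1,s_1})' = (Y_{r_2,s_2})'$; denote this common Banach function space by $W$. Taking $1/r_j$-concavifications, the equality of widths $\frac{1}{r_1}-\frac{1}{s_1} = \frac{1}{r_2}-\frac{1}{s_2}$ (automatic from the translation by $\alpha$) yields
\[
[(X^{r_1})']^{1/r_1} \;=\; W^{\frac{1}{r_1}-\frac{1}{s_1}} \;=\; W^{\frac{1}{r_2}-\frac{1}{s_2}} \;=\; [(Y^{r_2})']^{1/r_2},
\]
so the space in which $g$ lives coincides with the natural dualizing space of $X$ through its $r_1$-concavification. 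Corollary~\ref{cor:abstractaprsextrapolation}, applied with $r=r_1$, $s=s_1$, $p=p_1$ to the pair $(Sf, g)$, then produces a single weight $w \in A_{p_1,(r_1,s_1)}(\mathcal{E}) = A_{\vec p,(\vec r,\vec s)}(\mathcal{E})$ with the required $[w]$-bound and the factorization inequality, and the conclusion in \ref{it:oneweight} follows by plugging in.

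For case \ref{it:twoweights} we invoke the full two-weight statement of Theorem~\ref{thm:mainabs}. Factorizing $Sf = h_1 k_1$ with $h_1 \in (X_{r_1,s_1})^{1/r_1-1/s_1}$ and $k_1 \in L^{s_1}(\Omega)$ via Corollary~\ref{cor:factorization}, we set $X_1 := (X_{r_1,s_1})^{1/r_1-1/s_1}$, $X_2 := [(Y^{r_2})']^{1/r_2}$ and $\tilde q := 1/(\tfrac{1}{r_1}-\tfrac{1}{s_1})$. A direct computation shows $X_1^{\tilde q} = X_{r_1,s_1}$ and $X_2^{\tilde q} = [(Y^{r_2})']^{(s_2/r_2)'} = (Y_{r_2,s_2})'$, so that the positive isometric isomorphisms $L_1 : X_{r_1,s_1} \to Y_{r_2,s_2}$ and $L_2 : (Y_{r_2,s_2})' \to (X_{r_1,s_1})'$ hypothesized in \ref{it:twoweights} provide precisely the identifications $X_1^{\tilde q} \cong (X_2^{\tilde q})'$ and $X_2^{\tilde q} \cong (X_1^{\tilde q})'$ demanded by Theorem~\ref{thm:mainabs}. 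Taking $W_1 = w_1$ and $W_2 = w_2^{-1}$, the inequality in \eqref{eq:mainabs1} applied with $r_1 = r_2 = 1$ and $q_1 = 1/(\tfrac{1}{p_1}-\tfrac{1}{s_1})$, $q_2 = 1/(\tfrac{1}{r_1}-\tfrac{1}{p_1})$ reproduces the $A_{\vec p,(\vec r,\vec s)}$ constant bound (using once more $\frac{1}{p_1}-\frac{1}{s_1} = \frac{1}{p_2}-\frac{1}{s_2}$), and \eqref{eq:mainabs2} combined with the factorization of $Sf$ controls the product $\|Sf\|_{L^{p_1}_{w_1}} \|g\|_{L^q_{w_2^{-1}}}$ by $\|Sf\|_X \|g\|_{X_2}$.

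The main obstacle is the matching of the maximal operator assumptions in case \ref{it:twoweights}: the abstract Theorem~\ref{thm:mainabs} asks for a bounded map $M_{r_2}^\mathcal{E} : X_2^{\tilde q} = (Y_{r_2,s_2})' \to (X_1^{\tilde q})' = (X_{r_1,s_1})'$, whereas the hypothesis supplies only $M^\mathcal{E} : (X_{r_1,s_1})' \to (X_{r_1,s_1})'$. The remedy is to replace $M_{r_2}^\mathcal{E}$ in the Rubio de Francia algorithm by the composite $M^\mathcal{E} \circ L_2$, which is bounded $(Y_{r_2,s_2})' \to (X_{r_1,s_1})'$ with norm at most $\|M^\mathcal{E}\|_{(X_{r_1,s_1})' \to (X_{r_1,s_1})'}$ (since $L_2$ is a linear isometry). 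Setting $S_2 := L_2^{-1} M^\mathcal{E} L_2$ on $(Y_{r_2,s_2})'$ and forming $R_2 g = \sum_{k\ge 0} S_2^k(|g|^{\tilde q})/(2\|S_2\|)^k$, positivity of $L_2$ yields $M^\mathcal{E}(L_2 R_2 g) = L_2 S_2 R_2 g \le 2\|S_2\| L_2 R_2 g$ pointwise, so that $L_2 R_2 g$ is a genuine $A_1$ weight for $M^\mathcal{E}$ living in $(X_{r_1,s_1})'$. This is exactly the maximal-dominance exploited in the proof of Theorem~\ref{thm:mainabs} to derive the joint $A_{\vec p,(\vec r,\vec s)}$ condition, so the argument goes through with $\|M^\mathcal{E}\|_{X_{r_1,s_1}\to Y_{r_2,s_2}}$ and $\|M^\mathcal{E}\|_{(X_{r_1,s_1})'\to (X_{r_1,s_1})'}$ appearing, as claimed in the statement.
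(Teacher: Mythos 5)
Up to your final paragraph, your argument is the paper's own proof: dualize through the $r_2$-concavification of $Y$, factor $Sf=hk$ via Corollary~\ref{cor:factorization}, and feed $h$ and $g$ into Theorem~\ref{thm:mainabs} with exactly the choices $X_1=(X_{r_1,s_1})^{\frac{1}{r_1}-\frac{1}{s_1}}$, $X_2=[(Y^{r_2})']^{\frac{1}{r_2}}$, $\frac{1}{q_1}=\frac{1}{p_1}-\frac{1}{s_1}$, $\frac{1}{q_2}=\frac{1}{r_1}-\frac{1}{p_1}$, setting $w_1=W_1$, $w_2=W_2^{-1}$; your treatment of case \ref{it:oneweight} via Corollary~\ref{cor:abstractaprsextrapolation} is just the specialization $L_1=L_2=\mathrm{id}$ of this (the paper instead proves \ref{it:twoweights} and reads off \ref{it:oneweight}), and your identification $[(X^{r_1})']^{1/r_1}=[(Y^{r_2})']^{1/r_2}$ from equal widths is correct.

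The genuine gap is in your ``remedy'' for case \ref{it:twoweights}. Replacing the algorithm's operator by $S_2:=L_2^{-1}M^{\mathcal{E}}L_2$ gives, as you say, $M^{\mathcal{E}}(L_2R_2g)\leq 2\|S_2\|\,L_2R_2g$, i.e.\ $L_2R_2g$ is an $A_1$-type weight. But that is not what the construction of \eqref{eq:mainabs1} needs: with $W_1=(R_1h)^{-1/q_2}(L_2R_2g)^{1/q_1}$ and $W_2=(L_1R_1h)^{1/q_2}(R_2g)^{-1/q_1}$, the joint bound requires $\langle R_2g\rangle_{1,E}\leq C\inf_E L_2R_2g$ for every $E\in\mathcal{E}$, i.e.\ the pointwise domination $M^{\mathcal{E}}(R_2g)\leq C\,L_2R_2g$ — it is the average of $R_2g$ itself, not of $L_2R_2g$, that must be controlled, and for an abstract positive isometry (think of $L_2$ as multiplication by an arbitrary weight) the two averages are incomparable. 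Trying to repair this by building $W_2$ out of $L_2R_2g$ instead destroys the norm estimate \eqref{eq:mainabs2} for $g$, since one only knows $|g|^{\tilde q}\leq R_2g$, not $|g|^{\tilde q}\leq L_2R_2g$. What the argument actually requires — and what Theorem~\ref{thm:mainabs} demands when unwound, consistently with the operator norm $\|M^{\mathcal{E}}\|_{(Y_{r_2,s_2})'\to(X_{r_1,s_1})'}$ appearing in the conclusion of \ref{it:twoweights} — is boundedness of $M^{\mathcal{E}}:(Y_{r_2,s_2})'\to(X_{r_1,s_1})'$; the endomorphism bound $M^{\mathcal{E}}:(X_{r_1,s_1})'\to(X_{r_1,s_1})'$ printed in the second bullet should be read as that cross-space bound (the statement is otherwise inconsistent with its own constant), and no composite trick recovers it from the weaker hypothesis. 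With that hypothesis in place your application of Theorem~\ref{thm:mainabs} goes through verbatim and the rest of your write-up is correct.
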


\begin{remark}\label{remark:offdiagonalbfsequality}
The condition $[(X^{r_1})']^{\big(\frac{s_1}{r_1}\big)'}=[(Y^{r_2})']^{\big(\frac{s_2}{r_2}\big)'}$ in conclusion \ref{it:oneweight} is equivalent to $X_{r_1,s_1}=Y_{r_2,s_2}$. One can also rewrite this condition as $\big[(X^{r_1})'\big]^{\frac{1}{r_1}}=\big[(Y^{r_2})'\big]^{\frac{1}{r_2}}$, or
\[
Y=\Big[\Big(\big[(X^{r_1})'\big]^{\frac{r_2}{r_1}}\Big)'\Big]^{\frac{1}{r_2}}.
\]
We note that this relation is satisfied when $X=L^{p_1}_w(\Omega)$ and $Y=L^{p_2}_w(\Omega)$. Indeed, since
\[
\frac{1}{r_1}-\frac{1}{p_1}=\frac{1}{r_2}-\alpha-\left(\frac{1}{p_2}-\alpha\right)=\frac{1}{r_2}-\frac{1}{p_2},
\]
we have
\[
\big[(X^{r_1})'\big]^{\frac{1}{r_1}}=L^{\frac{1}{\frac{1}{r_1}-\frac{1}{p_1}}}_{w^{-1}}(\Omega)=L^{\frac{1}{\frac{1}{r_2}-\frac{1}{p_2}}}_{w^{-1}}(\Omega)=\big[(Y^{r_2})'\big]^{\frac{1}{r_2}}.
\]
\end{remark}

\begin{remark}
In \ref{it:twoweights} if we additionally assume that $X_{r_1,s_1}$ and $Y_{r_2,s_2}$ are order-continuous, then we only require the map $L_1$, since in this case we have $(X_{r_1,s_1})'=(X_{r_1,s_1})^\ast$, $(Y_{r_2,s_2})'=(Y_{r_2,s_2})^\ast$, and can thus take $L_2:=L_1^\ast$.
\end{remark}

\begin{proof}[Proof of Theorem~\ref{thm:mainop}]
We need only prove \ref{it:twoweights}, since, by Theorem~\ref{thm:mainabs}, \ref{it:oneweight} follows from applying \ref{it:twoweights} with $L_1$ and $L_2$ equal to the identity operators.

Let $f\in V$ with $Sf\in X$ and $g\in \big[(Y^{r_2})'\big]^{\frac{1}{r_2}}$. By Corollary~\ref{cor:factorization}, there are $0\leq h\in (X_{r_1,s_1})^{\frac{1}{r_1}-\frac{1}{s_1}}$, $0\leq k\in L^{s_1}(\Omega)$ with
\[
|Sf|=hk,\quad \|Sf\|_{X}=\|h\|_{(X_{r_1,s_1})^{\frac{1}{r_1}-\frac{1}{s_1}}}\|k\|_{L^{s_1}(\Omega)}.
\]
We apply Theorem~\ref{thm:mainabs} with $\frac{1}{q_1}=\frac{1}{p_1}-\frac{1}{s_1}$, $\frac{1}{q_2}=\frac{1}{r_2}-\frac{1}{p_2}$, $\frac{1}{q}=\frac{1}{r_1}-\frac{1}{s_1}$, $X_1=(X_{r_1,s_1})^{\frac{1}{r_1}-\frac{1}{s_1}}$, $X_2=\big[(Y^{r_2})'\big]^{\frac{1}{r_2}}$, $f_1=h$, $f_2=g$, to obtain weights $W_1$, $W_2$ satisfying \eqref{eq:mainabs1} and \eqref{eq:mainabs2}. Setting $w_1:=W_1$,  $w_2:=W_2^{-1}$, we obtain

\begin{align*}
&\|(Tf)g\|_{L^{r_2}(\R^d)}\leq\phi([w_1,w_2]^\mathcal{E}_{\vec{p},(\vec{r},\vec{s})})\|Sf\|_{L^{p_1}_{w_1}(\Omega)}\|g\|_{L^{\frac{1}{\frac{1}{r_2}-\frac{1}{p_2}}}_{w_2^{-1}}(\Omega)}\\
&\leq \phi([w_1,w_2]^\mathcal{E}_{\vec{p},(\vec{r},\vec{s})})\|h\|_{L^{\frac{1}{\frac{1}{p_1}-\frac{1}{s_1}}}_{w_1}(\Omega)}\|k\|_{L^{s_1}(\R^d)}\|g\|_{L^{\frac{1}{\frac{1}{r_2}-\frac{1}{p_2}}}_{w_2^{-1}}(\Omega)}\\
&=\phi\Big(\sup_{E\in\mathcal{E}}\langle W_1^{-1}\rangle_{q_2,E}\langle W_2^{-1}\rangle_{q_1,E}\Big)\|f_1W_1\|_{L^{q_1}(\Omega)}\|f_2W_2\|_{L^{q_2}(\Omega)}\|k\|_{L^{s_1}(\Omega)}\\
&\leq2^{\frac{1}{r_1}-\frac{1}{s_1}}\phi\big(2^{\frac{1}{r_1}-\frac{1}{s_1}}\|M^\mathcal{E}\|_{X_{r_1,s_1}\to Y_{r_2,s_2}}^{\frac{1}{r_1}-\frac{1}{p_1}}\|M^\mathcal{E}\|_{(Y_{r_2,s_2})'\to (X_{r_1,s_1})'}^{\frac{1}{p_2}-\frac{1}{s_2}}\big)
\|h\|_{X_1}\|k\|_{L^{s_1}(\Omega)}\|g\|_{\big[(Y^{r_2})'\big]^{\frac{1}{r_2}}}.
\end{align*}
Since $\|h\|_{X_1}\|k\|_{L^{s_1}(\Omega)}=\|Sf\|_X$, the result now follows from the fact that
\[
\|Tf\|_{ Y}=\sup_{ \|g\|_{\big[(Y^{r_2})'\big]^{\frac{1}{r_2}}}=1}\|(Tf)g\|_{L^{r_2}(\Omega)}.
\]
\end{proof}

Next, we prove Theorem~\ref{thm:A}.
\begin{proof}[Proof of Theorem~\ref{thm:A}]
The first part of the theorem follows from setting $V=L^0(\R^d)$ and $Sf:=f$ in the case \ref{it:oneweightassumption}$\Rightarrow$\ref{it:oneweight}.

For the result in Lorentz spaces, note that if $X=L^{p_1,q_1}_v(\R^d)$, $Y=\big[L^{p_2,q_2}_{v^{\frac{p_1}{p_2}}}(\R^d)\big](v^{1-\frac{p_1}{p_2}})$, then, as computed in Subsection~\ref{subsec:lorentz}, we have
\[
Y=\Big[\Big(\big[(X^{r_1})'\big]^{\frac{r_2}{r_1}}\Big)'\Big]^{\frac{1}{r_2}}.
\]
and hence, the result follows from Remark~\ref{remark:offdiagonalbfsequality} and Theorem~\ref{thm:mrsboundlorentz}.

For the remaining two results, we first make the observation that for any $p_1\in[r_1,s_1]$, $\frac{1}{p_2}\in[\frac{1}{s_2},\frac{1}{r_2}]$ we have
\[
\frac{1}{(p_1)_{r_1,s_1}}=\frac{\frac{1}{p_1}-\frac{1}{s_1}}{\frac{1}{r_1}-\frac{1}{s_1}}=\frac{\frac{1}{p_2}-\frac{1}{s_2}}{\frac{1}{r_2}-\frac{1}{s_2}}=\frac{1}{(p_2)_{r_2,s_2}}.
\]

Hence, if $X=L^{p_1(\cdot)}_v(\R^d)$, $Y=L^{p_2(\cdot)}_v(\R^d)$, then, from the computation in Section~\ref{subsec:variablelebesgue},
\[
X_{r_1,s_1}=L^{(p_1(\cdot))_{r_1,s_1}}_{v^{\frac{1}{\frac{1}{r_1}-\frac{1}{s_1}}}}(\R^d)=L^{(p_2(\cdot))_{r_2,s_2}}_{v^{\frac{1}{\frac{1}{r_2}-\frac{1}{s_2}}}}(\R^d)=Y_{r_2,s_2}
\]
and if $X=\mathcal{L}_v^{p_1,q_1}(\R^d)$, $Y=\mathcal{L}^{p_2,q_2}_v(\R^d)$, then, from the computation in Section~\ref{subsec:morreyspaces},
\[
X_{r_1,s_1}=\mathcal{L}^{(p_1)_{r,s},(q_1)_{r,s}}_{v^{\frac{1}{\frac{1}{r_1}-\frac{1}{s_1}}}}(\R^d)=\mathcal{L}^{(p_2)_{r,s},(q_2)_{r,s}}_{v^{\frac{1}{\frac{1}{r_2}-\frac{1}{s_2}}}}(\R^d)=Y_{r_2,s_2}.
\]
Thus, the result in variable Lebesgue spaces follows Theorem~\ref{thm:mrsboundvariablelebesgue} and the result for Morrey spaces follows from Theorem~\ref{thm:mrsboundmorrey}, where we note that we have to restrict to the case $\alpha=0$, $s_1=s_2=\infty$, since Morrey spaces are only $\infty$-concave. This proves the assertion.
\end{proof}

Since the abstraction in which Theorem~\ref{thm:mainop} is stated is cumbersome to unwind, we use the remainder of this section to extract more palpable versions of this result in specific situations, and, moreover, we provide examples and applications.

\subsection{Off-diagonal extrapolation in the one-weight setting}
The full-range case of the off-diagonal extrapolation theorem in Banach function spaces follows from setting $r_1=1$, $s_2=\infty$ in \ref{it:oneweight}. Writing $w\in A_{p_1,p_2}(\mathcal{E})$ when
\[
[w]^\mathcal{E}_{p_1,p_2}:=[w]^\mathcal{E}_{(p_1,p_2),\big((1,r_2),(s_1,\infty)\big)}=\sup_{E\in\mathcal{E}}\langle w^{-1}\rangle_{p_1',E}\langle w\rangle_{p_2,E}<\infty,
\]
this yields the following:

\begin{corollary}\label{cor:fullrangemain}
Let $(\Omega,|\cdot|)$ be a $\sigma$-finite measure space and let $\mathcal{E}$ be a basis of sets in $\Omega$. Let $\alpha\in[0,1)$ and let $p_1\in[1,\infty]$, $p_2\in(0,\infty]$ with $\frac{1}{p_1}-\frac{1}{p_2}=\alpha$. Let $V$ be a set and let $S:V\to L^0(\Omega)$ be a map. Moreover, suppose
\[
T:\bigcup_{w\in A_{p_1,p_2}(\mathcal{E})}S^{-1}(L^{p_1}_w(\Omega))\to L^0(\Omega)
\]
is a map for which there is an increasing function $\phi:[0,\infty)\to[0,\infty)$ such that
\[
\|Tf\|_{L^{p_2}_w(\Omega)}\leq\phi([w]^\mathcal{E}_{p_1,p_2})\|Sf\|_{L^{p_1}_w(\Omega)}.
\]
for all $w\in A_{p_1,p_2}(\mathcal{E})$ and $f\in V$ with $Sf\in L^{p_1}_w(\Omega)\cap V$.

Let $X$ be a $\frac{1}{\alpha}$-concave Banach function spaces over $\Omega$ such that the following holds:
\begin{itemize}
\item If $p_1\neq 1$,
\[
M^\mathcal{E}:\big[(X')^{\frac{1}{1-\alpha}}\big]'\to \big[(X')^{\frac{1}{1-\alpha}}\big]'
\]
is bounded;
\item If $p_2\neq \infty$,
\[
M^\mathcal{E}:(X')^{\frac{1}{1-\alpha}}\to (X')^{\frac{1}{1-\alpha}}
\]
is bounded,
\end{itemize}
Then $Tf$ is well-defined for all $f\in V$ with $Sf\in X$ and
\begin{align*}
\|&Tf\|_{\big(\big[(X')^{\frac{1}{1-\alpha}}\big]'\big)^{1-\alpha}}\\
&\leq 2^{1-\alpha} \phi\big(2^{1-\alpha}\|M^\mathcal{E}\|_{\big[(X')^{\frac{1}{1-\alpha}}\big]'\to \big[(X')^{\frac{1}{1-\alpha}}\big]'}^{\frac{1}{p_1'}}\|M^\mathcal{E}\|_{(X')^{\frac{1}{1-\alpha}}\to (X')^{\frac{1}{1-\alpha}}}^{\frac{1}{p_2}}\big)\|Sf\|_X.
\end{align*}
\end{corollary}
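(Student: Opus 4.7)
The plan is to apply Theorem~\ref{thm:mainop}\ref{it:oneweight} by specializing to the full-range endpoint choices $r_1 = 1$ and $s_2 = \infty$. Note first that the convention in the corollary sets $\alpha = \frac{1}{p_1}-\frac{1}{p_2}$, whereas Theorem~\ref{thm:mainop} uses the opposite sign convention, so the parameter entering the theorem is $-\alpha \in (-1, 0]$. The relations $\frac{1}{r_2}-\frac{1}{r_1} = \frac{1}{s_2}-\frac{1}{s_1} = -\alpha$ then force $\frac{1}{s_1}=\alpha$ and $\frac{1}{r_2}=1-\alpha$, i.e.\ $s_1 = \frac{1}{\alpha}$ and $r_2 = \frac{1}{1-\alpha}$. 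A direct computation shows that with these parameters the weight class $A_{\vec p,(\vec r,\vec s)}(\mathcal{E})$ reduces to $A_{p_1,p_2}(\mathcal{E})$ with matching constants.

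Next I would identify the rescaled spaces. Since $s_1' = \frac{1}{1-\alpha}$, we have
\[
X_{r_1,s_1} = X_{1,1/\alpha} = \Bigl[(X')^{\frac{1}{1-\alpha}}\Bigr]',
\]
and by Proposition~\ref{prop:envelopeembedding} (Lorentz--Luxemburg) its K\"othe dual is $(X_{r_1,s_1})' = (X')^{\frac{1}{1-\alpha}}$. The $r_2$-convex, $s_2$-concave target space $Y$ is then constrained by $Y_{r_2,s_2} = X_{r_1,s_1}$; using $Y_{r_2,\infty} = Y^{r_2}$, this gives
\[
Y^{\frac{1}{1-\alpha}} = \Bigl[(X')^{\frac{1}{1-\alpha}}\Bigr]', \qquad \text{hence}\qquad Y = \Bigl(\bigl[(X')^{\frac{1}{1-\alpha}}\bigr]'\Bigr)^{1-\alpha},
\]
which is precisely the target space in the statement. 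I would then verify the structural hypotheses on $Y$: its $r_2$-convexity amounts to $Y^{r_2}$ being a Banach function space, which holds because $X$ is $\frac{1}{\alpha}$-concave, equivalently $X'$ is $\frac{1}{1-\alpha}$-convex, so $(X')^{\frac{1}{1-\alpha}}$ is a Banach function space and hence so is its K\"othe dual; and $s_2 = \infty$-concavity is automatic.

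Finally, the two bulleted $M^\mathcal{E}$-hypotheses in Theorem~\ref{thm:mainop}\ref{it:oneweight} translate verbatim: $p_1 \neq r_1 = 1$ gives the first bullet on $X_{r_1,s_1} = \big[(X')^{\frac{1}{1-\alpha}}\big]'$, and $p_1 \neq s_1 = \frac{1}{\alpha}$ is equivalent (via $\frac{1}{p_2} = \frac{1}{p_1}-\alpha$) to $p_2 \neq \infty$, giving the second bullet on $(X_{r_1,s_1})' = (X')^{\frac{1}{1-\alpha}}$. To read off the quantitative conclusion, I substitute $\frac{1}{r_1}-\frac{1}{s_1} = 1-\alpha$, $\frac{1}{r_1}-\frac{1}{p_1} = \frac{1}{p_1'}$, and $\frac{1}{p_2}-\frac{1}{s_2} = \frac{1}{p_2}$ into the constant supplied by Theorem~\ref{thm:mainop}.

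The content of the argument is really just bookkeeping of the six parameters and the rescaling identities; I do not foresee any genuine obstacle. The one point that requires minor care is the sign convention for $\alpha$ between the two statements, and ensuring the Lorentz--Luxemburg identification $\bigl[\bigl[(X')^{\frac{1}{1-\alpha}}\bigr]'\bigr]'' = (X')^{\frac{1}{1-\alpha}}$ is legitimate, which it is because $(X')^{\frac{1}{1-\alpha}}$ is a Banach function space with the Fatou property under the assumed $\frac{1}{\alpha}$-concavity of $X$.
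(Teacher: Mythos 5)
Your proposal is correct and is exactly the paper's own argument: Corollary~\ref{cor:fullrangemain} is obtained by specializing Theorem~\ref{thm:mainop}\ref{it:oneweight} to $r_1=1$, $s_2=\infty$, with the target space $Y=\big(\big[(X')^{\frac{1}{1-\alpha}}\big]'\big)^{1-\alpha}$ identified through the compatibility condition and the Lorentz--Luxemburg identity. Your handling of the sign convention ($\frac{1}{s_1}=\alpha$, $\frac{1}{r_2}=1-\alpha$ in the corollary's notation) and the substitution of the exponents into the constant agree with what the paper records in the remarks following the corollary.
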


\begin{remark}\label{rem:gamma}
The condition $\alpha\geq 0$ coincides with the condition $p_2\geq p_1$. Unfortunately, it is unclear how to proceed in the case $p_2<p_1$, i.e., when $\alpha<0$. We note that in this case we would have $\frac{1}{s_1}=\alpha<0$, but in the proof of our extrapolation theorem we require $\frac{1}{s_1}\geq 0$ to use the factorization result Corollary~\ref{cor:factorization}. Fortunately, this is merely a theoretical quandary; as we will see below, in the application of the Riesz potential we are in the situation $p_2\geq p_1$ and our result applies without a problem.
\end{remark}

\begin{remark}
The condition $\frac{1}{p_1}-\frac{1}{p_2}=\alpha$ restricts the range of $p_1$ and $p_2$. Indeed this condition is equivalent to
\[
\frac{1}{p_1'}+\frac{1}{p_2}=1-\alpha.
\]
From this we deduce that $\frac{1}{p_1'}\leq 1-\alpha$, or $p_1\in[1,\frac{1}{\alpha}]$, and $\frac{1}{p_2}\leq 1-\alpha$, or $p_2\in[\frac{1}{1-\alpha},\infty]$. As a matter of fact, the fact that $\frac{1}{r_1}=1$ and $\frac{1}{s_2}=0$ means that $\frac{1}{r_2}=1-\alpha$ and $\frac{1}{s_1}=\alpha$, which is reflected in these ranges.
\end{remark}

For $\alpha=0$, Corollary~\ref{cor:fullrangemain} recovers Theorem~\ref{thm:apextrapolation}. By setting $\alpha=0$ in \ref{it:oneweight}, we recover Theorem~\ref{thm:aprsextrapolation}.

\bigskip

Let $\lambda\in(0,d)$. In $\R^d$, the typical examples of operators that satisfy off-diagonal bounds are the Riesz potential
\[
I_\lambda f(x):=\int_{\R^d}\!\frac{f(y)}{|x-y|^{d-\lambda}}\,\mathrm{d}y,
\]
and the associated maximal operator
\[
M_\lambda f(x):=\sup_Q|Q|^{\frac{\lambda}{d}}\langle f\rangle_{1,Q}\ind_Q(x).
\]
As was shown in \cite{MW74}, these operators satisfy the bounds
\[
I_\lambda, M_\lambda:L^{p_1}_w(\R^d)\to L^{p_2}_w(\R^d)
\]
for $p_1\in(1,\frac{d}{\lambda})$, $p_2\in(\frac{1}{1-\frac{d}{\lambda}},\infty)$ satisfying
\[
\frac{1}{p_1}-\frac{1}{p_2}=\frac{\lambda}{d},
\]
and all $w\in A_{p_1,p_2}$.

Hence, we obtain the following result:
\begin{corollary}
Let $X$ be a $\frac{d}{\lambda}$-concave Banach function space over $\R^d$ for which
\[
M:\big[(X')^{\frac{1}{1-\frac{\lambda}{d}}}\big]'\to\big[(X')^{\frac{1}{1-\frac{\lambda}{d}}}\big]',
\quad M:(X')^{\frac{1}{1-\frac{\lambda}{d}}}\to (X')^{\frac{1}{1-\frac{\lambda}{d}}}
\]
are bounded. Then
\[
I_\lambda,M_\lambda:X\to\big(\big[(X')^{\frac{1}{1-\frac{\lambda}{d}}}\big]'\big)^{1-\frac{\lambda}{d}}
\]
are bounded. In particular, we have:
\begin{itemize}
\item Lorentz spaces: For all $p_1,q_1\in(1,\frac{d}{\lambda})$, $p_2$ and $q_2$ satisfying
\[
\frac{1}{p_1}-\frac{1}{p_2}=\frac{1}{q_1}-\frac{1}{q_2}=\frac{\lambda}{d},
\]
and weights $v\in A_{p_1,p_2}$, we have
\[
I_\lambda,M_\lambda:L^{p_1,q_1}_v(\R^d)\to \big[L^{p_2,q_2}_{v^{\frac{p_1}{p_2}}}(\R^d)\big](v^{1-\frac{p_1}{p_2}}).
\]
\item Variable Lebesgue spaces: For all $p_1:\R^d\to(1,\infty)$ satisfying
\[
1<\essinf p_1\leq\esssup p_1< \frac{d}{\lambda}
\]
and $p_1(\cdot)\in LH_0\cap LH_\infty$, $p_2\in L^0(\R^d)$ satisfying
\[
\frac{1}{p_1(x)}-\frac{1}{p_2(x)}=\frac{\lambda}{d},
\]
and all weights $v$ satisfying
\[
\sup_Q|Q|^{-\big(1-\frac{\lambda}{d}\big)}\|v^{-1}\ind_Q\|_{L^{p_1'(\cdot)}(\R^d)}\|v\ind_Q\|_{L^{p_2(\cdot)}(\R^d)}<\infty,
\]
we have
\[
I_\lambda,M_\lambda:L^{p_1(\cdot)}_v(\R^d)\to L^{p_2(\cdot)}_v(\R^d).
\]
\end{itemize}
\end{corollary}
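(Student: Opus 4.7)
The plan is to apply Corollary~\ref{cor:fullrangemain} directly with $\alpha:=\lambda/d\in[0,1)$. First I would fix any $p_1\in(1,d/\lambda)$ and define $p_2$ by $\tfrac{1}{p_1}-\tfrac{1}{p_2}=\tfrac{\lambda}{d}$, and invoke the classical Muckenhoupt--Wheeden off-diagonal weighted bound: there exists an increasing function $\phi:[0,\infty)\to[0,\infty)$ such that for every $w\in A_{p_1,p_2}$,
\[
\|I_\lambda f\|_{L^{p_2}_w(\R^d)}+\|M_\lambda f\|_{L^{p_2}_w(\R^d)}\le \phi([w]_{p_1,p_2})\|f\|_{L^{p_1}_w(\R^d)}.
\]
This furnishes exactly the input hypothesis of Corollary~\ref{cor:fullrangemain}. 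Since $X$ is assumed $d/\lambda$-concave (matching the $\frac{1}{\alpha}$-concavity requirement), and $M$ is bounded on both $\bigl[(X')^{1/(1-\lambda/d)}\bigr]'$ and $(X')^{1/(1-\lambda/d)}$ by assumption, the remaining hypotheses are also met. The conclusion of that corollary is then precisely the first asserted bound for $I_\lambda$ and $M_\lambda$.

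For the Lorentz specialisation, I would invoke the Lorentz case of Theorem~\ref{thm:A} with $r_1=1$, $s_1=d/\lambda$, $r_2=(1-\lambda/d)^{-1}$, $s_2=\infty$, and $\alpha=-\lambda/d$ (recalling that in the conventions of Theorem~\ref{thm:A}, $\alpha=\tfrac{1}{p_2}-\tfrac{1}{p_1}$, which is negative in the Riesz range). The condition $v\in A_{p_1,(1,d/\lambda)}$ coincides with $v\in A_{p_1,p_2}$; the boundedness of $M$ on $X_{1,d/\lambda}$ and $(X_{1,d/\lambda})'$ is supplied by Theorem~\ref{thm:mrsboundlorentz}; and the target Lorentz space is identified as $\bigl[L^{p_2,q_2}_{v^{p_1/p_2}}(\R^d)\bigr](v^{1-p_1/p_2})$ using the formulae recorded in Subsection~\ref{subsec:lorentz}. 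The variable Lebesgue case is handled analogously through Theorem~\ref{thm:mrsboundvariablelebesgue}, using that the log-H\"older hypothesis $p_1(\cdot)\in LH_0\cap LH_\infty$ is equivalent to $(p_1)_{1,d/\lambda}(\cdot)\in LH_0\cap LH_\infty$ because $1/(p_1)_{1,d/\lambda}$ is an affine function of $1/p_1$.

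I do not anticipate any real obstacle: the corollary is essentially a bookkeeping application of the abstract extrapolation developed in Corollary~\ref{cor:fullrangemain} and Theorem~\ref{thm:A} to an operator whose weighted Lebesgue-space bounds are classical. The only mild point to verify is that the Muckenhoupt--Wheeden estimate depends on $[w]_{p_1,p_2}$ through an increasing function, which is standard and in fact made quantitative in later sharp-constant refinements of \cite{MW74}.
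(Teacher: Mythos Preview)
Your proposal is correct and matches the paper's approach: the corollary is stated immediately after recalling the Muckenhoupt--Wheeden bounds and is meant to follow at once from Corollary~\ref{cor:fullrangemain} with $\alpha=\lambda/d$, while the Lorentz and variable Lebesgue specialisations are exactly the corresponding cases of Theorem~\ref{thm:A} (with $r_1=1$, $s_1=d/\lambda$, $r_2=(1-\lambda/d)^{-1}$, $s_2=\infty$) via Theorems~\ref{thm:mrsboundlorentz} and~\ref{thm:mrsboundvariablelebesgue}. Your remark that the dependence of the Muckenhoupt--Wheeden bound on $[w]_{p_1,p_2}$ through an increasing $\phi$ needs to be verified is the only point the paper leaves implicit, and your resolution of it is correct.
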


We note that in weighted variable Lebesgue spaces, our result contains a larger class of weights than the one obtained in \cite[Corollary~3.7]{CW17}.

Notably, we have not included Morrey spaces in the above result. This is because we are not in the situation $\alpha=0$ and $s_1=s_2=\infty$, and hence, we are hindered by the fact that Morrey spaces are not concave. However, with the bounds we have obtained in Block spaces we can still obtain bounds for the Riesz potential as a consequence of sparse domination:
\begin{theorem}\label{thm:rieszmorrey}
For all $p_1\in(1,\frac{d}{\lambda})$, $q_1\in[p_1,\frac{d}{\lambda})$, $p_2$ and $q_2$ satisfying
\[
\frac{1}{p_1}-\frac{1}{p_2}=\frac{1}{q_1}-\frac{1}{q_2}=\frac{\lambda}{d},
\]
and weights $v$ satisfying
\[
\sup_Q|Q|^{-\big(1-\frac{\lambda}{d}-\frac{1}{p_1}+\frac{1}{q_1}\big)}\|v\ind_Q\|_{L^{q_2}(\R^d)}\|v^{-1}\ind_Q\|_{L^{p_1'}(\R^d)}<\infty,
\]
we have
\[
\|I_\lambda f\|_{\mathcal{L}_v^{p_2,q_2}(\R^d)}\lesssim_d\|f\|_{\mathcal{L}_v^{p_1,q_1}(\R^d)}
\]
for all $f\in L^{q_1}_v(\R^d)$.
\end{theorem}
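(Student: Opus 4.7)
The plan is to prove Theorem~\ref{thm:rieszmorrey} via pointwise sparse domination. It is classical that for every $f$ of sufficient decay there exist finitely many (at most $3^d$) sparse dyadic families $\mathcal{S}_\alpha$ such that
\[
I_\lambda f(x)\lesssim_d\sum_{\alpha=1}^{3^d}\mathcal{A}_{\lambda,\mathcal{S}_\alpha}f(x),\qquad\mathcal{A}_{\lambda,\mathcal{S}}f:=\sum_{Q\in\mathcal{S}}|Q|^{\lambda/d}\langle |f|\rangle_{1,Q}\ind_Q.
\]
Hence it suffices to bound $\mathcal{A}_{\lambda,\mathcal{S}}:\mathcal{L}_v^{p_1,q_1}(\R^d)\to\mathcal{L}_v^{p_2,q_2}(\R^d)$ uniformly in $\mathcal{S}$. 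Using $\frac{1}{p_1}-\frac{1}{p_2}=\frac{1}{q_1}-\frac{1}{q_2}=\frac{\lambda}{d}$, a direct computation shows that the hypothesis rewrites as $\langle v\rangle_{q_2,Q}\langle v^{-1}\rangle_{p_1',Q}\lesssim 1$ for all cubes $Q$, which via Jensen (since $p_2\leq q_2$) implies $v\in A_{p_1,p_2}$ and consequently $v^{q_2}\in A_\infty$.

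Fix a cube $Q_0$. I would estimate $|Q_0|^{1/q_2-1/p_2}\|(\mathcal{A}_{\lambda,\mathcal{S}}f)v\ind_{Q_0}\|_{L^{p_2}(\R^d)}$ by splitting the cubes of $\mathcal{S}$ contributing on $Q_0$ into a local piece $Q\subseteq 3Q_0$ and a global piece $Q\supsetneq Q_0$; nestedness of dyadic cubes rules out any other case. For the local piece, only $f\ind_{3Q_0}$ contributes, and the Muckenhoupt--Wheeden bound $\mathcal{A}_{\lambda,\mathcal{S}}:L^{p_1}_v\to L^{p_2}_v$ (which is valid because $v\in A_{p_1,p_2}$) produces a factor $\|fv\ind_{3Q_0}\|_{L^{p_1}}\leq|3Q_0|^{1/p_1-1/q_1}\|f\|_{\mathcal{L}_v^{p_1,q_1}}$; the remaining $|Q_0|$-exponent is precisely zero by the balance relation $\frac{1}{p_1}-\frac{1}{p_2}=\frac{1}{q_1}-\frac{1}{q_2}$.

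For the global piece, each $Q\supsetneq Q_0$ in $\mathcal{S}$ satisfies $\ind_Q\equiv 1$ on $Q_0$, so the restriction to $Q_0$ is a constant. H\"older's inequality together with the Morrey norm definition yields
\[
|Q|^{\lambda/d}\langle|f|\rangle_{1,Q}\leq|Q|^{\lambda/d-1+1/p_1-1/q_1}\|v^{-1}\ind_Q\|_{L^{p_1'}(\R^d)}\|f\|_{\mathcal{L}_v^{p_1,q_1}}\lesssim\frac{\|f\|_{\mathcal{L}_v^{p_1,q_1}}}{\|v\ind_Q\|_{L^{q_2}(\R^d)}},
\]
where the final inequality invokes the weight hypothesis. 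Since $\|v\ind_{Q_0}\|_{L^{p_2}}\leq|Q_0|^{1/p_2-1/q_2}\|v\ind_{Q_0}\|_{L^{q_2}}$ (again by $p_2\leq q_2$), the global contribution collapses to $\|f\|_{\mathcal{L}_v^{p_1,q_1}}\|v\ind_{Q_0}\|_{L^{q_2}}\sum_{Q\supsetneq Q_0}\|v\ind_Q\|_{L^{q_2}}^{-1}$. The main obstacle is to render this tail summable: Morrey norms lack absolute continuity, so the tail cannot be suppressed by truncation. Instead, I must exploit $v^{q_2}\in A_\infty$, whose reverse H\"older self-improvement produces a polynomial lower bound $\int_{Q^k}v^{q_2}\gtrsim(|Q^k|/|Q_0|)^\beta\int_{Q_0}v^{q_2}$ for some $\beta>0$; combined with the dyadic size growth $|Q^k|\geq 2^{dk}|Q_0|$ along the nested chain, this gives geometric decay of $\|v\ind_{Q^k}\|_{L^{q_2}}^{-1}$ and closes the argument.
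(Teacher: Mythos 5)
Your strategy is sound and genuinely different from the proof in the paper. The paper does not use pointwise sparse domination at all: it uses the bilinear-form domination $\|(I_\lambda f)g\|_{L^{r_2}}\lesssim_d\|M_{1,s_1'}(f,g)\|_{L^1}$ of Lemma~\ref{lem:rieszmorreysparse}, dualizes the Morrey norm against the block space $[(Y^{r_2})']^{\frac{1}{r_2}}$ (identified via Proposition~\ref{prop:dualofpowerofblock}), and then reduces everything to the boundedness of $M$ on the weighted Morrey space $\mathcal{L}^{p_1,q_1}_v$ (Theorem~\ref{thm:mrsboundmorrey}, resting on the extrapolation result of \cite{DR20}) and of $M_{s_1'}$ on the weighted block space (Theorem~\ref{thm:mboundblockspaces}, a new result of the paper). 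Your local/global splitting of the sparse majorant avoids duality, block spaces and the maximal-function input entirely, at the price of being specific to positive operators with pointwise sparse bounds; the paper's route is heavier but is exactly what generalizes to non-positive operators (e.g.\ Theorem~\ref{thm:sparseimpliesmorrey}) and to the abstract framework of the paper. Your exponent bookkeeping is correct: the hypothesis is precisely $\sup_Q\langle v\rangle_{q_2,Q}\langle v^{-1}\rangle_{p_1',Q}<\infty$, the local term carries $|Q_0|$ to the power $\frac{1}{q_2}-\frac{1}{p_2}+\frac{1}{p_1}-\frac{1}{q_1}=0$, and the global term reduces, as you say, to summing $(v^{q_2}(Q_0)/v^{q_2}(Q))^{1/q_2}$ over the dyadic cubes $Q$ strictly containing $Q_0$, which a reverse H\"older lower bound renders geometric.

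Two points need repair, though neither is fatal. First, the implication ``$v\in A_{p_1,p_2}$ and consequently $v^{q_2}\in A_\infty$'' is not valid as stated: $v\in A_{p_1,p_2}$ only gives $v^{p_2}\in A_\infty$, and $A_\infty$ is not stable under raising to the power $q_2/p_2>1$. What you should use is the full hypothesis itself: by Proposition~\ref{prop:jnreverseholder} it is equivalent to $v^{p_1}\in A_{p_1}$ together with the reverse H\"older inequality $\langle v\rangle_{q_2,Q}\leq C\langle v\rangle_{p_1,Q}$, i.e.\ $v^{p_1}\in A_{p_1}\cap RH_{q_2/p_1}$, which by the Johnson--Neugebauer characterization gives $v^{q_2}\in A_{\frac{q_2}{p_1}(p_1-1)+1}\subseteq A_\infty$; from there your polynomial lower bound $v^{q_2}(Q)\gtrsim(|Q|/|Q_0|)^{\beta}v^{q_2}(Q_0)$ follows from the reverse H\"older inequality for $v^{q_2}$. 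Second, since $Q_0$ in the Morrey norm is an arbitrary cube while $\mathcal{S}$ is dyadic, nestedness does \emph{not} reduce matters to $Q\subseteq 3Q_0$ or $Q\supsetneq Q_0$: a dyadic $Q$ with $\ell(Q)>\ell(Q_0)$ can meet $Q_0$ without containing it. This is harmless—such cubes satisfy $Q_0\subseteq 3Q$, your global estimate $|Q|^{\lambda/d}\langle f\rangle_{1,Q}\lesssim\|f\|_{\mathcal{L}^{p_1,q_1}_v}/\|v\ind_Q\|_{L^{q_2}}$ never used $Q\supseteq Q_0$, there are at most $2^d$ such cubes per scale, and $v^{q_2}(Q_0)\lesssim v^{q_2}(3Q)(|Q_0|/|3Q|)^{\beta}\lesssim v^{q_2}(Q)(|Q_0|/|Q|)^{\beta}$ by reverse H\"older and doubling—but it must be said; alternatively, reduce to dyadic $Q_0$ from $3^d$ shifted grids, for which the Morrey norm is comparable. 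Finally, as in the paper, the restriction to $f\in L^{q_1}_v(\R^d)$ is where you justify the pointwise sparse domination beyond $L^\infty_c$ (a monotone convergence argument for the positive operator $I_\lambda$ applied to $|f|$ suffices).
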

For this we require a sparse form domination result involving the bisublinear maximal operator
\[
M_{r,s'}(f,g):=\sup_Q\langle f\rangle_{r,Q}\langle g\rangle_{s',Q}\ind_Q.
\]
\begin{lemma}\label{lem:rieszmorreysparse}
Let $\lambda\in(0,d)$ and $\frac{1}{r_2}:=1-\frac{\lambda}{d}$, $\frac{1}{s_1}:=\frac{\lambda}{d}$. Then for all $f\in L^\infty_c(\R^d)$, $g\in L^{s_1'}_{\loc}(\R^d)$ we have
\[
\|(I_\lambda f)g\|_{L^{r_2}(\R^d)}\lesssim_d\|M_{1,s_1'}(f,g)\|_{L^1(\R^d)}.
\]
\end{lemma}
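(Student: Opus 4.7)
My plan is to reduce matters, via duality, to a standard pointwise sparse domination for the Riesz potential. The key observation is that $r_2' = s_1$, since
$$\tfrac{1}{r_2} + \tfrac{1}{s_1} = \bigl(1 - \tfrac{\lambda}{d}\bigr) + \tfrac{\lambda}{d} = 1.$$
Hence I would begin by writing
$$\|(I_\lambda f)g\|_{L^{r_2}(\R^d)} = \sup_{\|h\|_{L^{s_1}(\R^d)}\leq 1}\int_{\R^d} (I_\lambda f)\,g\,h\,\mathrm{d}x,$$
so that it suffices to bound each such trilinear integral by $\|M_{1,s_1'}(f,g)\|_{L^1(\R^d)}$ uniformly in $h$.

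Next, I would invoke the standard pointwise sparse domination: there exists a sparse collection of cubes $\mathcal{S}$ with pairwise disjoint major subsets $E_Q \subseteq Q$ satisfying $|E_Q|\geq\tfrac{1}{2}|Q|$, such that
$$I_\lambda f \lesssim_d \sum_{Q\in\mathcal{S}} |Q|^{\lambda/d}\langle f\rangle_{1,Q}\ind_Q.$$
Substituting this and applying H\"older with exponents $r_2$ and $s_1$ to $gh$ on each $Q$ gives
$$\int_{\R^d} (I_\lambda f)\,g\,h\,\mathrm{d}x \;\lesssim_d\; \sum_{Q\in\mathcal{S}} |Q|^{1+\lambda/d}\langle f\rangle_{1,Q}\langle g\rangle_{r_2,Q}\langle h\rangle_{s_1,Q}.$$

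The crucial step is that the factor $|Q|^{\lambda/d}$ is absorbed exactly by $\langle h\rangle_{s_1,Q}$: since $\lambda/d = 1/s_1$,
$$|Q|^{\lambda/d}\langle h\rangle_{s_1,Q} = |Q|^{1/s_1 - 1/s_1}\|h\|_{L^{s_1}(Q)} = \|h\|_{L^{s_1}(Q)} \leq \|h\|_{L^{s_1}(\R^d)}.$$
What remains is the standard passage from a sparse sum to the bisublinear maximal function: using $|Q|\leq 2|E_Q|$ and the disjointness of $\{E_Q\}$, each point $x \in \R^d$ lies in at most one $E_Q$, giving the pointwise bound
$$\sum_{Q\in\mathcal{S}}\langle f\rangle_{1,Q}\langle g\rangle_{r_2,Q}\ind_{E_Q}(x) \leq M_{1,s_1'}(f,g)(x)$$
since $r_2 = s_1'$, and integrating yields $\sum_{Q\in\mathcal{S}}\langle f\rangle_{1,Q}\langle g\rangle_{r_2,Q}|Q| \leq 2\|M_{1,s_1'}(f,g)\|_{L^1(\R^d)}$. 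Taking the supremum over $h$ in the unit ball of $L^{s_1}(\R^d)$ completes the argument.

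The essentially only nontrivial input is the pointwise sparse domination of $I_\lambda$, which is classical; after that, the remaining argument is simply H\"older, the exponent identity $\lambda/d = 1/s_1$ (which makes the fractional scaling disappear), and the defining properties of a sparse family. I do not anticipate substantive obstacles in the remaining manipulations.
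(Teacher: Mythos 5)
Your proposal is correct and follows essentially the same route as the paper: dualize against $L^{s_1}(\R^d)$ (using $r_2'=s_1$), insert the pointwise sparse domination of $I_\lambda$, apply H\"older on each cube so that the factor $|Q|^{\lambda/d}=|Q|^{1/s_1}$ is absorbed by $\langle h\rangle_{s_1,Q}\leq |Q|^{-1/s_1}\|h\|_{L^{s_1}(\R^d)}$, and then pass from the sparse sum to $\|M_{1,s_1'}(f,g)\|_{L^1(\R^d)}$ via the disjoint major subsets $E_Q$. No substantive differences or gaps.
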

\begin{proof}
We use the fact that there exists a sparse collection $\mathcal{S}$, i.e., a collection of cubes $Q$ satisfying the property that there are pairwise disjoint $E_Q\subseteq Q$ with $|Q|\lesssim_d|E_Q|$, such that
\[
|I_\lambda f|\lesssim_d\sum_{Q\in\mathcal{S}}|Q|^{\frac{\lambda}{d}}\langle f\rangle_{1,Q}\ind_Q
\]
for all $f\in L^\infty_c(\R^d)$, see \cite{Cr17}.

Let $g\in L^{s_1'}_{\loc}(\R^d)$. Then
\begin{align*}
\|(I_\lambda f)g\|_{L^{r_2}(\R^d)}
&=\sup_{\|h\|_{L^{\frac{d}{\lambda}}(\R^d)}=1}\|(I_\lambda)gh\|_{L^1(\R^d)}\\
&\leq\sup_{\|h\|_{L^{\frac{d}{\lambda}}(\R^d)}=1}\sum_{Q\in\mathcal{S}}|Q|^{\frac{\lambda}{d}}\langle f\rangle_{1,Q}\langle gh\rangle_{1,Q}|Q|\\
&\leq\sum_{Q\in\mathcal{S}}\langle f\rangle_{1,Q}\langle g\rangle_{s_1',Q}|Q|
\lesssim_d\sum_{Q\in\mathcal{S}}\langle f\rangle_{1,Q}\langle g\rangle_{s_1',Q}|E_Q|\\
&\leq\sum_{Q\in\mathcal{S}}\int_{E_Q}M_{1,s_1'}(f,g)\,\mathrm{d}x\leq\|M_{1,s_1'}(f,g)\|_{L^1(\R^d)}.
\end{align*}
The assertion follows.
\end{proof}

\begin{proof}[Proof of Theorem~\ref{thm:rieszmorrey}]
Since the space $L^{q_1}_v(\R^d)$ is order-continuous, the estimate
\[
\|(I_\lambda f)g\|_{L^{r_2}(\R^d)}\lesssim_d\|M_{1,s_1'}(f,g)\|_{L^1(\R^d)}
\]
from Lemma~\ref{lem:rieszmorreysparse} holds for all $f\in L^{q_1}_v(\R^d)$, $g\in L^{s_1'}(\R^d)$. Hence, for all $r_2$-convex quasi-Banach function space $Y$, all Banach function space $X$, and all $f\in L^{q_1}_v(\R^d)\cap X$, $g\in L^{s_2'}_{\loc}(\R^d)$ we have
\begin{equation}\label{eq:rieszmorrey1}
\begin{split}
\|I_\lambda f\|_Y&=\sup_{\|g\|_{[(Y^{r_2})']^{\frac{1}{r_2}}}=1}\|(I_\lambda f)g\|_{L^{r_2}(\R^d)}\\
&\lesssim_d\sup_{\|g\|_{[(Y^{r_2})']^{\frac{1}{r_2}}}=1}\|M f\|_X\|M_{s_1'}g\|_{X'}\\
&\leq\sup_{\|g\|_{[(Y^{r_2})']^{\frac{1}{r_2}}}=1}\|M\|_{X\to X}\|M_{s_1'}\|_{X'\to X'}\|f\|_X\|g\|_{X'}
\end{split}
\end{equation}
provided that 
\begin{equation}\label{eq:rieszmorrey2}
M:X\to X,\quad M_{s_1'}:X'\to X'
\end{equation}
are bounded.

Now take $X=\mathcal{L}^{p_1,q_1}_v(\R^d)$, $Y=\mathcal{L}^{p_2,q_2}_v(\R^d)$. Then we have
\[
X'=\mathcal{B}^{p_1',q_1'}_{v^{-1}}(\R^d),\quad [(Y^{r_2})']^{\frac{1}{r_2}}=(\mathcal{B}^{(\frac{p_2}{r_2})',(\frac{q_2}{r_2})'}_{v^{-r_2}}(\R^d))^{\frac{1}{r_2}}.
\]
Since $\frac{1}{r_2}-\frac{1}{p_2}=\frac{1}{p_1'}$, $\frac{1}{r_2}-\frac{1}{q_2}=\frac{1}{q_1'}$, it follows from Proposition~\ref{prop:dualofpowerofblock} that
\[
\big([(Y^{r_2})']^{\frac{1}{r_2}}\big)''=\big(\mathcal{L}^{p_1,q_1}_v(\R^d)\big)'=X'.
\]
Thus, using Proposition~\ref{prop:envelopeembedding} we have
\[
\|g\|_{X'}=\|g\|_{\big([(Y^{r_2})']^{\frac{1}{r_2}}\big)''}\leq\|g\|_{[(Y^{r_2})']^{\frac{1}{r_2}}}.
\]
Combining this with \eqref{eq:rieszmorrey1} yields
\[
\|I_\lambda f\|_Y\lesssim_d\|M\|_{X\to X}\|M_{s_1'}\|_{X'\to X'}\|f\|_X.
\]
Hence, it remains to check \eqref{eq:rieszmorrey2}.

By Theorem~\ref{thm:mrsboundmorrey} we have $M:X\to X$ whenever $v\in A_{p_1,(1,\frac{1}{\frac{1}{p_1}-\frac{1}{q_1}})}$. Since we have $v\in A_{p_1,(1,\frac{1}{\frac{1}{p_1}-\frac{1}{q_1}+\frac{1}{s_1}})}\subseteq A_{p_1,(1,\frac{1}{\frac{1}{p_1}-\frac{1}{q_1}})}$, this bound is satisfied.

For the bound $M_{s_1'}:X'\to X'$, note that by Theorem~\ref{thm:mboundblockspaces} this holds when $v^{-p_1'}\in A_{p_1'(\frac{1}{s_1'}-\frac{1}{q_1'})+1}$. This is equivalent to $v^{-1}\in A_{p_1',(\frac{1}{\frac{1}{q_2}+\frac{1}{p_1'}},\infty)}$, which is precisely our assumed condition. The assertion follows.
\end{proof}

\subsection{Endpoint extrapolation in the two-weight setting}
Setting $\alpha=0$ in \ref{it:twoweights}, we obtain the following upper endpoint two-weight extrapolation result:

\begin{corollary}\label{cor:twoweightendpointextrapolation}
Let $(\Omega,|\cdot|)$ be a $\sigma$-finite measure space and let $\mathcal{E}$ be a basis of sets in $\Omega$. Let $r\in(0,\infty)$, $s\in(0,\infty]$ with $r<s$. Let $V$ be a set and let $S:V\to L^0(\Omega)$ be a map. Moreover, suppose
\[
T:\bigcup_{(w_1,w_2)\in A_{s,(r,s)}(\mathcal{E})}S^{-1}(L^s_w(\Omega))\to L^0(\Omega)
\]
is a map for which there is an increasing function $\phi:[0,\infty)\to[0,\infty)$ such that
\[
\|Tf\|_{L^s_{w_2}(\R^d)}\leq\phi([w_1,w_2]^\mathcal{E}_{s,(r,s)})\|Sf\|_{L^s_{w_1}(\R^d)}
\]
for all $(w_1,w_2)\in A_{s,(r,s)}(\mathcal{E})$ and $f\in V$ with $Sf\in L^s_{w_1}(\R^d)$.

Let $X$, $Y$ be an $r$-convex and $s$-concave pair of quasi-Banach function space for which there is an isometric positive linear isomorphism
\[
L:X_{r,s}\to Y_{r,s}.
\]
Then $Tf$ is well-defined for all $f\in V$ with $Sf\in X$, with
\[
\|Tf\|_{Y}\leq 2^{\frac{1}{r}-\frac{1}{s}}\phi\big(2^{\frac{1}{r}-\frac{1}{s}}\|M\|_{X_{r,s}\to Y_{r,s}}^{\frac{1}{r}-\frac{1}{s}}\big)\|Sf\|_X.
\]
\end{corollary}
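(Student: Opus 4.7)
The plan is to deduce this corollary by specializing Theorem~\ref{thm:mainop} in the two-weight implication \ref{it:twoweightassumption}$\Rightarrow$\ref{it:twoweights}, with the endpoint choice of parameters
\[
\alpha = 0, \quad r_1 = r_2 = r, \quad s_1 = s_2 = s, \quad p_1 = p_2 = s.
\]
These parameters are admissible: we have $\tfrac{1}{p_2} - \tfrac{1}{p_1} = 0 = \alpha$, the convexity/concavity bounds are met since $\tfrac{1}{p_1} = \tfrac{1}{s} \in [\tfrac{1}{s_1}, \tfrac{1}{r_1}]$ sits exactly at the left endpoint of the allowable interval, and likewise for $p_2$. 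With these choices, the hypothesis of \ref{it:twoweightassumption} matches verbatim the initial two-weight bound on $T$ assumed in the corollary, with the weight class $A_{\vec{p},(\vec{r},\vec{s})}(\mathcal{E}) = A_{s,(r,s)}(\mathcal{E})$.

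The key observation is that the endpoint choice $p_1 = s_1 = s$ forces $p_2 = s_2 = s$ as well, so the second bullet of condition \ref{it:twoweights} (``if $p_2 \neq s_2$\ldots'') is vacuous. Consequently, we need neither a second isometric isomorphism $L_2 : (Y_{r,s})' \to (X_{r,s})'$ nor the boundedness of $M^\mathcal{E}$ on the K\"othe dual space. Only the first bullet remains, which asks for a positive isometric linear isomorphism $X_{r,s} \to Y_{r,s}$: this is precisely the $L$ provided by the hypothesis. The required boundedness of $M^\mathcal{E} : X_{r,s} \to Y_{r,s}$ enters only through its operator norm in the final constant, and if this norm is infinite, the inequality is trivially true.

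The quantitative conclusion then follows by direct substitution into the estimate from \ref{it:twoweights}. The exponents on the two maximal operator norms become $\tfrac{1}{r_1} - \tfrac{1}{p_1} = \tfrac{1}{r} - \tfrac{1}{s}$ and $\tfrac{1}{p_2} - \tfrac{1}{s_2} = 0$ respectively, so the dual-side factor $\|M^\mathcal{E}\|_{(Y_{r,s})' \to (X_{r,s})'}^{1/p_2 - 1/s_2}$ collapses to $1$ and disappears, leaving exactly
\[
\|Tf\|_Y \leq 2^{\frac{1}{r}-\frac{1}{s}} \phi\bigl(2^{\frac{1}{r}-\frac{1}{s}} \|M^\mathcal{E}\|_{X_{r,s} \to Y_{r,s}}^{\frac{1}{r}-\frac{1}{s}}\bigr) \|Sf\|_X
\]
as stated. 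There is no genuine obstacle here beyond keeping the bookkeeping straight; the substantive content is already encoded in Theorem~\ref{thm:mainop}, and the only insight needed is recognizing that the upper endpoint $p_1 = s$ trivializes the dual half of the hypotheses, which is characteristic of endpoint extrapolation results in the spirit of the $A_1$ theory.
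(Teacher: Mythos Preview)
Your proposal is correct and takes essentially the same approach as the paper: the corollary is obtained by specializing Theorem~\ref{thm:mainop}\ref{it:twoweights} with $\alpha=0$ and the endpoint choice $p_1=p_2=s$, which renders the second bullet vacuous and collapses the dual-side exponent to zero. The paper states this derivation in a single sentence (``Setting $\alpha=0$ in \ref{it:twoweights}''), and your unpacking of the bookkeeping is accurate.
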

\begin{remark}\label{rem:asrsmineq}
We note that $(w_1,w_2)\in A_{s,(r,s)}(\mathcal{E})$ if and only if there is a $C>0$ such that
\[
M^\mathcal{E}_{\frac{1}{\frac{1}{r}-\frac{1}{s}}}(w_1^{-1})\leq C w_2^{-1},
\]
with the smallest possible constant being given by $C^{\frac{1}{r}-\frac{1}{s}}=[w_1,w_2]^\mathcal{E}_{s,(r,s)}$.
\end{remark}

\begin{example}
As a typical example of a space $Y$ for which there exists a positive isometric isomorphism $L:X\to Y$, we consider the case where $u$ is a weight, and $Y=X(u)$, which is the space of those $f\in L^0(\Omega)$ for which $fu\in X$, equipped with the norm
\[
\|f\|_{X(u)}:=\|fu\|_X.
\]
If $X$ is a Banach function space, then so is $X(u)$. Indeed, the ideal property and the Fatou property are readily checked. For the saturation property, note that if $\rho>0$ is a weak order unit in $X$, then $\rho u^{-1}>0$ is a weak order unit in $X(u)$.

In this case the map
\[
L:X\to X(u),\quad Lf:=fu^{-1}
\]
is an isometric positive linear isomorphism.

For example, if $v_1$ and $v_2$ are weights, we can take $X=L^q_{v_1}(\R^d)$. Then choosing the weight $u:=\frac{v_2}{v_1}$ yields $X(u)=L^q_{v_2}(\R^d)$.
\end{example}

\begin{example}
Let $\Omega=\R^d$ with the Lebesgue measure and let $\mathcal{E}=\mathcal{Q}$ be the collection of cubes in $\R^d$. Let $p\in(r,s)$ and let $v$, $w$ be weights. Setting $X=L^p_w(\R^d)$, $Y=L^p_v(\R^d)$, we note that
\[
X_{r,s}=L^{p_{r,s}}_{w^{\frac{1}{\frac{1}{r}-\frac{1}{s}}}}(\R^d),\quad Y_{r,s}=L^{p_{r,s}}_{v^{\frac{1}{\frac{1}{r}-\frac{1}{s}}}}(\R^d).
\]

As in the above example, noting that
\[
L:X_{r,s}\to Y_{r,s}, \quad Lf:=\Big(\frac{w}{v}\Big)^{\frac{1}{\frac{1}{r}-\frac{1}{s}}} f,\\
\]
is an isometric positive linear isomorphism, we conclude that $T$ as above satisfies
\begin{equation}\label{eq:upperexample1}
\|T\|_{L^p_w(\R^d)\to L^p_v(\R^d)}\leq 2^{\frac{1}{r}-\frac{1}{s}}\phi\big(2^{\frac{1}{r}-\frac{1}{s}}\|M\|_{X_{r,s}\to Y_{r,s}}^{\frac{1}{r}-\frac{1}{s}}\big),
\end{equation}
provided that $M:L^{p_{r,s}}_{w^{\frac{1}{\frac{1}{r}-\frac{1}{s}}}}(\R^d)\to L^{p_{r,s}}_{v^{\frac{1}{\frac{1}{r}-\frac{1}{s}}}}(\R^d)$ is bounded. As a matter of fact, in this case we have the Sawyer testing condition
\[
\|M\|_{X_{r,s}\to Y_{r,s}}^{p_{r,s}}\eqsim_{p,r,s,d} \sup_Q\frac{\langle M(w^{-p_{r,s}'}\ind_Q)^{p_{r,s}}v^{p_{r,s}}\rangle_{1,Q}}{\langle w^{-p_{r,s}'}\rangle_{1,Q}}.
\]
Since
\[
[w^{\frac{1}{\frac{1}{r}-\frac{1}{s}}},v^{\frac{1}{\frac{1}{r}-\frac{1}{s}}}]^{\frac{1}{r}-\frac{1}{s}}_{p_{r,s},(1,\infty)}=[w,v]_{p,(r,s)},
\]
a sufficient condition for this to be bounded is $(w,v)\in A_{p,(r,s)}$ and $w^{-\frac{1}{\frac{1}{r}-\frac{1}{p}}}\in A_\infty$, with
\[
\|M\|^{\frac{1}{r}-\frac{1}{s}}_{X_{r,s}\to Y_{r,s}}\leq C_{p,r,s,d}[w,v]_{p,(r,s)}[w^{-\frac{1}{\frac{1}{r}-\frac{1}{p}}}]_{A_\infty}^{\frac{1}{p}-\frac{1}{s}}.
\]
Hence, in this case, by \eqref{eq:upperexample1}, we have
\[
\|T\|_{L^p_w(\R^d)\to L^p_v(\R^d)}\lesssim_{r,s}\phi\big(C_{p,r,s,d}[w,v]_{p,(r,s)}[w^{-\frac{1}{\frac{1}{r}-\frac{1}{p}}}]_{A_\infty}^{\frac{1}{p}-\frac{1}{s}}\big).
\]
\end{example}

In the classical setting of $\Omega=\R^d$, with $\mathcal{E}=\mathcal{Q}$ is the collection of cubes in $\R^d$ and the full-range case $r=1$, $s=\infty$, a $\BMO$ extrapolation result in Banach function spaces was proven by the author and Rey \cite{NR23}. Letting $M^\sharp f:=\sup_Q\inf_{c}\langle f-c\rangle_{1,Q}\ind_Q$ denote the sharp maximal operator, for a weight $w\in L^\infty_{\loc}(\R^d)$ we can then define the space $\BMO_w(\R^d)$ through
\[
\|f\|_{\BMO_w(\R^d)}:=\|(M^\sharp f)w\|_{L^\infty(\R^d)}.
\]
The following result was proven in \cite{NR23}:
\begin{theorem}[{\cite[Theorem~3.1]{NR23}}]
Suppose
\[
T:\bigcup_{w^{-1}\in A_1} \{f\in L^\infty_w(\R^d):\text{$f$ has compact support.}\}\to L^0(\R^d)
\]
is a map for which there exists an increasing function $\phi:[0,\infty)\to[0,\infty)$ such that
\begin{equation}\label{eq:bmowin}
\|Tf\|_{\BMO_w(\R^d)}\leq\phi([w^{-1}]_1)\|f\|_{L^\infty_w(\R^d)}.
\end{equation}
Then for all Banach function spaces $X$ over $\R^d$ for which 
\[
M:X\to X,\quad M:X'\to X'
\]
are bounded, $Tf$ is well-defined for all $f\in X$ of compact support with
\[
\|Tf\|_X\leq 2C_X\phi(2\|M\|_{X\to X})\|f\|_X,
\]
where $C_X$ is the optimal constant in the Fefferman-Stein inequality
\[
\|f\|_X\leq C_X\|M^\sharp f\|_X.
\]
Moreover, if $T$ is (sub)linear and $X$ is order-continuous, then $T$ extends to a bounded map $T:X\to X$ with
\[
\|T\|_{X\to X}\leq 2C_X\phi(2\|M\|_{X\to X}).
\]
\end{theorem}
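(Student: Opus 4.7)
The strategy will be to reduce to the initial BMO estimate by constructing, from the input function $f$ itself, a weight $w$ with $w^{-1}\in A_1$ for which $f\in L^\infty_w(\R^d)$, and then to recover the $X$-norm of $Tf$ by unwinding the $\BMO_w$ norm and applying the Fefferman--Stein inequality that is built into the constant $C_X$. Crucially, since the initial hypothesis controls $\|Tf\|_{\BMO_w}=\|(M^\sharp Tf)w\|_{L^\infty}$, this amounts to a pointwise upper bound on $M^\sharp(Tf)$ by a scalar multiple of $w^{-1}$, which is exactly the kind of quantity the ideal property of $X$ is made to estimate.

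Concretely, given $f\in X$ of compact support, the plan is to apply the (one-sided) Rubio de Francia algorithm in $X$: set
\[
u:=\sum_{k=0}^\infty\frac{M^k(|f|)}{(2\|M\|_{X\to X})^k},
\]
so that $u\in X$ with $\|u\|_X\leq 2\|f\|_X$ and $|f|\leq u$, while $Mu\leq 2\|M\|_{X\to X}\,u$, i.e.\ $u\in A_1$ with $[u]_1\leq 2\|M\|_{X\to X}$. Defining $w:=u^{-1}$ then yields $w^{-1}\in A_1$ with $[w^{-1}]_1\leq 2\|M\|_{X\to X}$, and the pointwise inequality $|f|w=|f|/u\leq 1$ gives $\|f\|_{L^\infty_w(\R^d)}\leq 1$. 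In particular $f$ lies in the domain of $T$ (it is compactly supported and in $L^\infty_w$), and the hypothesis \eqref{eq:bmowin} delivers
\[
\|Tf\|_{\BMO_w(\R^d)}\leq\phi(2\|M\|_{X\to X}).
\]

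Unpacking the $\BMO_w$ norm this reads $M^\sharp(Tf)\leq\phi(2\|M\|_{X\to X})\,u$ a.e., so the ideal property of $X$ together with $\|u\|_X\leq 2\|f\|_X$ gives
\[
\|M^\sharp(Tf)\|_X\leq 2\phi(2\|M\|_{X\to X})\|f\|_X,
\]
and the Fefferman--Stein inequality then yields the claimed estimate
\[
\|Tf\|_X\leq C_X\|M^\sharp(Tf)\|_X\leq 2C_X\phi(2\|M\|_{X\to X})\|f\|_X.
\]
For the final assertion, once $T$ is assumed to be (sub)linear and $X$ is order-continuous, compactly supported functions are dense in $X$ (approximate $f\in X$ by $f_n:=f\ind_{\{|f|\leq n\}\cap B(0,n)}$, which converges to $f$ in norm by order-continuity); combining density with the uniform bound just proved gives the bounded extension $T:X\to X$ with the stated norm.

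The argument is essentially forced once one sees that Rubio de Francia should be applied to $f$ rather than to a dual test function: this is what makes $\|M\|_{X\to X}$ (and not $\|M\|_{X'\to X'}$) the relevant operator norm, matching the statement, and ensures $\|f\|_{L^\infty_w}\leq 1$ in a single step. I do not anticipate a genuine obstacle; the only subtlety is checking that the abstract Fefferman--Stein constant $C_X$ is indeed finite under the hypothesis that $M$ is bounded on both $X$ and $X'$, which is incorporated into the statement through the assumption that such a $C_X$ exists.
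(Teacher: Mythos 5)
Your proof is correct and is essentially the paper's route: the paper obtains this statement (and its two-weight generalization) by applying its endpoint extrapolation machinery at $r=1$, $s=p=\infty$ to the map $f\mapsto M^\sharp(Tf)$, which, once unwound, is exactly your argument — the Rubio de Francia algorithm applied to $f$ itself, giving $w=u^{-1}$ with $[w^{-1}]_1\leq 2\|M\|_{X\to X}$ and $\|f\|_{L^\infty_w}\leq 1$, followed by the ideal property, the Fefferman–Stein inequality encoded in $C_X$, and the same density argument $f_n=f\ind_{\{|f|\leq n\}\cap B(0,n)}$ for the order-continuous extension. The only trivial point to add is the degenerate case $f=0$ (take $w=1$), and, as you note, the boundedness of $M$ on $X'$ enters only through the finiteness of $C_X$, not through the weight construction.
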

Let $\Omega=\R^d$ and let $\mathcal{E}$ be the basis of cubes. Note that by Remark~\ref{rem:asrsmineq}, the condition $(w_1,w_2)\in A_{\infty,(1,\infty)}$ is equivalent to there being a constant $C>0$ such that
\[
M(w_1^{-1})\leq Cw_2^{-1}.
\]
In turn, this is equivalent to $(w_2^{-1},w_1^{-1})\in A_1:=A_{1,(1,\infty)}$, where the optimal constant coincides with $C=[w_2^{-1},w_1^{-1}]_1$. Thus, Corollary~\ref{cor:twoweightendpointextrapolation} allows us to obtain the following two-weight version of the above theorem:
\begin{theorem}
Suppose
\[
T:\bigcup_{(w_2^{-1},w_1^{-1})\in A_1}\{f\in L^\infty_{w_1}(\R^d):\text{$f$ has compact support}\}\to L^0(\R^d)
\]
is a map for which there is an increasing function $\phi:[0,\infty)\to[0,\infty)$ such that
\[
\|Tf\|_{\BMO_{w_2}(\R^d)}\leq\phi([w_2^{-1},w_1^{-1}]_1)\|f\|_{L^\infty_{w_1}(\R^d)}
\]
for all $(w_2^{-1},w_1^{-1})\in A_1$ and compactly supported $f\in L^\infty_{w_1}(\R^d)$.

Let $X$, $Y$ be a pair of Banach function space for which $M:X\to Y$ is bounded, there is an isometric positive isometric isomorphism
\[
L:X\to Y,
\]
and for which $Y$ satisfies the Fefferman-Stein inequality
\[
\|f\|_Y\leq C_Y\|M^\sharp f\|_Y.
\]
Then $Tf$ is well-defined for all $f\in X$ with compact support, with
\[
\|Tf\|_{Y}\leq 2C_Y\phi(2\|M\|_{X\to Y})\|f\|_X.
\]
Moreover, if $T$ is (sub)linear and $X$ is order-continuous, then $T$ extends to a bounded map $T:X\to Y$ with
\[
\|T\|_{X\to Y}\leq 2C_X\phi(2\|M\|_{X\to Y}).
\]
\end{theorem}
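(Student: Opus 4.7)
The plan is to reduce this BMO two-weight extrapolation statement to Corollary~\ref{cor:twoweightendpointextrapolation} applied in the endpoint $r=1$, $s=\infty$ with $p_1=p_2=\infty$, with the map $T$ replaced by the composition $\widetilde{T}:=M^{\sharp}\circ T$. The key observation is that, for a weight $u$, the $L^{\infty}_u$ norm of $M^{\sharp}g$ is exactly the $\BMO_u$ norm of $g$, so the given hypothesis on $T$ can be rewritten as an $L^{\infty}$-to-$L^{\infty}$ two-weight estimate for $\widetilde T$. Since $X$ and $Y$ are Banach function spaces, $X_{1,\infty}=X$ and $Y_{1,\infty}=Y$ by the Lorentz–Luxemburg theorem, and the positive isometric isomorphism $L:X\to Y$ is exactly the hypothesis on the pair of rescaled spaces required by the corollary; the assumed bound $M:X\to Y$ is also exactly the hypothesis $M:X_{1,\infty}\to Y_{1,\infty}$.

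First I would verify the compatibility of weight classes. With $r=1$, $s=\infty$, $p_1=p_2=\infty$, Remark~\ref{rem:asrsmineq} shows that $(w_1,w_2)\in A_{\infty,(1,\infty)}(\mathcal{E})$ is the condition $M w_1^{-1}\leq C w_2^{-1}$ with optimal constant $C=[w_1,w_2]^{\mathcal{E}}_{\infty,(1,\infty)}$. On the other hand, unwinding the two-weight $A_1$ definition used in the statement gives that $(w_2^{-1},w_1^{-1})\in A_1$ is the condition $M w_1^{-1}\leq C w_2^{-1}$ with optimal constant $[w_2^{-1},w_1^{-1}]_1$. Hence the two constants coincide, so the hypothesis on $T$ can be restated as
\[
\|\widetilde T f\|_{L^{\infty}_{w_2}(\R^d)}=\|Tf\|_{\BMO_{w_2}(\R^d)}\leq \phi\bigl([w_1,w_2]^{\mathcal{E}}_{\infty,(1,\infty)}\bigr)\|f\|_{L^{\infty}_{w_1}(\R^d)}
\]
for every $(w_1,w_2)\in A_{\infty,(1,\infty)}$ and every compactly supported $f\in L^{\infty}_{w_1}(\R^d)$.

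Taking $V$ to be the set of compactly supported functions on $\R^d$ and $S$ the identity, Corollary~\ref{cor:twoweightendpointextrapolation} with $\frac{1}{r}-\frac{1}{s}=1$ applied to $\widetilde T$ yields $\widetilde T f\in Y$ for every compactly supported $f\in X$ with
\[
\|M^{\sharp}(Tf)\|_{Y}=\|\widetilde T f\|_{Y}\leq 2\,\phi\bigl(2\|M\|_{X\to Y}\bigr)\|f\|_{X}.
\]
Invoking now the assumed Fefferman–Stein inequality in $Y$, namely $\|g\|_{Y}\leq C_{Y}\|M^{\sharp}g\|_{Y}$, applied to $g=Tf$ gives the desired bound
\[
\|Tf\|_{Y}\leq C_{Y}\|M^{\sharp}(Tf)\|_{Y}\leq 2C_{Y}\,\phi\bigl(2\|M\|_{X\to Y}\bigr)\|f\|_{X}.
\]
Finally, if $T$ is (sub)linear and $X$ is order-continuous, then the space of compactly supported functions is dense in $X$ (one truncates by $\ind_{F_n}$ where the $F_n$ exhaust $\Omega$ and uses the order-continuity to pass to the limit), so the estimate extends to all of $X$ by the standard density argument, producing the asserted bounded extension $T:X\to Y$.

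The main obstacle is the a priori application of the Fefferman–Stein inequality to $Tf$: strictly speaking this estimate requires $Tf$ to lie in some ambient class before the inequality is even meaningful, and $\BMO_{w_2}$ control alone only determines $Tf$ up to constants. Here the situation is benign because $f\in L^{\infty}_{w_1}(\R^d)\cap L^{\infty}_{c}(\R^d)$ already lies in $L^{\infty}_{w_1}(\R^d)$ for some admissible $(w_1,w_2)$, and one can select a suitable representative of $Tf$ using local integrability (e.g., modifying by a constant so that $Tf$ has mean zero on a fixed cube), exactly as in the one-weight version \cite[Theorem~3.1]{NR23}; the argument is purely a matter of choosing the right representative and does not affect the quantitative bound.
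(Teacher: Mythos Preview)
Your proposal is correct and follows essentially the same approach as the paper: replace $T$ by $M^\sharp\circ T$, apply Corollary~\ref{cor:twoweightendpointextrapolation} with $r=1$, $s=\infty$, $V$ the compactly supported functions and $S$ the identity, then invoke the Fefferman--Stein inequality in $Y$ and finish by density when $X$ is order-continuous. Your write-up is slightly more explicit about the weight-class identification (which the paper handles in the paragraph preceding the theorem) and about the representative issue for the Fefferman--Stein step, but the underlying argument is identical.
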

\begin{proof}
Setting $r=1$, $s=\infty$, replacing $Tf$ by $M^\sharp(Tf)$, and taking $V=\{f\in L^0(\R^d):\text{$f$ has compact support}\}$ with $Sf=f$ in Corollary~\ref{cor:twoweightendpointextrapolation}, we conclude that
\[
\|M^\sharp(Tf)\|_Y\leq 2\phi(2\|M\|_{X\to Y})\|f\|_X
\]
for all $f\in X$ with compact support.

The final assertion follows from the fact that if $X$ is order-continuous, then $L^\infty_c(\R^d)\cap X$ is dense in $X$. Indeed, for $f\in X$, it follows from the ideal property of $X$ that $f_n:=f\ind_{\{|f|\leq n\}}\ind_{\{|x|\leq n\}}\in X\cap L^\infty_c(\R^d)$. If $X$ is order-continuous, then, since $|f-f_n|=|f|\ind_{\{|f|>n\}\cup\{|x|>n\}}\downarrow 0$, we have $\|f-f_n\|_X\to 0$, as asserted.
\end{proof}

\section{Sparse domination and multilinear extraplation in Banach function spaces}\label{sec:sparsedom}

In this section we prove a multilinear extrapolation result in Banach function spaces, motivated by the bounds that can be obtained from sparse domination.

\subsection{Sparse domination in the linear setting}

Many operators in the literature satisfy the $A_2$ bound
\begin{equation}\label{eq:TfullrangesA2}
\|Tf\|_{L^2_w(\R^d)}\leq C_d[w]_2^2\|f\|_{L^2_w(\R^d)}
\end{equation}
and, since this was shown to hold for Calder\'on-Zygmund operator by Hyt\"onen \cite{Hy12}, the idea to use sparse domination to prove weighted bounds was conceived of in \cite{Le13a} and further generalized in works such as \cite{Le16, LO20, Lo21, LLO22}. As a matter of fact, by now, the standard method of proving that an operator satisfies weighted bounds is through sparse domination.

One of the equivalent ways of writing sparse domination of an operator $T$ in form is through the estimate
\begin{equation}\label{eq:Tfullrangesd}
\|(Tf)g\|_{L^1(\R^d)}\leq C_T\|M_{1,1}(f,g)\|_{L^1(\R^d)}.
\end{equation}
Here, $M_{1,1}$ is the bisublinear Hardy-Littlewood maximal operator
\[
M_{1,1}(f,g):=\sup_Q\langle f\rangle_{1,Q}\langle g\rangle_{1,Q}\ind_Q.
\]
Whenever an operator satisfies \eqref{eq:Tfullrangesd}, it also satisfies \eqref{eq:TfullrangesA2}.

Using Theorem~\ref{thm:apextrapolation} with $p=2$ on the estimate \eqref{eq:TfullrangesA2}, we conclude that for a Banach function space $X$ for which $M:X\to X$ and $M:X'\to X'$ are bounded we have
\[
\|Tf\|_X\leq 8C_d\|M\|_{X\to X}\|M\|_{X'\to X'}\|f\|_X.
\]
We also point out that we can obtain this same bound directly from \eqref{eq:Tfullrangesd}. Indeed, we have $M_{1,1}(f,g)\leq (Mf)(Mg)$ so that
\begin{equation}\label{eq:sparsebfsfullrangeright}
\|M_{1,1}(f,g)\|_{L^1(\R^d)}\leq\|Mf\|_X\|Mg\|_{X'}\leq \|M\|_{X\to X}\|M\|_{X'\to X'}\|f\|_X\|g\|_{X'}
\end{equation}
and hence, \eqref{eq:Tfullrangesd} implies that
\begin{align*}
\|Tf\|_X
&=\sup_{\|g\|_{X'}=1}\|(Tf)g\|_{L^1(\R^d)}
\leq C_T\sup_{\|g\|_{X'}=1}\|M_{1,1}(f,g)\|_{L^1(\R^d)}\\
&\leq C_T\|M\|_{X\to X}\|M\|_{X'\to X'}\|f\|_X.
\end{align*}
As it turns out, one cannot use sparse domination to obtain bounds in Banach function spaces $X$ beyond the ones for which $M:X\to X$ and $M:X'\to X'$ are bounded. Indeed, we have the following:

\begin{proposition}\label{prop:sparseboundsfullrangeequiv}
Let $X$ be a Banach function space over $\R^d$. Then the following assertions are equivalent:
\begin{enumerate}[(i)]
\item\label{it:sparseboundfullrangeequiv1} $M:X\to X$ and $M:X'\to X'$ are bounded;
\item\label{it:sparseboundfullrangeequiv2} $M_{1,1}:X\times X'\to L^1(\R^d)$ is bounded.
\end{enumerate}
Moreover, in this case we have
\begin{equation}\label{eq:sparsebfsfullrange}
\max\big(\|M\|_{X\to X},\|M\|_{X'\to X'}\big)\lesssim_d\|M_{1,1}\|_{X\times X'\to L^1(\R^d)}\leq\|M\|_{X\to X}\|M\|_{X'\to X'}.
\end{equation}
\end{proposition}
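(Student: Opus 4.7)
The direction (\ref{it:sparseboundfullrangeequiv1})$\Rightarrow$(\ref{it:sparseboundfullrangeequiv2}), together with the right-hand inequality in the displayed estimate, has already been observed in \eqref{eq:sparsebfsfullrangeright}: the pointwise bound $M_{1,1}(f,g)\leq(Mf)(Mg)$ combined with the defining H\"older inequality $\|hk\|_{L^1(\R^d)}\leq\|h\|_X\|k\|_{X'}$ for the K\"othe dual immediately gives
\[
\|M_{1,1}(f,g)\|_{L^1(\R^d)}\leq\|M\|_{X\to X}\|M\|_{X'\to X'}\|f\|_X\|g\|_{X'}.
\]
So the only substantive task is to prove (\ref{it:sparseboundfullrangeequiv2})$\Rightarrow$(\ref{it:sparseboundfullrangeequiv1}) with the left-hand inequality.

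The plan is to use the classical fact that $M$ admits a pointwise sparse domination by itself: for every locally integrable $f$ there is a sparse collection $\mathcal{S}$ of cubes, i.e., a family admitting pairwise disjoint major subsets $E_Q\subseteq Q$ with $|Q|\lesssim_d|E_Q|$, such that
\[
Mf(x)\lesssim_d\sum_{Q\in\mathcal{S}}\langle f\rangle_{1,Q}\ind_Q(x)
\]
(this is the standard Calder\'on--Zygmund/Lerner-type domination, and one can equivalently use the sparse form $\int(Mf)|g|\lesssim_d\sum_{Q\in\mathcal{S}}\langle f\rangle_{1,Q}\langle g\rangle_{1,Q}|Q|$ obtained directly from the $3^d$-lattice theorem and the dyadic Calder\'on--Zygmund decomposition at level sets of $Mf$). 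Since $X$ has the Fatou property, by the Lorentz--Luxemburg theorem $X=X''$ with equal norm, and thus I can test $\|Mf\|_X$ by duality against $g\in X'$ with $\|g\|_{X'}=1$. Applying the sparse bound, then using $|Q|\lesssim_d|E_Q|$, the disjointness of $(E_Q)_{Q\in\mathcal{S}}$, and the lower bound $M_{1,1}(f,g)|_{E_Q}\geq\langle f\rangle_{1,Q}\langle g\rangle_{1,Q}$, I obtain
\[
\int_{\R^d}(Mf)|g|\,\mathrm{d}x
\lesssim_d\sum_{Q\in\mathcal{S}}\langle f\rangle_{1,Q}\langle g\rangle_{1,Q}|E_Q|
\leq\|M_{1,1}(f,g)\|_{L^1(\R^d)}
\leq\|M_{1,1}\|_{X\times X'\to L^1(\R^d)}\|f\|_X\|g\|_{X'}.
\]
Taking the supremum over $\|g\|_{X'}=1$ yields $\|M\|_{X\to X}\lesssim_d\|M_{1,1}\|_{X\times X'\to L^1(\R^d)}$. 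The bound on $\|M\|_{X'\to X'}$ follows by the same argument with the roles of $f$ and $g$ interchanged, again using $X''=X$ to convert duality estimates back to norm estimates.

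The main obstacle is simply invoking the sparse domination of $M$ cleanly; beyond that the argument is a direct duality chase. A minor technical point is making sure the testing by $g\in X'$ captures $\|Mf\|_X$ exactly, which is where the Lorentz--Luxemburg identity $X=X''$ enters. I expect no further subtleties, and the two inequalities in the display combine to give both the equivalence and the claimed quantitative comparison.
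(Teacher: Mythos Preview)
Your proof is correct and follows essentially the same approach as the paper: both use the sparse domination of $M$ to obtain $\|(Mf)g\|_{L^1(\R^d)}\lesssim_d\|M_{1,1}(f,g)\|_{L^1(\R^d)}$ and then dualize via $X=X''$. The only difference is that you spell out the intermediate step (pointwise sparse bound, pass to $|E_Q|$, then dominate by $M_{1,1}$ on $E_Q$) that the paper simply packages as a single sparse-form inequality.
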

\begin{proof}
The implication \ref{it:sparseboundfullrangeequiv1}$\Rightarrow$\ref{it:sparseboundfullrangeequiv2} as well as the second inequality in \eqref{eq:sparsebfsfullrange} follow from \eqref{eq:sparsebfsfullrangeright}.

The implication \ref{it:sparseboundfullrangeequiv2}$\Rightarrow$\ref{it:sparseboundfullrangeequiv1} follows from the fact that $M$ itself satisfies the sparse domination
\[
\|(Mf)g\|_{L^1(\R^d)}\lesssim_d \|M_{1,1}(f,g)\|_{L^1(\R^d)}
\]
so that
\[
\|Mf\|_X=\sup_{\|g\|_{X'}=1}\|(Mf)g\|_{L^1(\R^d)}\lesssim_d\|M_{1,1}\|_{X\times X'\to L^1(\R^d)}\|f\|_X
\]
and
\[
\|Mg\|_{X'}=\sup_{\|f\|_X=1}\|f(Mg)\|_{L^1(\R^d)}\lesssim_d\|M_{1,1}\|_{X\times X'\to L^1(\R^d)}\|g\|_{X'}.
\]
This proves the result.
\end{proof}
Since, generally, sparse domination yields sharper bounds than extrapolation, a natural question to ask at this point is if it is possible to obtain a bound that is better than
\[
\|T\|_{X\to X}\lesssim_d C_T\|M\|_{X\to X}\|M\|_{X'\to X'}
\]
from sparse domination. Say, we want to find the optimal increasing function $\phi$ for which
\[
\|T\|_{X\to X}\lesssim_d C_T\phi\Big(\max\big(\|M\|_{X\to X},\|M\|_{X'\to X'}\big)\Big)
\]
for any $T$ satisfying \eqref{eq:Tfullrangesd}. It was shown by Lerner, Li, and Ombrosi\footnote{Personal communication with S. Ombrosi.} that for the Hilbert transform $T=H$ and the choice $X=L^2_w(\R)$ for $w^2\in A_2$ we must have
\[
\phi(t)\gtrsim t(1+\log t).
\]
Hence, we conjecture the following:
\begin{conjecture}
Suppose $T$ satisfies \eqref{eq:Tfullrangesd}. Then for any Banach function space $X$ for which $M:X\to X$ and $M:X'\to X'$ are bounded we have
\[
\|T\|_{X\to X}\lesssim_d C_T\max\big(\|M\|_{X\to X},\|M\|_{X'\to X'}\big)\Big(1+\log\big(\max\big(\|M\|_{X\to X},\|M\|_{X'\to X'}\big)\big)\Big).
\]
\end{conjecture}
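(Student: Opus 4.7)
The plan is the following. By the sparse domination \eqref{eq:Tfullrangesd} combined with duality, it suffices to show
\[
\|M_{1,1}\|_{X\times X'\to L^1(\R^d)}\lesssim_d K(1+\log K),
\]
where $K:=\max(\|M\|_{X\to X},\|M\|_{X'\to X'})$. Via the three-lattice theorem and the standard pointwise sparse bound for $M_{1,1}$, this reduces in turn to estimating the bilinear sparse form
\[
\Lambda_{\mathcal{S}}(f,g)=\sum_{Q\in\mathcal{S}}\langle f\rangle_{1,Q}\langle g\rangle_{1,Q}|Q|
\]
by $K(1+\log K)\|f\|_X\|g\|_{X'}$, uniformly in sparse collections $\mathcal{S}$ inside a dyadic grid.

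The quantitative tool available is Theorem~\ref{thm:maximalselfimprove}: applied on both $X$ and $X'$, it produces an exponent $q=q(K)>1$ with $q'\eqsim K$ for which $\|M_q\|_{X\to X}$ and $\|M_q\|_{X'\to X'}$ are both $\lesssim K$. Informally, $X$ behaves like an $L^q$-type space at the scale $q-1\eqsim 1/K$, and it is precisely this slack that should be converted into the logarithmic gain. The approach is to reorganize $\mathcal{S}$ via a principal-cube stopping-time decomposition with doubling parameter $\lambda$ for the product $\langle f\rangle_{1,\cdot}\langle g\rangle_{1,\cdot}$, extracting a sparse subcollection $\mathcal{P}\subset\mathcal{S}$ such that $\Lambda_{\mathcal{S}}(f,g)\lesssim_\lambda\Lambda_{\mathcal{P}}(f,g)$. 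Iterating produces a nested chain $\mathcal{S}=\mathcal{P}^{(0)}\supset\mathcal{P}^{(1)}\supset\mathcal{P}^{(2)}\supset\cdots$ along which the averages grow by $\lambda^{k}$. Once $\lambda^{k}$ reaches the $L^q$-to-$L^1$ comparison scale, that is, after $k\eqsim\log_\lambda K$ iterations, I would close the argument by pairing the remaining terms against $M_q f$ and $M_q g$ via Hölder in $L^q\times L^{q'}$; this last step costs a single factor of $K$ by Theorem~\ref{thm:maximalselfimprove}. Telescoping produces the conjectured $K(1+\log K)$.

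The principal obstacle, and the reason this remains a conjecture, is closing the iteration with constants depending only on the dimension. Each pass through the principal-cube construction must cost at most an absolute constant: any implicit dependence on $\|M\|_{X\to X}$ would immediately collapse the bound back to the trivial $K^2$. This is especially delicate in the final Carleson-embedding step, where one must absorb exactly one factor of $K$, not two. Since the Lerner--Li--Ombrosi lower bound $\phi(t)\gtrsim t(1+\log t)$ leaves no room for slack anywhere in the argument, any successful proof will have to be exceedingly sharp. A secondary route is to first establish the conjecture on weighted Lebesgue spaces $L^p_w$ (where $A_2$-type sparse technology applies directly) and then to transfer to general Banach function spaces through a Calderón--Lozanovskii factorization $X=X_0^{1-\theta}\cdot X_1^{\theta}$; however, even this requires care, because the intrinsic norm $K$ does not a priori interact multiplicatively with such factorizations in the required way.
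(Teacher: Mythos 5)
The statement you are addressing is stated in the paper as an open conjecture: the paper gives no proof, only the Lerner--Li--Ombrosi lower bound $\phi(t)\gtrsim t(1+\log t)$ as motivation. Your text is likewise not a proof but a strategy sketch, and it contains a genuine gap which you yourself flag. The preliminary reductions are fine: by \eqref{eq:Tfullrangesd} and duality it suffices to bound the bilinear sparse form $\Lambda_{\mathcal{S}}(f,g)$ by $C_d\,K(1+\log K)\|f\|_X\|g\|_{X'}$ with $K=\max(\|M\|_{X\to X},\|M\|_{X'\to X'})$, and invoking Theorem~\ref{thm:maximalselfimprove} on the basis of cubes (which does have the sharp $A_1$ self-improvement property) to produce $q>1$ with $q'\eqsim K$ and $\|M_q\|_{X\to X},\|M_q\|_{X'\to X'}\lesssim K$ is legitimate. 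But the two steps that would actually produce the logarithm are only asserted. First, the principal-cube iteration is not constructed: you do not specify the stopping condition, verify sparseness of each generation $\mathcal{P}^{(k)}$, or prove that each pass costs only a dimensional constant, and it is exactly this uniformity that distinguishes the conjectured bound from the trivial one. Second, and decisively, the closing step as you describe it does not cost ``a single factor of $K$'': pairing against $M_qf$ and $M_qg$ via H\"older and then estimating $\|M_qf\|_X\|M_qg\|_{X'}$ uses the self-improvement bound on \emph{both} spaces and therefore yields $K^2$, which is already available with no work at all from $M_{1,1}(f,g)\le (Mf)(Mg)$ as in \eqref{eq:sparsebfsfullrangeright}. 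No mechanism is offered for absorbing only one factor of $K$ after $\eqsim\log K$ generations, so the telescoping claim is unsubstantiated and the argument, as written, does not improve on the known quadratic estimate.

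To be clear about the comparison you were asked for: there is no proof in the paper to compare against, so the only honest assessment is that your proposal identifies a plausible line of attack (convert the convexity slack $q-1\eqsim 1/K$ into a logarithmic gain via a stopping-time decomposition of the sparse form) but leaves the conjecture exactly as open as the paper does. If you want to make partial progress, a more tractable intermediate target would be to first establish the bound for $X=L^2_w(\R^d)$ with $w^2\in A_2$, where $K\eqsim\max([w^2]_{A_2},[w^{-2}]_{A_2})^{1/2}$-type quantities are explicit and the sparse form can be analyzed with weighted Carleson embedding; even there the transfer to general Banach function spaces via Calder\'on--Lozanovskii factorization faces the obstruction you note, since $K$ is not known to behave multiplicatively under $X=X_0^{1-\theta}\cdot X_1^{\theta}$.
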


In \cite{BFP16} it was shown that a large class of operators, beyond the scope of Calder\'on-Zygmund operators, satisfy the sparse domination
\begin{equation}\label{eq:Tlimrangesd}
\|(Tf)g\|_{L^1(\R^d)}\leq C_T\|M_{r,s'}(f,g)\|_{L^1(\R^d)}
\end{equation}
for $1\leq r<s\leq\infty$, where
\[
M_{r,s'}(f,g):=\sup_Q\langle f\rangle_{r,Q}\langle g\rangle_{s',Q}\ind_Q.
\]
Moreover, they showed that this bound implies that
\[
T:L^p_w(\R^d)\to L^p_w(\R^d)
\]
for all $w\in A_{p,(r,s)}$.

Much like before, one can show that \eqref{eq:Tlimrangesd} implies that $T:X\to X$ precisely when $X$ is a Banach function space for which
\[
M:X^r\to X^r,\quad M:(X')^{s'}\to (X')^{s'}
\]
are bounded. This is the case when
\begin{equation}\label{eq:conjxrsbound}
M:X_{r,s}\to X_{r,s},\quad M:(X_{r,s})'\to (X_{r,s})'
\end{equation}
are bounded. It is not clear if the converse also holds, which we conjectured in Conjecture~\ref{con:bfsboundconjecture2}.

It was conjectured in \cite{LN22} that if \eqref{eq:Tlimrangesd} holds, then we also have
\[
\|(Tf)g\|_{L^r(\R^d)}\lesssim_d C_T\|M_{r,\frac{1}{\frac{1}{r}-\frac{1}{s}}}(f,g)\|_{L^r(\R^d)}.
\]
Then, using the fact that for an $r$-convex quasi-Banach function space $X$ we have
\[
\|Tf\|_X=\sup_{\|g\|_{[(X^r)']^{\frac{1}{r}}}=1}\|(Tf)g\|_{L^r(\R^d)},
\]
we find that this implies that $T:X\to X$ whenever $X$ is an $r$-convex quasi-Banach function space for which
\[
M:X^r\to X^r,\quad M:\big[(X^r)'\big]^{\big(\frac{s}{r}\big)'}\to[(X^r)']^{\big(\frac{s}{r}\big)'}
\]
are bounded. Again, this is the case when \eqref{eq:conjxrsbound} holds, and the converse is conjectured in Conjecture~\ref{con:bfsboundconjecture1}. An interesting point to note here is that we now no longer have the requirement that $X$ is $s$-concave. This means that for operators satisfying sparse domination, we could actually get a result in Morrey spaces. It is an open problem whether the $s$-concavity condition can also be removed in our extrapolation theorem.

\begin{theorem}\label{thm:sparseimpliesmorrey}
Let $1\leq r<s\leq\infty$ and suppose $T$ is a (sub)linear operator so that for all $f,g\in L^\infty_c(\R^d)$ we have
\[
\|(Tf)g\|_{L^1(\R^d)}\leq C_T\|M_{r,s'}(f,g)\|_{L^1(\R^d)}.
\]
Then for all $p\in(r,s)$, $q\in[p,s)$, all weights $v$ satisfying
\[
\sup_Q|Q|^{-(\frac{1}{r}-\frac{1}{s}+\frac{1}{q}-\frac{1}{p})}\|v\ind_Q\|_{L^{\frac{1}{\frac{1}{q}-\frac{1}{s}}}(\R^d)}\|v^{-1}\ind_Q\|_{L^{\frac{1}{\frac{1}{r}-\frac{1}{p}}}(\R^d)}<\infty,
\]
and all $f\in L^q_v(\R^d)$, $T$ has an extension satisfying
\[
\|Tf\|_{\mathcal{L}^{p,q}_v(\R^d)}\lesssim_d C_T\|f\|_{\mathcal{L}^{p,q}_v(\R^d)}.
\]
\end{theorem}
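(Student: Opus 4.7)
My plan is to follow the template used in the proof of Theorem~\ref{thm:rieszmorrey}: sparse domination reduces the bound for $T$ on the Morrey space to boundedness of $M_r$ on $\mathcal{L}^{p,q}_v(\R^d)$ and of $M_{s'}$ on the Köthe dual $\mathcal{B}^{p',q'}_{v^{-1}}(\R^d)$. First, set $X = \mathcal{L}^{p,q}_v(\R^d)$, so that $X' = \mathcal{B}^{p',q'}_{v^{-1}}(\R^d)$. After extending the hypothesized sparse estimate from $L^\infty_c$ to the order-continuous space $L^q_v(\R^d)$ (paired with $g \in L^{s'}_{\loc}(\R^d)$) by the usual density argument in the proof of Theorem~\ref{thm:rieszmorrey}, the pointwise bound $M_{r,s'}(f,g) \leq (M_r f)(M_{s'} g)$ gives
\[
\|(Tf)g\|_{L^1(\R^d)} \leq C_T \|M_r f\|_X \|M_{s'} g\|_{X'}.
\]
Dualizing via $\|Tf\|_X = \sup_{\|g\|_{X'}=1} \|(Tf)g\|_{L^1(\R^d)}$, which uses the fact that $X'' = X$ for Morrey spaces, reduces the problem to establishing (a) $M_r : X \to X$ and (b) $M_{s'} : X' \to X'$ with operator norms controlled by the weight.

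For (a), I will use Proposition~\ref{prop:mqboundrescale} to rephrase the bound as $M : X^r \to X^r$, where $X^r = \mathcal{L}^{p/r,q/r}_{v^r}(\R^d)$. Applying Theorem~\ref{thm:mrsboundmorrey} with the substitutions $r \to 1$, $s \to \infty$, $p \to p/r$, $q \to q/r$, $v \to v^r$, this requires $v^r \in A_{p/r,(1, 1/(r(1/p - 1/q)))}$, which unwinds to $v \in A_{p,(r, 1/(1/p - 1/q))}$. Since $1/p - 1/q \leq 1/p - 1/q + 1/s$, this is implied by the hypothesis $v \in A_{p,(r, 1/(1/p - 1/q + 1/s))}$.

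For (b), I will apply Theorem~\ref{thm:mboundblockspaces} with $(p,q,r,v)$ replaced by $(p', q', s', v^{-1})$. The hypothesis $q < s$ gives $s' < q'$, and the required weight condition becomes $v^{-p'} \in A_{p'(1/q - 1/s) + 1}$, which is equivalent to
\[
\sup_Q |Q|^{-(1/p' + 1/q - 1/s)} \|v^{-1} \ind_Q\|_{L^{p'}(\R^d)} \|v \ind_Q\|_{L^{1/(1/q - 1/s)}(\R^d)} < \infty.
\]
The central computation is that since $r \geq 1$, Hölder's inequality yields $\|v^{-1} \ind_Q\|_{L^{p'}(\R^d)} \leq |Q|^{1 - 1/r} \|v^{-1} \ind_Q\|_{L^{1/(1/r - 1/p)}(\R^d)}$, and the exponent of $|Q|$ on the right becomes $-(1/p' + 1/q - 1/s) + (1 - 1/r) = -(1/r - 1/p + 1/q - 1/s)$, which is precisely the exponent in the hypothesized weight condition.

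Combining (a) and (b) gives $\|Tf\|_X \lesssim_d C_T \|f\|_X$ whenever $f \in L^q_v(\R^d)$ with finite Morrey norm. This yields the unique extension by density; when the Morrey norm is infinite, the inequality is vacuous. The main obstacle is the weight manipulation in step (b): recognizing that the hypothesized $L^{1/(1/r-1/p)}$-integrability of $v^{-1}$ controls the $L^{p'}$-integrability up to a factor $|Q|^{1-1/r}$ that fits precisely into the exponent gap. The rest of the argument is a direct transcription of the dualize-and-apply-maximal-bounds template, and the restriction $r \geq 1$ is used only in this Hölder step.
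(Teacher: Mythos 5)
Your proposal is correct and follows essentially the same route as the paper's proof: dualize $\mathcal{L}^{p,q}_v(\R^d)$ against the block space $\mathcal{B}^{p',q'}_{v^{-1}}(\R^d)$, use $M_{r,s'}(f,g)\leq(M_rf)(M_{s'}g)$, and reduce to $M_r$ bounded on the Morrey space (Proposition~\ref{prop:mqboundrescale} plus Theorem~\ref{thm:mrsboundmorrey}) and $M_{s'}$ bounded on the block space (Theorem~\ref{thm:mboundblockspaces}), where your direct H\"older verification of the block-space weight condition is exactly the computation the paper phrases as $v^{-p'}\in A_{p'(\frac{1}{q}-\frac{1}{s})+1}\Leftrightarrow v\in A_{p,(1,\frac{1}{\frac{1}{s}+\frac{1}{p}-\frac{1}{q}})}\supseteq A_{p,(r,\frac{1}{\frac{1}{s}+\frac{1}{p}-\frac{1}{q}})}$. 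The only point to tighten is the order of the extension step: as in the paper, first define $Tf$ for $f\in L^q_v(\R^d)$ (the hypothesis gives $v\in A_{q,(r,s)}$, so $T$ is bounded on the order-continuous space $L^q_v(\R^d)$ and extends by density, after which the sparse estimate passes to the extension), then extend in $g$ from $L^\infty_c(\R^d)\cap X'$ to all of $X'$ using order-continuity ($p'$-concavity) of the block space rather than pairing with $L^{s'}_{\loc}$; note also that $L^q_v(\R^d)\subseteq\mathcal{L}^{p,q}_v(\R^d)$, so your final caveat about infinite Morrey norm is vacuous.
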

The reason we only get bounds in $L_v^q(\R^d)\subseteq\mathcal{L}^{p,q}_v(\R^d)$ is because $\mathcal{L}^{p,q}_v(\R^d)$ is not order-continuous, and $L^\infty_c(\R^d)$ is not a dense subspace.
\begin{proof}[Proof of Theorem~\ref{thm:sparseimpliesmorrey}]
Since $v\in A_{q,(r,s)}$ and $L_v^q(\R^d)$ is order-continuous, $T$ can be extended so that
\[
\|(Tf)g\|_{L^1(\R^d)}\leq C_T\|M_{r,s'}(f,g)\|_{L^1(\R^d)}
\]
holds for all $f\in L^q_v(\R^d)$ and $g\in L^\infty_c(\R^d)$. Moreover, letting $X=\mathcal{L}_v^{p,q}(\R^d)$, we have $X'=\mathcal{B}^{p',q'}_{v^{-1}}(\R^d)$ and
\begin{equation}\label{eq:sparseimpliesmorrey1}
\|(Tf)g\|_{L^1(\R^d)}\leq C_T\|M_r\|_X\|M_{s'}\|_{X'}\|f\|_X\|g\|_{X'}.
\end{equation}
for all $f\in L^q_v(\R^d)$ and $g\in L^\infty_c(\R^d)\cap X'$, provided that 
\begin{equation}\label{eq:sparseimpliesmorrey2}
M_r:X\to X,\quad M_{s'}:X'\to X'
\end{equation}
Since the Block space $X'$ is $p'$-concave, it is order-continuous, and hence, $L^\infty_c(\R^d)\cap X'$ is dense in $X'$. Thus, \eqref{eq:sparseimpliesmorrey1} also holds for all $g\in X'$. We conclude that
\[
\|Tf\|_X=\sup_{\|g\|_{X'}=1}\|(Tf)g\|_{L^1(\R^d)}\leq C_T\|M_r\|_X\|M_{s'}\|_{X'}\|f\|_X
\]
for all $f\in L^q_v(\R^d)$, and it remains to check \eqref{eq:sparseimpliesmorrey2}.

By Theorem~\ref{thm:mboundblockspaces}, $M_{s'}$ is bounded on $X'$ for $v^{-p'}\in A_{p'(\frac{1}{s'}-\frac{1}{q'})+1}$, which is equivalent to $v^{-1}\in A_{p',(\frac{1}{\frac{1}{q}-\frac{1}{s}+\frac{1}{p'}},\infty)}$ or
\[
v\in A_{p,(1,\frac{1}{\frac{1}{s}+\frac{1}{p}-\frac{1}{q}})}\supseteq A_{p,(r,\frac{1}{\frac{1}{s}+\frac{1}{p}-\frac{1}{q}})},
\]
the latter being the condition assumed on $v$. 

By Proposition~\ref{prop:mqboundrescale}, $M_r$ is bounded on $X$ if and only if $M$ is bounded on $X^r=X_{r,\infty}$. By Theorem~\ref{thm:mrsboundmorrey} this is the case when
\[
v\in A_{p,(r,\frac{1}{\frac{1}{p}-\frac{1}{q}})}\supseteq A_{p,(r,\frac{1}{\frac{1}{s}+\frac{1}{p}-\frac{1}{q}})},
\]
where the latter condition is the one imposed on $v$. Thus, \eqref{eq:sparseimpliesmorrey2} holds, and the assertion follows.
\end{proof}

\subsection{Multilinear sparse domination and extrapolation}
An $m$-(sub)linear operator $T$ is said to have sparse domination in form if
\begin{equation}\label{eq:Tfullrangemultisd}
\|T(f_1,\ldots,f_m)g\|_{L^1(\R^d)}\leq C_T\|M_{\vec{1},1}(f_1,\ldots,f_m,g)\|_{L^1(\R^d)},
\end{equation}
where,
\[
M_{\vec{1},1}(f_1,\ldots,f_m,g):=\sup_Q \Big(\prod_{j=1}^m\langle f_j\rangle_{1,Q}\Big)\langle g\rangle_{1,Q}\ind_Q.
\]
Many multilinear operators satisfy this bound such as, for example,  multilinear Calder\'on-Zygmund operators \cite{CR16}.

Consider an $m$-tuple of Banach function spaces $X_1,\ldots, X_m$ such that the product space
\[
X:=\prod_{j=1}^m X_j
\]
is again a Banach function space. To be more precise, the space $X$ is defined so that $f\in X$ precisely when there exist $0\leq f_j\in X_j$ such that $|f|\leq\prod_{j=1}^m f_j$ and
\[
\|f\|_X:=\inf\prod_{j=1}^m\|f_j\|_{X_j},
\]
where the infimum runs over all possible $0\leq f_j\in X_j$ such that $|f|\leq\prod_{j=1}^m f_j$.

Now, it is clear that if
\[
M:X_j\to X_j
\]
for all $j\in\{1,\ldots,m\}$ and $M:X'\to X'$ are bounded, then
\[
\|M\|_{X_1\times\cdots X_m\times X'\to L^1(\R^d)}\leq\Big(\prod_{j=1}^m\|M\|_{X_j\to X_j}\Big)\|M\|_{X'\to X'}
\]
and, hence, using the fact that $X$ is a Banach function space, we have the bound
\begin{align*}
\|T(f_1,\ldots,f_m)\|_{X}
&=\sup_{\|g\|_{X'}=1}\|T(f_1,\ldots,f_m)g\|_{L^1(\R^d)}\\
&\leq C_T\Big(\prod_{j=1}^m\|M\|_{X_j\to X_j}\Big)\|M\|_{X'\to X'}\prod_{j=1}^m\|f_j\|_{X_j}.
\end{align*}
However, unlike in the case $m=1$, this does not in general give us the full range of possible bounds. As a matter fact, the condition that $M:X_j\to X_j$ and $M:X'\to X'$ is too strong in the multilinear case. As was originally shown in \cite{LOPTT09}, in the case where $X_J=L^{p_j}_{w_j}(\R^d)$ and $X=L^p_w(\R^d)$ for $p_j\in(1,\infty]$, $\frac{1}{p}=\sum_{j=1}^m\frac{1}{p_j}$, and $w=\prod_{j=1}^m w_j$, the bound
\[
M:X_1\times\cdots\times X_m\times X'\to L^1(\R^d)
\]
is characterized by the condition $\vec{w}\in A_{\vec{p}}$ given by
\[
[\vec{w}]_{\vec{p}}:=\sup_Q\Big(\prod_{j=1}^m\langle w_j^{-1}\rangle_{p_j',Q}\Big)\langle w\rangle_{p,Q}<\infty.
\]
They also showed that the class $A_{p_1}\times\cdots\times A_{p_m}$ is strictly contained in $A_{\vec{p}}$, i.e., there exist weights $\vec{w}\in A_{\vec{p}}$ such that there is a $j\in\{1,\ldots,m\}$ with $w_j^{p_j}\notin A_{p_j}$. Hence, in this case we do have
\[
M:X_1\times\cdots\times X_m\times X'\to L^1(\R^d),
\]
but not $M:X_j\to X_j$. Thus, the condition that $M:X_j\to X_j$ and $M:X'\to X'$ is too strong in the multilinear case. Define
\[
M_{\vec{1}}(f_1,\ldots,f_m):=\sup_Q \prod_{j=1}^m\langle f_j\rangle_{1,Q}\ind_Q.
\]
Then we have the following multilinear analogue of Proposition~\ref{prop:sparseboundsfullrangeequiv}:
\begin{proposition}
Let $X_1,\ldots,X_m$ be an $m$-tuple of Banach function space over $\R^d$ so that their product space $X$ is again a Banach function space. Then the following assertions are equivalent:
\begin{enumerate}[(i)]
\item\label{it:sparseboundfullrangemultiequiv1} $M_{\vec{1}}:X_1\times\cdots X_m\to X$ and
\[
M:X_1\times X_{j-1}\times X_{j+1}\times\cdots\times X_m\times X'\to X_j'
\]
are bounded for all $j\in\{1,\ldots,m\}$;
\item\label{it:sparseboundfullrangemultiequiv2} $M_{\vec{1},1}:X_1\times\cdots\times X_m\times X'\to L^1(\R^d)$ is bounded.
\end{enumerate}
\end{proposition}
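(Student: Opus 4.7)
The plan is to use the pointwise sparse domination of the multilinear Hardy--Littlewood maximal operator as the main technical tool. By a standard Calder\'on--Zygmund stopping-time argument combined with the $3^d$-lattice reduction, for any positive locally integrable $h_1,\ldots,h_{n+1}$ one can produce a sparse family $\mathcal{S}$ (depending on the inputs) with pairwise disjoint $E_Q \subseteq Q$ satisfying $|Q| \leq C_d|E_Q|$ such that
\begin{equation*}
\int_{\R^d} M_{\vec{1}}(h_1,\ldots,h_n)\,h_{n+1}\,dx \lesssim_d \sum_{Q \in \mathcal{S}}\prod_{i=1}^{n+1}\langle h_i\rangle_{1,Q}\,|Q|.
\end{equation*}
Using the disjointness of the $E_Q$ and the pointwise bound $\prod_i\langle h_i\rangle_{1,Q} \leq M_{\vec{1}}(h_1,\ldots,h_{n+1})(x)$ on $E_Q$, the right-hand side is in turn comparable to $\|M_{\vec{1}}(h_1,\ldots,h_{n+1})\|_{L^1(\R^d)}$ up to a $d$-dependent constant.

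For the direction (ii) $\Rightarrow$ (i), I would apply the chain above with $n=m$ and $(h_1,\ldots,h_{m+1})=(f_1,\ldots,f_m,g)$, obtaining $\int M_{\vec{1}}(\vec{f})\,g\,dx \lesssim_d \|M_{\vec{1},1}(\vec{f},g)\|_{L^1}$. Taking the supremum over $\|g\|_{X'}=1$ and using the $X$--$X'$ duality, together with hypothesis (ii), yields $\|M_{\vec{1}}(\vec{f})\|_X \lesssim_d \|M_{\vec{1},1}\|\prod_k\|f_k\|_{X_k}$. Since $M_{\vec{1},1}$ is symmetric in its $m+1$ arguments, the very same argument with any $f_j$ taking the role of the test function against which one pairs, and $g$ staying as an input, delivers the second family of bounds $M_{\vec{1}} : X_1\times\cdots\times X_{j-1}\times X_{j+1}\times\cdots\times X_m\times X'\to X_j'$.

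For the converse (i) $\Rightarrow$ (ii), I would again invoke the sparse bound for the $(m+1)$-linear maximal operator,
\begin{equation*}
\|M_{\vec{1},1}(\vec{f},g)\|_{L^1} \lesssim_d \sum_{Q \in \mathcal{S}}\prod_{j=1}^m\langle f_j\rangle_{1,Q}\,\langle g\rangle_{1,Q}\,|Q|,
\end{equation*}
fix an index $j_0$, and write $\langle f_{j_0}\rangle_{1,Q}|Q|=\int_Q f_{j_0}$ so as to recast the sparse sum as $\int_{\R^d} f_{j_0}\,\Psi\,dx$, where $\Psi := \sum_{Q \in \mathcal{S}}\prod_{k\neq j_0}\langle f_k\rangle_{1,Q}\langle g\rangle_{1,Q}\ind_Q$ is a sparse operator acting on the remaining inputs. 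By H\"older's inequality in the $X_{j_0}\cdot X_{j_0}'\hookrightarrow L^1$ pairing it is enough to bound $\|\Psi\|_{X_{j_0}'}$ by $C\prod_{k\neq j_0}\|f_k\|_{X_k}\|g\|_{X'}$; this is established by dualizing against $h\in X_{j_0}$, using the sparseness $|Q|\lesssim_d|E_Q|$ together with the pointwise majorisation $\prod_{k\neq j_0}\langle f_k\rangle_{1,Q}\langle g\rangle_{1,Q}\leq M_{\vec{1}}(f_1,\ldots,\widehat{f_{j_0}},\ldots,f_m,g)$ on $E_Q$, and then invoking the second bound of (i) in the form $\|M_{\vec{1}}(\vec{f}_{\widehat{j_0}},g)\|_{X_{j_0}'}\lesssim\prod_{k\neq j_0}\|f_k\|_{X_k}\|g\|_{X'}$.

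The main obstacle will be the final step of the converse: the natural pairing of $\Psi$ against a test function $h$ produces an $(m+1)$-linear expression that, if handled na\"ively, loops back to $M_{\vec{1},1}$ itself and is hence circular. The plan is to avoid this loop by using the $|Q|\leq C_d|E_Q|$ inequality together with the sparse structure to peel off the factor $\langle h\rangle_{1,Q}$ in a way that matches the dual form of the second bound in (i), rather than reconstructing the conclusion (ii). The symmetry of the sparse expression in all $m+1$ inputs is what makes this tradeoff between the two hypotheses in (i) and the single conclusion in (ii) possible.
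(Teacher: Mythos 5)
Your direction (ii)$\Rightarrow$(i) is fine and is exactly the multilinear analogue of the paper's argument for the linear case (Proposition~\ref{prop:sparseboundsfullrangeequiv}): sparse domination of $M_{\vec 1}$ itself gives $\|M_{\vec 1}(\vec f)\,g\|_{L^1(\R^d)}\lesssim_d\|M_{\vec 1,1}(\vec f,g)\|_{L^1(\R^d)}$, and the bounds in (i) then follow from $\|\cdot\|_X=\|\cdot\|_{X''}$ and from the definition of the K\"othe dual norm on $X_j'$, exploiting the symmetry of $M_{\vec 1,1}$ in its $m+1$ arguments.

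The converse, however, has a genuine gap exactly at the point you flag. After rewriting the sparse form as $\int_{\R^d} f_{j_0}\Psi\,\mathrm{d}x$ and dualizing $\Psi$ against $h\in X_{j_0}$, you face $\sum_{Q\in\mathcal S}\prod_{k\neq j_0}\langle f_k\rangle_{1,Q}\langle g\rangle_{1,Q}\langle h\rangle_{1,Q}|Q|$, which is the same symmetric $(m+1)$-linear sparse form you started from, with $h$ in place of $f_{j_0}$. To invoke the hypothesis $\|M_{\vec 1}(f_1,\ldots,\widehat{f_{j_0}},\ldots,f_m,g)\|_{X_{j_0}'}\lesssim\prod_{k\neq j_0}\|f_k\|_{X_k}\|g\|_{X'}$ you must detach the factor $\langle h\rangle_{1,Q}$, and the sparse structure only offers two ways: dominate it by $Mh$ on $E_Q$, which requires $M:X_{j_0}\to X_{j_0}$ (not assumed, and deliberately so, since the whole point of the multilinear condition is that it is weaker), or keep it as $\int_Q h$, which is circular. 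Your plan also never uses the first bound $M_{\vec 1}:X_1\times\cdots\times X_m\to X$, and a single hypothesis cannot suffice: already for $m=1$ with $X=L^1(\R^d)$, $X'=L^\infty(\R^d)$, the bound $M:X'\to X'$ holds while (ii) fails. The intended argument (parallel to the pointwise bound $M_{1,1}(f,g)\leq (Mf)(Mg)$ in the linear proof) needs no sparse domination in this direction: for every cube $Q$ one has $\big(\prod_{k=1}^m\langle f_k\rangle_{1,Q}\big)^m\langle g\rangle_{1,Q}^m=\big(\prod_{k=1}^m\langle f_k\rangle_{1,Q}\big)\cdot\prod_{j=1}^m\big(\prod_{k\neq j}\langle f_k\rangle_{1,Q}\langle g\rangle_{1,Q}\big)$, whence pointwise
\[
M_{\vec 1,1}(f_1,\ldots,f_m,g)\leq\Big(M_{\vec 1}(f_1,\ldots,f_m)\prod_{j=1}^m M_{\vec 1}(f_1,\ldots,f_{j-1},f_{j+1},\ldots,f_m,g)\Big)^{\frac{1}{m}};
\]
since $X\cdot X_1'\cdots X_m'=\prod_{j=1}^m(X_j\cdot X_j')\subseteq L^{\frac{1}{m}}(\R^d)$ with constant one, taking $m$-th roots and applying both families of bounds in (i) yields (ii) with $\|M_{\vec 1,1}\|\leq\big(\|M_{\vec 1}\|_{X_1\times\cdots\times X_m\to X}\prod_{j=1}^m\|M_{\vec 1}\|_{\cdots\to X_j'}\big)^{\frac{1}{m}}$.
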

The proof of this result is analogous to \cite[Proposition~4.3]{LN22} and is left to the interested reader.

The spaces for which $M$ is bounded is not the only obstruction. For example, even if $X_j=L^{p_j}(\R^d)$ for $p_j\in(1,\infty]$, we have
\[
X=L^p(\R^d),\quad \frac{1}{p}=\sum_{j=1}^m\frac{1}{p_j},
\]
which is not a Banach function space whenever $p\in(\frac{1}{m},1)$. Nonetheless, if $T$ satisfies \eqref{eq:Tfullrangemultisd}, then it does satisfy bounds in this range of $p$ as can be shown using multilinear extrapolation in Lebesgue spaces \cite{LMO20, Ni19, LMMOV21}. The way to solve this in the general setting of Banach function spaces is through a rescaling argument. If we only assume that $X_1,\ldots, X_m$ are Banach function spaces, but assume no extra condition on $X$, then $X$ is a $\frac{1}{m}$-convex quasi-Banach function space. Writing
\[
M_{\vec{m},1}(f_1,\ldots,f_m,g)=\sup_Q \Big(\prod_{j=1}^m\langle f_j\rangle_{m,Q}\Big)\langle g\rangle_{1,Q}\ind_Q,
\]
then we can impose the condition that
\[
M_{\vec{m},1}:X_1^{\frac{1}{m}}\times\cdots\times X_m^{\frac{1}{m}}\times(X^{\frac{1}{m}})'\to L^1(\R^d)
\]
is bounded. Indeed, if $X_j=L^{p_j}_{w_j}(\R^d)$ for $\vec{w}\in A_{\vec{p}}$, then the above bound holds, even for $p\in(\frac{1}{m},1)$.

We conjecture that the following should hold:
\begin{conjecture}
Let $p_1,\ldots,p_m\in(1,\infty]$ with $\frac{1}{p}=\sum_{j=1}^m\frac{1}{p_j}>0$. Suppose $T$ is an operator that is well-defined on all $m$-tuples of functions $f_j\in L^{p_j}_{w_j}(\R^d)$ with $\vec{w}\in A_{\vec{p}}$ for $j\in\{1,\ldots,m\}$. Moreover, assume that there is an increasing function $\phi:[0,\infty)\to[0,\infty)$ such that for all $\vec{w}\in A_{\vec{p}}$ and $f_j\in L^{p_j}_{w_j}(\R^d)$, $j\in\{1,\ldots,m\}$ we have
\[
\|T(f_1,\ldots,f_m)\|_{L^p_w(\R^d)}\leq\phi([\vec{w}]_{\vec{p}})\prod_{j=1}^m\|f_j\|_{L^{p_j}_{w_j}(\Omega)}.
\]
Let $X_1,\ldots, X_m$ be Banach function spaces over $\R^d$ for which
\[
M_{\vec{m},1}:X_1^{\frac{1}{m}}\times\cdots\times X_m^{\frac{1}{m}}\times(X^{\frac{1}{m}})'\to L^1(\R^d)
\]
is bounded. Then $T(f_1,\ldots,f_m)$ is well-defined for any $f_j\in X_j$ and there is an increasing function $\psi:[0,\infty)\to [0,\infty)$ so that
\[
\|T(f_1,\ldots,f_m)\|_X\leq\psi(\|M_{\vec{m},1}\|_{X_1^{\frac{1}{m}}\times\cdots\times X_m^{\frac{1}{m}}\times(X^{\frac{1}{m}})'\to L^1(\R^d)})\prod_{j=1}^m\|f_j\|_{X_j}.
\]
\end{conjecture}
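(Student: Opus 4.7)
The plan is to extend the duality-plus-Rubio-de-Francia strategy of Theorem~\ref{thm:mainabs} and Theorem~\ref{thm:B} to the genuinely multilinear weight class $A_{\vec{p}}$, which is strictly larger than the product class $\prod_j A_{p_j}$ handled by Theorem~\ref{thm:B}. First I would dualize: since each $X_j$ is a Banach function space, the space $X=\prod_j X_j$ is $\frac{1}{m}$-convex and satisfies $X^{1/m}=\prod_j X_j^{1/m}$, where $X^{1/m}$ is itself a Banach function space. An iterated application of Lozanovskii's duality theorem (Theorem~\ref{prop:lozprod}) then yields
\[
(X^{1/m})'=\prod_{j=1}^m(X_j')^{1/m},
\]
so every nonnegative $g\in(X^{1/m})'$ factors as $g=\prod_j g_j^{1/m}$ with $g_j\in X_j'$. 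Combining $\|T(f_1,\ldots,f_m)\|_X=\|T(f_1,\ldots,f_m)^{1/m}\|_{X^{1/m}}^m$ with the K\"othe duality for $X^{1/m}$ gives
\[
\|T(f_1,\ldots,f_m)\|_X=\sup_{g}\big\|T(f_1,\ldots,f_m)\prod_jg_j\big\|_{L^{1/m}(\R^d)},
\]
where the supremum runs over nonnegative $g\in(X^{1/m})'$ of unit norm together with a factorization $g=\prod_jg_j^{1/m}$.

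Next, for any weight tuple $\vec{w}=(w_1,\ldots,w_m)$ with $w=\prod_jw_j$, the multilinear H\"older inequality with exponents $p,p_1',\ldots,p_m'$ (which satisfy $\frac{1}{p}+\sum_j\frac{1}{p_j'}=m$, i.e., the reciprocal of $\frac{1}{m}$) yields
\[
\big\|T(f_1,\ldots,f_m)\prod_jg_j\big\|_{L^{1/m}(\R^d)}\leq\|T(f_1,\ldots,f_m)\|_{L^p_w(\R^d)}\prod_j\|g_j\|_{L^{p_j'}_{w_j^{-1}}(\R^d)},
\]
after which the initial hypothesis bounds the first factor on the right by $\phi([\vec{w}]_{\vec{p}})\prod_j\|f_j\|_{L^{p_j}_{w_j}}$. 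What remains is to construct, for each fixed data $(f_1,\ldots,f_m,g_1,\ldots,g_m)$ with $f_j\in X_j$ and $g_j\in X_j'$, a tuple $\vec{w}\in A_{\vec{p}}$ with $[\vec{w}]_{\vec{p}}$ controlled by $\|M_{\vec{m},1}\|_{X_1^{1/m}\times\cdots\times X_m^{1/m}\times(X^{1/m})'\to L^1(\R^d)}$ and with $\prod_j\|f_j\|_{L^{p_j}_{w_j}}\|g_j\|_{L^{p_j'}_{w_j^{-1}}}$ comparable to $\prod_j\|f_j\|_{X_j}\|g_j\|_{X_j'}$. This is the multilinear analog of the weight construction in Theorem~\ref{thm:mainabs}: instead of iterating a single operator $S$ on a single space, one iterates a coupled system of $m+1$ operators driven by $M_{\vec{m},1}$ applied to the tuple $(f_1^m,\ldots,f_m^m,g)$. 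The test hypothesis on $M_{\vec{m},1}$ supplies the geometric decay required for the iteration to converge in each component, while the joint $A_{\vec{p}}$ characteristic of the resulting tuple is forced by the product structure of $M_{\vec{m},1}$ as a supremum of products of averages. Once the weights are in hand, chaining the H\"older estimate, the initial bound, and the $(X^{1/m})'$-norm control on $g$, and taking the supremum over $g$, yields the conclusion with $\psi$ depending only on $\|M_{\vec{m},1}\|$.

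The main obstacle is precisely this multilinear Rubio de Francia construction. The $A_{\vec{p}}$ condition does not decouple into individual $A_1$ or $A_{p_j}$ conditions, so the iteration must genuinely couple all $m+1$ inputs rather than proceed componentwise; otherwise one only recovers the weaker product class $\prod_jA_{p_j}$ already covered by Theorem~\ref{thm:B}. The version of the argument developed in \cite{Ni19} at the level of weighted Lebesgue spaces exploits specific features of $L^p$, notably the availability of positive isometric isomorphisms between $L^p$-type spaces and explicit rescalings of the form $w_j\mapsto w_j^{-p_j'}$. Porting this construction to a general Banach function space framework while keeping the final estimate sharp in $\|M_{\vec{m},1}\|$, and without requiring the stronger assumption that $M$ be bounded on each $X_j$ individually, is the substantive technical hurdle and the reason the statement is still conjectural.
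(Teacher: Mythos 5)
There is a genuine gap, and in fact the statement you are addressing is left as an open conjecture in the paper: the paper offers no proof of it, explicitly remarking that the multilinear Rubio de Francia algorithm of \cite{Ni19} only seems to work in Lebesgue spaces, and proving instead only the weaker Theorem~\ref{thm:B}/Theorem~\ref{thm:multilinearextrapolation} for the product class $A_{p_1}\times\cdots\times A_{p_m}\subsetneq A_{\vec{p}}$. Your preparatory steps are fine and essentially coincide with the paper's proof of that weaker theorem: the $\frac{1}{m}$-convexity of $X$, the Lozanovskii factorization $(X^{\frac{1}{m}})'=\prod_{j=1}^m(X_j')^{\frac{1}{m}}$, the dualization of $\|T(f_1,\ldots,f_m)\|_X$, and the multilinear H\"older step are all routine. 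But the entire content of the conjecture sits in the step you describe as "the multilinear analog of the weight construction in Theorem~\ref{thm:mainabs}": given $f_j\in X_j$ and a factorized $g=\prod_j g_j$, produce a \emph{single coupled} tuple $\vec{w}\in A_{\vec{p}}$ with $[\vec{w}]_{\vec{p}}$ controlled by $\|M_{\vec{m},1}\|_{X_1^{1/m}\times\cdots\times X_m^{1/m}\times(X^{1/m})'\to L^1(\R^d)}$ and with $\prod_j\|f_j\|_{L^{p_j}_{w_j}}\|g_j\|_{L^{p_j'}_{w_j^{-1}}}$ comparable to $\prod_j\|f_j\|_{X_j}\|g_j\|_{X_j'}$. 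You assert that iterating "a coupled system of $m+1$ operators driven by $M_{\vec{m},1}$" converges and that "the joint $A_{\vec{p}}$ characteristic of the resulting tuple is forced by the product structure," but no such iteration is defined, no convergence is shown, and no mechanism is given that extracts individual weights $w_j$ from a bound on the multisublinear operator, whose norm does not obviously control any single-operator iteration on any single space. Applying the linear algorithm componentwise, as you note yourself, only recovers the product class already covered by Theorem~\ref{thm:B}. Since you end by conceding that carrying this out "is the substantive technical hurdle and the reason the statement is still conjectural," the proposal is a strategy outline terminating exactly at the open problem, not a proof.

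Two smaller remarks. First, your Lozanovskii step needs the $m$-fold version with equal exponents $\tfrac{1}{m}$, obtained by iterating Theorem~\ref{prop:lozprod}; this is exactly how the paper handles the factorization in the proof of Theorem~\ref{thm:multilinearextrapolation}, so nothing new is gained there. Second, the paper's own suggested route to the conjecture is different from the one you sketch: rather than porting the algorithm of \cite{Ni19}, it proposes following the scheme of \cite{LMO20}, which reduces multilinear $A_{\vec{p}}$ extrapolation to repeated applications of \emph{off-diagonal linear} extrapolation, now available in the Banach function space setting via Theorem~\ref{thm:mainop}. If you want to attack the conjecture, making that reduction precise (in particular, identifying which pairs of spaces play the roles of $X$ and $Y$ with $X_{r_1,s_1}=Y_{r_2,s_2}$ at each stage, and verifying the required boundedness of $M$ from the single hypothesis on $M_{\vec{m},1}$) is the concrete missing work.
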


The multilinear Rubio de Francia algorithm that was developed in \cite{Ni19} only seems to work in Lebesgue spaces, and it is not clear how to extend this to general Banach function spaces. However, the proof in \cite{LMO20} relies on the off-diagonal extrapolation theorem in the case $m=1$. Since we have proven off-diagonal extrapolation in the case $m=1$ in this current work, this gives us a possible avenue to prove the conjecture.

In the meantime, we are able to prove an extrapolation theorem in the product weight class $A_{p_1}\times\cdots\times A_{p_m}\subsetneq A_{\vec{p}}$. As a matter of fact, we do this in a limited range setting where the initial assumptions on the operator $T$ is equivalent to the one assumed in the limited range Lebesgue space extrapolation result of \cite{CM17}:

\begin{theorem}\label{thm:multilinearextrapolation}
Let $r_1,\ldots,r_m\in(0,\infty)$, $s_1,\ldots, s_m\in(0,\infty]$ with $r_j<s_j$, and $p_j\in[r_j,s_j]$ for $j\in\{1,\ldots,m\}$ with $\frac{1}{p}:=\sum_{j=1}^m\frac{1}{p_j}$. Let $(\Omega,|\cdot|)$ be a $\sigma$-finite measure space with a basis of sets $\mathcal{E}$. Let $V_j$ be sets and let $S_j:V_j\to L^0(\Omega)$ be maps. Moreover, suppose
\[
T:\bigcup_{w_1\in A_{p_1,(r_1,s_1)}(\mathcal{E})}S_1^{-1}(L^{p_1}_{w_1}(\Omega))\times\cdots\times \bigcup_{w_m\in A_{p_m,(r_m,s_m)}(\mathcal{E})}S_m^{-1}(L^{p_m}_{w_m}(\Omega))\to L^0(\Omega)
\]
is a map for which there is an increasing function $\phi:[0,\infty)^m\to[0,\infty)$ such that for all $w_j\in A_{p_j,(r_j,s_j)}(\mathcal{E})$ and $f_j\in V_j$ with $Sf_j\in L^{p_j}_{w_j}(\Omega)$, $j\in\{1,\ldots,m\}$, we have
\begin{equation}\label{eq:extrapolationmultiin}
\|T(f_1,\ldots,f_m)\|_{L^p_w(\Omega)}\leq\phi([w_1]^\mathcal{E}_{p_1,(r_1,s_1)},\ldots,[w_m]^\mathcal{E}_{p_m,(r_m,s_m)})\prod_{j=1}^m\|S_jf_j\|_{L^{p_j}_{w_j}(\Omega)}.
\end{equation}
Let $X_1,\ldots, X_m$ be quasi-Banach function spaces over $\Omega$ for which $X_j$ is $r_j$-convex and $s_j$-concave, and for all $j\in\{1,\ldots,m\}$ we have:
\begin{itemize}
\item  If $p_j\neq r_j$,
\[
M^\mathcal{E}:(X_j)_{r_j,s_j}\to (X_j)_{r_j,s_j}
\]
is bounded;
\item If $p_j\neq s_j$, then
\[
M^\mathcal{E}:\big[(X_j)_{r_j,s_j}\big]'\to\big[(X_j)_{r_j,s_j}\big]'
\]
is bounded.
\end{itemize}
Then $T(f_1,\ldots,f_m)$ is well-defined for all $f_j\in V_j$ with $Sf_j\in X_j$, and
\begin{equation}\label{eq:extrapolationmultiout}
\|T(f_1,\ldots,f_m)\|_X\leq2^{\frac{1}{r}-\frac{1}{s}}\phi(C_1,\ldots,C_m)\prod_{j=1}^m\|f_j\|_{X_j},
\end{equation}
where $\frac{1}{r}=\sum_{j=1}^m\frac{1}{r_j}$, $\frac{1}{s}=\sum_{j=1}^m\frac{1}{s_j}$, and
\[
C_j=2^{\frac{1}{r_j}-\frac{1}{s_j}}\|M^\mathcal{E}\|_{(X_j)_{r_j,s_j}\to (X_j)_{r_j,s_j}}^{\frac{1}{r_j}-\frac{1}{p_j}}\|M^\mathcal{E}\|_{\big[(X_j)_{r_j,s_j}\big]'\to\big[(X_j)_{r_j,s_j}\big]'}^{\frac{1}{p_j}-\frac{1}{s_j}}.
\]
\end{theorem}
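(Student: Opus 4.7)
The strategy is to reduce the multilinear problem to $m$ parallel applications of the one-variable abstract extrapolation result Corollary~\ref{cor:abstractaprsextrapolation}, glued together by a multilinear Lozanovskii factorization on the ``dual side'' and by multilinear H\"older's inequality. Set $X := \prod_{j=1}^m X_j$ and write $\tfrac{1}{r} := \sum_j \tfrac{1}{r_j}$, $\tfrac{1}{s} := \sum_j \tfrac{1}{s_j}$. Unwinding the definition of the product space together with the $r_j$-concavifications, one checks $X^r = \prod_{j=1}^m (X_j^{r_j})^{r/r_j}$, with the exponents $r/r_j$ positive and summing to $1$; hence $X$ is $r$-convex, and a symmetric argument gives $s$-concavity. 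In particular, $X$ admits the representation
\[
\|T(f_1,\ldots,f_m)\|_X = \sup\bigl\{\,\|T(f_1,\ldots,f_m)\,g\|_{L^r(\Omega)} : \|g\|_{[(X^r)']^{1/r}} \leq 1\,\bigr\}.
\]

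The key algebraic tool is that, by iterated application of the duality part of Theorem~\ref{prop:lozprod} (combined with Theorem~\ref{thm:spacesplitting} to produce attained factorizations), the dual side factors as
\[
[(X^r)']^{1/r} = \prod_{j=1}^m [(X_j^{r_j})']^{1/r_j},
\]
and for every $g$ in this space there exist positive $g_j \in [(X_j^{r_j})']^{1/r_j}$ with $|g| = \prod_j g_j$ and $\|g\|_{[(X^r)']^{1/r}} = \prod_j \|g_j\|_{[(X_j^{r_j})']^{1/r_j}}$.

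Fix $f_j \in V_j$ with $S_j f_j \in X_j$ and factor a test function $g$ as above. For each $j$, apply Corollary~\ref{cor:abstractaprsextrapolation} to the pair $(S_j f_j, g_j)$ (whose hypotheses on $M^\mathcal{E}$ at $(X_j)_{r_j,s_j}$ and its K\"othe dual are precisely those assumed in the theorem) to produce a weight $w_j \in A_{p_j,(r_j,s_j)}(\mathcal{E})$ with $[w_j]^\mathcal{E}_{p_j,(r_j,s_j)} \leq C_j$ such that
\[
\|S_j f_j\|_{L^{p_j}_{w_j}(\Omega)}\,\|g_j\|_{L^{1/(1/r_j - 1/p_j)}_{w_j^{-1}}(\Omega)} \leq 2^{1/r_j - 1/s_j}\,\|S_j f_j\|_{X_j}\,\|g_j\|_{[(X_j^{r_j})']^{1/r_j}}.
\]
Setting $w := \prod_j w_j$, the exponent identity $\tfrac{1}{r} = \tfrac{1}{p} + \sum_j\bigl(\tfrac{1}{r_j} - \tfrac{1}{p_j}\bigr)$ licenses the multilinear H\"older estimate
\[
\|T(f_1,\ldots,f_m)\,g\|_{L^r(\Omega)} \leq \|T(f_1,\ldots,f_m)\|_{L^p_w(\Omega)}\,\prod_{j=1}^m \|g_j\|_{L^{1/(1/r_j - 1/p_j)}_{w_j^{-1}}(\Omega)},
\]
and the initial hypothesis \eqref{eq:extrapolationmultiin} bounds the first factor by $\phi([w_1],\ldots,[w_m])\prod_j \|S_j f_j\|_{L^{p_j}_{w_j}}$. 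Since $\phi$ is increasing in each argument, $\phi([w_1],\ldots,[w_m]) \leq \phi(C_1,\ldots,C_m)$; collecting the $2^{1/r_j - 1/s_j}$ factors into $2^{1/r - 1/s}$ and taking the supremum over $g$ yields \eqref{eq:extrapolationmultiout}.

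The main obstacle is the multi-factor Lozanovskii factorization with attained infimum. This is carried out by induction on $m$, splitting off one factor at a time via Theorem~\ref{thm:spacesplitting} applied to a suitable rescaling at each step; alternatively, the attainment can be circumvented by using almost-optimal factorizations with a multiplicative loss of $(1+\varepsilon)$, which is absorbed by letting $\varepsilon \downarrow 0$ before taking the supremum over $g$. Everything else in the proof is a direct combination of the one-variable dual extrapolation of Corollary~\ref{cor:abstractaprsextrapolation} with multilinear H\"older.
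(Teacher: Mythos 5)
Your proposal is correct and follows essentially the same route as the paper: factor the dual space $[(X^r)']^{1/r}=\prod_{j=1}^m[(X_j^{r_j})']^{1/r_j}$ via (iterated) Lozanovskii duality, apply the one-variable abstract result (Corollary~\ref{cor:abstractaprsextrapolation}) to each pair $(S_jf_j,g_j)$ to produce weights $w_j\in A_{p_j,(r_j,s_j)}(\mathcal{E})$ with $[w_j]^{\mathcal{E}}_{p_j,(r_j,s_j)}\leq C_j$, combine by H\"older in $L^r$ with $w=\prod_j w_j$, and dualize. The only cosmetic difference is that the paper never needs an attained (or $\varepsilon$-optimal) factorization of $g$: it works with any majorization $|g|\leq\prod_j g_j$ and takes the infimum over representations at the end, which sidesteps the one point you flagged as the "main obstacle."
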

\begin{proof}
Since $X_j$ is $r_j$-convex for all $j\in\{1,\ldots,m\}$, the product space $X=\prod_{j=1}^m X_j$ is $r$-convex, where $\frac{1}{r}=\sum_{j=1}^m\frac{1}{r_j}$. Moreover, by Lozanovskii's duality theorem, we have
\[
\big[(X^r)'\big]^{\frac{1}{r}}=\Big[\Big(\prod_{j=1}^m X_j^{r_j\frac{r}{r_j}}\Big)'\Big]^{\frac{1}{r}}=\Big[\prod_{j=1}^m\big[(\ X_j^{r_j})'\big]^{\frac{r}{r_j}}\Big]^{\frac{1}{r}}=\prod_{j=1}^m \big[(X_j^{r_j})'\big]^{\frac{1}{r_j}}.
\]
Fix $g\in\big[(X^r)'\big]^{\frac{1}{r}}$. Then there exist $0\leq g_j\in\big[(X_j^{r_j})'\big]^{\frac{1}{r_j}}$ so that $|g|\leq\prod_{j=1}^m g_j$.

Now, fix $j\in\{1,\ldots,m\}$ and $f_j\in V_j$ with $Sf_j\in X_j$. Then by Corollary~\ref{cor:abstractaprsextrapolation} there exists a weight $w_j\in A_{p_j,(r_j,s_j)}(\mathcal{E})$ with
\[
[w_j]^\mathcal{E}_{p_j,(r_j,s_j)}\leq 2^{\frac{1}{r_j}-\frac{1}{s_j}}\|M^\mathcal{E}\|_{(X_j)_{r_j,s_j}\to (X_j)_{r_j,s_j}}^{\frac{1}{r_j}-\frac{1}{p_j}}\|M^\mathcal{E}\|_{\big[(X_j)_{r_j,s_j}\big]'\to\big[(X_j)_{r_j,s_j}\big]'}^{\frac{1}{p_j}-\frac{1}{s_j}}=:C_j
\]
and $S_jf_j\in L^{p_j}_{w_j}(\Omega)$, $g_j\in L^{\frac{1}{\frac{1}{r_j}-\frac{1}{p_j}}}_{w_j^{-1}}(\Omega)$ with
\[
\|S_jf_j\|_{L^{p_j}_{w_j}(\Omega)}\|g_j\|_{L^{\frac{1}{\frac{1}{r_j}-\frac{1}{p_j}}}_{w_j^{-1}}(\Omega)}\leq 2^{\frac{1}{r_j}-\frac{1}{s_j}}\|S_jf_j\|_{X_j}\|g_j\|_{[(X_j^{r_j})']^{\frac{1}{r_j}}}.
\]
Thus, by \eqref{eq:extrapolationmultiin}, we have
\begin{align*}
\|&T(f_1,\ldots,f_m)g\|_{L^r(\Omega)}\\
&\leq\phi([w_1]_{p_1,(r_1,s_1)},\ldots,[w_m]_{p_m,(r_m,s_m)})\Big(\prod_{j=1}^m\|S_jf_j\|_{L^{p_j}_{w_j}(\Omega)}\Big)\|g\|_{L^{\frac{1}{\frac{1}{r}-\frac{1}{p}}}_{w^{-1}}(\Omega)}\\
&\leq\phi(C_1,\ldots,C_m)\prod_{j=1}^m\|S_jf_j\|_{L^{p_j}_{w_j}(\Omega)}\|g_j\|_{L^{\frac{1}{\frac{1}{r_j}-\frac{1}{p_j}}}_{w_j^{-1}}(\Omega)}\\
&\leq 2^{\frac{1}{r}-\frac{1}{s}}\phi(C_1,\ldots,C_m)\prod_{j=1}^m\|S_jf_j\|_{X_j}\|g_j\|_{\big[(X_j^{r_j})'\big]^{\frac{1}{r_j}}}.
\end{align*}
Taking an infimum over all possible representations $|g|\leq\prod_{j=1}^m g_j$, $0\leq g_j\in X_j$, we conclude that
\[
\|T(f_1,\ldots,f_m)g\|_{L^r(\R^d)}\leq 2^{\frac{1}{r}-\frac{1}{s}}\phi(C_1,\ldots,C_m)\prod_{j=1}^m\Big(\prod_{j=1}^m\|S_jf_j\|_{X_j}\Big)\|g\|_{\big[(X^r)'\big]^{\frac{1}{r}}}.
\]
The result now follows from the fact that
\[
\|T(f_1,\ldots,f_m)\|_X=\sup_{\|g\|_{\big[(X^r)'\big]^{\frac{1}{r}}}=1}\|T(f_1,\ldots,f_m)g\|_{L^r(\Omega)}.
\]
\end{proof}
Theorem~\ref{thm:B} follows by taking $V=L^0(\R^d)$ and $Sf:=f$.

We note that in the full range case $r_j=1$, $s_j=\infty$ for all $j\in\{1,\ldots,m\}$, the main ingredient of the proof was Lozanovskii's duality theorem, which allows us to write
\[
\big[(X^{\frac{1}{m}})'\big]^m=\prod_{j=1}^m X_j'.
\]
After this, the result follows in a straightforward way from the case $m=1$. As a matter of fact, one can prove an off-diagonal version of this result with essentially the same proof, this time using our off-diagonal extrapolation argument after reducing to the case $m=1$. We leave this case as an exercise for the reader.

\subsection{Multilinear vector-valued extrapolation}

Using the multilinear vector-valued extrapolation theorems from \cite{LN19, Nidiss}, we can also obtain a multilinear version of Theorem~\ref{thm:vectorvaluedmain} as an application of Theorem~\ref{thm:multilinearextrapolation}.

\begin{theorem}\label{thm:multivvextrapolation}
Let $r_1,\ldots,r_m\in(0,\infty)$, $s_1,\ldots,s_m\in(0,\infty]$ with $r_j<s_j$. Suppose $T$ is an $m$-(sub)linear operator such that for all $p_j\in(r_j,s_j)$ it is well-defined on all $m$-tuples of functions $(f_1,\ldots,f_m)$ with $f_j\in L^{p_j}_{w_j}(\R^d)$ with $w_j\in A_{p_j,(r_j,s_j)}$, $j\in\{1,\ldots,m\}$ and, moreover, there is an increasing function $\phi_{\vec{p}}:[0,\infty)^m\to[0,\infty)$ such that for all $w_j\in A_{p_j,(r_j,s_j)}$ and $f_j\in L^{p_j}_{w_j}(\R^d)$, $j\in\{1,\ldots,m\}$ we have
\[
\|T(f_1,\ldots,f_m)\|_{L^p_w(\R^d)}\leq\phi_{\vec{p}}([w_1]_{p_1,(r_1,s_1)},\ldots,[w_m]_{p_m,(r_m,s_m)})\prod_{j=1}^m\|f_j\|_{L^{p_j}_{w_j}(\R^d)}.
\]
Let $Y_1,\ldots,Y_m$ be an $m$-tuple of quasi-Banach function spaces over a $\sigma$-finite measure space $(\Omega,|\cdot|)$ such that $Y_j$ is order-continuous, $r_j$-convex and $s_j$-concave for all $j\in\{1,\ldots,m\}$ such that for all simple functions $f_j\in L^\infty_c(\R^d;Y_j)$ the function
\[
\widetilde{T}(f_1,\ldots,f_m)(x,y):=T(f_1(\cdot,y),\cdots, f_m(\cdot,y))(x)
\]
is strongly measurable in $Y=\prod_{j=1}^m Y$. If $Y_j^{r_j}$ and $((Y_j)_{r_j,s_j})'$ have the Hardy-Littlewood property for all $j\in\{1,\ldots,m\}$, then there is an increasing function $\phi_{\vec{Y},\vec{r},\vec{s}}:[0,\infty)^{2m}\to [0,\infty)$ such that for all $m$-tuples $X_1,\ldots, X_m$ of quasi-Banach function spaces over $\R^d$ for which $X_j$ is $r_j$-convex, $s_j$-concave, and for all $j\in\{1,\ldots,m\}$ we have that
\[
M:(X_j)_{r_j,s_j}\to (X_j)_{r_j,s_j},\quad M:\big[(X_j)_{r_j,s_j}\big]'\to\big[(X_j)_{r_j,s_j}\big]'
\]
are bounded, we have that
\[
\widetilde{T}:X_1(Y_1)\times\ldots\times X_m(Y_m)\to X(Y)
\]
is bounded, where $X=\prod_{j=1}^m X_j$, with
\begin{align*}
\|\widetilde{T}&\|_{X_1(Y_1)\times\ldots\times X_m(Y_m)\to X(Y)}\\
&\leq\phi_{\vec{Y},
\vec{r},\vec{s}}\big((\|M\|_{(X_j)_{r_j,s_j}\to (X_j)_{r_j,s_j}},\|M\|_{\big[(X_j)_{r_j,s_j}\big]'\to \big[(X_j)_{r_j,s_j}\big]'})_{j=1}^m\big).
\end{align*}
\end{theorem}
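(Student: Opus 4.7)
The plan is to reduce Theorem~\ref{thm:multivvextrapolation} to the combination of two ingredients: a multilinear vector-valued extrapolation theorem in weighted Lebesgue spaces, and the scalar multilinear extrapolation Theorem~\ref{thm:multilinearextrapolation} from the previous subsection. First, I would fix any convenient tuple $p_j\in(r_j,s_j)$, for instance $\frac{1}{p_j}=\frac{1}{2}\bigl(\frac{1}{r_j}+\frac{1}{s_j}\bigr)$, and set $\frac{1}{p}=\sum_{j=1}^m\frac{1}{p_j}$. By the multilinear version of the vector-valued extrapolation theorem of \cite{LN19, Nidiss}, under the hypothesis that each $Y_j$ is order-continuous, $r_j$-convex, $s_j$-concave, and that $Y_j^{r_j}$ and $\bigl((Y_j)_{r_j,s_j}\bigr)'$ have the Hardy-Littlewood property (i.e., $Y_j\in\UMD_{r_j,s_j}$), the operator $\widetilde{T}$ extends to a bounded $m$-(sub)linear map on weighted Bochner spaces: there is an increasing $\Psi_{\vec{Y},\vec{p},\vec{r},\vec{s}}:[0,\infty)^m\to[0,\infty)$ such that
\[
\|\widetilde{T}(f_1,\dots,f_m)\|_{L^p_w(\R^d;Y)}\leq \Psi_{\vec{Y},\vec{p},\vec{r},\vec{s}}\bigl([w_1]_{p_1,(r_1,s_1)},\dots,[w_m]_{p_m,(r_m,s_m)}\bigr)\prod_{j=1}^m\|f_j\|_{L^{p_j}_{w_j}(\R^d;Y_j)}
\]
for all $w_j\in A_{p_j,(r_j,s_j)}$, where $w=\prod_{j=1}^m w_j$.

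Next, I would scalarize via Fubini/Bochner. Define $S_j:V_j\to L^0(\R^d)$ by $S_jf_j(x):=\|f_j(x,\cdot)\|_{Y_j}$ on $V_j:=\bigcup_{w_j\in A_{p_j,(r_j,s_j)}}L^{p_j}_{w_j}(\R^d;Y_j)$, and the scalar operator $\mathcal{T}(f_1,\dots,f_m)(x):=\|\widetilde{T}(f_1,\dots,f_m)(x,\cdot)\|_Y$. Using $\|h\|_{L^p_w(\R^d;Z)}=\bigl\|\|h(\cdot,y)\|_Z\bigr\|_{L^p_w(\R^d)}$ for any $Z\in\{Y,Y_j\}$, the displayed inequality above turns into the hypothesis of Theorem~\ref{thm:multilinearextrapolation} for $\mathcal{T}$ with the auxiliary maps $S_j$:
\[
\|\mathcal{T}(f_1,\dots,f_m)\|_{L^p_w(\R^d)}\leq \Psi_{\vec{Y},\vec{p},\vec{r},\vec{s}}\bigl([w_1]_{p_1,(r_1,s_1)},\dots,[w_m]_{p_m,(r_m,s_m)}\bigr)\prod_{j=1}^m\|S_jf_j\|_{L^{p_j}_{w_j}(\R^d)}.
\]

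Then I would apply Theorem~\ref{thm:multilinearextrapolation} directly, with the basis of cubes in $\R^d$, to $\mathcal{T}$ and the data $(V_j,S_j)$. The boundedness assumptions on $M$ on $(X_j)_{r_j,s_j}$ and $[(X_j)_{r_j,s_j}]'$ are exactly the hypotheses of that theorem. Its conclusion yields
\[
\|\mathcal{T}(f_1,\dots,f_m)\|_X\leq 2^{\frac{1}{r}-\frac{1}{s}}\Psi_{\vec{Y},\vec{p},\vec{r},\vec{s}}(C_1,\dots,C_m)\prod_{j=1}^m\|S_jf_j\|_{X_j}
\]
for all $f_j\in V_j$ with $S_jf_j\in X_j$, where $C_j$ is as in Theorem~\ref{thm:multilinearextrapolation}. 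By the very definition of the mixed-norm spaces, $\|S_jf_j\|_{X_j}=\|f_j\|_{X_j(Y_j)}$ and $\|\mathcal{T}(f_1,\dots,f_m)\|_X=\|\widetilde{T}(f_1,\dots,f_m)\|_{X(Y)}$. Setting $\phi_{\vec{Y},\vec{r},\vec{s}}(t_1,u_1,\dots,t_m,u_m):=2^{\frac{1}{r}-\frac{1}{s}}\Psi_{\vec{Y},\vec{p},\vec{r},\vec{s}}\bigl(2^{\frac{1}{r_1}-\frac{1}{s_1}}t_1^{\frac{1}{r_1}-\frac{1}{p_1}}u_1^{\frac{1}{p_1}-\frac{1}{s_1}},\dots\bigr)$ completes the argument, after first passing simple functions to general $f_j\in X_j(Y_j)$ by order-continuity of each $Y_j$ and the standing strong measurability assumption on $\widetilde{T}$ of simple vector-valued functions.

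The only nontrivial step is invoking the multilinear vector-valued extrapolation with $\UMD_{r_j,s_j}$ factors; this is the principal technical hurdle, but it follows by the same Rubio de Francia algorithm plus factorization argument developed in \cite{LN19, Nidiss}, now applied coordinate-wise as in the scalar proof of Theorem~\ref{thm:multilinearextrapolation}, using Lozanovskii factorization of $\bigl[(Y^r)'\bigr]^{\frac{1}{r}}=\prod_{j=1}^m\bigl[(Y_j^{r_j})'\bigr]^{\frac{1}{r_j}}$ to reduce the vector-valued multilinear case to $m$ single-factor problems in $Y_j$. After that reduction, the scalar dualization/Fubini argument assembles everything into the form stated above.
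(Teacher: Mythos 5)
Your proposal is correct and follows essentially the same route as the paper: fix an exponent tuple $p_j\in(r_j,s_j)$, invoke the weighted multilinear vector-valued extrapolation of \cite{Nidiss} to get the bounds on $L^{p_1}_{w_1}(\R^d;Y_1)\times\cdots\times L^{p_m}_{w_m}(\R^d;Y_m)\to L^p_w(\R^d;Y)$, and then scalarize with $\mathcal{T}f(x):=\|\widetilde{T}f(x,\cdot)\|_Y$, $S_jf_j(x):=\|f_j(x,\cdot)\|_{Y_j}$ so that Theorem~\ref{thm:multilinearextrapolation} applies verbatim. The extra density/simple-function step you mention is not needed, and the particular choice of $p_j$ (the paper takes $p_j=1+\tfrac{r_j}{s_j'}$) is immaterial.
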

Note that this is Theorem~\ref{thm:C}.
\begin{proof}
Choosing $p_j=1+\frac{r_j}{s_j'}$, it follows from \cite[Theorem~9.1.1]{Nidiss} there is an increasing function $\phi_{\vec{Y},\vec{r},\vec{s}}:[0,\infty)^m\to[0,\infty)$ such that
\[
\widetilde{T}:L^{p_1}_{w_1}(\R^d;Y_1)\times\cdots\times L^{p_m}_{w_m}(\R^d;Y_m)\to L^p_w(\R^d;Y)\]
is bounded for all $w_j\in A_{p_j,(r_j,s_j)}$, with
\[
\|\widetilde{T}f\|_{L^p_w(\R^d;Y)}\leq\phi_{\vec{Y},p,r,s}([w_1]_{p_1,(r_1,s_1)},\ldots,[w_m]_{p_m,(r_m,s_m)})\prod_{j=1}^m\|f_j\|_{L^{p_j}_{w_j}(\R^d;Y_j)}.
\]
Then it follows from Theorem~\ref{thm:multilinearextrapolation} with $V_j=\bigcup_{w_j\in A_{p_j,(r_j,s_j)}}L_{w_j}^{p_j}(\Omega;Y_j)$ applied with $Tf$ replaced by $\mathcal{T}f(x):=\|\widetilde{T}f(x,\cdot)\|_Y$ and $S_jf_j(x):=\|f_j(x,\cdot)\|_{Y_j}$. That for all $f_j\in X_j(Y_j)$ we have
\begin{align*}
\|\widetilde{T}f\|_{X(Y)}&=\|\mathcal{T}f\|_X\leq 2^{\frac{1}{r}-\frac{1}{s}}\phi_{\vec{Y},\vec{r},\vec{s}}(C_1,\ldots,C_m)\prod_{j=1}^m\|S_jf_j\|_{X_j}\\
&=2^{\frac{1}{r}-\frac{1}{s}}\phi_{\vec{Y},\vec{r},\vec{s}}(C_1,\ldots,C_m)\prod_{j=1}^m\|f_j\|_{X_j(Y_j)}.
\end{align*}
The assertion follows.
\end{proof}

\subsection{An application to the Bilinear Hilbert transform}
The Bilinear Hilbert transform $\BHT$ is defined for $f_1,f_2\in\mathcal{S}(\R)$ by
\[
\BHT(f_1,f_2)(x):=\pv\int_R\!f_1(x-y)f_2(x+y)\,\frac{\mathrm{d}y}{y},
\]
and falls outside of the scope of bilinear Calder\'on-Zygmund theory. It was shown through a sparse domination method in \cite{CDO18} that this operator is bounded
\[
\BHT:L^{p_1}_{w_1}(\R)\times L^{p_2}_{w_2}(\R)\to L^p_w(\R),
\]
where $w=w_1w_2$, $\frac{1}{p}=\frac{1}{p_1}+\frac{1}{p_2}$, whenever there exist $r_1,r_2,r_3\in(1,\infty)$ satisfying
\[
\sum_{j=1}^3\max\Big(\frac{1}{r_j},\frac{1}{2}\Big)<2,
\]
with $r_j<p_j$ and
\[
[w_1,w_2]_{(p_1,p_2),(r_1,r_2,r_3')}:=\sup_Q\langle w_1^{-1}\rangle_{\frac{1}{\frac{1}{r_1}-\frac{1}{p_1}},Q}\langle w_2^{-1}\rangle_{\frac{1}{\frac{1}{r_2}-\frac{1}{p_2}},Q}\langle w\rangle_{\frac{1}{\frac{1}{p}-\frac{1}{r_3'}},Q}<\infty.
\]
While, as we discussed in the previous section, unfortunately our extrapolation theorem does not cover these more general weight classes, we can still get new bounds by noting that if $s_1,s_2\in(0,\infty]$ with $r_1<s_1$, $r_2<s_2$ satisfies $\frac{1}{s_1}+\frac{1}{s_2}=\frac{1}{r_3'}$, then, by H\"older's inequality, we have
\[
[w_1,w_2]_{(p_1,p_2),(r_1,r_2,r_3')}\leq[w_1]_{p_1,(r_1,s_1)}[w_2]_{p_2,(r_2,s_2)}.
\]
Hence, by Theorem~\ref{thm:multilinearextrapolation}, we obtain the following:
\begin{theorem}
Let $r_1,r_2\in(1,\infty)$, $s_1,s_2\in(1,\infty]$ with $r_j<s_j$ and, for $\frac{1}{s}=\frac{1}{s_1}+\frac{1}{s_2}$,
\[
\max\Big(\frac{1}{r_1},\frac{1}{2}\Big)+\max\Big(\frac{1}{r_2},\frac{1}{2}\Big)+\max\Big(\frac{1}{s'},\frac{1}{2}\Big)<2.
\]
Then
\[
\BHT:X_1\times X_2\to X
\]
is bounded for all $r_j$- and $s_j$-concave quasi-Banach function spaces $X_j$ over $\R$ for which
\[
M:(X_j)_{r_j,s_j}\to (X_j)_{r_j,s_j},\quad M:\big[(X_j)_{r_j,s_j}\big]'\to\big[(X_j)_{r_j,s_j}\big]'
\]
are bounded for $j\in\{1,2\}$.

In particular, we have:
\begin{itemize}
\item Lorentz spaces: For all $p_1,q_1\in(r_1,s_1)$, $p_2,q_2\in(r_2,s_2)$, and weights $(v_1,v_2)\in A_{p_1,(r_1,s_1)}\times A_{p_2,(r_2,s_2)}$, we have
\[
\BHT:L^{p_1,q_1}_{v_1}(\R^d)\times L^{p_2,q_2}_{v_2}(\R)\to L^{p,q}_v(\R),
\]
where $v=v_1v_2$, $\frac{1}{p}=\frac{1}{p_1}+\frac{1}{p_2}$, $\frac{1}{q}=\frac{1}{q_1}+\frac{1}{q_2}$.
\item Variable Lebesgue spaces: For all $p_1,p_2:\R\to(1,\infty)$ satisfying
\[
r_j<\essinf p_j\leq\esssup p_j< s_j
\]
and $p_j(\cdot)\in LH_0\cap LH_\infty$, $j\in\{1,2\}$,  and all weights $v_1,v_2$ satisfying
\[
\sup_Q|Q|^{-\big(\frac{1}{r_j}-\frac{1}{s_j}\big)}\|v^{-1}\ind_Q\|_{L^{\frac{1}{\frac{1}{r_j}-\frac{1}{p_j(\cdot)}}}(\R)}\|v\ind_Q\|_{L^{\frac{1}{\frac{1}{p_j(\cdot)}-\frac{1}{s_j}}}(\R)}<\infty,
\]
we have
\[
\BHT:L^{p_1(\cdot)}_{v_1}(\R)\times L^{p_2(\cdot)}_{v_2}(\R)\to L^{p(\cdot)}_v(\R),
\]
where $v=v_1v_2$, $\frac{1}{p(x)}=\frac{1}{p_1(x)}+\frac{1}{p_2(x)}$.
\end{itemize}

Moreover, if $Y_1$, $Y_2$ are respectively $r_1$- and $r_2$-convex and $s_1$- and $s_2$-concave order-continuous quasi-Banach function spaces such that $Y_j^{r_j}$ and $((Y_j)_{r_j,s_j})'$ have the Hardy-Littlewood property, then we also have that
\[
\widetilde{\BHT}:X_1(Y_1)\times X_2(Y_2)\to X(Y)
\]
is bounded for any of the above spaces $X_1$, $X_2$.
\end{theorem}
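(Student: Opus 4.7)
The plan is to reduce the result to the multilinear extrapolation theorem \ref{thm:multilinearextrapolation} by extracting a suitable one-weight initial bound for $\BHT$ from the sparse domination result of Culiolo--Di Plinio--Ou. Setting $r_3:=s'$ makes the condition in the statement coincide exactly with the hypothesis from \cite{CDO18}; for any $p_1\in(r_1,s_1)$, $p_2\in(r_2,s_2)$ with $\frac{1}{p}=\frac{1}{p_1}+\frac{1}{p_2}$, this yields an increasing function $\phi_{\vec{p}}$ such that
\[
\|\BHT(f_1,f_2)\|_{L^p_w(\R)}\leq\phi_{\vec{p}}\big([w_1,w_2]_{(p_1,p_2),(r_1,r_2,s')}\big)\|f_1\|_{L^{p_1}_{w_1}(\R)}\|f_2\|_{L^{p_2}_{w_2}(\R)}
\]
whenever $[w_1,w_2]_{(p_1,p_2),(r_1,r_2,s')}<\infty$, where $w=w_1w_2$. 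The key algebraic observation, already hinted at in the paragraph preceding the statement, is that since $\frac{1}{p}-\frac{1}{s}=(\frac{1}{p_1}-\frac{1}{s_1})+(\frac{1}{p_2}-\frac{1}{s_2})$, H\"older's inequality applied to the product-weight average bounds
\[
[w_1,w_2]_{(p_1,p_2),(r_1,r_2,s')}\leq[w_1]_{p_1,(r_1,s_1)}[w_2]_{p_2,(r_2,s_2)}
\]
for any $w_j\in A_{p_j,(r_j,s_j)}$. Defining $\phi(t_1,t_2):=\phi_{\vec{p}}(t_1t_2)$ then puts us precisely in the setting of Theorem~\ref{thm:multilinearextrapolation} with $m=2$.

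Choosing any $p_j\in(r_j,s_j)$ (concretely, $p_j:=1+r_j/s_j'$), the conclusion of Theorem~\ref{thm:multilinearextrapolation} directly produces the abstract boundedness $\BHT:X_1\times X_2\to X$ with operator norm controlled by $\phi$ evaluated at $\|M\|_{(X_j)_{r_j,s_j}\to(X_j)_{r_j,s_j}}$ and $\|M\|_{[(X_j)_{r_j,s_j}]'\to[(X_j)_{r_j,s_j}]'}$. The two concrete examples then reduce entirely to verifying the two maximal-function hypotheses on each $X_j$: for Lorentz spaces I will apply Theorem~\ref{thm:mrsboundlorentz} to $X_j=L^{p_j,q_j}_{v_j}(\R)$ with $v_j\in A_{p_j,(r_j,s_j)}$, and then identify the product space as $X=L^{p,q}_v(\R)$ with $v=v_1v_2$ via the standard Lorentz-space factorization $X_1\cdot X_2=L^{p,q}_v(\R)$; for variable Lebesgue spaces I will apply Theorem~\ref{thm:mrsboundvariablelebesgue} to each $L^{p_j(\cdot)}_{v_j}(\R)$ under the assumed $LH_0\cap LH_\infty$ conditions, with the target identified as $L^{p(\cdot)}_v(\R)$ from the pointwise factorization of variable Lebesgue norms.

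For the final vector-valued assertion I plan to invoke Theorem~\ref{thm:multivvextrapolation} with the Lebesgue input furnished by the same sparse-domination-plus-H\"older step over the full open range $p_j\in(r_j,s_j)$. The one hypothesis worth checking explicitly is the strong measurability of $\widetilde{\BHT}(f_1,f_2)(x,y)=\BHT(f_1(\cdot,y),f_2(\cdot,y))(x)$ on simple functions $f_j\in L^\infty_c(\R;Y_j)$; bilinearity reduces this to the scalar case where $\BHT$ applied to two $L^\infty_c(\R)$ functions is jointly measurable, and the finitely many simple tensors then give measurability in $Y=Y_1\cdot Y_2$. The main technical obstacle in the whole argument is cosmetic rather than conceptual: one must carefully align the three H\"older identities inside the joint weight constant $[w_1,w_2]_{(p_1,p_2),(r_1,r_2,s')}$ (on $w_1^{-1}$, on $w_2^{-1}$, and on the product $w$) to see that the single splitting $\frac{1}{s}=\frac{1}{s_1}+\frac{1}{s_2}$ produces a product bound with no loss; once this is in place, the remaining steps are direct applications of the extrapolation machinery already established in the paper.
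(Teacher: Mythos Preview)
Your proposal is correct and follows essentially the same approach as the paper: the paper states the theorem immediately after noting that the H\"older inequality $[w_1,w_2]_{(p_1,p_2),(r_1,r_2,r_3')}\leq[w_1]_{p_1,(r_1,s_1)}[w_2]_{p_2,(r_2,s_2)}$ (with $r_3'=s$) reduces the CDO18 weighted bound to the product-weight hypothesis of Theorem~\ref{thm:multilinearextrapolation}, and the concrete examples then follow from Theorems~\ref{thm:mrsboundlorentz}, \ref{thm:mrsboundvariablelebesgue}, and~\ref{thm:multivvextrapolation} exactly as you outline. Your additional remarks on the product-space identifications and the measurability check for $\widetilde{\BHT}$ fill in details the paper leaves implicit.
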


It was shown in \cite{CDO18} that we have the sparse domination
\[
\|\BHT(f_1,f_2)g\|_{L^1(\R)}\lesssim_{r_1,r_2,s} \|M_{r_1,r_2,s'}(f_1,f_2,g)\|_{L^1(\R)}
\]
for all $f_1,f_2,g\in L^\infty_c(\R)$ whenever $r_1,r_2,s\in(1,\infty)$ satisfy
\[
\max\Big(\frac{1}{r_1},\frac{1}{2}\Big)+\max\Big(\frac{1}{r_2},\frac{1}{2}\Big)+\max\Big(\frac{1}{s'},\frac{1}{2}\Big)<2.
\]
It was later also shown in \cite{BM17} that for these same parameters we also have the ``worse'' (in the sense of \cite[Conjecture~6.2]{LN22}) sparse domination
\[
\|\BHT(f_1,f_2)g\|_{L^r(\R)}\lesssim_{r_1,r_2,s} \|M_{r_1,r_2,\frac{1}{\frac{1}{r}-\frac{1}{s}}}(f_1,f_2,g)\|_{L^r(\R)}
\]
for all $f_1,f_2,g\in L^\infty_c(\R)$, where $\frac{1}{r}=\frac{1}{r_1}+\frac{1}{r_2}$. Using this result, we can prove bounds for $\BHT$ in weighted Morrey spaces:
\begin{theorem}\label{thm:bhtmorrey}
Let $r_1,r_2\in(1,\infty)$, $s_1,s_2\in(1,\infty]$ with $r_j<s_j$ and, for $\frac{1}{s}=\frac{1}{s_1}+\frac{1}{s_2}$,
\[
\max\Big(\frac{1}{r_1},\frac{1}{2}\Big)+\max\Big(\frac{1}{r_2},\frac{1}{2}\Big)+\max\Big(\frac{1}{s'},\frac{1}{2}\Big)<2.
\]
Then for all $p_1\in(r_1,s_1)$, $p_2\in(r_2,s_2)$,  $q\in[p,\infty)$, $\frac{1}{q_j}:=\frac{1}{p_j}\frac{p}{q}$, and all weights $v_j$ with
\[
\sup_Q|Q|^{-\big(\frac{1}{r_j}-\frac{1}{s_j}-\frac{1}{p_j}+\frac{1}{q_j}\big)}\|v\ind_Q\|_{L^{\frac{1}{\frac{1}{q_j}-\frac{1}{s_j}}}(\R)}\|v^{-1}\ind_Q\|_{L^{\frac{1}{\frac{1}{r_j}-\frac{1}{p_j}}}(\R)}<\infty,
\]
for $j\in\{1,2\}$, we have
\[
\BHT:\mathcal{L}_{v_1}^{p_1,q_1}(\R)\times \mathcal{L}_{v_2}^{p_2,q_2}(\R)\to\mathcal{L}_v^{p,q}(\R),
\]
where $v=v_1v_2$, $\frac{1}{p}=\frac{1}{p_1}+\frac{1}{p_2}$.
\end{theorem}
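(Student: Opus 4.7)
The plan is to combine the trilinear sparse form domination for the bilinear Hilbert transform established by Benea and Muscalu with the maximal operator bounds on Morrey and Block spaces obtained in Section~\ref{sec:weightedbfsmbound}, following the strategy of Theorem~\ref{thm:sparseimpliesmorrey} and Theorem~\ref{thm:rieszmorrey}. Set $\frac{1}{r} := \frac{1}{r_1} + \frac{1}{r_2}$ and $t := \frac{1}{1/r - 1/s}$, and write $X_j := \mathcal{L}_{v_j}^{p_j, q_j}(\R)$ and $Y := \mathcal{L}_v^{p, q}(\R)$. Since $p_j > r_j$ forces $1/p = \sum_j 1/p_j < \sum_j 1/r_j = 1/r$, the space $Y$ is $r$-convex, so
\[
\|\BHT(f_1, f_2)\|_Y = \sup_{\|g\|_{[(Y^r)']^{1/r}} = 1} \|\BHT(f_1, f_2)\, g\|_{L^r(\R)}.
\]
Invoking the sparse domination followed by the pointwise bound $M_{r_1, r_2, t}(f_1, f_2, g) \leq (M_{r_1} f_1)(M_{r_2} f_2)(M_t g)$ and a density extension of the sparse form from $L^\infty_c$ to each $L^{q_j}_{v_j}(\R)$ (which is order-continuous), the whole matter reduces to the trilinear estimate
\[
\|(M_{r_1} f_1)(M_{r_2} f_2)(M_t g)\|_{L^r(\R)} \lesssim \|f_1\|_{X_1} \|f_2\|_{X_2} \|g\|_{[(Y^r)']^{1/r}}.
\]

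This trilinear estimate decomposes into three pieces. First, the Morrey product identity $X_1 \cdot X_2 \subseteq Y$ follows from H\"older's inequality on each cube using $\frac{1}{p_1} + \frac{1}{p_2} = \frac{1}{p}$ and $\frac{1}{q_1} + \frac{1}{q_2} = \frac{1}{q}$ (the latter being a consequence of $q_j = p_j q/p$), together with $v = v_1 v_2$; this, combined with $Y \cdot [(Y^r)']^{1/r} \subseteq L^r(\R)$ (immediate from the $r$-convexity of $Y$, since $|h|^r \in Y^r$ and $|g|^r \in (Y^r)'$ pair into $L^1$), yields the product inclusion $X_1 \cdot X_2 \cdot [(Y^r)']^{1/r} \subseteq L^r(\R)$ with the associated three-factor H\"older inequality. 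Second, $M_{r_j}: X_j \to X_j$ follows from Proposition~\ref{prop:mqboundrescale} applied with the rescaling exponent $r_j$, reducing to $M : X_j^{r_j} \to X_j^{r_j}$ on the Morrey space $\mathcal{L}^{p_j/r_j, q_j/r_j}_{v_j^{r_j}} = (X_j)_{r_j, \infty}$; the hypothesis on $v_j$ unwinds to $v_j \in A_{p_j, (r_j, \widetilde{s}_j)}$ with $\frac{1}{\widetilde{s}_j} = \frac{1}{p_j} - \frac{1}{q_j} + \frac{1}{s_j}$, which is inside the range of Theorem~\ref{thm:mrsboundmorrey} applied to $X_j$ with $r = r_j$, $s = s_j$, and the resulting bound on $(X_j)_{r_j, s_j}$ transfers to $(X_j)_{r_j, \infty} = X_j^{r_j}$ via Proposition~\ref{prop:mboundbiggerrs}. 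Third, using the elementary identity $(M_t f)^r = M_{t/r}(|f|^r)$, the bound $M_t : [(Y^r)']^{1/r} \to [(Y^r)']^{1/r}$ becomes $M_{t/r} : (Y^r)' \to (Y^r)'$ on the weighted Block space $\mathcal{B}^{(p/r)', (q/r)'}_{v^{-r}}$, which is covered by Theorem~\ref{thm:mboundblockspaces} once the exponent restriction $t/r < (q/r)'$ and the Muckenhoupt condition on $v^{-r(p/r)'}$ are verified.

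The exponent restriction $t/r < (q/r)'$ is equivalent to $q < s$, and this is automatic: the finiteness of the assumed weight condition forces $q_j < s_j$, i.e., $q/p < s_j/p_j$ for $j \in \{1,2\}$, while the elementary inequality $\frac{1}{s} = \sum_j \frac{1}{s_j} \leq \frac{\max_j (p_j/s_j)}{p}$ gives $s \geq p\min_j(s_j/p_j) > q$. The main obstacle is therefore the weight verification: one must show that the two-weight conditions on $(v_1, v_2)$ assumed in the theorem imply the single-weight condition on $v = v_1 v_2$ needed to apply Theorem~\ref{thm:mboundblockspaces}. This is a H\"older-type computation on cubes in the spirit of Proposition~\ref{prop:limitedrangeweightinterpolation}: writing each $A_{p_j, (r_j, \widetilde{s}_j)}$ bound for $v_j$ as a product of two local $L^P$-averages, multiplying the two inequalities across $j$, and matching exponents $\frac{1}{q_j} - \frac{1}{s_j}$, $\frac{1}{r_j} - \frac{1}{p_j}$ against the target Block space exponents, produces exactly the desired condition on $v$ once the Morrey-to-Block rescaling has been performed. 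Assembling the three ingredients then yields the claimed bound.
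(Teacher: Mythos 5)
Your overall strategy coincides with the paper's proof: the Benea--Muscalu sparse form bound for $\BHT$ with exponents $(r_1,r_2,\tfrac{1}{\frac1r-\frac1s})$, the pointwise domination by a product of maximal functions, duality of the $r$-convex Morrey space against the block space $[(Y^r)']^{1/r}$ (with the easy inclusion $X_1\cdot X_2\subseteq Y$ in place of the full identity of Lemma~\ref{lem:factormorreybht}), the reduction of $M_t$ on $[(Y^r)']^{1/r}$ to $M_{(\frac sr)'}$ on $\mathcal{B}^{(\frac pr)',(\frac qr)'}_{v^{-r}}(\R)$ via Proposition~\ref{prop:mqboundrescale} and Theorem~\ref{thm:mboundblockspaces}, and the H\"older computation on cubes that deduces the condition on $v=v_1v_2$ from the assumed conditions on $v_1,v_2$; your explicit check that $q<s$ is a point the paper leaves implicit. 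Do note that the sparse form must also be extended in the $g$-variable from $L^\infty_c(\R)$ to the block space before taking the supremum; order-continuity of the block space handles this, as for the $f_j$-variables.

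There is, however, one genuine gap: your verification of $M_{r_j}:X_j\to X_j$. You apply Theorem~\ref{thm:mrsboundmorrey} with $s=s_j$ to get $M$ bounded on $(X_j)_{r_j,s_j}$ and then claim this "transfers to $(X_j)_{r_j,\infty}=X_j^{r_j}$ via Proposition~\ref{prop:mboundbiggerrs}". That proposition does not cover this passage: it requires $\widetilde r<r$ strictly (and an $r$-convex \emph{and $s$-concave} space, which $X_j=\mathcal{L}^{p_j,q_j}_{v_j}(\R)$ is not when $p_j\neq q_j$). With $\widetilde r=r_j$ and $\widetilde s=\infty$ the factorization of Lemma~\ref{lem:mboundbiggerrs} has $\alpha=0$, so the interpolation step of Lemma~\ref{lem:mboundinterpolation} would need $M$ to be bounded on $L^1(\R)$, which is false; and the passage from $X_{r,s}$-boundedness to $X^r$-boundedness in general needs the self-improvement machinery of Theorem~\ref{thm:mboundnos}, whose concavity hypothesis again fails for Morrey spaces. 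The fix is immediate and is what the paper does: apply Theorem~\ref{thm:mrsboundmorrey} directly with $s=\infty$, whose conclusion is exactly $M:X_j^{r_j}\to X_j^{r_j}$ and whose weight hypothesis $v_j\in A_{p_j,(r_j,\frac{1}{\frac{1}{p_j}-\frac{1}{q_j}})}$ follows by H\"older (or Proposition~\ref{prop:jnreverseholder}) from your assumed $v_j\in A_{p_j,(r_j,\widetilde s_j)}$ with $\frac{1}{\widetilde s_j}=\frac{1}{s_j}+\frac{1}{p_j}-\frac{1}{q_j}$. With that replacement your argument is complete and agrees with the paper's.
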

For this we need the following result:
\begin{lemma}\label{lem:factormorreybht}
Let $p_1,p_2\in(0,\infty]$ with $\frac{1}{p}=\frac{1}{p_1}+\frac{1}{p_2}>0$, $q\in[p,\infty)$, $\frac{1}{q_j}:=\frac{1}{p_j}\frac{p}{q}$, and $v_j$ a weight for $j\in\{1,2\}$. Then 
\[
\mathcal{L}_{v_1}^{p_1,q_1}(\R^d)\cdot \mathcal{L}_{v_2}^{p_2,q_2}(\R^d)=\mathcal{L}^{p,q}_v(\R^d),
\]
where $v=v_1v_2$.
\end{lemma}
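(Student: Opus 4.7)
My plan is to prove the identity in two stages: first reducing to an unweighted statement by pulling the weights out of the Morrey norms, and then proving the unweighted identity via H\"older's inequality for one direction and an explicit factorization for the other. The key numerical identity that makes everything work is that $\frac{1}{q} = \frac{1}{q_1} + \frac{1}{q_2}$, which follows immediately from $\frac{1}{q_j} = \frac{1}{p_j}\frac{p}{q}$ by summing over $j$, together with the fact that $\frac{p_j}{q_j} = \frac{p}{q}$ for both $j$.

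For the reduction, I would use the general identity $[X](u_1) \cdot [Y](u_2) = [X\cdot Y](u_1u_2)$ for quasi-Banach function spaces, which is immediate from unwinding the definition of the product space and the fact that $f \in [X](u)$ iff $fu \in X$. Since $\mathcal{L}^{p_j,q_j}_{v_j}(\R^d) = [\mathcal{L}^{p_j,q_j}(\R^d)](v_j)$, this reduces the problem to proving the unweighted identity $\mathcal{L}^{p_1,q_1}(\R^d)\cdot\mathcal{L}^{p_2,q_2}(\R^d) = \mathcal{L}^{p,q}(\R^d)$.

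For the inclusion $\mathcal{L}^{p_1,q_1}\cdot\mathcal{L}^{p_2,q_2} \subseteq \mathcal{L}^{p,q}$, given $|f| \leq f_1 f_2$ with $0\leq f_j\in\mathcal{L}^{p_j,q_j}$, I would apply H\"older's inequality over a cube $Q$ with the conjugate exponents $p_1/p$ and $p_2/p$ (which are conjugate since $\frac{p}{p_1}+\frac{p}{p_2}=1$) to obtain $\langle f\rangle_{p,Q} \leq \langle f_1\rangle_{p_1,Q}\langle f_2\rangle_{p_2,Q}$. Multiplying both sides by $|Q|^{1/q} = |Q|^{1/q_1}|Q|^{1/q_2}$ and taking a supremum over $Q$ yields $\|f\|_{\mathcal{L}^{p,q}} \leq \|f_1\|_{\mathcal{L}^{p_1,q_1}}\|f_2\|_{\mathcal{L}^{p_2,q_2}}$.

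For the reverse inclusion, given $f\in\mathcal{L}^{p,q}$, the natural factorization is $|f| = f_1 f_2$ with $f_j := |f|^{p/p_j}$. A direct computation gives $\langle f_j\rangle_{p_j,Q} = \langle f\rangle_{p,Q}^{p/p_j}$, so that, using the relation $\frac{1}{q_j}\cdot\frac{p_j}{p} = \frac{1}{q}$,
\[
|Q|^{1/q_j}\langle f_j\rangle_{p_j,Q} = \bigl(|Q|^{1/q}\langle f\rangle_{p,Q}\bigr)^{p/p_j}.
\]
Taking a supremum over $Q$ gives $\|f_j\|_{\mathcal{L}^{p_j,q_j}}\leq\|f\|_{\mathcal{L}^{p,q}}^{p/p_j}$, and the product $\|f_1\|_{\mathcal{L}^{p_1,q_1}}\|f_2\|_{\mathcal{L}^{p_2,q_2}}$ is bounded by $\|f\|_{\mathcal{L}^{p,q}}^{p/p_1+p/p_2}=\|f\|_{\mathcal{L}^{p,q}}$, completing the argument with equality of norms. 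There is no real obstacle here: the proof is a one-page computation once the reduction to the unweighted case is observed, and the only subtlety is verifying the two numerical identities $\frac{1}{q_1}+\frac{1}{q_2}=\frac{1}{q}$ and $\frac{p_j}{p q_j}=\frac{1}{q}$.
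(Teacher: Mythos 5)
Your proposal is correct and follows essentially the same route as the paper's proof: the same reduction to the unweighted case by pulling out the weights, H\"older's inequality on each cube for the inclusion $\mathcal{L}^{p_1,q_1}\cdot\mathcal{L}^{p_2,q_2}\subseteq\mathcal{L}^{p,q}$, and the explicit factorization $f_j=|f|^{p/p_j}$ with the identity $|Q|^{1/q_j}\langle f_j\rangle_{p_j,Q}=\bigl(|Q|^{1/q}\langle f\rangle_{p,Q}\bigr)^{p/p_j}$ for the reverse inclusion. The numerical bookkeeping ($\frac{1}{q_1}+\frac{1}{q_2}=\frac{1}{q}$ and $\frac{p_j}{p q_j}=\frac{1}{q}$) is exactly what the paper uses, so there is nothing to add.
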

\begin{proof}
Since
\begin{align*}
\mathcal{L}_{v_1}^{p_1,q_1}(\R^d)\cdot \mathcal{L}_{v_2}^{p_2,q_2}(\R^d)
&=\mathcal{L}^{p_1,q_1}(\R^d)(v_1)\cdot \mathcal{L}^{p_2,q_2}(\R^d)(v_2)\\
&=\big(\mathcal{L}^{p_1,q_1}(\R^d)\cdot \mathcal{L}^{p_2,q_2}(\R^d)\big)(v_1v_2),
\end{align*}
it suffices to prove the case $v_1=v_2=1$.

Let $h\in\mathcal{L}^{p_1,q_1}(\R^d)\cdot \mathcal{L}^{p_2,q_2}(\R^d)$. Then there are $0\leq f_j\in \mathcal{L}^{p_j,q_j}(\R^d)$ with $|h|\leq f_1f_2$. Hence, for every cube $Q$ in $\R^d$, it follows from H\"older's inequality that 
\[
|Q|^{\frac{1}{q}}\langle h\rangle_{p,Q}\leq|Q|^{\frac{1}{q}}\langle f_1\rangle_{p_1,Q}\langle f_2\rangle_{p_2,Q}\leq\|f_1\|_{\mathcal{L}^{p_1,q_1}(\R^d)}\|f_2\|_{\mathcal{L}^{p_2,q_2}(\R^d)}.
\]
Hence, by taking a supremum over all cubes $Q$ we conclude that $h\in\mathcal{L}^{p,q}(\R^d)$, and by taking an infimum over all decompositions $|h|\leq f_1f_2$ we conclude that
\[
\|h\|_{\mathcal{L}^{p,q}(\R^d)}\leq\|h\|_{\mathcal{L}^{p_1,q_1}(\R^d)\cdot \mathcal{L}^{p_2,q_2}(\R^d)}.
\]
For the converse, suppose $h\in\mathcal{L}^{p,q}(\R^d)$. Write $f_j:=|h|^{\frac{p}{p_j}}$. Then, $|h|=f_1f_2$ and, using $\frac{1}{q_j}=\frac{1}{p_j}\frac{p}{q}$, for a cube $Q$ we have
\[
|Q|^{\frac{1}{q_j}}\langle f_j\rangle_{p_j,Q}=\big(|Q|^{\frac{1}{q}}\langle h\rangle_{p,Q}\big)^{\frac{p}{p_j}}
\]
for $j\in\{1,2\}$. Taking a supremum over all cubes $Q$, we conclude that $f_j\in\mathcal{L}^{p_j,q_j}(\R^d)$ with
\[
\|f_j\|_{\mathcal{L}^{p_j,q_j}(\R^d)}=\|h\|_{\mathcal{L}^{p,q}(\R^d)}^{\frac{p}{p_j}}
\]
so that
\[
\|h\|_{\mathcal{L}^{p_1,q_1}(\R^d)\cdot\mathcal{L}^{p_2,q_2}(\R^d)}\leq\|f_1\|_{\mathcal{L}^{p_1,q_1}(\R^d)}\|f_2\|_{\mathcal{L}^{p_2,q_2}(\R^d)}=\|h\|_{\mathcal{L}^{p,q}(\R^d)}.
\]
This proves the assertion.
\end{proof}
\begin{proof}[Proof of Theorem~\ref{thm:bhtmorrey}]
Since $v_j\in A_{q_j,(r_j,s_j)}$, we can use order-continuity of the spaces $L^{q_j}_{v_j}(\R)$ to conclude that 
\[
\|\BHT(f_1,f_2)g\|_{L^r(\R)}\lesssim_{r_1,r_2,s} \|M_{r_1,r_2,\frac{1}{\frac{1}{r}-\frac{1}{s}}}(f_1,f_2,g)\|_{L^r(\R)}
\]
for all $f_j\in L^{q_j}_{v_j}(\R)$ and $g\in L^\infty_c(\R)$. Letting $X_j:=\mathcal{L}^{p_j,q_j}_{v_j}(\R)$ it follows from Lemma~\ref{lem:factormorreybht} that$X:=X_1\cdot X_2=\mathcal{L}^{p,q}_v(\R)$. Then we have that $X$ is $r$-convex and
\begin{equation}\label{eq:bhtmorrey1}
\begin{split}
\|\BHT&(f_1,f_2)g\|_{L^r(\R)}\lesssim_{r_1,r_2,s}\|M_{r_1,r_2}(f_1,f_2)\|_{X}\|M_{s'}g\|_{[(X^r)']^{\frac{1}{r}}}\\
&\leq\|M_{r_1}f_1\|_{X_1}\|M_{r_2}f_2\|_{X_2}\|M_{s'}g\|_{[(X^r)']^{\frac{1}{r}}}\\
&\leq\|M_{r_1}\|_{X_1\to X_1}\|M_{r_2}\|_{X_2\to X_2}\|M_{\frac{1}{\frac{1}{r}-\frac{1}{s}}}\|_{[(X^r)']^{\frac{1}{r}}\to [(X^r)']^{\frac{1}{r}}}\|f_1\|_{X_1}\|f_2\|_{X_2}\|g\|_{[(X^r)']^{\frac{1}{r}}}
\end{split}
\end{equation}
for all $f_j\in L^{q_j}_{v_j}(\R)$, $g\in L^\infty_c(\R)\cap [(X^r)']^{\frac{1}{r}}$, provided that
\begin{equation}\label{eq:bhtmorrey2}
M_{r_j}:X_j\to X_j,\quad M_{\frac{1}{\frac{1}{r}-\frac{1}{s}}}:[(X^r)']^{\frac{1}{r}}\to [(X^r)']^{\frac{1}{r}}
\end{equation}
are bounded for $j\in\{1,2\}$, where
\[
[(X^r)']^{\frac{1}{r}}=(\mathcal{B}^{(\frac{p}{r})',(\frac{q}{r})'}_{v^{-r}}(\R))^{\frac{1}{r}}.
\]
Since this space is order-continuous, this means that \eqref{eq:bhtmorrey1} also holds for all $g\in[(X^r)']^{\frac{1}{r}}$. Hence, we have
\begin{align*}
\|\BHT&(f_1,f_2)\|_X=\sup_{\|g\|_{[(X^r)']^{\frac{1}{r}}}=1}\|\BHT(f_1,f_2)g\|_{L^r(\R)}\\
&\lesssim_{r_1,r_2,s}\|M_{r_1}\|_{X_1\to X_1}\|M_{r_2}\|_{X_2\to X_2}\|M_{\frac{1}{\frac{1}{r}-\frac{1}{s}}}\|_{[(X^r)']^{\frac{1}{r}}\to [(X^r)']^{\frac{1}{r}}}\|f_1\|_{X_1}\|f_2\|_{X_2}.
\end{align*}
Thus, it remains to check \eqref{eq:bhtmorrey2}.

Fix $j\in\{1,2\}$. By Proposition~\ref{prop:mqboundrescale} we have that $M_{r_j}$ is bounded on $X_j$ if and only if $M$ is bounded on $X_j^{r_j}=(X_j)_{r_j,\infty}$. By Theorem~\ref{thm:mrsboundmorrey} this is the case when
\[
v_j\in A_{p_j,(r_j,\frac{1}{\frac{1}{p_j}-\frac{1}{q_j}})}\supseteq A_{p_j,(r_j,\frac{1}{\frac{1}{s_j}+\frac{1}{p_j}-\frac{1}{q_j}})}.
\]
Since $v_j$ is in the latter class, we conclude that $M_{r_j}$ is bounded on $X_j$.

Finally, it follows from Proposition~\ref{prop:mqboundrescale} that $M_{\frac{1}{\frac{1}{r}-\frac{1}{s}}}$ is bounded on $[(X^r)']^{\frac{1}{r}}$ if and only if $M_{(\frac{s}{r})'}$ is bounded on $(X^r)'=\mathcal{B}^{(\frac{p}{r})',(\frac{q}{r})'}_{v^{-r}}(\R)$. By Theorem~\ref{thm:mboundblockspaces} this is the case when $v^{-\frac{1}{\frac{1}{r}-\frac{1}{p}}}\in A_{(\frac{p}{r})'(\frac{r}{q}-\frac{r}{s})+1}$, which is equivalent to $v^{-1}\in A_{\frac{1}{\frac{1}{r}-\frac{1}{p}},(\frac{1}{\frac{1}{r}-\frac{1}{p}+\frac{1}{q}-\frac{1}{s}},\infty)}$, or
\begin{equation}\label{eq:bhtmorrey3}
v\in A_{p,(r,\frac{1}{\frac{1}{s}+\frac{1}{p}-\frac{1}{q}})}.
\end{equation}
Note that by H\"older's inequality we have
\[
\langle v\rangle_{\frac{1}{\frac{1}{q}-\frac{1}{s}},Q}\langle v^{-1}\rangle_{\frac{1}{\frac{1}{r}-\frac{1}{p}},Q}\leq [v_1]_{p_1,(r_1,\frac{1}{\frac{1}{s_1}+\frac{1}{p_1}-\frac{1}{q_1}})}[v_2]_{p_2,(r_2,\frac{1}{\frac{1}{s_2}+\frac{1}{p_2}-\frac{1}{q_2}})}
\]
for all cubes $Q$, and hence, that \eqref{eq:bhtmorrey3} holds by the assumed conditions on the $v_j$. The assertion follows.
\end{proof}

\subsection*{Acknowledgements}
The author wants to express her gratitude to Emiel Lorist for many fruitful discussions on the various necessary and unnecessary conditions one can impose on quasi-Banach function spaces resulting in a more streamlined theory, for providing the idea needed to improve the bounds obtained for weighted Block spaces, and for pointing out an error in the original manuscript related to Morrey spaces. The author also wishes to express her gratitude to Luz Roncal for providing numerous minor corrections to the overall presentation of this work. Finally, the author expresses her gratitude to the anonymous referee who provided several suggestions for improving the presentation of this work.

\bibliography{bieb}
\bibliographystyle{alpha}
\end{document}